\def\dj{d\kern-0.4em\char"16\kern-0.1em}
\def\Dj{\mbox{\raise 0.3ex\hbox{-}\kern-0.38em D}}
\newtheorem{thm}{Theorem}
\newtheorem{lem}{Lemma}
\newtheorem{Proposition}{Proposition}
\newtheorem{df}{Definition}
\newtheorem{conj}{Conjecture}
\newtheorem{Example}{Example}
\def\ds{\displaystyle}
\def\dzn{,\kern-0.1em,}
\def\be{\begin{equation} }
\def\ee{\end{equation} }
\def\bfl{\begin{flushleft} }
\def\efl{\end{flushleft} }
\def\bfr{\begin{flushright} }
\def\efr{\end{flushright} }
\def\bc{\begin{center}}
\def\vs*{\vspace*}
\def\hs*{\hspace*}
\def\ec{\end{center}}
\def\beq{\begin{eqnarray}}
\def\eeq{\end{eqnarray}}
\def\ben{\begin{enumerate}}
\def\een{\end{enumerate}}
\def\bit{\begin{itemize}}
\def\eit{\end{itemize}}
\begin{document}


\oddsidemargin 16.5mm
\evensidemargin 16.5mm

\thispagestyle{plain}



\vspace{5cc}
\begin{center}
{\Large\bf   A spanning union of cycles  in thin cylinder, torus and Klein bottle grid graphs
\rule{0mm}{6mm}\renewcommand{\thefootnote}{}
\footnotetext{\scriptsize 2010 Mathematics Subject Classification.
05C38, 05C50, 05A15, 05C30, 05C85.

\rule{2.4mm}{0mm}Keywords and Phrases:  enumeration, algorithm,  grid graphs, transfer matrix method}}

\vspace{1cc} {\large\it   Jelena \Dj oki\' c, Ksenija Doroslova\v
cki\footnote{corresponding author},  Olga Bodro\v{z}a-Panti\'{c}}

\vspace{1cc}
\parbox{24cc}{{\small
 We  propose an algorithm for obtaining the common     transfer digraph
${\cal D}^*_m$ for  enumeration  of 2-factors  in graphs from the title
all of which with $m \cdot n$ vertices ($m, n \in N, m \geq 2$).
 The numerical data  gathered for $m \leq 18$ reveal some matchings of 
the numbers of 2-factors for different types of torus  or Klein bottle.
In latter case we  conjecture that these numbers are  invariant under  twisting.  
}}
\end{center}

\vspace{1cc}


\vspace{1.5cc}
\begin{center}
{\bf  1.  INTRODUCTION}
\end{center}
\label{sec:intro}

\vspace*{4mm}

After conducted research concerning  enumeration of 2-factors on the  rectangular $RG_{m}(n)$, thick  cylinder $TkC_{m}(n)$ and Moebius strip $MS_{m}(n)$ grid graphs, \cite{JOK1} and \cite{JKO2},
 we now continue on new three classes grid  graphs: the thin cylinder $TnC_{m}(n)$, torus $TG^{(p)}_{m}(n)$ and Klein bottle $KB^{(p)}_m(n)$ all of which with $m \cdot n$ vertices.
 The  motivation for these explorations is exposed in  \cite{JOK1}.

 A spanning union of (disjoint) cycles for a  graph is 2-regular spanning subgraph and is therefore   called a 2-factor.
  The structure of (square) grid graphs under consideration  allows the grouping of vertices in  ``columns''
  and  considering  their edges as ``vertical'' and ``horizontal''.
For  an arbitrary 2-factor  of a such graph we code its vertices. To each its  column we join a word over  some alphabet of length $m$ (abbr. m-word), where  $m \in N$ is called the {\em width} of the grid graph. These words represent the vertices of the  so-called \emph{Transfer digraph}.
The process of enumeration of 2-factors is reduced to enumeration of  some oriented walks in it.
 In contrary to the case of rectangular, thick cylinder and Moebious strip  grid graphs where
 the words assigned to columns of vertices are linear,
 for the graphs from the title: $TnC_{m}(n)$, $TG^{(p)}_{m}(n)$ and  $KB^{(p)}_m(n)$ ($ 0 \leq p \leq m-1; m,n \in N $)  we treat those words as circular ones.

  \begin{figure}[htb]
\begin{center}
\includegraphics[width=4in]{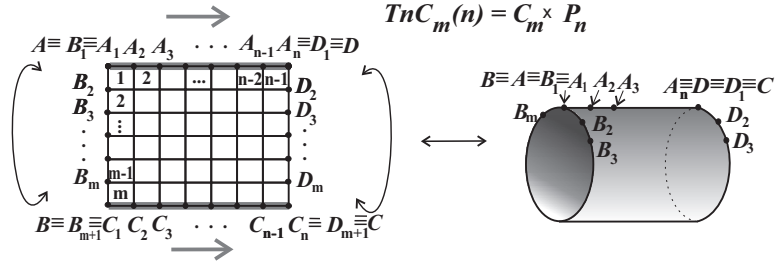}
\\ \ \vspace*{-18pt}
\end{center}
\caption{ Construction of the thin grid cylinder $TnC_m(n)= C_{m} \times  P_n$.}
\label{Tankicilindar}
\end{figure}

\begin{figure}[htb]
\begin{center}
\includegraphics[width=4in]{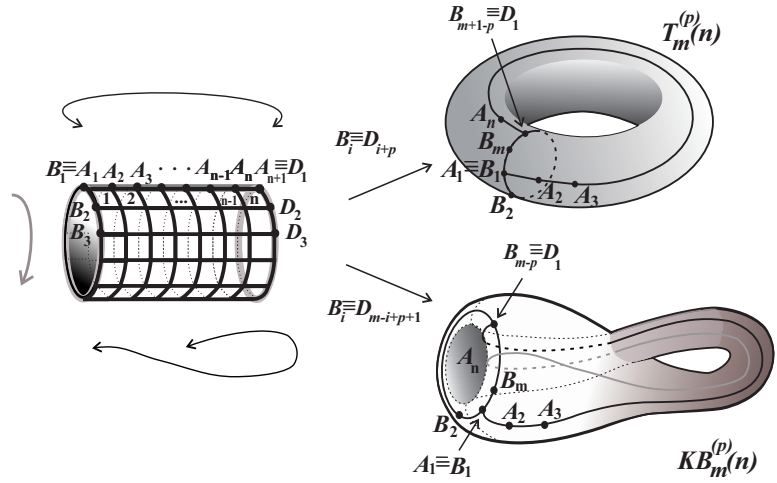}
\\ \ \vspace*{-18pt}
\end{center}
\caption{Construction of the torus grid  $TG^{(p)}_m(n)= C_{m} \times  C_n$ and Klein bottle $KB^{(p)}_m(n)$ }
\label{torus-Klein}
\end{figure}

 The thin grid cylinder $TnC_m(n)= C_{m} \times  P_n$, where  $P_n$ and $C_n$ denote the path and  cycle with $n$ vertices, respectively, can be obtained from the  rectangular grid graph  $RG_{m+1}(n)$ $ = P_{m+1} \times P_{n}$
  by identification   of  corresponding vertices from the first and  last  rows (see Figure~\ref{Tankicilindar}, the vertex $A_i$ coincides with $C_i$ for $i=1, \ldots , n$) without   producing (horizontal) multiplying edges. The other two graphs which we considered  can be obtained from  the thin grid cylinder $TnC_m(n+1)= C_{m} \times  P_{n+1}$
   by gluing   corresponding vertices  from the first  and  last  columns, without   producing vertical multiplying edges.
    The identification  of corresponding vertices $B_i $, $1 \leq i \leq m$ from the first    column   and $D_j $, $1 \leq j \leq m$ from the  last  column
    in $TnC_m(n+1)$ (see Figure~\ref{torus-Klein})  can be done in several ways resulting the following grid graphs on a torus or a Klein bottle surface:
\begin{itemize}
\item
   If $B_i \equiv D_{i+p}$  for $i=1, \ldots , m$ where $0 \leq p \leq m-1$ and the sign $+$ in subscript is addition  modulo $m$, then we obtain the torus $TG^{(p)}_m(n)$.  Note that $TG^{(0)}_m(n) = C_m \times  C_n$ is just a special case.
\item If $B_i \equiv D_{m-i+p+1}$  for $i=1, \ldots , m$ ($0 \leq p \leq m-1$), then resulting grid graph is on  a Klein bottle surface and we denote it by $KB^{(p)}_m(n)$. (Informally, we elongate the cylinder, then bend it so that the two ends of the cylinder, i.e. the cycles with vertices $ B_1, B_2, \ldots , B_m$ and $ D_1, D_2, \ldots , D_m$ come close to each other. Prior to the  identification, one end of the cylinder is twisted until the corresponding vertices match.)
\end{itemize}

To be able to  `` walk to the left, right, upper or down'' trough the cycles in a 2-factor of one of our grid graphs imagine that we  ``cut and develop  in the plane" the  surfaces of the considered  graph.  At first,  in the case of  the torus $TG^{(p)}_m(n)$  or Klein bottle $KB^{(p)}_m(n)$  we ``cut'' the edges connecting the vertices of the  last and first column. Then,  in all  the  three cases,  we do the same with   the edges connecting the vertices of the  last and first row of the cylinder and develop it in the plane.
 The obtained picture in which the lines corresponding to the  edges of the considered 2-factor are emphasized (bolded)  is called {\bf flat-representation (FR)} of this 2-factor for the considered graph  (see Figure~ \ref{maliTankib} below).

 \begin{figure}[htb]
\begin{center}
\includegraphics[width=3in]{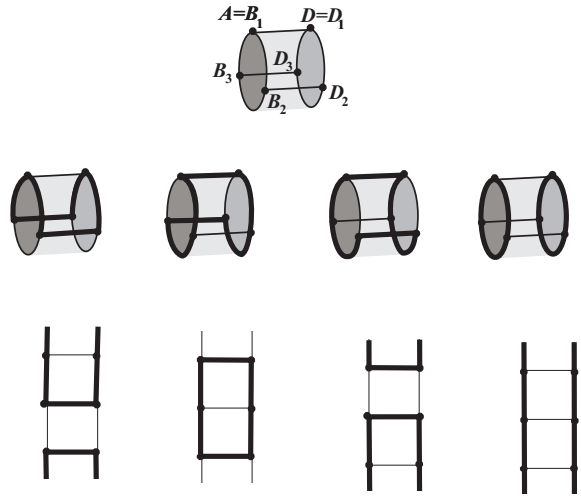}
\\ \ \vspace*{-18pt}
\end{center}
\caption{
 The graph  $TnC_3(2)= C_3 \times P_2$ with
all the possible  2-factors (bold edges) and
their  flat representations (below).}
\label{maliTankib}
\end{figure}

In case of thin cylinder,  using translation of infinite number of copies of FR
 and  linking them to each other in a sequence, as shown in   Figure~ \ref{maliTankia},
 we obtain the infinite  strip grid graph of width  $n$. In other two cases, at first, we produce the similar infinite  strip. Then,  we make the copies of it and using   translation (in case of torus) and glade symmetry (in case of  Klein-bottle) we tile the whole plane
taking into account that $B_i \equiv D_{i+p}$, $i=1, \ldots , m$ for $TG^{(p)}_m(n)$, or   $B_i \equiv D_{m+p+1-i}$, $i=1, \ldots , m$  for $KB^{(p)}_m(n)$.
Figure~\ref{noviTK} illustrates this  tiling technique  for $TG^{(0)}_m(n)$, $TG^{(2)}_m(n)$, $KB^{(0)}_m(n)$ and  $KB^{(4)}_m(n)$.

\begin{figure}[htb]
\begin{center}
\includegraphics[width=3.5in]{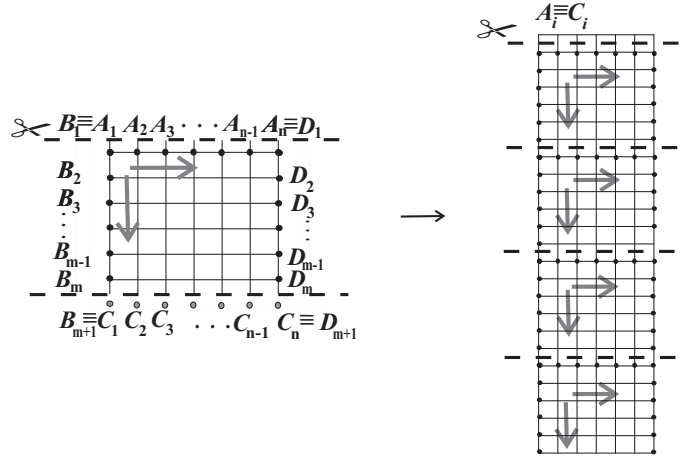}
\\ \ \vspace*{-18pt}
\end{center}
\caption{
The technique of obtaining rolling imprints using  flat representation for $TnC_m(n)$;}
 \label{maliTankia}
\end{figure}

The picture of the infinite grid graph  obtained from copies of FR (see  Figure~\ref{Flat}) is called  {\bf  Rolling Imprints (RI)}.
(The concept of  rolling imprints was introduced in \cite{BKDP1}.)
Observe that  the bold lines in RI  determine a 2-factor of the  infinity grid graph
for  all three cases.

We use matrix labeling. Thus,  each vertex of $G \equiv G_{m}(n)$ is represented with an ordered pair \ $(i,j) \in  \{1,2, \ldots, m \} \times \{1,2, \ldots, n \}$ \  where \ $i$ \ represents
the ordinal number of the row viewed from  up to down in FR, while \ $j$
\ represents the ordinal number of the column in FR, viewed from left to
right (the vertices $A_1, A_2, \ldots A_n$  belong to the first row, and the vertices  $B_1, B_2, \ldots B_m$ belong to the first column).
In  Figure~\ref{maliTankib}  the grid  graph $TnC_{3}(2) = C_3 \times P_2$ and its  all  possible 2-factors are shown.
  With the exception of  the last case all other 2-factors   are connected, i.e.  Hamiltonian cycles.

 We associate each  2-factor of the observed graph $G$ with the so-called {\bf Code
  matrix } \ $[ \alpha_{i,j} ]_{m\times n }$ \
   where  $\alpha_{i,j}$ is the one of six possible labels shown in Figure~\ref{cvkod1}
   (called  {\bf alpha-letters}) that is corresponding to the vertex \ $(i,j)$ \
($ 1 \leq i \leq m $, \ $ 1 \leq j \leq n $).
By reading the columns of these code matrices, from up to down,
we obtain so-called  {\bf alpha words}.
For instance, three alpha words: $facd$, $cedb$ and $dfac$
(following column order) determine 2-factors   shown in  Figure~\ref{Flat}.

In Section~2,   we give  a characterization of these code matrices, hence 2-factors,
for each  of the grid graphs from the title.
 Using this characterization and  the  transfer matrix method we propose   algorithms for obtaining the numbers of 2-factors, labeled by $f^{TnC}_m(n)$, $f^{TG}_{m,p}(n)$  and  $f^{KB}_{m,p}(n)$, where the integer $p$ refers to a type of the grid graph. Additionally, we give some properties of the transfer digraph ${\cal D}_m$ whose adjacency matrix is the transfer matrix used in these  algorithms. Thus, we prove that the number of vertices of  ${\cal D}_m$ is $3^m + (-1)^m$, that  ${\cal D}_m$ is disconnected digraph with strongly connected components.

 In Section~3, we reduce   ${\cal D}_m$ to the transfer digraph ${\cal D}^{*}_m$ of order $2^m$.  Thus, we improve the algorithms  for  obtaining $f^{TnC}_m(n)$, $f^{TG}_{m,p}(n)$  and  $f^{KB}_{m,p}(n)$. In case of thin grid cylinder we use just
 one of  the components  of ${\cal D}^{*}_m$, labeled by ${\cal N}^{*}_m$ which can be additionally reduced resulting in the digraph ${\cal N}^{**}_m$.
 We prove that ${\cal D}^{*}_m$ is also disconnected digraph with strongly connected components, its adjacency matrix is symmetric and that the number of its edges is $3^m + (-1)^m$.

  The numerical date about   ${\cal D}^*_m$ we gathered for $m \leq 18$
with the implementation of the  algorithms described in Section 3 reveal some interesting properties of ${\cal D}^{*}_m$
concerning  the order of  components of  ${\cal D}^*_m$ and the generating functions for the considered sequences $f^{TnC}_m(n)$, $f^{TG}_{m,p}(n)$  and  $f^{KB}_{m,p}(n)$. In Section~4, we   pose   these assertions and prove some of them  for an arbitrary $m \geq 2$.
Namely, some of the  functions  $f^{TG}_{m,p}(n)$  coincide  for different values of $p$. We prove the related assertion.
 In case of Klein bottle, such matchings are more numerous. For example, when $m$ is even the number  $f^{KB}_{m,p}(n)$
is independent of the value of  $p$. We propose a  conjecture concerning these observations.

\begin{figure}[htb]
\begin{center}
\includegraphics[width=3.4in]{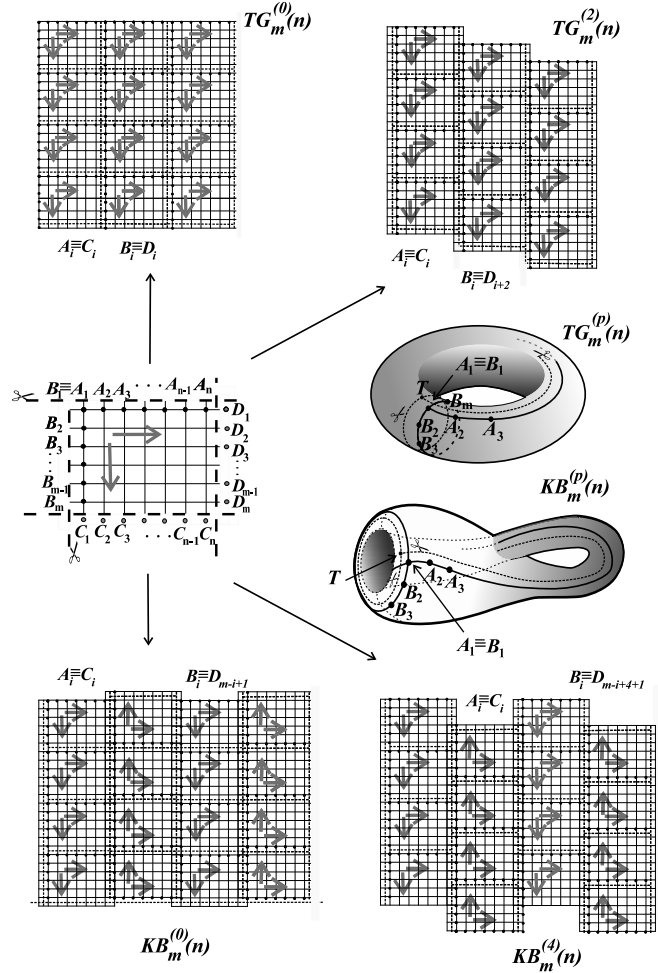}
\end{center}
\caption{The technique of obtaining rolling imprints using  flat representation for $TG^{(p)}_m(n)$ and $KB^{(p)}_m(n)$.}
\label{noviTK}
\end{figure}

 \begin{figure}[htb]
\begin{center}
\includegraphics[width=8in]{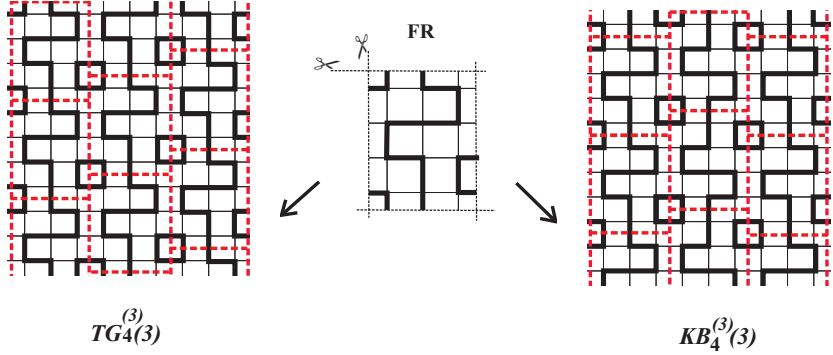}
\\ \ \vspace*{-18pt}
\end{center}
\caption{RI of a 2-factor of $TG^{(3)}_4(3)$ (left) and RI of a 2-factor of $KB^{(3)}_4(3)$ (right) with their common flat representation (in the middle).}
\label{Flat}
\end{figure}

Computational results we gathered for $m \leq 18$  are partially given in Section~4, the rest of them in  Appendix (Section~5)


\vspace{1.5cc}
\begin{center}
{\bf  2.  Enumeration of 2-factors using  ${\cal D}_m$}
\end{center}
\label{sec:intro}

\vspace*{4mm}

For each alpha-letter $\alpha$, denote by $\overline{\alpha}$  ($\alpha'$) the alpha-letter
of the situation from  Figure~\ref{cvkod1}  obtained
 by reflecting  the situation  of $\alpha$ over  horizontal (vertical) line. Precisely,
 $\overline{\alpha}$ and $\alpha'$ are images of $\alpha$ under mapping

 $$ \left( \begin{array}{cccccc}   a \; & \;  b  \; & \;  c \; & \;  d \; & \;  e \; & \;  f \\   c \; & \;  b  \; & \;  a \; & \;  f \; & \;  e \; & \;  d  \end{array} \right) \mbox{  and  }
 \left( \begin{array}{cccccc}   a \; & \;  b  \; & \;  c \; & \;  d \; & \;  e \; & \;  f \\   d \; & \;  b  \; & \;  f \; & \;  a \; & \;  e \; & \;  c  \end{array} \right), \mbox{respectively.} $$

\begin{figure}[htb]
\begin{center}
\includegraphics[width=4in]{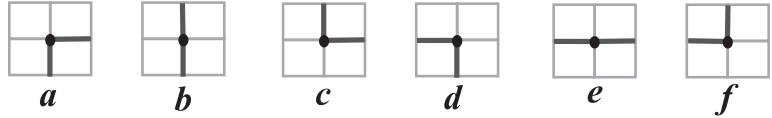}
\\ \ \vspace*{-18pt}
\end{center}
\caption{Six possible
situations in any vertex for given 2-factor }
\label{cvkod1}
\end{figure}

 The digraphs \ ${\cal D}_{lr}$ \ and \ ${\cal D}_{ud}$,  shown in  Figure~\ref{cvkod2}, \
describe the possibility that two alpha letters are  corresponding to two  adjacent vertices of $G$ in a  2-factor.
 The digraph \ ${\cal D}_{ud}$ is responsible for building alpha-words, and the other one  for the adjacency of columns in code matrices in the following sense:

 \begin{figure}[htb]
 \begin{center}
\includegraphics[width=3.5in]{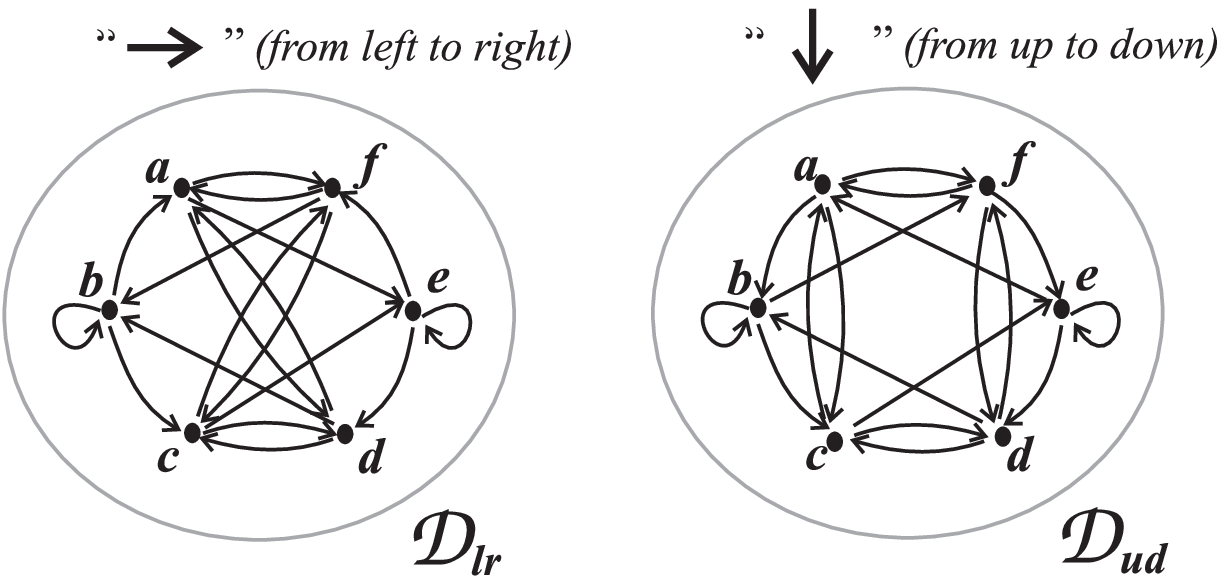}
\caption{Digraphs ${\cal D}_{ud}$ and  ${\cal D}_{lr}$}
\label{cvkod2}
\end{center}
\end{figure}

\begin{lem}  \label{lem:1} (Characterisation of 2-factors)

The code matrix $  [ \alpha_{i,j} ]_{m\times n }$ for any 2-factor of   $G \in  \{TnC_{m}(n), TG^{(p)}_{m}(n), KB^{(p)}_{m}(n) \}$ satisfies  the following properties ($ \alpha_{m+1,j} \stackrel{\rm def}{=}  \alpha_{1,j}$):
\begin{enumerate}
\item \textbf{Column condition:} \ For every fixed \ $j$  \ ($1 \leq j \leq n$),

 the ordered pairs \ $ (\alpha_{i,j}, \alpha_{i+1,j})$, \ where \ $1 \leq i
\leq m$, \ must be arcs in the digraph \ ${\cal D}_{ud}$.

\item \textbf{Adjacency of column condition:} \  For every  fixed  $j$, where  \ $1 \leq j \leq n-1 $,
 the ordered pairs \ $ (\alpha_{i,j}, \alpha_{i,j+1})$, \ where \ $1
\leq i \leq m$, \ must be arcs in the digraph \ ${\cal D}_{lr}$.

\item \textbf{First and Last Column conditions:}
\begin{enumerate}
 \item
If $G= TnC_{m}(n)$, then
the alpha-word of the first  column consists of the letters from the
set \ $\{ a, b, c \}$
 and of  the last column of the letters
  from the set \ $\{  b, d, f \}$.

\item
If $G= TG^{(p)}_{m}(n)$,  then
 the ordered pairs \ $ (\alpha_{i+p,n}, \alpha_{i,1})$, \ where \ $1
\leq i \leq m$, \ must be arcs in the digraph \ ${\cal D}_{lr}$.

\item
If $G= KB^{(p)}_{m}(n)$, then
 the ordered pairs \ $ (\alpha_{m+p+1-i,n}, \overline{\alpha}_{i,1})$, \ where \ $1
\leq i \leq m $, \ must be arcs in the digraph \ ${\cal D}_{lr}$.
 \end{enumerate}
\end{enumerate}
The converse, for every matrix $[\alpha_{i,j}]_{m \times n }$ with entries from $\{a,b,c,d,e,f\}$
that satisfies conditions 1--3 there is  a  unique 2-factor  on the grid graph $G$.
\end{lem}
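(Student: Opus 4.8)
The plan is to prove the lemma as a bijection between the $2$-factors of $G$ and the code matrices satisfying Conditions 1--3, exploiting the fact that in the flat representation every vertex has at most four incident edges, pointing up, down, left and right. Since a $2$-factor is $2$-regular, at each vertex exactly two of these four edges are used, and there are exactly $\binom{4}{2}=6$ such choices; Figure~\ref{cvkod1} fixes the bijection between these six local patterns and the alpha-letters $a,b,c,d,e,f$. The whole argument rests on the observation that a collection of local patterns (one per vertex) comes from a genuine subgraph if and only if, for every edge of $G$, its two endpoints \emph{agree} on whether that edge is used. The digraphs $\mathcal{D}_{ud}$ and $\mathcal{D}_{lr}$ of Figure~\ref{cvkod2} are exactly these compatibility relations for a shared vertical resp.\ horizontal edge: $(\alpha,\beta)$ is an arc of $\mathcal{D}_{ud}$ iff the ``down'' status of $\alpha$ equals the ``up'' status of $\beta$, and $(\alpha,\beta)$ is an arc of $\mathcal{D}_{lr}$ iff the ``right'' status of $\alpha$ equals the ``left'' status of $\beta$. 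Reading the two digraphs this way (directly off the figures), Conditions 1--3 become precisely the statement that every edge of $G$ is used consistently by its two endpoints.

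For the forward direction, given a $2$-factor $F$ of $G$ I assign to each vertex $(i,j)$ the alpha-letter $\alpha_{i,j}$ recording which two of its incident edges lie in $F$. For a fixed column $j$ the vertical edges are those of the cycle factor $C_m$, so row $m$ is joined to row $1$; hence for every $i$ with $1\le i\le m$ the shared vertical edge between $(i,j)$ and $(i+1,j)$ forces $(\alpha_{i,j},\alpha_{i+1,j})\in\mathcal{D}_{ud}$, under the convention $\alpha_{m+1,j}\equiv\alpha_{1,j}$, which is Condition 1. Likewise, for $1\le j\le n-1$ the horizontal edge between $(i,j)$ and $(i,j+1)$ exists in all three graphs and yields Condition 2. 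Thus Conditions 1 and 2 are common to all three families and record the ``interior'' edges.

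The three families differ only in the seam between the last and first column, which is where Condition 3 enters. For $TnC_m(n)=C_m\times P_n$ the columns form a path, so no first-column vertex has a left edge and no last-column vertex has a right edge; reading off Figure~\ref{cvkod1}, the letters with no left edge are exactly $\{a,b,c\}$ and those with no right edge are exactly $\{b,d,f\}$, giving Condition 3(a). For the torus $TG^{(p)}_m(n)$ the identification $B_i\equiv D_{i+p}$ means that after cutting, the horizontal seam-edge joins $(i+p,n)$ to $(i,1)$, so horizontal consistency becomes $(\alpha_{i+p,n},\alpha_{i,1})\in\mathcal{D}_{lr}$, i.e.\ Condition 3(b). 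For the Klein bottle $KB^{(p)}_m(n)$ the gluing $B_i\equiv D_{m+p+1-i}$ is orientation-reversing, so across the seam the up/down directions of the first column are interchanged; the letter the seam ``sees'' at $(i,1)$ is therefore its horizontal reflection $\overline{\alpha}_{i,1}$, and horizontal consistency reads $(\alpha_{m+p+1-i,n},\overline{\alpha}_{i,1})\in\mathcal{D}_{lr}$, which is Condition 3(c). I expect this last case to be the main obstacle: one must verify that the orientation-reversing identification acts on the local pattern exactly by the reflection $\overline{(\cdot)}$ while simultaneously reversing the row index as $i\mapsto m+p+1-i$, so that the single arc condition in $\mathcal{D}_{lr}$ captures both the existence of the seam-edge and its consistency.

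For the converse, given any matrix over $\{a,b,c,d,e,f\}$ satisfying 1--3 I would define a candidate edge set $F$ by declaring a vertical (horizontal) edge of $G$ present exactly when the alpha-letter at one endpoint prescribes it; Conditions 1--3 guarantee that the two endpoints always agree, so $F$ is well defined, and the boundary part of Condition 3 guarantees that no letter ever prescribes a non-existent edge (for $TnC_m(n)$ the excluded letters are precisely those demanding a missing seam-edge). Since each alpha-letter prescribes exactly two incident edges and every prescribed edge is included while no others touch the vertex, every vertex has degree exactly $2$ in $F$, so $F$ is a spanning $2$-regular subgraph, i.e.\ a $2$-factor. Finally, the assignments $F\mapsto[\alpha_{i,j}]$ and $[\alpha_{i,j}]\mapsto F$ are mutually inverse by construction, which yields both the converse and the asserted uniqueness of the $2$-factor associated to a code matrix.
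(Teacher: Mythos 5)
Your proof is correct and follows the same route as the paper, which simply asserts that Conditions 1--3 amount to checking edge-compatibility at adjacent vertices and that, conversely, a compatible assignment of the six local patterns determines a unique spanning 2-regular subgraph; you have merely spelled out that compatibility check (including the seam cases 3(b), 3(c)) in full. No gaps.
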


\begin{proof}
The properties 1-3  can  be easily proved
by  checking  compatibility for   all the possible  edge arrangements for adjacent vertices of $G$ in the considered 2-factor.
Vice versa, the validity of the conditions 1-3 for the  matrix $[\alpha_{i,j}]_{m \times n }$ insures
that the obtained  subgraph of  $G$ is  a  unique spanning 2-regular graph, i.e.  2-factor. $\Box$
\end{proof}

\begin{df}
For any alpha-word $\alpha \equiv \alpha_{1}\alpha_{2} \ldots \alpha_{m-1}\alpha_{m}$,
the alpha words $\overline{\alpha} \equiv \overline{\alpha}_{m}\overline{\alpha}_{m-1} \ldots $ $ \overline{\alpha}_{2}\overline{\alpha}_{1}$ and $\alpha' \equiv \alpha'_{1}\alpha'_{2} \ldots\alpha'_{m-1}\alpha'_{m}$ are called {\bf horizontal } and {\bf vertical conversion}  of $\alpha$, respectively.
The circular word $\rho(\alpha) \equiv
\alpha_{2}\alpha_{3} \ldots \alpha_{m-1}\alpha_{m}\alpha_{1}$  is called
{\bf rotation} of $\alpha$.
\end{df}

Clearly, $\overline{ \overline{\alpha}} = \alpha$, $ (\alpha')' = \alpha$,
$ \rho^{p}(\alpha) \equiv \alpha_{p+1}\alpha_{p+2} \ldots \alpha_{m}\alpha_{1}\alpha_{2} \ldots \alpha_{p}$ $(1 \leq p \leq m-1)$ and
$\rho^{0}(\alpha) \equiv \alpha$. Note that
$ \rho^{p}(\overline{\alpha}) = \overline{\rho^{m-p}(\alpha)} \mbox{ for } p =0,1,  \ldots ,m-1.$

 Now, we can create  for each integer
  \ $ m $ \ $ (m \in N) $  a  digraph  \ $ {\cal D}_{m}
\stackrel{\rm def}{=} (V({\cal D}_{m}), E({\cal D}_{m}))$  \ (common  for all three types of grid graphs) in the following way:
  \ the set of vertices  \ $ V({\cal D}_{m}) $  \ consists of all possible circular words $\alpha_{1,j}\alpha_{2,j} \ldots \alpha_{m,j}$ over  alphabet $ \{ a,b,c,d,e,f \}$ (alpha-words) which fulfill   Condition 1 (Column condition)  from Lemma~\ref{lem:1};  an arc joins
   \ $ v = \alpha_{1,j}\alpha_{2,j} \ldots \alpha_{m,j}$ \ to  \ $ u= \alpha_{1,j+1}\alpha_{2,j+1} \ldots \alpha_{m,j+1}$, i.e. $(v,u) \in E({\cal D}_{m})$, \
or \ $ v \rightarrow u $ \ \  iff
  Condition 2 (Adjacency of column condition) from Lemma~\ref{lem:1} is satisfied  for the  vertices  \  $ v$ \
and  $ u $.

The subset of $V({\cal D}_{m})$ which consists of all the  possible first (last) columns in code matrices $[\alpha_{i,j}]_{m \times n}$  for $TnC_{m}(n)$
(Condition 3a, Lemma~\ref{lem:1}) is denoted by ${\cal F}_m$  (${\cal L}_m$).
From Condition 1, the first column of $[\alpha_{i,j}]_{m \times n}$ is
an alpha-word  from  \ $\{ a, b, c \}^m$. Similarly, the  last  column of $[\alpha_{i,j}]_{m \times n}$  is an alpha-word from  \ $\{  b, d, f \}^m$.
Consequently, we have

\begin{lem} \label{lem:2}
 The cardinalities of $ {\cal F}_{m}  $ and ${\cal L}_{m} $ $(m \in N)$  are equal to the $m$-th  member of the  Lucas sequence $L_m$ ($ L_1=1, L_2=3; L_k = L_{k-1} + L_{k-2}$, where $k \geq 3$).
\end{lem}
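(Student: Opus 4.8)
The plan is to recognize $|\cF_m|$ as the trace of a power of a small transfer matrix and then to identify that trace with the Lucas numbers via their Binet form. By construction $\cF_m$ is exactly the set of circular alpha-words $\alpha_1\alpha_2\cdots\alpha_m$ over the three-letter alphabet $\{a,b,c\}$ (with the cyclic convention $\alpha_{m+1}=\alpha_1$) all of whose consecutive ordered pairs $(\alpha_i,\alpha_{i+1})$ are arcs of $\cD_{ud}$. Equivalently, $\cF_m$ is the set of closed walks of length $m$ in the sub-digraph $H$ of $\cD_{ud}$ induced on $\{a,b,c\}$, so that $|\cF_m|=\operatorname{tr}(A^m)$, where $A$ is the adjacency matrix of $H$.

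First I would read the arcs of $H$ off Figure~\ref{cvkod2}: within $\{a,b,c\}$ one finds exactly $a\to c$, $b\to a$, $b\to b$, $c\to a$ and $c\to b$, so that, ordering the letters as $(a,b,c)$,
\[
A=\begin{pmatrix}0&0&1\\ 1&1&0\\ 1&1&0\end{pmatrix}.
\]
Its characteristic polynomial factors as $-\lambda(\lambda^2-\lambda-1)$, so the eigenvalues of $A$ are $0$ together with the two roots $\phi=\tfrac{1+\sqrt5}{2}$ and $\psi=\tfrac{1-\sqrt5}{2}$ of $x^2-x-1$. Consequently $|\cF_m|=\operatorname{tr}(A^m)=0^m+\phi^m+\psi^m=\phi^m+\psi^m$ for $m\ge 1$. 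Since $\phi+\psi=1$ and $\phi\psi=-1$, the sequence $s_m:=\phi^m+\psi^m$ satisfies $s_m=s_{m-1}+s_{m-2}$ with $s_1=1$ and $s_2=3$; these are precisely the defining relations of the Lucas sequence, whence $|\cF_m|=L_m$. (Equivalently, Cayley--Hamilton gives $A^3=A^2+A$, so the trace sequence obeys the Lucas recurrence directly, and one checks the base values $\operatorname{tr}(A)=1=L_1$ and $\operatorname{tr}(A^2)=3=L_2$.)

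It remains to transfer this count to $\cL_m$. Rather than repeat the computation on $\{b,d,f\}$, I would exhibit a bijection via the vertical conversion: the map $\alpha\mapsto\alpha'$ sends the letters $\{a,b,c\}$ onto $\{d,b,f\}=\{b,d,f\}$, and applied letter-by-letter while preserving each position within the column it carries a word to its vertical conversion. Because vertical reflection swaps ``left'' and ``right'' but leaves the ``up'' and ``down'' incidences untouched, while the arcs of $\cD_{ud}$ depend only on the up/down incidences, a circular word over $\{a,b,c\}$ satisfies the column condition if and only if its image over $\{b,d,f\}$ does. This yields a bijection $\cF_m\to\cL_m$, and therefore $|\cL_m|=|\cF_m|=L_m$.

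The computation itself is routine; the step deserving the most care is the passage from ``circular alpha-words satisfying the column condition'' to ``closed walks of length $m$'', i.e. confirming that $\cF_m$ is counted as a set of genuine length-$m$ sequences with wrap-around rather than as rotation classes. This is exactly what makes the identity $|\cF_m|=\operatorname{tr}(A^m)$ legitimate, and it is consistent with the companion count $|V(\cD_m)|=3^m+(-1)^m$, which is the analogous trace over the full six-letter alphabet (whose transfer matrix has nonzero eigenvalues $3$ and $-1$). A secondary point to verify carefully is the correct reading of the induced sub-digraph $H$ from $\cD_{ud}$, since a single mis-recorded arc would alter the spectrum and break the identification with $L_m$.
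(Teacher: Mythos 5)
Your proposal is correct and follows essentially the same route as the paper: both count $\cF_m$ as closed walks of length $m$ in the subdigraph of ${\cal D}_{ud}$ induced on $\{a,b,c\}$, extract the factor $\lambda^2-\lambda-1$ from its characteristic polynomial, and match the Lucas recurrence with the base values $1$ and $3$. The only cosmetic differences are that you invoke the Binet form $\phi^m+\psi^m$ where the paper uses the recurrence directly, and that you handle $\cL_m$ by the vertical-conversion bijection $\alpha\mapsto\alpha'$ instead of repeating the identical computation on $\{b,d,f\}$ --- both of which are sound.
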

\begin{proof}
It can be carried out  analogically as it has been  done in linear case (see Lemma 3.1 in  \cite{JOK1}).
Namely,
  characteristic polynomial of the adjacency matrix of  the subdigraph of ${\cal D}_{ud}$ induced by the set $\{ a,b,c \}$ or $\{ b,d,f \}$
is $\lambda (\lambda^2 - \lambda -1)$. The polynomial in brackets   determines the recurrence relation for the numbers of all oriented closed walks  of length $m$ in the considered subdigraph
starting and ending with  a same  vertex. Consequently, $\mid {\cal F}_{m}\mid = \mid {\cal F}_{m-1}\mid + \mid {\cal F}_{m-2}\mid $ and $\mid {\cal L}_{m}\mid = \mid {\cal L}_{m-1}\mid + \mid {\cal L}_{m-2}\mid $, for $m \geq 3$. \\
Since   $ \mid {\cal F}_{1} \mid =\mid \{ b \} \mid = \mid {\cal L}_{1} \mid  =1= L_1$ and  $ \mid {\cal F}_{2} \mid  =  \mid \{
bb, ac, ca \} \mid =  \mid {\cal L}_{2} \mid  = \mid \{ bb, df, fd \} \mid = 3 = L_2$,
 we conclude that $ \mid {\cal F}_{m} \mid  =  \mid {\cal L}_{m} \mid  = L_m$, for all $m \in N$. $\Box$
\end{proof}

\begin{df}
A  square symmetric binary  matrix ${\cal H}_m = [h_{ij}]$ of order $ \mid V({\cal D}_{m}) \mid $ such that $h_{i,j} =1$ iff the $i$-th and $j$-th vertex of the digraph
 ${\cal D}_{m}$ can be obtained  from each other by horizontal conversion is called the {\bf H-conversion  matrix} for ${\cal D}_{m}$.
A permutation binary  matrix ${\cal R}_m = [r_{ij}]$ of order $ \mid V({\cal D}_{m}) \mid $ such that $r_{i,j} =1$ iff $ v_j = \rho(v_i)$ where $ v_i$ and $ v_j$
are the $i$-th and $j$-th vertex of the digraph  ${\cal D}_{m}$ is called the {\bf rotation matrix} for ${\cal D}_{m}$.
\end{df}

\begin{lem}  \label{lem:3}
If  $f_m^{TnC}(n)$,  $f_{m,p}^{TG}(n)$ and  $f_{m,p}^{KB}(n)$ ($ m \geq 2$) denote the number of 2-factors of $TnC_{m}(n)$, $TG^{(p)}_{m}(n)$ and  $KB^{(p)}_{m}(n)$, respectively, then
\be \label{glavna1}
f_m^{TnC}(n)
=   \sum_{ \begin{array}{c}v_i \in {\cal F}_m ; \\  v_j \in {\cal L}_m \end{array} } a_{i,j}^{(n-1)} =
    \sum_{\begin{array}{c}v_i \in {\cal F}_m \end{array}} a_{i,i}^{(n)} =a_{1,1}^{(n+1)}  \ee

\be \label{glavna2}
f_{m,p}^{TG}(n) =  tr(  {\cal T}_m^n    \cdot  {\cal R}^{p}_m) = tr({\cal R}^{p}_m \cdot {\cal T}_m^n) = \sum_{\begin{array}{c} v_i, v_j \in   V({\cal D}_m); \\
 v_i= \rho^{p}(v_j) \end{array} } a_{i,j}^{(n)}\ee

\be \label{glavna2}
f_{m,p}^{KB}(n) =
 tr( {\cal T}_m^n   \cdot {\cal R}^{p}_m \cdot {\cal H}_m) = tr( {\cal R}^{p}_m \cdot {\cal H}_m  \cdot  {\cal T}_m^n) = \sum_{\begin{array}{c} v_i, v_j  \in   V({\cal D}_m); \\
\overline{v_i}= \rho^{p}(v_{j}) \end{array} } a_{i,j}^{(n)}\ee
   where
    ${\cal T}_m = [a_{ij}]$, ${\cal H}_m = [h_{ij}]$ and  ${\cal R}_m = [r_{ij}]$ are
     the  adjacency (transfer), H-conversion and rotation matrix  for  the digraph  \ $ {\cal D}_{m} $, respectively   and $v_1=b^{m}$.
\end{lem}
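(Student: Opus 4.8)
The plan is to read every equality as a statement about oriented walks in $\mathcal{D}_m$. The common engine is the observation, immediate from the construction of $\mathcal{D}_m$, that an oriented walk $v_{i_0}\to v_{i_1}\to\cdots\to v_{i_\ell}$ of length $\ell$ records precisely a sequence of $\ell+1$ columns each of which obeys Condition~1 (since every vertex of $\mathcal{D}_m$ does) and each consecutive pair of which obeys Condition~2 (by definition of the arcs). Hence $a_{i,j}^{(\ell)}:=(\mathcal{T}_m^\ell)_{i,j}$ is exactly the number of $m\times(\ell+1)$ arrays over $\{a,\ldots,f\}$ satisfying Conditions~1--2 whose first column is $v_i$ and whose last column is $v_j$. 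Everything below amounts to imposing the correct boundary requirement (Condition~3) on such walks and then recognising the resulting count as an entry, a marked sum, or a trace.

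For the thin cylinder the first equality is nothing but Lemma~\ref{lem:1}(3a): a $2$-factor of $TnC_m(n)$ is an $n$-column array satisfying Conditions~1--2 whose first column lies in $\mathcal{F}_m$ and whose last lies in $\mathcal{L}_m$, and summing $a_{i,j}^{(n-1)}$ over $v_i\in\mathcal{F}_m$, $v_j\in\mathcal{L}_m$ counts exactly these. The other two equalities rest on one fact about $v_1=b^m$: since the letter $b$ carries no horizontal edge (Figure~\ref{cvkod1}), an arc $b^m\to u$ forces every $u_i$ to have no left-edge and an arc $w\to b^m$ forces every $w_i$ to have no right-edge, so the out-neighbourhood of $b^m$ is exactly $\mathcal{F}_m$ and its in-neighbourhood is exactly $\mathcal{L}_m$. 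Splitting a closed walk of length $n+1$ at $b^m$ into first step, middle walk of length $n-1$, and last step gives $a_{1,1}^{(n+1)}=\sum_{v_i\in\mathcal{F}_m,\,v_j\in\mathcal{L}_m}a_{i,j}^{(n-1)}$. For the remaining equality I would use the bijection that deletes the marked vertex: a closed walk $b^m\to x_1\to\cdots\to x_n\to b^m$ maps to the closed walk $x_1\to\cdots\to x_n\to x_1$ based at $x_1\in\mathcal{F}_m$. This is well defined and invertible because $x_1\in\mathcal{F}_m$ has no left-edges, so the arc $x_n\to x_1$ is present exactly when $x_n$ has no right-edges, i.e.\ when $x_n\in\mathcal{L}_m$ (equivalently, when $x_n\to b^m$ is an arc), while $b^m\to x_1$ holds for every $x_1\in\mathcal{F}_m$; summing over the base point yields $\sum_{v_i\in\mathcal{F}_m}a_{i,i}^{(n)}=a_{1,1}^{(n+1)}$.

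For the torus I would first rewrite the wrap-around Condition~3b as a single arc of $\mathcal{D}_m$. Reindexing the pairs $(\alpha_{i+p,n},\alpha_{i,1})$ (arcs of $\mathcal{D}_{lr}$ for every $i$, indices mod $m$) and comparing with the definition of $\rho$ shows that Condition~3b is equivalent to the single arc $c_n\to\rho^{-p}c_1$, where $c_1,c_n$ are the first and last columns and $\rho^{-p}=\rho^{m-p}$. Thus a $2$-factor of $TG^{(p)}_m(n)$ is a length-$n$ walk from $c_1$ to $\rho^{-p}c_1$; summing over $c_1$ gives $f^{TG}_{m,p}(n)=\sum_{v_j=\rho^{-p}v_i}a_{i,j}^{(n)}=\sum_{v_i=\rho^{p}(v_j)}a_{i,j}^{(n)}$, the marked-sum form. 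Since $\rho$ permutes $V(\mathcal{D}_m)$ (a cyclic shift does not alter the cyclic list of pairs tested in Condition~1) and $\mathcal{R}_m$ is by definition its permutation matrix, $(\mathcal{R}_m^{p})_{j,i}=1$ iff $v_i=\rho^{p}(v_j)$; hence the marked sum equals $\sum_{i,j}a_{i,j}^{(n)}(\mathcal{R}_m^{p})_{j,i}=tr(\mathcal{T}_m^n\mathcal{R}_m^{p})$, and the two trace orderings agree by cyclicity.

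The Klein-bottle case runs along the same lines and is where the genuine bookkeeping lives. Condition~3c differs from 3b by the horizontal conversion $\overline{(\cdot)}$ on the first-column letters and by the order-reversing index $m+p+1-i$; reindexing as before turns it into the single arc $c_n\to\rho^{-p}\overline{c_1}$, where $\overline{c_1}$ is the horizontal conversion of the whole word $c_1$. One must check here that $\overline{(\cdot)}$, like $\rho$, maps $V(\mathcal{D}_m)$ to itself: reversing the column and swapping up/down preserves every pair tested in Condition~1, so $\mathcal{H}_m$ is a well-defined (symmetric, involutive) permutation matrix. Then a $2$-factor of $KB^{(p)}_m(n)$ is a length-$n$ walk from $c_1$ to $\rho^{-p}\overline{c_1}$, whence $f^{KB}_{m,p}(n)=\sum_{\overline{v_i}=\rho^{p}(v_j)}a_{i,j}^{(n)}$; expanding $(\mathcal{T}_m^n\mathcal{R}_m^{p}\mathcal{H}_m)_{i,i}=\sum_{j,k}a_{i,j}^{(n)}(\mathcal{R}_m^{p})_{j,k}(\mathcal{H}_m)_{k,i}$ and summing collapses, via $v_k=\rho^{p}(v_j)$ and $v_k=\overline{v_i}$, to exactly this sum, giving $tr(\mathcal{T}_m^n\mathcal{R}_m^{p}\mathcal{H}_m)$, with the alternate ordering again by cyclicity. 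The main obstacle throughout is precisely this index accounting---fixing the direction of the shift ($\rho^{p}$ versus $\rho^{-p}$) and the interaction of the order-reversal in 3c with $\rho$ and with $\overline{(\cdot)}$---together with verifying that $\rho$ and $\overline{(\cdot)}$ preserve the vertices of $\mathcal{D}_m$; the passage from a marked sum to a trace is then routine from the definitions of $\mathcal{R}_m,\mathcal{H}_m$ and the cyclic invariance of the trace.
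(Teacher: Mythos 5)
Your proof is correct and follows essentially the same route as the paper: you interpret code matrices as oriented walks in ${\cal D}_m$, convert the wrap-around conditions 3(b)/3(c) into a single extra arc $c_n \rightarrow \rho^{m-p}(c_1)$ resp.\ $c_n \rightarrow \rho^{m-p}(\overline{c_1})$, and read off the resulting counts as marked sums and traces. The only difference is one of explicitness --- you spell out the reindexing in Conditions 3(b)--(c) and the fact that the out- and in-neighbourhoods of $b^m$ are exactly ${\cal F}_m$ and ${\cal L}_m$, which the paper states more tersely.
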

\begin{proof}
The number  of all the possible code matrices  is equal to the number of all oriented walks of the length \ $ n-1 $ in the digraph  \ $ {\cal D}_{m} $ for which the initial and  final vertex satisfy The First and  Last Column conditions from Lemma~\ref{lem:1}.

In case of $ TnC_m(n)$, this number is equal to  the sum of  $(i,j)$-entries  of $(n-1)$-th degree of ${\cal T}_m $ where $v_i \in {\cal F}_m $ and $v_j\in {\cal L}_m $. Since  every vertex from   ${\cal F}_m$ is  direct successor of any   vertex from ${\cal L}_m$, we have  that
$\ds  f_m^{TnC}(n) =  \sum_{v_i \in {\cal F}_m } a_{i,i}^{(n)}$. As the vertex $b^{m}$ belongs to both  $ {\cal F}_m $ and ${\cal L}_m$, further we obtain  that the requested number is equal to $ a_{1,1}^{(n+1)} $.

In case of $  TG^{(p)}_m(n)$
the number  of all the possible code matrices (i.e.  rolling imprints) with  the first  column $v_i$ where $v_i  \in V({\cal D}_m)$ is equal to
the number  of all oriented walks of  length  \ $ n-1$ \ in the digraph \ $ {\cal D}_{m} $ \
 with  initial vertex    $v_i $ and one of predecessors of $v_j= \rho^{m-p}(v_i)$ (equivalent to $v_i= \rho^{p}(v_j)$) as  a  final vertex (condition 3(b) from Lemma~\ref{lem:1}) or all oriented walks  of  length $n$ which start with   $v_i$ and end with $v_j= \rho^{m-p}(v_i)$ (the number $a_{i,j}^{(n)}$).
Consequently, the total number  2-factors of $  TG^{(p)}_m(n)$   equals
 the trace of the matrix ${\cal T}_m^n  \cdot  {\cal R}^{p}_m$.
The trace of a product of square matrices does  not depend on the order of multiplication, so
 $ \ds f_{m,p}^{TG}(n) =    tr({\cal T}_m^n  \cdot  {\cal R}^{p}_m) =tr({\cal R}^{p}_m \cdot {\cal T}_m^n)$.

Case $ KB^{(p)}_m(n)$ is similar to the previous  one. Namely,  the enumeration  of all oriented walks of  length
\ $ n-1$ \ in $ {\cal D}_{m} $
starting from   $v_i \in V({\cal D}_m)$   and ending at one of predecessors of $v_j= \rho^{m-p}(\overline{v_{i}})= \overline{ \rho^{p}(v_{i})}$ (condition 3(c) from Lemma~\ref{lem:1})
comes down  to finding the trace of  the matrix $  {\cal R}^{p}_m \cdot {\cal H}_m  \cdot {\cal T}_m^n$ or $ {\cal T}_m^n    \cdot {\cal R}^{p}_m  \cdot {\cal H}_m $. $\Box$
\end{proof}

 \begin{Proposition} \label{Primedba2}
(Properties of vertical conversion)
\\
For every $v,w \in V({\cal D}_m) ,$

  \be  \label{c1}  v  \rightarrow v' ; \ee \noindent
 \be  \label{c2} \mbox{ If } v \rightarrow w, \; \; \mbox{ \ \  then  \ \  } w' \rightarrow v' .  \ee
\end{Proposition}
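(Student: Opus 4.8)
The plan is to reduce both claims to a position-by-position verification on the six alpha-letters, using the fact that---unlike the horizontal conversion $\alpha\mapsto\overline{\alpha}$---the vertical conversion does not permute the positions of a word: if $v\equiv\alpha_1\alpha_2\ldots\alpha_m$ then $v'\equiv\alpha_1'\alpha_2'\ldots\alpha_m'$. The geometric input is that $\alpha\mapsto\alpha'$ reflects the local situation of Figure~\ref{cvkod1} across a vertical line; consequently $\alpha'$ carries a left-edge if and only if $\alpha$ carries a right-edge (and symmetrically with the roles of left and right exchanged), whereas the up-edge and the down-edge at a vertex are left unchanged by $'$. These three equivalences are the only properties of $'$ that the argument will use.

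I would first check that $v'$ is itself a vertex of ${\cal D}_m$, so that $v\to v'$ and $w'\to v'$ are meaningful. An arc of ${\cal D}_{ud}$ (Figure~\ref{cvkod2}) encodes compatibility of the down-edge of the upper letter with the up-edge of the lower one (Column condition, Lemma~\ref{lem:1}); since $'$ fixes up- and down-edges at every position, including the cyclic pair $(\alpha_m,\alpha_1)$, every arc $(\alpha_i,\alpha_{i+1})$ of ${\cal D}_{ud}$ yields the arc $(\alpha_i',\alpha_{i+1}')$, so $v'$ again satisfies Condition~1. For \eqref{c1}, both the Adjacency-of-columns condition (Condition~2) and the conversion $'$ act positionwise, so $v\to v'$ reduces to showing that $(\alpha,\alpha')$ is an arc of ${\cal D}_{lr}$ for each single letter $\alpha$. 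An arc $(\gamma,\delta)$ of ${\cal D}_{lr}$ is exactly the requirement that $\gamma$ carries a right-edge if and only if $\delta$ carries a left-edge; specialising $\delta=\alpha'$ and using that $\alpha'$ carries a left-edge if and only if $\alpha$ carries a right-edge makes this requirement a tautology. Hence $(\alpha,\alpha')\in{\cal D}_{lr}$ always, which proves \eqref{c1}.

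For \eqref{c2} the same positionwise reduction shows it is enough to prove, at the level of single letters, that $(\alpha,\beta)\in{\cal D}_{lr}$ implies $(\beta',\alpha')\in{\cal D}_{lr}$. The hypothesis says that $\alpha$ carries a right-edge if and only if $\beta$ carries a left-edge. The conclusion to be reached says that $\beta'$ carries a right-edge if and only if $\alpha'$ carries a left-edge; rewriting the primed edges through the reflection rule ($\beta'$-right becomes $\beta$-left, $\alpha'$-left becomes $\alpha$-right) turns the conclusion into the statement that $\beta$ carries a left-edge if and only if $\alpha$ carries a right-edge, which is the hypothesis itself. Applying this letter implication at each position of the arc $v\to w$ yields $w'\to v'$.

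The argument has no genuine obstacle: both assertions are manifestations of the symmetry of a reflection in a vertical line, and the whole difficulty is bookkeeping---keeping the non-reversing, positionwise action of $'$ apart from the order-reversing horizontal conversion, and remembering to confirm that $'$ maps vertices to vertices. As an independent check one may simply read $\alpha\to\alpha'$ and the arc-reversal off the explicit arc set of ${\cal D}_{lr}$ in Figure~\ref{cvkod2} for each of the six letters; the reflection argument above is just the uniform reason behind that finite verification.
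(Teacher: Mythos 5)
Your proposal is correct. The paper in fact states this proposition without any proof, treating it as evident from Figures~6 and~7, so there is no argument of the authors' to compare against; what you supply is precisely the verification they leave implicit. Your reduction is the right one: since the vertical conversion acts positionwise (no reversal of indices, unlike the horizontal conversion) and an arc of ${\cal D}_{lr}$ means exactly ``$o(\gamma)=i(\delta)$'', while the reflection in a vertical line gives $i(\alpha')=o(\alpha)$ and $o(\alpha')=i(\alpha)$ with up- and down-edges fixed (so $v'$ is again a vertex of ${\cal D}_m$), both claims follow letter by letter; this is the uniform reason behind the finite check one could also read directly off the arc sets of ${\cal D}_{lr}$ and ${\cal D}_{ud}$.
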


\begin{lem}  \label{lem:4}
  The number of vertices in  ${\cal D}_m$  is $\mid V({\cal D}_m) \mid = \ds 3^{m} + (-1)^{m}.$
\end{lem}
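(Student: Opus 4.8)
The plan is to count the vertices of $\cD_m$ as closed walks of length $m$ in the digraph $\cD_{ud}$. By the Column condition of Lemma~\ref{lem:1}, a vertex of $\cD_m$ is precisely a circular alpha-word $\alpha_1\alpha_2\cdots\alpha_m$ in which every cyclically consecutive pair $(\alpha_i,\alpha_{i+1})$, with $\alpha_{m+1}=\alpha_1$, is an arc of $\cD_{ud}$. The number of such words equals the number of closed walks of length $m$ in $\cD_{ud}$, so $|V(\cD_m)| = \mathrm{tr}(A^m)$, where $A$ is the $6\times 6$ adjacency matrix of $\cD_{ud}$. Everything therefore reduces to the spectrum of $A$.

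Next I would exploit the structure of $\cD_{ud}$. Each alpha-letter records which two of the four incident edges (up, down, left, right) are used, and the arc $\alpha\to\beta$ expresses compatibility of the shared vertical edge: it is present iff the ``down'' edge of $\alpha$ coincides with the ``up'' edge of $\beta$. Hence whether $\alpha\to\beta$ is an arc depends only on the pair of bits $(d(\alpha),u(\beta))$, where $u(\cdot),d(\cdot)\in\{0,1\}$ indicate the presence of an up- resp. down-edge. Sorting the six letters by their profile $(u,d)$ gives one letter of type $(0,0)$, two of type $(0,1)$, two of type $(1,0)$, and one of type $(1,1)$.

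The key step is to collapse the $6\times 6$ count to a $2\times 2$ one. For a closed walk $\ell_1,\dots,\ell_m$ (with $\ell_{m+1}=\ell_1$) the constraint $d(\ell_i)=u(\ell_{i+1})$ forces the up/down boundary bits to form a single cyclic binary sequence $t_1,\dots,t_m$, where $t_i:=d(\ell_i)=u(\ell_{i+1})$, so that $\ell_i$ has profile $(t_{i-1},t_i)$ (indices modulo $m$, $t_0:=t_m$). Summing over all such sequences and counting, at each position, the number $n_{t_{i-1},t_i}$ of letters of the prescribed profile gives
\[
|V(\cD_m)| \;=\; \sum_{(t_1,\dots,t_m)\in\{0,1\}^m}\ \prod_{i=1}^m n_{t_{i-1},t_i}\;=\;\mathrm{tr}\,N^m,
\qquad
N=\begin{pmatrix} 1 & 2 \\ 2 & 1\end{pmatrix}.
\]
Equivalently, writing $\mathbf 1$ for the all-ones vector and $\mathbf d,\mathbf u$ for the profile vectors, $A=\mathbf d\,\mathbf u^{T}+(\mathbf 1-\mathbf d)(\mathbf 1-\mathbf u)^{T}$ has rank at most $2$, and its two nonzero eigenvalues coincide with those of $N$.

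Finally, the eigenvalues of $N$ are $1\pm 2$, i.e. $3$ and $-1$, whence $\mathrm{tr}\,N^m = 3^m+(-1)^m$, which is the claim. The only genuinely nontrivial point is the reduction in the third paragraph, namely recognizing that $A$ is governed entirely by the two boundary bits and therefore has effective rank $2$; the remainder is a routine spectral computation, of the same flavour as the one that produced the Lucas numbers in Lemma~\ref{lem:2}, where the subdigraph induced by $\{a,b,c\}$ had characteristic polynomial $\lambda(\lambda^2-\lambda-1)$.
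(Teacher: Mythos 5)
Your proposal follows essentially the same route as the paper: both identify $|V(\cD_m)|$ with the number of closed walks of length $m$ in $\cD_{ud}$, i.e. $\mathrm{tr}(A^m)$, and both reduce to the eigenvalues $3$ and $-1$. The paper simply states the characteristic polynomial $\lambda^4(1+\lambda)(\lambda-3)$ and converts it into the recurrence $d_m=2d_{m-1}+3d_{m-2}$ with $d_1=2$, $d_2=10$; your rank-two factorization $A=\mathbf d\,\mathbf u^{T}+(\mathbf 1-\mathbf d)(\mathbf 1-\mathbf u)^{T}$ is a cleaner structural explanation of why only the $2\times 2$ matrix $\left[\begin{smallmatrix}1&2\\2&1\end{smallmatrix}\right]$ matters, and it is correct (the profile counts $1,2,2,1$ check out against the letters $b,e$ versus $a,c,d,f$). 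The one piece you omit is the paper's closing step: having counted the circular words satisfying the Column condition, the authors still verify that every such word $w$ genuinely occurs as a column of a code matrix of some 2-factor, using $w\rightarrow w'$ and $w'\rightarrow w$ (Proposition~\ref{Primedba2}) to realize $w$ inside $TG^{(0)}_m(2)$. Under the literal definition of $V(\cD_m)$ given in Section~2 this is automatic, but the paper treats it as part of the proof (it is what justifies the inequality becoming an equality), so you should either add that realizability argument or note explicitly that your count coincides with the definition of the vertex set.
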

\begin{proof}
Let $d_m$ $(m \in N)$ be
the number of all  circular  words  of length $m$ over
alphabet $\{a,b,c,d,e,f \}$ that satisfy Column condition of  Lemma~\ref{lem:1}. It is  equal to the number of all  closed
oriented walks (the first vertex is labeled) of length $m$ in ${\cal D}_{ud}$, i.e. the trace of the $m$th degree of adjacency matrix of ${\cal D}_{ud}$.
Since the characteristic polynomial of this matrix is $P(\lambda) =\lambda^4(1+\lambda)(\lambda -3)$, we have
 $d_m = 2d_{m-1}+3d_{m-2}$, with initial conditions   $d_1=2$ (words from $\{ e, b \}$), $d_2 = 10$ (words from $\{ ac, af, bb, ca, cd, dc, df, ee, fa, fd \}$).
 By solving the above  recurrence relation, we obtain  $\ds d_m    = (-1)^{m}+3^{m}$, ($m \in N$),
 which implies  $ \mid V({\cal D}_{m}) \mid \leq \ds (-1)^{m}+3^{m}$.
 In order to prove that every  circular  word  $w$ accounted in $d_m$ is a vertex of ${\cal D}_{m}$, note that $w \rightarrow w'$ and $w' \rightarrow w'' = w$ (see (\ref{c1})).
This means  that the word $w$ appears as a column  in the    code matrix for $TG^{(0)}_m(2)$. $\Box$
\end{proof}

The following definition and lemma are  taken from \cite{JOK1}  with the only  difference that now they refer to circular words.

\begin{df}
The  \emph{ outlet
(inlet)
word}   of the word  $\alpha \equiv \alpha_1 \alpha_2 \ldots\alpha_m \in V({\cal D}_{m})$ is the binary
  word \ $o(\alpha) \equiv o_1o_2 \ldots o_m$  ($i(\alpha) \equiv i_1i_2 \ldots i_m$) \ where \
  $$ \ds o_j
\stackrel{\rm def}{=} \left \{
\begin{array}{cc}{}
0, & \; \; if \; \; \alpha_j \in \{ b, d, f \} \\
1, & \; \; if \; \; \alpha_j \in \{ a, c, e \}
\end{array}
 \right.  \; \;  and  \; \;  i_j
\stackrel{\rm def}{=} \left \{
\begin{array}{cc}{}
0, & \; \; if \; \; \alpha_j \in \{a,b,c \} \\
1, & \; \; if \; \; \alpha_j \in \{ d,e,f \}
\end{array}
 \right. ,  \; \; \; 1 \leq j \leq m .
$$
\end{df}
Note that  $o_j = 1$ ($i_j = 1$) iff  the situation shown in Figure~\ref{cvkod1} matched  to  $\alpha_j$ has an edge ``on the right''  (``on the left'').

\begin{lem}  \label{lem:5}
Digraph ${\cal D}_{m}$ for $m \geq 2$ is disconnected. Each of its components is a strongly connected digraph.
\end{lem}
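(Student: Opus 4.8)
The plan is to treat the two assertions separately: strong connectivity of the components will follow almost immediately from the properties of vertical conversion collected in Proposition~\ref{Primedba2}, whereas the disconnectedness requires producing a numerical invariant that is constant on each weakly connected component but takes at least two values. Throughout I use the following reading of an arc: $v\rightarrow w$ forces, coordinate by coordinate, the edge leaving $v$ ``on the right'' to coincide with the edge entering $w$ ``on the left'', which in terms of the outlet and inlet words means $o_j(v)=i_j(w)$ for every $j$, i.e. $o(v)=i(w)$.

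For the strong connectivity I would first show that every arc lies on a directed cycle. Given an arc $v\rightarrow w$, I combine $w\rightarrow w'$ (by~(\ref{c1})), then $w'\rightarrow v'$ (by~(\ref{c2}) applied to $v\rightarrow w$), and finally $v'\rightarrow v$ (by~(\ref{c1}) applied to $v'$, using $(v')'=v$) into the directed walk $w\rightarrow w'\rightarrow v'\rightarrow v$. Thus $v$ and $w$ are mutually reachable whenever they are joined by an arc in either direction. Since ``mutually reachable'' is an equivalence relation, any two vertices lying in one weakly connected component (joined by a sequence of arcs with orientation ignored) are mutually reachable, so each weak component is a single strongly connected component.

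For the disconnectedness I would use the parity of the number of horizontal edges. For a vertex $v$ let $R(v)$ and $L(v)$ be the number of coordinates $j$ with $o_j(v)=1$ and with $i_j(v)=1$, i.e. the number of letters of $v$ in $\{a,c,e\}$ and in $\{d,e,f\}$, respectively. Counting edge-endpoints in a single column $v$ (each of its $m$ vertices has degree $2$; each vertical edge inside the column contributes two endpoints, each horizontal edge exactly one) gives $2m=2V+(R(v)+L(v))$, where $V$ is the number of vertical edges of the column; hence $R(v)+L(v)$ is even and $R(v)\equiv L(v)\pmod 2$. Now put $\Psi(v):=R(v)\bmod 2$. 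Along an arc $v\rightarrow w$ we have $o(v)=i(w)$, hence $R(v)=L(w)$, and combining this with $L(w)\equiv R(w)\pmod 2$ gives $\Psi(v)=\Psi(w)$. Therefore $\Psi$ is constant on each weak component.

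It remains to realize both values of $\Psi$ by genuine vertices. The all-$b$ word $b^m$ has $R=0$, so $\Psi(b^m)=0$. On the other hand $\alpha=a\,b^{m-2}f$ satisfies the Column condition (one checks directly that it codes a single vertical path through the column, with the letter $a$ carrying its only right edge and $f$ its only left edge), so it is a vertex of ${\cal D}_m$ with $R(\alpha)=1$ and $\Psi(\alpha)=1$. Hence $b^m$ and $a\,b^{m-2}f$ lie in different weak components and ${\cal D}_m$ is disconnected for every $m\ge2$. The only genuinely creative step is the choice of the invariant $\Psi$; once the internal parity relation $R\equiv L\pmod 2$ and the transfer identity $o(v)=i(w)$ across an arc are in hand, both halves are short, and the strong-connectivity half is essentially forced by Proposition~\ref{Primedba2}.
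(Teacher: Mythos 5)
Your proof is correct and follows essentially the same route as the paper: strong connectivity via the vertical-conversion properties of Proposition~\ref{Primedba2} (the paper reverses a whole walk $w_0\ldots w_k$ as $w_k\rightarrow w'_k\rightarrow\cdots\rightarrow w'_0\rightarrow w_0$, you do it arc by arc), and disconnectedness via the parity of the number of $1$s in the outlet word, separating the very same witnesses $b^m$ and $ab^{m-2}f$. The only difference is that you supply the edge-counting justification for the parity invariance that the paper leaves to the analogy with the linear case.
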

\begin{proof}
This proof is analogical  as in the linear case. Namely, if two words $w,v $ are adjacent vertices in ${\cal D}_{m}$, then the numbers of 1s in their outlet words must have
 the same  parity. Consequently, there exist  two vertices  of ${\cal D}_{m}$ (for example the words $ab^{m-2}f$ and $b^{m}$) belonging to different components.

Note that if there exists a directed walk $w_0w_1 \ldots  w_{k-1} w_k$ (of length  $k \in N$), then the existence of the directed walk $w_k w'_k  w'_{k-1} \ldots$   $  w'_1 w'_0 w_0 $ (see  Proposition~\ref{Primedba2}) confirms  the second part  of this lemma. $\Box$
 \end{proof}

Let ${\cal D}_m = {\cal A}_m \cup {\cal B}_m$, $m \geq 1$
where ${\cal A}_m$ is the component of ${\cal D}_{m}$  which contains the vertex  $e^{m}$ ($m \geq 1$).
Also, we introduce ${\cal N}_m$ as the component of   ${\cal D}_{m}$ which contains the vertex $b^{m}$ ($m \geq 1$)
(this component is sufficient for enumerating 2-factors  for the thin grid cylinder  $TnC_m(n)$).

\begin{lem}  \label{lem:6}
 ${\cal A}_m \equiv {\cal N}_{m}$ iff   $m $ is even.
 \end{lem}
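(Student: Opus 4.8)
The plan is to reduce the lemma to a reachability question in ${\cal D}_m$ and then settle the two directions separately, one by a parity invariant and the other by an explicit short walk. I would first restate the arc relation of ${\cal D}_m$ in the outlet/inlet calculus. By the meaning of ${\cal D}_{lr}$, a left letter and a right letter are compatible exactly when the left one has an edge ``on the right'' iff the right one has an edge ``on the left''; in the encoding this reads $v \rightarrow u$ in ${\cal D}_m$ iff $o(v)=i(u)$ (equality of binary $m$-words), which is the same identity underlying $v\rightarrow v'$ in Proposition~\ref{Primedba2}. Since $o(b^m)=0^m$, $o(e^m)=1^m$ and $i(e^m)=1^m$, and since ${\cal N}_m$ and ${\cal A}_m$ are by definition the components of $b^m$ and $e^m$, the whole lemma becomes equivalent to: $b^m$ and $e^m$ lie in one component of ${\cal D}_m$ iff $m$ is even.

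For the direction ``${\cal A}_m\equiv{\cal N}_m \Rightarrow m$ even'' I would invoke the invariant already extracted in the proof of Lemma~\ref{lem:5}: adjacent vertices have outlet words whose numbers of $1$'s agree modulo $2$, so this parity is constant on each component. The outlet word of $b^m$ is $0^m$ (parity even), while that of $e^m$ is $1^m$ (with $m$ ones, parity $\equiv m$). For odd $m$ these parities differ, so $b^m$ and $e^m$ lie in distinct components and ${\cal N}_m\neq{\cal A}_m$.

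For the converse ($m$ even) I would interpose a single vertex $w$ with $i(w)=0^m$ and $o(w)=1^m$; then $b^m\rightarrow w\rightarrow e^m$, because $o(b^m)=0^m=i(w)$ and $o(w)=1^m=i(e^m)$. The only alpha-letters with inlet $0$ and outlet $1$ are $a$ and $c$ (from the maps defining $\overline{\alpha}$ and $\alpha'$ one reads that both carry a right edge and no left edge, differing only in their up/down edge), so $w$ must be a circular word over $\{a,c\}$ obeying the Column condition. As the two letters have complementary vertical profiles, ${\cal D}_{ud}$ admits neither $a\rightarrow a$ nor $c\rightarrow c$, so only the alternating transitions survive and the alternating word closes into a legal circular word exactly when $m$ is even, namely $w=(ac)^{m/2}\in V({\cal D}_m)$. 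The directed walk $b^m\rightarrow w\rightarrow e^m$ then places both vertices in the same connected component, which by Lemma~\ref{lem:5} is strongly connected, so ${\cal N}_m={\cal A}_m$.

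The delicate point is this converse: one must read the up/down profiles of $a$ and $c$ correctly (from the conversion maps, or from Figure~\ref{cvkod1}) in order to be certain that the required alternating $\{a,c\}$-word closes up for even $m$ and --- reassuringly consistent with the parity obstruction --- fails to close for odd $m$. Everything else is routine bookkeeping with the outlet/inlet encoding.
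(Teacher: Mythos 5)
Your proof is correct and follows essentially the same route as the paper: the odd case is handled by the same parity invariant on outlet words, and the even case by a two-step walk through an intermediate vertex whose inlet and outlet words are constant — you use $(ac)^{m/2}$ from $b^m$ to $e^m$ where the paper uses its vertical conversion $(df)^{m/2}$ from $e^m$ to $b^m$, an immaterial difference given strong connectivity of the components (Lemma~\ref{lem:5}).
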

\begin{proof}
If $m$ is even, then $e^{m}  $  and $b^{m}$ are connected (belong to the same component) because  $e^{m}  \rightarrow (df)^{m/2}    \rightarrow b^{m}$, i.e.
 ${\cal A}_m \equiv {\cal N}_{m}$.  If $m$ is odd,
 the numbers of $1$s in outlet words of   vertices  in $V({\cal N}_m)$ and  $V({\cal A}_m)$  are of different parities and  therefore
 ${\cal A}_m \not\equiv {\cal N}_{m}$. $\Box$
 \end{proof}

\vspace{1.5cc}
\begin{center}
{\bf  3.  Enumeration of 2-factors using  ${\cal D}^{*}_m$ and ${\cal N}^{**}_m$}
\end{center}
\label{sec:intro}

\vspace*{4mm}

If two vertices  from \ ${\cal D}_m$ \ have
the same outlet word, then they  have the same set of direct successors.
We group  all vertices from  \ $ V({\cal D}_{m})$ \  \ having the same corresponding outlet word and replace
 them with just one vertex,   labeled by their common  outlet word.
Next, we substitute all the arcs in   $ {\cal D}_{m}$ starting with vertices with the same outlet word and ending
in  same vertex with only one arc.
In this way, we  reduce the  transfer digraph  ${\cal D}_m$ onto
$ {\cal D}^*_{m}$ with  adjacency (transfer) matrix  ${\cal T}^*_m$.

 For example, vertices $abbc, abfe, afdc, cdfa, edbc, edfe \in  V({\cal D}_{4})$ with the common outlet word $1001$ are all replaced with   $1001 \in  V({\cal D}^*_{4})$.
All edges starting from these vertices and ending in $dcaf$ are changed by the one arc: $1001 \rightarrow 0110 $.
But, the same is valid for edges  starting from these vertices and ending in $facd$.  This results in
 the existence of two parallel  edges ($1001 \rightarrow  0110$) in ${\cal D}^*_{4}$.

 Note that in linear case  \cite{JOK1} the digraph  ${\cal D}^*_m$ is simple, i.e. without multiple arcs, because
 two different  vertices from  \ $ V({\cal D}_{m})$  with the same outlet word can not  have the same direct predecessor.
The situation  with circular words is different.
Namely, for two binary words $u$ and $w$ of length $m$ there exist at most two  vertices from $V({\cal D}_m)$ whose inlet and outlet word are $u$ and $w$, respectively.
Precisely, for given horizontal edges of a 2-factor  corresponding to a column of vertices of the considered grid graph, if such 2-factor exists at all, then  there exist at most two possible choices  of vertical edges for that column and that 2-factor.
 This implies that the  entries of ${\cal T}^*_m$ are from the set $\{ 0,1,2 \}$.

 \begin{thm}  \label{thm:TkC3}
Adjacency matrix  ${\cal T}^*_m$ of the digraph ${\cal D}^*_m$ is a symmetric  matrix, i.e. ${\cal T}^*_m = ({\cal T}^*_m)^{T} $.
\end{thm}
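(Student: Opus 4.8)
The plan is to reduce the asserted symmetry to a counting identity about inlet and outlet words, and then to prove that identity by means of the vertical-conversion involution $v \mapsto v'$. The first task is to make explicit how the entries of $\mathcal{T}^*_m$ are governed by inlet/outlet words. Reading off the six situations in Figure~\ref{cvkod1}, a pair $(\alpha,\beta)$ is an arc of $\mathcal{D}_{lr}$ exactly when $\alpha$ has an edge ``on the right'' if and only if $\beta$ has an edge ``on the left'', that is, precisely when the $o$-bit of $\alpha$ equals the $i$-bit of $\beta$. Hence, by Condition~2 of Lemma~\ref{lem:1}, a vertex $v$ has an arc to a vertex $u$ in $\mathcal{D}_m$ if and only if $o(v)=i(u)$. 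In particular all vertices with a common outlet word share the same out-neighbourhood, which is exactly what makes the reduction to $\mathcal{D}^*_m$ well defined.

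Next I would track the multiplicities created by the reduction. Since every vertex of the group labelled $w_1$ (outlet word $w_1$) sends an arc to $u$ precisely when $i(u)=w_1$, the arcs out of the group $w_1$ reach exactly those targets $u$ with $i(u)=w_1$; merging parallel arcs to a common target and then grouping the targets by outlet word shows that the entry $\mathcal{T}^*_m[w_1,w_2]$ equals the number $N(w_1,w_2)$ of distinct vertices $u\in V(\mathcal{D}_m)$ with $i(u)=w_1$ and $o(u)=w_2$. (This is consistent with the worked example: $dcaf$ and $facd$ both have inlet $1001$ and outlet $0110$, producing the two parallel arcs $1001\rightarrow 0110$.) With this bookkeeping in place, the statement $\mathcal{T}^*_m=(\mathcal{T}^*_m)^{T}$ becomes the purely combinatorial claim $N(w_1,w_2)=N(w_2,w_1)$ for all binary words $w_1,w_2$ of length $m$.

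The identity $N(w_1,w_2)=N(w_2,w_1)$ I would then establish by a bijection. Extend vertical conversion letterwise to $v=\alpha_1\cdots\alpha_m\mapsto v'=\alpha'_1\cdots\alpha'_m$. Two facts suffice. First, $v'$ is again a vertex of $\mathcal{D}_m$: vertical conversion interchanges the left and right edges of each situation but leaves its up/down edges unchanged (as the table $a'=d,\ b'=b,\ c'=f,\ d'=a,\ e'=e,\ f'=c$ shows), so it preserves adjacency in $\mathcal{D}_{ud}$ and therefore the Column condition; this is already recorded in Proposition~\ref{Primedba2}, whose relation $v\rightarrow v'$ presupposes $v'\in V(\mathcal{D}_m)$. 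Second, because $\alpha'_j$ has a left edge iff $\alpha_j$ has a right edge and conversely, one gets $i(v')=o(v)$ and $o(v')=i(v)$. Consequently the involution $v\mapsto v'$ (note $(v')'=v$) carries the set of vertices with $(i,o)=(w_1,w_2)$ bijectively onto the set with $(i,o)=(w_2,w_1)$, giving $N(w_1,w_2)=N(w_2,w_1)$ and hence the symmetry of $\mathcal{T}^*_m$.

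The step demanding the most care is not the bijection, which is immediate once $i(v')=o(v)$ is observed, but the first one: one must verify that the reduction multiplicities are genuinely counted by $N(w_1,w_2)$, i.e.\ that the arcs collapsed in passing from $\mathcal{D}_m$ to $\mathcal{D}^*_m$ are exactly those sharing a single \emph{target} vertex, so that distinct targets with the same outlet word survive as distinct parallel arcs. It is this bookkeeping that converts the assertion into the clean count to which the involution then applies.
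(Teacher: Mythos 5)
Your proof is correct and rests on the same key mechanism as the paper's: the vertical conversion $y\mapsto y'$ together with the identities $o(y')=i(y)$ and $i(y')=o(y)$ (Proposition~\ref{Primedba2}). The one substantive difference is that the paper's proof, as written, only shows that an arc $v\rightarrow w$ in ${\cal D}^*_m$ forces an arc $w\rightarrow v$, i.e.\ it establishes symmetry of the support of ${\cal T}^*_m$; your version additionally identifies the entry ${\cal T}^*_m[w_1,w_2]$ with the count $N(w_1,w_2)$ of vertices having inlet $w_1$ and outlet $w_2$ and uses the involution as a bijection, which is what is actually needed to conclude that entries equal to $2$ are matched by entries equal to $2$. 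So your write-up is the same argument carried out more completely on the multiplicity bookkeeping.
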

\begin{proof}
Suppose that  $v \rightarrow w $, where $v,w \in V({\cal D}^*_m)$.
Then  there exist vertices  $x, y \in V({\cal D}_m)$  such that
$x \rightarrow y$, where $ o(x) = v$ and $ o(y) = w$.
Since $v= o(x) =i(y) =  o(y')$ and  $ y \rightarrow y' $, we have that
$ w \rightarrow v$. $\Box$   \end{proof}

\begin{thm}  \label{thm:SC}
Digraph ${\cal D}^*_{m}$ is disconnected. Each of its components is a strongly connected digraph.
\end{thm}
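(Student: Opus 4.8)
The plan is to deduce both assertions from results already established: the strong connectivity of the components from the symmetry of ${\cal T}^*_m$ (Theorem~\ref{thm:TkC3}), and the disconnectedness from the parity invariant inherited from ${\cal D}_m$ that underlies the proof of Lemma~\ref{lem:5}.

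For the strong connectivity I would first invoke Theorem~\ref{thm:TkC3}: since ${\cal T}^*_m$ is symmetric, $v \rightarrow w$ is an arc of ${\cal D}^*_m$ if and only if $w \rightarrow v$ is. Thus ${\cal D}^*_m$ is a symmetric digraph, so reversing every arc of a directed walk $w_0 \rightarrow w_1 \rightarrow \cdots \rightarrow w_k$ again produces a directed walk $w_k \rightarrow w_{k-1} \rightarrow \cdots \rightarrow w_0$. Consequently, if two vertices $u$ and $v$ lie in the same connected component, an undirected path between them yields, by this symmetry, directed walks both from $u$ to $v$ and from $v$ to $u$. Hence every connected component is strongly connected, which is the second assertion.

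For the disconnectedness I would transfer the parity invariant from ${\cal D}_m$ to ${\cal D}^*_m$. By construction, an arc $v \rightarrow w$ of ${\cal D}^*_m$ arises from some arc $x \rightarrow y$ of ${\cal D}_m$ with $o(x) = v$ and $o(y) = w$; and the proof of Lemma~\ref{lem:5} shows that the numbers of $1$s in the outlet words of two adjacent vertices of ${\cal D}_m$ have the same parity. Since the vertices of ${\cal D}^*_m$ are themselves outlet words, the parity of the number of $1$s is therefore constant along every arc, hence on every component. It then suffices to exhibit two vertices of different parity: the outlet word $o(b^m) = 0^m$ has no $1$s (even parity), whereas $o(ab^{m-2}f) = 10^{m-1}$ has exactly one $1$ (odd parity), and both $b^m$ and $ab^{m-2}f$ are genuine vertices of ${\cal D}_m$ for $m \geq 2$. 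Thus ${\cal D}^*_m$ contains at least two components and is disconnected.

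Both halves reduce to facts already proved, so I expect the argument to be short. The only points requiring care are the passage from weak to strong connectivity in a symmetric digraph and the verification that the reduction ${\cal D}_m \rightarrow {\cal D}^*_m$ faithfully carries the arc-level parity constraint down to the outlet-word labels; I anticipate no substantial obstacle beyond making these two reductions precise.
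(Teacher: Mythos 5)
Your proof is correct, but it reaches the two assertions by a different route than the paper. The paper's own proof is a quotient argument: it observes that all vertices of ${\cal D}_m$ sharing an outlet word lie in a single component of ${\cal D}_m$ (they have identical successor sets, and every vertex has a successor, e.g.\ $x \rightarrow x'$), so the gluing ${\cal D}_m \rightarrow {\cal D}^*_m$ neither merges distinct components nor destroys strong connectivity, and the whole statement is inherited at once from Lemma~\ref{lem:5}. You instead establish the two halves independently at the level of ${\cal D}^*_m$ itself: strong connectivity of each component from the symmetry of ${\cal T}^*_m$ (Theorem~\ref{thm:TkC3}), since in a digraph with $v \rightarrow w \Leftrightarrow w \rightarrow v$ weak and strong connectivity coincide; and disconnectedness by pushing the parity invariant of Lemma~\ref{lem:5} down to outlet words and exhibiting $0^m$ and $10^{m-1}$ as vertices of opposite parity. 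Both arguments are sound and both lean on previously proved results; yours has the advantage of not needing the (unstated in the paper, though easy) fact that the quotient map preserves the component decomposition, while the paper's is more economical in that a single observation transfers the entire structure of Lemma~\ref{lem:5}. The only points to make explicit in your write-up are the one-line justification that vertices of different outlet-word parity genuinely exist in $V({\cal D}^*_m)$ for all $m \geq 2$ (which your examples $b^m$ and $ab^{m-2}f$ supply), and the standing hypothesis $m \geq 2$.
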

\begin{proof}
Note that all the  vertices   of ${\cal D}_m$ which are  glued in one vertex   belong to the same component, so such  gluing does not reduce the number of components.
Using this the statement  of the theorem follows  directly  from Lemma~\ref{lem:5}. $\Box$
 \end{proof}

The component of ${\cal D}^*_{m}$ which contains the circular word  $0^m \equiv 00 \ldots 0$ is denoted by ${\cal N}^*_m$. The component of ${\cal D}^*_{m}$ containing the circular word $1^m$ is labeled by ${\cal A}^*_m$ and the union of the remaining components by ${\cal B}^*_m$.
Their adjacency matrices  for $m=2$,   $m=3$ and  $m=4$ are  exposed below.
 \begin{center}
$ \ds  {\cal T}^*_{{\cal A}_2} = {\cal T}^*_{{\cal N}_2} = \left[
\begin{array}{cc}
 1 & 2 \\
 2 & 1
\end{array} \right]
$  $  \ds \begin{array}{c}
v_1= 00 \\
 v_2= 11
\end{array}$ and $ \ds   {\cal T}^*_{{\cal B}_2} = \left[
\begin{array}{cc}
0 & 2 \\
 2 & 0
\end{array} \right]
$  $  \ds \begin{array}{c}
v_1=10 \\
 v_2=01
\end{array}$ ;
\end{center}

 \begin{center}
$ \ds {\cal T}^*_{{\cal N}_3} = {\cal T}^*_{{\cal B}_3} = \left[
\begin{array}{cccccc}
 1 & 1 &  1 & 1 \\
 1 & 0 &  1 & 1 \\
 1 & 1 &  0 & 1 \\
 1 & 1 &  1 & 0
\end{array} \right]
$ $  \ds \begin{array}{c}
v_1=000  \\
 v_2=101 \\
 v_3 = 011 \\
 v_4 = 110
\end{array}$  and
$ \ds  {\cal T}^*_{{\cal A}_3} =  \left[
\begin{array}{cccccc}
 1 & 1 &  1 & 1 \\
 1 & 0 &  1 & 1 \\
 1 & 1 &  0 & 1 \\
 1 & 1 &  1 & 0
\end{array} \right]
$  $  \ds \begin{array}{c}
v_1=111 \\
 v_2=001 \\
 v_3 =100  \\
 v_4 = 010
\end{array}$;
\end{center}

\begin{center}
$ \ds  {\cal T}^*_{{\cal A}_4} = {\cal T}^*_{{\cal N}_4} =\left[
\begin{array}{cccccc}
 1 & 2 &  1 & 1 & 1 & 1 \\
 2 & 1 &  1 & 1 & 1 & 1 \\
 1 & 1 &  0 & 1 & 2 & 1 \\
 1 & 1 &  1 & 0 & 1 & 2 \\
 1 & 1 &  2 & 1 & 0 & 1 \\
 1 & 1 &  1 & 2 & 1 & 0
\end{array} \right]
$  $  \ds \begin{array}{c}
v_1= 0000 \\
 v_2= 1111 \\
 v_3 = 0011 \\
 v_4 = 0110 \\
 v_5 = 1100 \\
 v_6 = 1001
\end{array}; $ \\ $
 \ds {\cal T}^*_{{\cal B}^{(1)}_4} = \left[
\begin{array}{cccccccc}

 0 &  0 & 0 & 0 & 1 & 2 & 1  &   1     \\
0 &  0 & 0 & 0 & 1 & 1 & 1   & 2   \\
 0 &  0 & 0 & 0 & 1 & 1 & 2  &  1       \\
 0 &  0 & 0 & 0 & 2 & 1 & 1  &  1       \\
 1 &  1 & 1 & 2 & 0 & 0 & 0  &  0       \\
 2 &  1 & 1 & 1 & 0 & 0 & 0  &  0       \\
 1 &  1 & 2 & 1 & 0 & 0 & 0  &  0   \\
 1 &  2 & 1 & 1 & 0 & 0 & 0  &  0   \\
\end{array} \right]
$ $  \ds \begin{array}{c}
 v_1=0001 \\
 v_2 = 0111 \\
 v_3 = 1101 \\
 v_4 = 0100 \\
 v_5 = 1011 \\
  v_6 = 1110 \\
 v_7 = 0010 \\
 v_8=1000, \end{array}$ and
 $ \ds  {\cal T}^*_{{\cal B}^{(2)}_4} = \left[
\begin{array}{cc}
0 & 2 \\
 2 & 0
\end{array} \right]
$   $  \ds \begin{array}{c}
v_1=0101 \\
 v_2=1010
\end{array}$.
\end{center}

\begin{thm}  \label{thm:TkC5}
The number of vertices in digraph ${\cal D}^*_m$  is
$ \ds  \mid V({\cal D}^*_m) \mid =  2^{m} $.
 \end{thm}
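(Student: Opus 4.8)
The plan is to realize $V({\cal D}^*_m)$ as the image of the outlet map $o\colon V({\cal D}_m)\to\{0,1\}^m$, $v\mapsto o(v)$. By the very construction of ${\cal D}^*_m$ (one new vertex per distinct outlet word), $V({\cal D}^*_m)=\{o(v):v\in V({\cal D}_m)\}$, so $\mid V({\cal D}^*_m)\mid\le 2^m$ because there are only $2^m$ binary words of length $m$. Thus everything reduces to showing that $o$ is surjective, i.e. that \emph{every} $w=w_1\cdots w_m\in\{0,1\}^m$ is the outlet word of some vertex of ${\cal D}_m$.

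To set this up I would encode each alpha-letter by three bits: its outlet bit (equal to $1$ exactly for $a,c,e$), together with an up-bit $u$ and a down-bit $d$ recording whether the situation in Figure~\ref{cvkod1} has an edge ``above'' and ``below''. Reading off ${\cal D}_{ud}$, an ordered pair $(\alpha,\beta)$ is an arc precisely when $d(\alpha)=u(\beta)$, so the Column condition for a circular word $\alpha_1\cdots\alpha_m$ (with $\alpha_{m+1}=\alpha_1$) is equivalent to the existence of cyclic interface bits $t_1,\dots,t_m\in\{0,1\}$ satisfying $u(\alpha_j)=t_{j-1}$ and $d(\alpha_j)=t_j$ (indices modulo $m$). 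The last point to record is which triples $(o,u,d)$ are actually realized by a letter: among the three letters with outlet bit $1$ the pairs $(u,d)$ run through $\{(1,0),(0,1),(0,0)\}$, and among the three with outlet bit $0$ through $\{(1,1),(1,0),(0,1)\}$. Hence outlet bit $1$ forbids $(u,d)=(1,1)$, outlet bit $0$ forbids $(u,d)=(0,0)$, and in every allowed case the letter is uniquely determined by $(o,u,d)$.

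The crux is to supply, for an arbitrary target $w$, a cyclic interface sequence making all $m$ positions admissible at once; the complementary choice $t_j:=1-w_j$ does exactly this. With $(u(\alpha_j),d(\alpha_j))=(t_{j-1},t_j)=(1-w_{j-1},\,1-w_j)$, a position with $w_j=1$ has $d(\alpha_j)=0$ so its pair is never $(1,1)$, and a position with $w_j=0$ has $d(\alpha_j)=1$ so its pair is never $(0,0)$. Every position is therefore admissible regardless of its neighbour, the cyclic constraint closes automatically, and the construction selects a unique letter $\alpha_j\in\{a,c,e\}$ when $w_j=1$ and $\alpha_j\in\{b,d,f\}$ when $w_j=0$. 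This produces a vertex $\alpha\in V({\cal D}_m)$ with $o(\alpha)=w$, proving surjectivity and hence $\mid V({\cal D}^*_m)\mid=2^m$. I expect the only real obstacle to be this cyclic closure: one must rule out both forbidden local patterns simultaneously all the way around the cycle, and the point of the choice $t_j=1-w_j$ is that it makes the forbidden pattern impossible at each single step independently of the neighbouring letters, so that no global consistency issue can arise.
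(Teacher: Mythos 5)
Your proof is correct, and it takes a genuinely different route from the paper's. The paper proves surjectivity of the outlet map by importing the result from the linear case (every binary $m$-word except the ``queen'' $0(10)^{(m-1)/2}$ for odd $m$ is already realized by a linear column), observing that the linear transfer digraph sits inside ${\cal D}^*_m$, and then exhibiting the single explicit word $x=(fa)^{(m-1)/2}b$ to cover the one missing outlet word. You instead give a uniform, self-contained construction: encoding each alpha-letter by its outlet, up and down bits, translating the circular Column condition into matching interface bits $t_j$, and choosing $t_j=1-w_j$ so that the only forbidden local triples --- $(o,u,d)=(1,1,1)$ (degree $3$) and $(0,\ast,0)$ with $u=0$ (degree $<2$) --- are excluded at every position independently, which makes the cyclic closure automatic. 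Your bookkeeping of which $(u,d)$ pairs are realized for each outlet bit is accurate, and the uniqueness of the letter given $(o,u,d)$ is a correct (if not strictly needed) observation. What your approach buys is independence from the external reference \cite{JOK1} and a single argument with no exceptional case; what the paper's approach buys is brevity, at the cost of leaning on the linear-case classification and a slightly delicate claim that the linear digraph embeds in the circular one.
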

\begin{proof}
We should  prove that there exists
 at least one  vertex  $x \in V({\cal D}_m)$ with $o(x) = v$,
 where   $v$ is an arbitrary circular   binary word of length  $m$.
   In \cite{JOK1} we proved that every binary (linear) m-word except the word $0(10)^{(m-1)/2}$ (one of so-called {\em queens})
  for odd $m$ is a vertex of  the  transfer digraph for linear case (thick  cylinder and Moebius strip  grid graphs).
   Note that this digraph  is the subdigraph of ${\cal D}_m^*$.
 So, it remains to find $x \in V({\cal D}_m)$ with $o(x) = 010101...010$ (for  case  $m$-odd).
  The  example $x= (fa)^{(m-1)/2}b $  confirms  the statement of this theorem. $\Box$
  \end{proof}

\begin{thm}  \label{thm:TkC6}
  The number of edges in  ${\cal D}^{*}_m$  is $\mid E({\cal D}^*_m) \mid = \ds 3^{m} + (-1)^{m}.$
\end{thm}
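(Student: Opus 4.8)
The plan is to show that the total number of arcs of ${\cal D}^*_m$, counted with multiplicity (equivalently, the sum of all entries of ${\cal T}^*_m$), is exactly the number of vertices of the unreduced digraph ${\cal D}_m$, and then to invoke Lemma~\ref{lem:4}.

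First I would pin down the multiplicity of an arbitrary arc $v \rightarrow w$ of ${\cal D}^*_m$, where $v,w$ are binary words of length $m$. Recall that $x \rightarrow y$ in ${\cal D}_m$ precisely when $o(x) = i(y)$ (a ``right'' edge in column $j$ must meet a ``left'' edge in column $j+1$), so the set of direct successors of $x$ depends only on $o(x)$. In the reduction, arcs of ${\cal D}_m$ are merged only when they share the same outlet word at the tail \emph{and} end at the same vertex $y$; two arcs ending at \emph{different} vertices $y_1 \neq y_2$ are never identified. Hence the multiplicity of $v \rightarrow w$ equals the number of vertices $y \in V({\cal D}_m)$ with $i(y) = v$ and $o(y) = w$. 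The worked example $1001 \rightarrow 0110$, carried by the two distinct vertices $dcaf$ and $facd$, is exactly this phenomenon.

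The key step is then a double count: send each vertex $y \in V({\cal D}_m)$ to the ordered pair $\bigl(i(y), o(y)\bigr)$. By the previous paragraph each $y$ contributes exactly one unit of multiplicity to the arc $i(y) \rightarrow o(y)$ of ${\cal D}^*_m$, and this arc really occurs, since its tail $i(y)$ is realized as an outlet word: indeed $o(y') = i(y)$ under vertical conversion, so $y' \rightarrow y$ in ${\cal D}_m$ (alternatively one may cite Theorem~\ref{thm:TkC5}). Conversely, every unit of multiplicity of every arc arises from a unique such $y$. Summing over all arcs,
$$ \mid E({\cal D}^*_m) \mid = \sum_{v,w} \mid \{ y \in V({\cal D}_m) : i(y) = v, \ o(y) = w \} \mid = \mid V({\cal D}_m) \mid . $$

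Finally, Lemma~\ref{lem:4} gives $\mid V({\cal D}_m) \mid = 3^m + (-1)^m$, which completes the argument. The only delicate point — and the step I expect to be the main obstacle — is the bookkeeping in the second paragraph: one must argue that collapsing the tails never merges arcs reaching distinct heads, so that each parallel class of arcs counts vertices of ${\cal D}_m$ rather than merely unordered inlet/outlet pairs. Once that identification of arc-multiplicities with vertices is secured, the enumeration is immediate.
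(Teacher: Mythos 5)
Your proposal is correct and takes essentially the same route as the paper: the paper also establishes a bijection between the arcs of ${\cal D}^{*}_m$ (counted with multiplicity) and the vertices of ${\cal D}_m$, pairing each arc $v\rightarrow w$ with the vertex $x$ whose incoming arcs it replaced, i.e.\ the $x$ with $i(x)=v$ and $o(x)=w$, and then invokes Lemma~\ref{lem:4}. Your explicit handling of the ``delicate point'' (arcs with distinct heads are never merged, and every vertex has a predecessor via vertical conversion) is exactly the content the paper packages into its single/double-arc case analysis.
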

\begin{proof}
We establish
 the  bijection between the set $E({\cal D}^*_m)$ and $V({\cal D}_m) $ in the following way.

  Each single arc $vw$ ($v \rightarrow w$) from ${\cal D}^*_m$  is paired with exactly one vertex $x  \in V({\cal D}_m)$ whose
 all incoming arcs  are replaced with  the  arc $vw$.  Clearly,  $o(x) = w$, $(\forall z \in V({\cal D}_m))(z \rightarrow x  \Rightarrow o(z)=v)$ and
  there is no vertex  $y \in V({\cal D}_m)$ with $o(y) = w$ whose incoming  arcs have origin with outlet word $v$.

  Double arc $vw$ (two parallel arcs) in ${\cal D}^*_m$  is paired with exactly two  vertices  $x $ and $y$ from $ V({\cal D}_m)$
  where
 all incoming arcs of $x$  are replaced with one of the two parallel  arcs $vw$ and all incoming arcs of $y$   with the another one.  Now,  $o(x) = o(y) = w$, $(\forall z \in V({\cal D}_m))((z \rightarrow x  \Rightarrow o(z)=v) \wedge (z \rightarrow y  \Rightarrow o(z)=v))$ and
  $x$ and $y$ are the only two  vertices in ${\cal D}_m$ with these properties.
 The above described bijection  and
  Lemma~\ref{lem:4} prove the theorem. $\Box$
 \end{proof}

For a  binary word $v \equiv b_1b_2 \ldots b_{m-1}b_{m} \in \{ 0,1\}^m$, we introduce the label $\overline{v} \stackrel{\rm def}{=}  b_mb_{m-1}$ $ \ldots b_{2}b_{1}$ and
$\rho (v) \stackrel{\rm def}{=} b_2 \ldots b_{m-1} b_{m}b_1$.  Note that $o(\rho(x)) = \rho(o(x))$, for  any $ x \in V({\cal D}_{m})$.
 \begin{Proposition} \label{Primedba3} (Properties of horizontal conversion)
\\
 For  arbitrary  $ x, y \in V({\cal D}_{m})$,
 \be \label{prim3a}  \  o(\overline{x}) = \overline {o(x) }; \ee
 \be \label{prim3b} \mbox{  If } o(x) = o(y), \mbox{ then } o(\overline{x}) = o(\overline{y}).  \; \; \Box \ee
\end{Proposition}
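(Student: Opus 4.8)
The plan is to establish (\ref{prim3a}) first and then obtain (\ref{prim3b}) as an immediate corollary. The entire argument reduces to one elementary observation about how horizontal conversion acts on a single alpha-letter, together with careful bookkeeping of the two order-reversals that appear — one hidden inside the word-level horizontal conversion $\alpha \mapsto \overline{\alpha}$ (which both relabels each letter and reverses the order), the other inside the binary operation $v \mapsto \overline{v}$ on outlet words (which only reverses the bit string).

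First I would record that horizontal conversion preserves the outlet value of each individual alpha-letter. This can be read straight off the defining permutation $\left(\begin{smallmatrix} a & b & c & d & e & f \\ c & b & a & f & e & d \end{smallmatrix}\right)$: the class $\{a,c,e\}$ of letters with outlet bit $1$ is mapped onto itself ($a \leftrightarrow c$, $e \mapsto e$), and likewise the class $\{b,d,f\}$ of letters with outlet bit $0$ ($d \leftrightarrow f$, $b \mapsto b$). Conceptually this is just the fact that $\overline{\alpha}$ is the reflection of $\alpha$ over a horizontal line, which leaves the left--right orientation untouched and hence preserves whether the vertex carries an edge ``on the right'' — exactly the information encoded by the outlet bit. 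Consequently the outlet value of $\overline{\alpha_k}$ equals that of $\alpha_k$ for every letter of any word.

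Next I would prove (\ref{prim3a}) by comparing the two binary words position by position. Writing $x = x_1 x_2 \ldots x_m$, word-level conversion gives $\overline{x} = \overline{x_m}\,\overline{x_{m-1}} \cdots \overline{x_1}$, so the letter of $\overline{x}$ in position $j$ is $\overline{x_{m+1-j}}$; its outlet bit, by the single-letter observation, equals the outlet bit of $x_{m+1-j}$, i.e. the $(m+1-j)$-th entry of $o(x)$. On the other side, $\overline{o(x)}$ merely reverses the bit string, so its $j$-th entry is also the $(m+1-j)$-th entry of $o(x)$. The two words thus agree in every position, proving $o(\overline{x}) = \overline{o(x)}$. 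Then (\ref{prim3b}) is immediate: from $o(x) = o(y)$ we get $\overline{o(x)} = \overline{o(y)}$ by reversing both sides, and (\ref{prim3a}) converts this into $o(\overline{x}) = \overline{o(x)} = \overline{o(y)} = o(\overline{y})$.

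There is no genuine obstacle in this argument; the only points demanding care are not to confuse the two distinct reversing operations and to confirm the single-letter outlet-preservation, which one does either by checking all six letters against the permutation above or, more economically, by invoking the horizontal-reflection picture once. The routine per-letter verification is what I would relegate to a one-line remark. Note also that the argument never uses the Column condition, so it holds for any alpha-word and in particular for every $x,y \in V({\cal D}_m)$.
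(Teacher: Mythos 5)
Your argument is correct. The paper offers no proof at all for this Proposition (it is stated with a closing $\Box$ as an evident observation), and your verification — that the letter permutation $a\leftrightarrow c$, $d\leftrightarrow f$, $b\mapsto b$, $e\mapsto e$ preserves membership in $\{a,c,e\}$ versus $\{b,d,f\}$, combined with the position bookkeeping $o(\overline{x})_j = o(x)_{m+1-j} = \overline{o(x)}_j$ — is exactly the routine check the authors leave to the reader, with (\ref{prim3b}) following immediately from (\ref{prim3a}).
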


Let ${\cal H}_m^* = [h_{ij}]$ be the square binary  matrix of order $ \mid V({\cal D}^*_{m}) \mid $ whose entry $h_{i,j} =1$ iff the $i$-th and $j$-th vertex of the digraph
 ${\cal D}^*_{m}$ satisfy $v_i = \overline{v_j}$. We call it {\bf H-conversion  matrix}  for   \ $ {\cal D}^*_{m} $.
  Note that the matrix ${\cal H}_m^*$ is a symmetric  one, i.e. $(\forall i,j)( v_i = \overline{v_j} \Leftrightarrow v_j = \overline{v_i})$.
Let ${\cal R}_m^* = [r_{ij}]$ be the  permutation  matrix of order $ \mid V({\cal D}^*_{m}) \mid $
   whose entry $r_{i,j} =1$ iff the $i$-th and $j$-th vertex of the digraph
 ${\cal D}^*_{m}$ satisfy $\rho(v_i) = v_j$. We call it {\bf rotation  matrix}  for   \ $ {\cal D}^*_{m} $.

The  process of enumeration of 2-factors can be  improved using the  new  transfer-matrix  ${\cal T}_m^*$.
\begin{thm}  \label{thm:f2}
\be \label{gl1} f_m^{TnC}(n)=  a_{1,1}^{(n)}\ee
\be \label{gl2} f_{m,p}^{TG}(n)=  tr(({\cal R}_m^*)^p \cdot ({\cal T}_m^*)^n) =
 tr( ({\cal T}_m^*)^n  \cdot   ({\cal R}_m^*)^p) =
 \ds \sum_{\begin{array}{c} v_i,v_j \in   V({\cal D}_m^*)\\
 v_i= \rho^{p}(v_j) \end{array}  } a_{i,j}^{(n)}\ee
 \be \label{gl3}
  f_{m,p}^{KB}(n)=
    tr( ({\cal R}_m^{*} )^p  \cdot  {\cal H}_m^{*}  \cdot  ({\cal T}_m^*)^n)
     =  tr(  ({\cal T}_m^*)^n  \cdot ({\cal R}_m^{*} )^p  \cdot {\cal H}_m^{*} )
     =\ds \sum_{\begin{array}{c} v_i,v_j \in   V({\cal D}_m^*)\\
 \overline{v_{i}}= \rho^{p}(v_j) \end{array}  } a_{i,j}^{(n)} , \ee
 where  ${\cal T}^*_m = [a_{ij}]$ is   the adjacency matrix of the digraph  \ $ {\cal D}^{*}_{m} $ and       $v_1 \equiv 00 \ldots 0$.
\end{thm}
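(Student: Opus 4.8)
The plan is to transfer the three identities of Lemma~\ref{lem:3}, which are stated for the large digraph ${\cal D}_m$, onto the reduced digraph ${\cal D}^*_m$ by means of a single combinatorial identity between the two transfer matrices. The structural fact I will lean on is that an arc $x\to u$ exists in ${\cal D}_m$ if and only if $o(x)=i(u)$ (the horizontal edges leaving the column coded by $x$ must match, bit by bit, those entering the column coded by $u$), so the out\-neighbourhood of a vertex depends only on its outlet word. Combined with the observation recorded just before Theorem~\ref{thm:TkC3} — that for any two binary words $v,w$ there are at most two vertices $u\in V({\cal D}_m)$ with $i(u)=v$ and $o(u)=w$ — this identifies the entry $({\cal T}^*_m)_{v,w}$ with the integer $\mu(v,w):=|\{u\in V({\cal D}_m): i(u)=v,\ o(u)=w\}|\in\{0,1,2\}$.

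First I would prove, by induction on $k\ge 1$, the key identity
\[ \left(({\cal T}_m)^k\right)_{x,y} \;=\; \left(({\cal T}^*_m)^{k-1}\right)_{o(x),\,i(y)}, \qquad x,y\in V({\cal D}_m). \]
For $k=1$ both sides equal $[o(x)=i(y)]$. For the step I expand $\left(({\cal T}_m)^{k+1}\right)_{x,y}=\sum_{z}\left(({\cal T}_m)^k\right)_{x,z}\,[o(z)=i(y)]$, apply the hypothesis, and group the intermediate vertices $z$ by their inlet word $i(z)=w$; the number of such $z$ additionally satisfying $o(z)=i(y)$ is $\mu(w,i(y))=({\cal T}^*_m)_{w,i(y)}$, so the sum telescopes to $\left(({\cal T}^*_m)^k\right)_{o(x),i(y)}$. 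This identity is the heart of the matter and the only place where the multiplicities and the shift of exponent from $k$ to $k-1$ enter. Formula (\ref{gl1}) is then immediate: by Lemma~\ref{lem:3} one has $f^{TnC}_m(n)=\left(({\cal T}_m)^{n+1}\right)_{b^m,b^m}$, and since $o(b^m)=i(b^m)=0^m=v_1$, the key identity with $k=n+1$ yields $f^{TnC}_m(n)=\left(({\cal T}^*_m)^n\right)_{0^m,0^m}=a^{(n)}_{1,1}$.

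For the torus I would start from the index-sum form in Lemma~\ref{lem:3}, namely $f^{TG}_{m,p}(n)=\sum_{y}\left(({\cal T}_m)^n\right)_{\rho^p(y),\,y}$. Applying the key identity together with $o(\rho^p(y))=\rho^p(o(y))$, and then grouping the vertices $y$ by the pair $(o(y),i(y))=(v,w)$ with weight $\mu(w,v)=({\cal T}^*_m)_{w,v}$, rewrites the sum as $\sum_{v,w}({\cal T}^*_m)_{w,v}\left(({\cal R}^*_m)^p({\cal T}^*_m)^{n-1}\right)_{v,w}=\mathrm{tr}\!\left({\cal T}^*_m\,({\cal R}^*_m)^p\,({\cal T}^*_m)^{n-1}\right)$; one cyclic rotation of the trace gives $\mathrm{tr}\!\left(({\cal R}^*_m)^p({\cal T}^*_m)^n\right)$, which is (\ref{gl2}). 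The Klein bottle case (\ref{gl3}) is structurally identical: I begin with $f^{KB}_{m,p}(n)=\sum_{y}\left(({\cal T}_m)^n\right)_{\overline{\rho^p(y)},\,y}$, use $o(\overline{\rho^p(y)})=\overline{\rho^p(o(y))}$ (Proposition~\ref{Primedba3} with $o\circ\rho=\rho\circ o$), perform the same grouping, and recognise that inserting the factor ${\cal H}^*_m$ exactly realises the bar on the outlet index, arriving at $\mathrm{tr}\!\left(({\cal R}^*_m)^p{\cal H}^*_m({\cal T}^*_m)^n\right)$. In all three cases the remaining equalities — the second trace form and the index-sum form — follow at once from the cyclic invariance of the trace and the defining relations $\left(({\cal R}^*_m)^p\right)_{ij}=[\rho^p(v_i)=v_j]$ and $({\cal H}^*_m)_{ij}=[v_i=\overline{v_j}]$.

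The one genuine obstacle is the key identity itself: I must make sure the reduction step (merging vertices with a common outlet word and collapsing the resulting parallel arcs) neither loses nor double-counts walks, which is precisely what the equality $({\cal T}^*_m)_{v,w}=\mu(v,w)$ controls, and I must track the exponent bookkeeping that produces the shift from $k$ to $k-1$. Everything downstream is routine matrix algebra, so I would isolate the key identity as a separate lemma and present the three formulas as short corollaries of it.
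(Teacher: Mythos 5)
Your proposal is correct and follows essentially the same route as the paper: both transfer the three formulas of Lemma~\ref{lem:3} from ${\cal D}_m$ to ${\cal D}^*_m$ by grouping vertices according to their outlet words and recognising that the multiplicities $\mu(v,w)$ are exactly the entries of ${\cal T}^*_m$. Your entrywise key identity $\bigl(({\cal T}_m)^k\bigr)_{x,y}=\bigl(({\cal T}^*_m)^{k-1}\bigr)_{o(x),i(y)}$ is just a sharper, induction-proved form of the paper's relation (\ref{o2}), which the paper asserts without proof, and the subsequent trace manipulations coincide.
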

\begin{proof}

For the observed digraph (${\cal D}_{m}$ or ${\cal D}^*_{m}$)  the number of oriented walks of length  $n$  which start with  vertex $x$  and finish with vertex $y$
is denoted by  ${\cal W}^{y}_{x}(n)$. Recall that
the number  ${\cal W}^{v_j}_{v_i}(n)$  where $v_i,v_j \in V({\cal D}^{*}_{m})$         is equal to
the $(i,j)$-entry  $ a_{i,j}^{(n)}$ of $n$-th degree of the matrix ${\cal T}_m^*$.
Note that
\begin{equation}\label{o2}
 {\cal W}^{v_j}_{v_i}(n) = \ds \sum_{\ds  \begin{array}{c}  y \in  V({\cal D}_{m}) \\ o(y) =v_j \end{array}} {\cal W}^{y}_{x} (n), \mbox{ where } x \in V({\cal D}_{m}) \mbox{ and } o(x) =v_i. \end{equation}
Using this and  Lemma~\ref{lem:3}, we have
 $$\ds  f_m^{TnC}(n)
=  {\cal W}^{b^{m}}_{b^{m}} (n+1) = \ds \sum_{\ds y \in   {\cal L}_m }  {\cal W}^{y}_{b^{m}} (n) = {\cal W}^{0^m}_{0^m} (n)=a_{1,1}^{(n)},
$$ which completes the case  $G = TnC$.

In order to obtain the expressions for the other two  cases observe  that
\begin{equation}\label{o1}
 {\cal W}^{y}_{x_1}(n) = {\cal W}^{y}_{x_2}(n), \mbox{ for any } x_1,x_2,y \in V({\cal D}_{m}) \mbox{ where } o(x_1) = o(x_2). \end{equation}

By using  (\ref{o2}) and  (\ref{o1}), for any integer $k$ and $v_j \in   V({\cal D}_m^*)$, we get
\begin{equation}\label{o3}
 {\cal W}_{\rho^{k}(v_j)}^{v_j}(n) = \ds \sum_{\ds  \begin{array}{c}  y \in  V({\cal D}_{m}) \\ o(y) = o(y_1) =  v_j \end{array}} {\cal W}^{y}_{\rho^{k}(y_1)} (n) = \ds \sum_{\ds  \begin{array}{c}  y \in  V({\cal D}_{m}) \\ o(y) = v_j \end{array}} {\cal W}^{y}_{\rho^{k}(y)} (n).
\end{equation}

 Applying  Lemma~\ref{lem:3} again and (\ref{o3})  to  $TG^{(p)}_m(n)$ we obtain

$$ \ds  f_{m,p}^{TG}(n) = \ds \sum_{\ds \begin{array}{c} x,y \in   V({\cal D}_m)  \\ x= \rho^{p}(y) \end{array}} {\cal W}^{y}_{x} (n) =
  \sum_{\ds v_j \in   V({\cal D}_m^{*}) } \sum_{\ds \begin{array}{c}  y \in  V({\cal D}_{m})  \\ o(y) =v_j  \end{array}}   {\cal W}^{y}_{ \rho^{p}(y)} (n) = $$
   $$   \sum_{v_j \in   V({\cal D}_m^{*})}   {\cal W}^{v_j}_{\rho^{p}(v_j) } (n) =
 \ds \sum_{\ds \begin{array}{c} v_i,v_j \in   V({\cal D}_m^*)\\ v_i = \rho^{p}(v_j) \end{array} } a_{i,j}^{(n)} =  tr(({\cal R}_m^*)^p \cdot ({\cal T}_m^*)^n)=  tr(({\cal T}_m^*)^n \cdot ({\cal R}_m^*)^p ). $$

For $KB^{(p)}_m(n)$ the procedure is similar.
From  (\ref{o2}), (\ref{o1}) and  Proposition~\ref{Primedba3}, for any integer $k$ and $v_j \in   V({\cal D}_m^*)$, we get
\begin{equation} \label{o4}
 {\cal W}_{\overline{\rho^{k}}(v_j)}^{v_j}(n) = \ds \sum_{\ds  \begin{array}{c}  y \in  V({\cal D}_{m}) \\ o(y) = o(y_1) =  v_j \end{array}} {\cal W}^{y}_{\overline{\rho^{k}}(y_1)} (n) = \ds \sum_{\ds  \begin{array}{c}  y \in  V({\cal D}_{m}) \\ o(y) = v_j \end{array}} {\cal W}^{y}_{\overline{\rho^{k}}(y)} (n).
\end{equation}

Now, Lemma~\ref{lem:3} and (\ref{o4})  yield that

$$ f_{m,p}^{KB}(n) = \ds \sum_{\ds \begin{array}{c} x,y \in   V({\cal D}_m)  \\ \overline{x}= \rho^{p}(y) \end{array}} {\cal W}^{y}_{x} (n) =
 \sum_{\ds y \in   V({\cal D}_m)} {\cal W}^{y}_{\overline{\rho^{p}}(y)} (n) =$$ $$
  \sum_{\ds v_j \in   V({\cal D}_m^{*}) } \sum_{\ds \begin{array}{c}  y \in  V({\cal D}_{m}) \\ o(y) =v_j   \end{array}}
    {\cal W}^{y}_{\overline{\rho^{p}}(y)} (n) =
  \sum_{\ds v_j \in   V({\cal D}_m^{*}) }   {\cal W}_{\overline{\rho^{p}}(v_j)}^{v_j}(n)
         = $$ $$
    \ds \sum_{\ds \begin{array}{c} v_i,v_j \in   V({\cal D}_m^*) \\ v_j = \rho^{m-p}(\overline{v_i})
    \end{array} }    a_{i,j}^{(n)} =  tr(({\cal R}_m^*)^p  \cdot {\cal H}_m^* \cdot ({\cal T}_m^*)^n)=
     tr(({\cal T}_m^*)^n \cdot ({\cal R}_m^*)^p  \cdot {\cal H}_m^*) \; .   \; \; \Box$$
\end{proof}

Further reduction  of the transfer  matrices is possible just  in case $G=TnC_m(n)$ using the following
\begin{thm}  \label{thm:simmetry}
If   $v \in V({\cal N}^*_{m})$,  then  $ \overline{v}  \in V({\cal N}^*_{m})$ and $ \rho^k(v)  \in V({\cal N}^*_{m}) \;  (0 \leq k \leq m-1)$.
\end{thm}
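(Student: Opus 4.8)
The plan is to exhibit two automorphisms of the digraph ${\cal D}^*_m$, namely the rotation $\rho$ and the horizontal conversion $v\mapsto\overline v$, each of which fixes the vertex $0^m$. Granting this, the conclusion is immediate: by Theorem~\ref{thm:SC} the components of ${\cal D}^*_m$ are well defined, an automorphism permutes them, and since $\rho(0^m)=0^m$ and $\overline{0^m}=0^m$ the component ${\cal N}^*_m$ of $0^m$ is mapped onto itself; hence $\rho^k(v)\in V({\cal N}^*_m)$ for all $k$ and $\overline v\in V({\cal N}^*_m)$ whenever $v\in V({\cal N}^*_m)$.

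First I would make explicit the arc criterion in the reduced digraph. For alpha-words $x,y\in V({\cal D}_m)$ one has $x\to y$ iff every pair $(x_i,y_i)$ is an arc of ${\cal D}_{lr}$, and inspecting Figure~\ref{cvkod1} this holds iff the right-edge indicator of $x$ equals the left-edge indicator of $y$, i.e. $o(x)=i(y)$. Passing to ${\cal D}^*_m$ and using Theorem~\ref{thm:TkC5} (so that some $x$ with $o(x)=v$ always exists), I obtain the clean statement: there is an arc $v\to w$ in ${\cal D}^*_m$ if and only if there is an alpha-word $y\in V({\cal D}_m)$ with $i(y)=v$ and $o(y)=w$.

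Next I would check arc preservation. Given $v\to w$ with witness $y$, the rotation $\rho(y)$ is again a vertex of ${\cal D}_m$ (the column condition is a condition on circular words), and rotation commutes with the passage to inlet and outlet words, so $i(\rho(y))=\rho(i(y))=\rho(v)$ and $o(\rho(y))=\rho(o(y))=\rho(w)$ (using the remark $o(\rho(x))=\rho(o(x))$ and its inlet analogue). Thus $\rho(y)$ witnesses $\rho(v)\to\rho(w)$, so $\rho$ preserves arcs; applying the same to $\rho^{m-1}=\rho^{-1}$ shows it also reflects arcs, so $\rho$ is an automorphism of ${\cal D}^*_m$. For horizontal conversion the argument is parallel: $\overline y\in V({\cal D}_m)$ (this is what underlies the matrix ${\cal H}_m$), and $o(\overline y)=\overline{o(y)}=\overline w$ by Proposition~\ref{Primedba3} while $i(\overline y)=\overline{i(y)}=\overline v$, since the reflection-over-a-horizontal-line map on alpha-letters leaves both the left- and right-edge indicators unchanged and conversion merely reverses positions. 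Hence $\overline y$ witnesses $\overline v\to\overline w$; being an involutive bijection, $v\mapsto\overline v$ is an automorphism as well, and combining the two automorphisms with the first paragraph finishes the proof.

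The step I expect to require the most care is the second paragraph: descending the two operations, which are defined on the alpha-words of ${\cal D}_m$, to honest automorphisms of the reduced digraph ${\cal D}^*_m$ on binary words. In particular one must confirm the inlet identities $i(\rho(y))=\rho(i(y))$ and $i(\overline y)=\overline{i(y)}$ directly from the alpha-letter maps of Figure~\ref{cvkod1}, since the text only records the outlet versions; everything after that is formal.
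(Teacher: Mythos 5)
Your proof is correct and rests on essentially the same idea as the paper's: both arguments use that rotation and horizontal conversion are symmetries fixing the all-zero (resp.\ all-$b$) vertex, hence preserve its component. The only cosmetic difference is that the paper verifies the symmetry at the level of ${\cal D}_m$ (walks to $b^m$) and pushes it down through the outlet map, whereas you verify directly that the induced maps are automorphisms of ${\cal D}^*_m$ via the arc criterion $v\to w$ iff there exists $y$ with $i(y)=v$ and $o(y)=w$.
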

\begin{proof}
If  $o(x) = v$ ($x \in V({\cal N}_{m})$),  then   there exists an integer $n \geq 1$ for which ${\cal W}^{b^{m}}_x (n) \neq 0$.  By  using the  property of reflection symmetry ($\overline{b^{m}} = b^{m}$),
we obtain that ${\cal W}^{b^{m}}_{\overline{x}}  (n) = {\cal W}^{b^{m}}_x (n) \neq 0$, which implies $\overline{x} \in V({\cal N}_{m})$.
Since $\overline{v} = o(\overline{x})$, we conclude that  $ \overline{v}  \in V({\cal N}^*_{m})$.
\\ By  using the  property of rotational   symmetry (if $ x \rightarrow y$, then $ \rho(x) \rightarrow \rho(y)$) and $\rho(b^m) = b^m$, we have
$\rho(v)  \in V({\cal N}^*_{m})$, and therefore $\rho^k(v)  \in V({\cal N}^*_{m}) $ for arbitrary integer $k$. \ $\Box$
 \end{proof}

Now, we can glue  vertices $v$, $\overline{v}$, $\rho^k(v)$ and $\rho^k(\overline{v})$ ($1 \leq k \leq m-1$)  into one vertex for  $v \in V({\cal R}^*)$ resulting in a  new digraph ${\cal N}_m^{**}$.
During gluing  we retain arcs starting from just one of these vertices and delete the ones starting from another vertices.
Multiple  arcs appear  when glued vertices  have a  common direct predecessor.

\begin{Example}
For example, for  $m=4$ in the transition from
 ${\cal N}^*_{4}$ to  ${\cal N}^{**}_{4}$ the  vertices $v_3$, $v_4$, $v_5$ and $v_6$ are glued. The adjacency matrix of ${\cal N}^{**}_{4}$ is obtained by adding the $4$-th, $5$-th and $6$-th columns to the $3$-rd one, at first. Then, the added columns and corresponding rows are removed. The resulting matrix is
$ \ds  {\cal T}^{**}_{{\cal N}_4} =\left[
\begin{array}{ccc}
 1 & 2 &  4  \\
 2 & 1 &  4  \\
 1 & 1 &  4
\end{array} \right]
$  where  $
v_1= 0000,  \;
 v_2= 1111, \;
 v_3 = 0011.$
\end{Example}

The symmetry of $TnC_m(n)$ leads us to the conclusion that ${\cal W}^{0^m}_{\overline{v}}  (n) = {\cal W}^{0^m}_v(n)= {\cal W}^{0^m}_{\rho^k(v)}(n) = {\cal W}^{0^m}_{\rho^k(\overline{v})}(n)$ for any vertex $v \in V({\cal N}^*)$ ($1 \leq k \leq m-1$).
Consequently, the number ${\cal W}^{0^m}_{0^m}(n)$ remains the same  in both digraph ${\cal N}^*$ and ${\cal N}^{**}$, i.e.

\begin{thm}  \label{thm:glue} The number
$f_m^{TnC}(n)$ is equal to entry $a_{1,1}^{(n)}$ of the $n$-th degree of the adjacency matrix for  ${\cal N}^{**}$ where   $v_1 \equiv 0^m$.
\end{thm}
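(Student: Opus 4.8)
The plan is to realize the passage from ${\cal N}^*_m$ to ${\cal N}^{**}_m$ as an exact \emph{lumping} of the transfer matrix along the orbits of the symmetry group generated by rotation and horizontal conversion, and then to read off the single diagonal entry indexed by the fixed point $0^m$. Throughout I work inside the one component ${\cal N}^*_m$, which by Theorem~\ref{thm:simmetry} is closed under both $\rho$ and the bar map, so orbits never leave it. Write $A$ for the adjacency (transfer) matrix of ${\cal N}^*_m$ with the convention $A_{vw}=$ (number of arcs $v\to w$), so $(A^n)_{vw}={\cal W}^w_v(n)$ and, by Theorem~\ref{thm:f2}, $f_m^{TnC}(n)=(A^n)_{0^m,0^m}$.

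First I would establish the genuinely matrix-level symmetry: both $\rho$ and the bar map are automorphisms of ${\cal N}^*_m$ fixing $0^m$, i.e. $A_{\rho v,\rho w}=A_{vw}$ and $A_{\overline v,\overline w}=A_{vw}$, arc multiplicities (the entries $0,1,2$) included. For $\rho$ this follows from $o(\rho x)=\rho(o(x))$ together with the obvious fact that rotating two adjacent columns preserves the row-wise ${\cal D}_{lr}$-adjacency, so $\rho$ is an arc-preserving bijection of the unreduced ${\cal D}_m$; the reduction then transports this to ${\cal D}^*_m$, since the multiplicity of $v\to w$ counts the targets $y$ with $o(y)=w$ receiving an arc from an outlet-$v$ source, and $\rho$ biject those counted for $v\to w$ with those for $\rho v\to\rho w$. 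For the bar map the same argument runs, using Proposition~\ref{Primedba3} ($o(\overline x)=\overline{o(x)}$) in place of the rotation identity; the input from the unreduced level is that horizontal conversion carries arcs of ${\cal D}_m$ to arcs of ${\cal D}_m$. Since $\rho(0^m)=0^m=\overline{0^m}$, the orbit of $0^m$ is the singleton $\{0^m\}$.

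Next I would cast the gluing as a matrix factorization. Let $G=\langle \rho,\ \text{bar}\rangle$, let $O_1=\{0^m\},O_2,\dots,O_r$ be its orbits on $V({\cal N}^*_m)$, let $L$ be the $|V|\times r$ membership matrix ($L_{v,i}=1$ iff $v\in O_i$) and $S$ the $r\times|V|$ representative-selection matrix ($S_{i,v}=1$ iff $v=r_i$), so $SL=I_r$. The equivariance from the previous step says exactly that the number of arcs from a vertex $v$ into a whole orbit $O_j$ depends only on the orbit of $v$; equivalently $AL=L\,{\cal T}^{**}_m$, where ${\cal T}^{**}_m:=SAL$ is precisely the glued matrix — its $(i,j)$-entry sums the in-arcs of $O_j$ as seen from the representative $r_i$, which is what the construction of ${\cal N}^{**}_m$ (summing the orbit columns and keeping one representative row) produces, as the Example for $m=4$ confirms. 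Iterating $AL=L\,{\cal T}^{**}_m$ gives $A^nL=L\,({\cal T}^{**}_m)^n$ by a one-line induction.

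Finally I would read off the entry. Using $A^nL=L\,({\cal T}^{**}_m)^n$ and that $O_1=\{0^m\}$ is a singleton, the $(0^m,1)$-entry of the left side is $\sum_{w\in O_1}(A^n)_{0^m,w}=(A^n)_{0^m,0^m}$, while on the right it equals $({\cal T}^{**}_m)^n_{11}$ (as $0^m$ lies in $O_1$ alone). Hence $({\cal T}^{**}_m)^n_{11}=(A^n)_{0^m,0^m}=f_m^{TnC}(n)$, which is the assertion. The main obstacle is the first step: one must verify that horizontal conversion is a true automorphism of the \emph{reduced} digraph with the correct arc multiplicities — not merely a symmetry of walk counts into $0^m$ — since it is this one-step equivariance, stronger than the displayed walk-symmetry preceding the theorem, that makes the lumping exact and forces $AL=L\,{\cal T}^{**}_m$. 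Once that finite check on the six alpha-letters and on ${\cal D}_{lr}$, ${\cal D}_{ud}$ is in hand, everything else is formal.
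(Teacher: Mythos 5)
Your proof is correct and follows essentially the same route as the paper: quotient ${\cal N}^*_m$ by the group generated by rotation and horizontal conversion, note that $0^m$ is a fixed point of both so its orbit is a singleton, and read off the $(1,1)$ entry of the lumped matrix. You are merely more explicit than the paper on the one point it leaves implicit, namely that the arc-level equivariance $A_{gv,gw}=A_{vw}$ (with multiplicities), rather than just the displayed walk-count symmetry into $0^m$, is what makes the gluing an exact lumping with $AL=L\,{\cal T}^{**}_m$.
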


\vspace{1.5cc}
\begin{center}
{\bf  4.  Computational results}
\end{center}
\label{sec:intro}

\vspace*{4mm}

We wrote computer programs for the computation of the  adjacency
matrices of  these   digraphs ${\cal D}_m$,  \ ${\cal D}^*_m$, ${\cal N}_m$,  \ ${\cal N}^*_m$, \  ${\cal N}^{**}_m$ and initial members of required sequences
$f_m^{TnC}(n)$,  $f_{m,p}^{TG}(n)$ and  $f_{m,p}^{KB}(n)$ ($ m \geq 2$).  The data pertaining to the size of these digraphs are summarized in Table 1 and Table2.

\noindent
\hspace*{-1cm}
\begin{table}
\begin{center}
\begin{tabular}{||c||r|r|r|r|r|r|r|r|r|r|r|r|r|||}
\hline \hline  m   & 2 & 3 & 4 & 5& 6& 7 & 8 & 9 & 10   \\
\hline \hline
 $ \mid {\cal F}_{m} \mid  = \mid {\cal L}_{m}\mid =L_m $   &  3 &4
&7 & 11 & 18& 29 & 47 & 76 & 123
\\ \hline
 $ \mid V({\cal N}_{m}) \mid $  &   6& 13
& 38 &121 &282 & 1093 & 2214 & 9841   & 17906
\\ \hline
 $ \mid V({\cal N}^{*}_{m}) \mid $  & 2 & 4 & 6&
16 & 20 & 64 &70 & 256 & 252
\\ \hline
 $ \mid V({\cal N}^{**}_{m}) \mid $ & 2 &2 &3 & 4&
6 & 9 & 11 & 23 & 26
\\  \hline
 order  & 2 &2&3 & 4 &
 6 & 8 & 10 & 16 & 21
\\
 \hline
\hline
\end{tabular}

\vspace*{5mm}

\begin{tabular}{||c||r|r|r|r|r|r|r|r||}
\hline \hline  m  & 11 & 12 & 13 & 14 & 15 & 16   & 17 & 18 \\
\hline \hline
 $ \mid {\cal F}_{m} \mid  = \mid {\cal L}_{m} \mid = L_m$  & 199 & 322 & 521 & 843  & 1364
& 2207&     3571 & 5778
\\ \hline
 $ \mid V({\cal N}^{*}_{m}) \mid $  & 1024  & 924 & 4096 & $\ll$ & $\ll$  &
$\ll$ & $\ll$ & $\ll$
\\ \hline
 $ \mid V({\cal N}^{**}_{m}) \mid $ & 63  & 62 & 190 & 170 & 612 & 487 &2056& 1530
 \\  \hline
 order   &32 & 39 & 64 & 73&$\ll$ &$\ll$&$\ll$ & $\ll$
 \\
 \hline
\hline
\end{tabular}

\caption{ The number of  vertices for \ ${\cal
N}_m$, \ ${\cal N}^{*}_m$ \ and  ${\cal N}^{**}_m$  \ and
order od recurrence relations for thin cylinder graphs \ $C_{m}
\times P_{n}$;  The number $ L_{m} $ is the $m$th member of the  Lucas sequence.}
\label{tab1}
\end{center}
\end{table}

\begin{table}[htb]
\begin{center}
\scalebox{0.85}{
\begin{tabular}{||c||r|r|r|r|r|r|r|r|r|r|r||}
\hline \hline  m   &  2 & 3 & 4 & 5& 6& 7 & 8 & 9 & 10 & 11 & 12    \\
\hline \hline
 $ \mid V({\cal D}_{m}) \mid $  &$10$&$26$
&$ 82$&$242$&$730$&$2186$&$6562$&$19682$&$59050$&$177146$&$531442$
\\ \hline
 $ \mid V({\cal D}^*_{m}) \mid $  &4&8&16&32&64&128&256&512&1024&2048 &4096
\\ \hline  \hline
 $ \mid V({\cal A}^*_{m}) \mid $ & 2 & 4 &  6& 16& 20
 &64 & 70 & 256 &252  & 1024 & 924
\\  \hline
 $ \mid V({\cal B}^{*(1)}_{m}) \mid $ &2  & 4 &8&16 &30
 &64&112&256&420&1024&1584
 \\  \hline
 $ \mid V({\cal B}^{*(2)}_{m}) \mid $ &-  & - & 2 & -
 & 12 & - & 56 &-  &240  &-  &990
 \\  \hline
 $ \mid V({\cal B}^{*(3)}_{m}) \mid $ &-  &-  &-  &-
 &2  &-  &16  &-  &90  &-  &440
 \\  \hline
 $ \mid V({\cal B}^{*(4)}_{m}) \mid $ &- &-  &-  &-
 &-  &-  & 2 & - & 20 & - &132
 \\  \hline
 $ \mid V({\cal B}^{*(5)}_{m}) \mid $ &-  & - & - &
- & - & - & - & - & 2 & - & 24
 \\  \hline
 $ \mid V({\cal B}^{*(6)}_{m}) \mid $ &- & - & - &
- & - & - & - & - & - & - &2
\\  \hline \hline
\end{tabular}}
\caption{The number of  vertices of  \ ${\cal D}_m$, ${\cal D}^{*}_m$ and  components of ${\cal D}^{*}_m$.}
\label{tab2}
\end{center}
\end{table}

\normalsize

By analyzing the computational data for $m \leq 10$ (in case of $TnC_m(n)$ for $m \leq 18 $) we  spotted
some  properties of the  digraphs ${\cal D}_m$, ${\cal D}^*_m$ and ${\cal N}_m^{*}$ which   have been  discussed and  proved in the previous section for arbitrary  $m \in N$.
The proofs of another ones,
concerning the  structure of   ${\cal D}^*_m$
will be exposed in another place (see \cite{JKO4}). Now, we just
express them in the following theorem.

\begin{thm}  \label{thm:hip}
For each even  $m\geq 2 $,
 the digraph  ${\cal D}^{*}_m$ has exactly $\ds \left\lfloor    \ds \frac{m}{2} \right\rfloor + 1$ components, i.e.
 \\
 $\ds {\cal D}^{*}_m = {\cal A}^*_m    \cup $ $\ds  (\bigcup_{s=1}^{\left\lfloor    \frac{m}{2} \right\rfloor }{\cal B}^{*(s)}_m)$, where
  ${\cal A}^*_m $  contains both  $1^m$ and $0^m$, all the components  ${\cal B}^{*(s)}_{m}$ ($ 1 \leq s \leq \ds \left\lfloor    \ds \frac{m}{2} \right\rfloor $) are bipartite digraphs,
  $\ds \mid V({\cal B}^{*(s)}_{m}) \mid  = \ds 2 {m \choose   m/2 -s} \mbox{ \  and \  } \ds \mid V({\cal A}^{*}_{m}) \mid =  \ds \ds {m \choose m/2}.$ \\ \\
For each odd  $m\geq 1 $,  the digraph  ${\cal D}^{*}_m$ has exactly two  components, i.e.
 ${\cal D}^{*}_m = {\cal A}^*_m    \cup {\cal N}^*_m $,  which are mutually  isomorphic  and
  with $2^{m-1}$ vertices.
 \end{thm}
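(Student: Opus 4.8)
The plan is to realise $ {\cal D}^*_m$ as an explicit ``vertical--edge'' transfer graph and then produce the conserved quantity that labels its components. By Lemma~\ref{lem:1} together with the reduction defining $ {\cal D}^*_m$, an arc $v\to w$ exists exactly when some column (a vertex $y$ of $ {\cal D}_m$) has inlet word $v$ and outlet word $w$; here I use Theorem~\ref{thm:TkC5}, which guarantees that every binary word occurs as an inlet/outlet word, so no separate existence check for $v$ is needed. Encode each alpha-letter by the two of its four incident edges it uses -- left ($=$ inlet bit $v_i$), right ($=$ outlet bit $w_i$), up and down. The Column condition of Lemma~\ref{lem:1} makes the down-edge of row $i$ and the up-edge of row $i+1$ one and the same variable $e_i$ (a vertical edge of the column, cyclically indexed with $e_0:=e_m$), and the degree-two property of each vertex then yields the single cyclic relation $e_{i-1}+e_i+v_i+w_i=2$ for $1\le i\le m$. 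Thus every arc is parameterised by a vertical pattern $e\in\{0,1\}^m$ compatible with $v$, and along it $w_i=2-v_i-e_{i-1}-e_i$; since the adjacency matrix is symmetric (Theorem~\ref{thm:TkC3}) I treat the graph as undirected.

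For even $m$ I introduce $\Phi(v):=\sum_{i=1}^m(-1)^i v_i$. Substituting $w_i=2-v_i-e_{i-1}-e_i$ and using that $m$ is even (so $\sum_i(-1)^i=0$, and reindexing with $e_0=e_m$ gives $\sum_i(-1)^i e_{i-1}=-\sum_i(-1)^i e_i$) produces $\Phi(w)=-\Phi(v)$ along every arc. Hence $|\Phi|$ is constant on each component while every arc reverses the sign of $\Phi$. On a level set $\{\,|\Phi|=s\,\}$ with $s\ge 1$ the splitting $\{\Phi=s\}\sqcup\{\Phi=-s\}$ is respected by all arcs, forcing bipartiteness and identifying it with $ {\cal B}^{*(s)}_m$; the level set $\{\Phi=0\}$ contains both $0^m$ and $1^m$ (for the latter $\sum_i(-1)^i=0$) and is $ {\cal A}^*_m$. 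Splitting coordinates into the $m/2$ odd and $m/2$ even positions, $\Phi$ is the difference of the two partial weights, so Vandermonde's identity gives $|\{\Phi=s\}|=\binom{m}{m/2-s}$; therefore $|V( {\cal A}^*_m)|=\binom{m}{m/2}$, $|\{\,|\Phi|=s\,\}|=2\binom{m}{m/2-s}$, and there are exactly $\lfloor m/2\rfloor+1$ nonempty values of $|\Phi|$.

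For odd $m$ no consistent alternating sign pattern exists; summing $e_{i-1}+e_i+v_i+w_i=2$ over $i$ shows only that $|v|_1\equiv|w|_1\pmod 2$ is preserved (consistent with Lemma~\ref{lem:5}), and the two parity classes have $2^{m-1}$ words each. To match them I use the bitwise complement $\hat v$ (each $b_i\mapsto 1-b_i$, which is \emph{not} the reversal $\overline v$): replacing $(v,w,e)$ by $(\hat v,\hat w,\hat e)$ preserves $e_{i-1}+e_i+v_i+w_i=2$, so complementation is a graph automorphism of $ {\cal D}^*_m$. For odd $m$ it interchanges the two parity classes and sends $1^m$ to $0^m$, giving the isomorphism $ {\cal A}^*_m\cong {\cal N}^*_m$. (The same map carries $\{\Phi=s\}$ to $\{\Phi=-s\}$ when $m$ is even, re-confirming the structure above.)

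The main obstacle is \emph{completeness} of the invariant. Everything above shows only that each component is contained in a single level set, i.e.\ it yields at least $\lfloor m/2\rfloor+1$ components for even $m$ (resp.\ at least two for odd $m$); to obtain exactly these counts, and with them the stated sizes and bipartiteness, it remains to prove that each level set is connected. Theorem~\ref{thm:SC} already supplies that the components are strongly connected, so the task is to rule out a level set breaking into several components. Here I would argue by reachability to a canonical representative: choosing $e$ to be a union of runs produces explicit ``block'' arcs that flip $v$ only at the run boundaries and fix it elsewhere, and combining these with the rotation automorphism $\rho$ (if $v\to w$ via $e$ then $\rho(v)\to\rho(w)$ via $\rho(e)$, cf.\ the symmetry used in Theorem~\ref{thm:simmetry}) and with the complement automorphism should transport an arbitrary $v$ to a normal form determined by $\Phi(v)$ (resp.\ by its weight parity), presumably by induction on $m$ or on the number of runs of $v$. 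Checking that these moves suffice -- and in particular that they never need to leave the level set -- is the delicate point; once it is settled, the counting above pins down the components, their cardinalities, and their bipartiteness exactly as stated.
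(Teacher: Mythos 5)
First, note that the paper itself offers no proof of this theorem: the authors state explicitly that the proofs ``concerning the structure of ${\cal D}^{*}_{m}$ will be exposed in another place (see \cite{JKO4})'' and only record the statement. So there is no in-paper argument to compare yours against; what can be assessed is whether your proposal is a complete proof on its own, and it is not. The part you do carry out is sound and verifiable: the reformulation of arcs of ${\cal D}^{*}_{m}$ via the relation $e_{i-1}+e_{i}+v_{i}+w_{i}=2$ is a correct model of the reduction, the alternating-sum invariant $\Phi(v)=\sum_{i}(-1)^{i}v_{i}$ does satisfy $\Phi(w)=-\Phi(v)$ along every arc when $m$ is even (one can check against the $m=4$ data: ${\cal A}^{*}_{4}$ is exactly $\{\Phi=0\}$, ${\cal B}^{*(1)}_{4}$ is $\{\Phi=\pm1\}$ with the block anti-diagonal adjacency matrix witnessing bipartiteness, ${\cal B}^{*(2)}_{4}$ is $\{\Phi=\pm2\}$), the Vandermonde count $|\{\Phi=s\}|=\binom{m}{m/2-s}$ is right, and bitwise complementation is indeed an automorphism that for odd $m$ swaps the two weight-parity classes and sends $1^{m}$ to $0^{m}$, which already yields ${\cal A}^{*}_{m}\cong{\cal N}^{*}_{m}$.

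The genuine gap is the one you name yourself: you never prove that each level set of $|\Phi|$ (resp.\ each parity class for odd $m$) is a \emph{single} component. Everything you establish is a one-sided bound --- at least $\lfloor m/2\rfloor+1$ components for even $m$, at least two for odd $m$ --- and without the converse none of the theorem's quantitative claims follow: not the exact component count, not $|V({\cal B}^{*(s)}_{m})|=2\binom{m}{m/2-s}$ or $|V({\cal A}^{*}_{m})|=\binom{m}{m/2}$ (these are cardinalities of level sets, which equal component sizes only if the level sets are connected), and not the $2^{m-1}$ vertex count for odd $m$. Your closing sketch (normal forms reached by ``block'' arcs combined with rotation and complementation) is a plausible route, but it is exactly the delicate half of the theorem and is left entirely unexecuted; in particular you would need to exhibit, for arbitrary $v$ with $\Phi(v)=\Phi(v')$, a chain of admissible vertical patterns $e$ (each satisfying $v_{i}+e_{i-1}+e_{i}\in\{1,2\}$, a constraint you note but do not work with) connecting $v$ to $v'$. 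As it stands the proposal proves a correct and nontrivial piece of the theorem but is not a proof of it.
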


Similar to the linear case, computational results revealed that the maximum eigenvalues of the
adjacency matrix of ${\cal D}^{*}_{m}$ and of its component ${\cal A}^{*}_{m}$ (the one  which contain  the vertex $1^m$) are matched.
It remains the open question. \\
Since the digraph ${\cal N}^{*}_{m}$ is isomorphic (or the same) to ${\cal A}^{*}_{m}$
(in accordance to Lemma~\ref{lem:6} and Theorem~\ref{thm:hip}) their  maximum eigenvalues  coincide, too.
  This eigenvalue is  denoted by $\theta_m$.
Some of the values $\theta_m$ and corresponding  coefficients $a^{TnC}_m$, $a^{TG}_m$ and $a^{KB}_m$ for
 $TnC_{m}(n)$, $TG^{p}_{m}(n)$ and $KB^{p}_m(n)$, respectively,
 are given in Table~\ref{tab5}).

\begin{table}[htb]
\begin{center}
\scalebox{0.82}{
\begin{tabular}{||c||r|r|r|r||} \hline\hline
$m$ & \multicolumn{1}{c| }{$\theta_m$}
 & \multicolumn{1}{c|}{$a^{TnC}_m$}
  & \multicolumn{1}{c|}{$a^{TG}_m$}
    & \multicolumn{1}{c||}{$a^{KB}_m$}
\\ \hline\hline
2 & 3
  & 0.5  & 1 & 1   \\ \hline
3 & 3.30277563773199464655961
  & 0.36132495094369271949541 &  2 & 2   \\ \hline
4 & 6.37228132326901432992531
  & 0.20648058601107554045568 & 1 & 1 \\ \hline
5 & 8.18892699556896444855102
  & 0.14742465083948628541204 & 2 & 2 \\ \hline
6 & 14.50643149404807519214675
  & 0.083803607557435033268322 & 1 & 1   \\ \hline
7 & 19.73524639197846469681942
  & 0.060124577862634389701985 & 2 & 2   \\ \hline
8 & 33.67678695772204595105959
  & 0.033993311467178925450426 & 1 & 1   \\ \hline
9  & 47.19198108116434356177681
   &  0.024494324930673487693541 &  2 & 2   \\ \hline
10 & 78.81886459182770309510614
   &  0.013795808626212712909327 &  1 & 1   \\ \hline
11 & 112.47596764975684359653883
   & 0.0099731885141630148539155 &  2 & 2   \\ \hline
12 & 185.23985780663511179029955
   &   0.0056015242668523217503199 &  1 & 1  \\ \hline
13 & 267.61048630198595550870131
   & 0.0040594014830894625219171 & 2 & 2   \\ \hline
14 & 436.39957118470795413801428
   & 0.0022751952682333538270877  & 1 & 1  \\ \hline
15 & $\approx_{(400)}$ 636.07041461464648869768059
   & $\approx_{(400)}$ 0.0016519675074264452373550 &  2 & 2   \\ \hline
16 & $\approx_{(300)}$ 1029.6591497906007164409665
   & $\approx_{(300)}$ 0.00092436404208619807770845 &  1 & 1  \\ \hline
17 & $\approx_{(300)}$ 1510.8614061966116128903621
   & $\approx_{(300)}$ 0.00067217371233189645627886 &  2 & 2   \\ \hline
18 & $\approx_{(300)}$ 2431.89478407418442105820
   & $\approx_{(300)}$ 0.00037562107425215551890931  & 1 & 1   \\ \hline
\hline
\end{tabular}}
\end{center}
\caption{The approximate values of $\theta_m$ and $a^{TnC}_m$ for $1\leq m\leq 16$,
  where $\approx_{(n)}$ means the estimate based on the first n
  entries of the sequence.}
\label{tab5}
\end{table}

Based on the  obtained numerical  results we computed the  generating functions

 \ $\ds   {\cal F}^{TnC}_{m}(x) \stackrel{\rm def}{=} \sum_{n\geq 1}^{\infty} f^{TnC}_{m}(n)x^{n} $ ($2 \leq m \leq 18$), \
  $\ds   {\cal F}^{TG}_{m,p}(x) \stackrel{\rm def}{=} \sum_{n\geq 1}^{\infty} f^{TG}_{m,p}(n)x^{n} $ ($2 \leq m \leq 10$) and

    $\ds   {\cal F}^{KB}_{m,p}(x) \stackrel{\rm def}{=} \sum_{n\geq 1}^{\infty} f^{KB}_{m,p}(n)x^{n} $  ($2 \leq m \leq 10$).
    They are listed in Appendix of this paper.
 The order  of the  recurrence relations   for  $TnC_{m}(n)$, \ $TG_{m}^{(p)}(n)$ and \ $KB_{m}^{(p)}(n)$ is obtained from the  polynomial  in denominator of these rational functions (see Table~\ref{tab1}, Table~\ref{tab3} and   Table~\ref{tab4}).

\begin{table}[htb]
\begin{center}
\begin{tabular}{||c||r|r|r|r|r|r|r|r|r||}
\hline \hline  m   &  2 & 3 & 4 & 5& 6& 7 & 8 & 9 & 10    \\
\hline \hline
$ p=0 $  &  4& 3
& 8 &10 &26 &35& 105 & 132 &370
\\ \hline
 $ p=1 $  &  4& 3
& 8 &10 &25 &35& 80 & 132 &369
\\ \hline
 $ p=2  $  &  -  & 3 & 7 & 10 & 24 &35 & 61 & 132 & 370
\\ \hline
$ p=3 $  & -   &  - & 8 & 10 & 26 &35 & 80 & 132 & 369
\\ \hline
 $ p=4  $ & -  &  -  &  -  &  10 & 24
 &35 &  105  &132 & 370
 \\  \hline
 $ p=5  $ &-  & - & - & -
 & 25 & 35 & 80 & 132  & 369
 \\  \hline \hline
\end{tabular}
\caption{
The order  of the  recurrence relation   for the   torus graphs \ $TG_{m}^{(p)}(n)$.}
\label{tab3}
\end{center}
\end{table}

\begin{table}[htb]
\begin{center}
\begin{tabular}{||c||r|r|r|r|r|r|r|r|r||}
\hline \hline  m   &  2 & 3 & 4 & 5& 6& 7 & 8 & 9 & 10    \\
\hline \hline
$ p=0 $  &  4& 2
& 8 &4 &17 &8& 34 &16 &66
\\ \hline
$ p=1 $  &  4& 2
& 8 &4 &17 &8& 34 &16 &66
\\ \hline \hline
\end{tabular}
\caption{
The order  of the  recurrence relation   for the  Klein bottle $KB^{(p)}_m(n)$.}
\label{tab4}
\end{center}
\end{table}

By analyzing these functions in case of torus grid, we noticed matchings
 of some functions.
\begin{thm}  \label{thm:sim1}
$${\cal F}^{TG}_{m,p}(x) = {\cal F}^{TG}_{m,m-p}(x), \mbox{for all }  1 \leq p \leq m-1. $$
 \end{thm}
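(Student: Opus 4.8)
The plan is to reduce the claimed identity of generating functions to the corresponding identity of their coefficients, namely $f^{TG}_{m,p}(n) = f^{TG}_{m,m-p}(n)$ for every $n \geq 1$; since ${\cal F}^{TG}_{m,p}(x) = \sum_{n \geq 1} f^{TG}_{m,p}(n)x^{n}$, term-by-term equality of the two sequences immediately yields ${\cal F}^{TG}_{m,p}(x) = {\cal F}^{TG}_{m,m-p}(x)$. To establish the coefficient identity I would start from the matrix formula (\ref{gl2}) of Theorem~\ref{thm:f2}, which writes each count as a trace, $f^{TG}_{m,p}(n) = tr\big(({\cal R}_m^*)^p\,({\cal T}_m^*)^n\big)$, and argue purely on the level of these matrices.

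Two structural facts drive the argument. First, by Theorem~\ref{thm:TkC3} the transfer matrix ${\cal T}_m^*$ is symmetric, hence $S := ({\cal T}_m^*)^n$ satisfies $S^{T} = S$. Second, ${\cal R}_m^*$ is the permutation matrix of the rotation $\rho$ acting on circular words of length $m$; since $\rho^{m}$ is the identity, $({\cal R}_m^*)^{m} = I$, and because the inverse of a permutation matrix equals its transpose, we obtain
$$({\cal R}_m^*)^{m-p} = \big(({\cal R}_m^*)^{p}\big)^{-1} = \big(({\cal R}_m^*)^{p}\big)^{T}.$$

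Writing $A := ({\cal R}_m^*)^{p}$, the identity then follows from the elementary trace rules $tr(M) = tr(M^{T})$ and $tr(AB) = tr(BA)$:
$$f^{TG}_{m,m-p}(n) = tr(A^{T}S) = tr\big((A^{T}S)^{T}\big) = tr(S^{T}A) = tr(SA) = tr(AS) = f^{TG}_{m,p}(n),$$
where $S^{T} = S$ was used in the fourth equality. As this holds for all $n \geq 1$, the generating functions coincide.

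I expect no genuine obstacle here: the statement is essentially a corollary of the symmetry of ${\cal T}_m^*$ (already proved in Theorem~\ref{thm:TkC3}) together with the orthogonality relation $({\cal R}_m^*)^{-p} = \big(({\cal R}_m^*)^{p}\big)^{T}$ for the rotation matrix. The only point deserving explicit mention is the claim $({\cal R}_m^*)^{m} = I$, i.e.\ that $\rho$ has order dividing $m$ on $V({\cal D}_m^*)$; this is immediate since $\rho$ cyclically shifts a length-$m$ word, and ${\cal R}_m^*$ is a genuine permutation matrix because, by Theorem~\ref{thm:TkC5}, $V({\cal D}_m^*)$ is the full set $\{0,1\}^m$ on which $\rho$ acts bijectively. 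One could alternatively give a combinatorial proof via a reflection of the cutting line in the flat representation, mapping $2$-factors of $TG^{(p)}_m(n)$ to those of $TG^{(m-p)}_m(n)$, but the trace argument above is both shorter and fully rigorous.
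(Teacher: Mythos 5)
Your proposal is correct and follows essentially the same route as the paper's own proof: both reduce to the trace formula $f^{TG}_{m,p}(n)=tr\big(({\cal R}_m^*)^p({\cal T}_m^*)^n\big)$, use $({\cal R}_m^*)^m=I$ together with the orthogonality of the permutation matrix to get $\big(({\cal R}_m^*)^p\big)^{T}=({\cal R}_m^*)^{m-p}$, and conclude via the symmetry of ${\cal T}_m^*$ and invariance of the trace under transposition and cyclic permutation. Your added remarks on why ${\cal R}_m^*$ is a genuine permutation matrix are a harmless elaboration of what the paper leaves implicit.
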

\begin{proof}
Since  $\ds  ({\cal R}_m^*)^m = I$ we have $\ds \left( ( {\cal R}_m^*)^p \right)^T = ( {\cal R}_m^*)^{m-p}$. Additionally, the matrix ${\cal T}_m^*$ and  all its degrees are symmetric ones.
Thus, $$
 f_{m,p}^{TG}(n)=  tr(({\cal R}_m^*)^p \cdot ({\cal T}_m^*)^n) = tr((({\cal R}_m^*)^p \cdot ({\cal T}_m^*)^n)^{T}) =
 tr( \left(({\cal T}_m^*)^n \right)^{T} \cdot   \left(({\cal R}_m^*)^p \right)^{T})  = $$
  $\ds tr( ({\cal T}_m^*)^n  \cdot ({\cal R}_m^*)^{m-p}) =  tr( ({\cal R}_m^*)^{m-p} \cdot  ({\cal T}_m^*)^n ) =  f_{m,m-p}^{TG}(n).$ \ $\Box$
\end{proof}

 The matching in case of  Klein bottle is less obvious less and it is
 expressed in the following theorem.
\begin{conj}
If $m$ is  odd, the generating function
  $\ds   {\cal F}^{KB}_{m,p}(x) \stackrel{\rm def}{=} \sum_{n\geq 1}^{\infty} f^{KB}_{m,p}(n)x^{n} $ is invariant for different values of $p$ ($0 \leq p \leq m-1$). If $m$ is even, the same is true for all values of $p$ which are of the same parity.
\end{conj}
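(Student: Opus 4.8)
The plan is to work entirely from the closed form
\[
f_{m,p}^{KB}(n) = tr(({\cal R}_m^*)^p \cdot {\cal H}_m^* \cdot ({\cal T}_m^*)^n)
\]
of Theorem~\ref{thm:f2} and to exploit the symmetries of the three matrices involved. Writing $C = {\cal R}_m^*$, $H = {\cal H}_m^*$, $T = {\cal T}_m^*$ for brevity, I would first record the structural identities I need: (i) $C^m = I$ and $C^{T} = C^{-1}$; (ii) $H^2 = I$ and $H^{T} = H$ (both noted in the text); (iii) $T = T^{T}$ (Theorem~\ref{thm:TkC3}); (iv) $CT = TC$, coming from the rotational symmetry ``$x \to y \Rightarrow \rho(x) \to \rho(y)$'' together with $o(\rho(x)) = \rho(o(x))$; (v) $HT = TH$, coming from the reflection symmetry ``$x \to y \Rightarrow \overline{x} \to \overline{y}$'' together with $o(\overline{x}) = \overline{o(x)}$ (Proposition~\ref{Primedba3}); and (vi) the dihedral relation $HCH = C^{-1}$, which is the matrix form of the word identity $\overline{\rho(v)} = \rho^{-1}(\overline{v})$. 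The real content lies in (iv)--(vi), since one must verify not only that the arcs are permuted correctly but that their \emph{multiplicities} (the entries $0,1,2$ of $T$) are preserved; this I would check by lifting each arc of ${\cal D}_m^*$ to an arc of ${\cal D}_m$, applying $\rho$ or $\overline{\cdot}$ there, and projecting back through $o(\cdot)$, using that both $\rho$ and the letter reflection preserve adjacency in ${\cal D}_{lr}$.

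The core step is a short trace computation. Conjugating by $C$ inside the trace and then using (iv) and (vi),
\[
tr(C^p H T^n) = tr(C \cdot C^p H T^n \cdot C^{-1}) = tr(C^{p+1} H C^{-1} T^n) = tr(C^{p+2} H T^n),
\]
where the middle equality uses $T^n C^{-1} = C^{-1} T^n$ and the last uses $H C^{-1} = C H$ (a consequence of $HCH = C^{-1}$). Hence $f_{m,p}^{KB}(n) = f_{m,p+2}^{KB}(n)$ for all $n$, and therefore ${\cal F}_{m,p}^{KB}(x) = {\cal F}_{m,p+2}^{KB}(x)$, indices read modulo $m$. (A convention-dependent sign would replace $p+2$ by $p-2$, which generates the same subgroup and is immaterial.)

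From this single relation the statement follows by counting the orbit of $p$ under $p \mapsto p+2 \pmod{m}$. When $m$ is odd, $2$ is a unit modulo $m$, so the orbit is all of $\{0,1,\dots,m-1\}$ and ${\cal F}_{m,p}^{KB}$ is independent of $p$. When $m$ is even, the orbit of $p$ is exactly the residues of the same parity as $p$, which yields the claimed invariance within each parity class. Transposing the same expression and using (i)--(iii) and (v) gives the extra relation $f_{m,p}^{KB}(n) = f_{m,m-p}^{KB}(n)$, but for even $m$ this respects parity and so adds nothing new.

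The main obstacle is precisely the even-$m$ case if one aims for the stronger invariance hinted at in the Introduction, namely independence of $p$ across both parities. The rotation and reflection symmetries generate only the dihedral action above, whose orbit of any fixed $p$ never leaves a single parity class when $m$ is even; thus no combination of (iv)--(vi) can identify ${\cal F}_{m,0}^{KB}$ with ${\cal F}_{m,1}^{KB}$. Bridging the two parity classes would require a genuinely new symmetry of $T$ (for instance one arising from the glide symmetry of the Klein bottle, or a direct analysis of how $H$ and $T$ act on the eigenspaces of $C$), and it is this gap that I expect keeps the stronger assertion at the level of a conjecture, while the statement as written is settled by the argument above.
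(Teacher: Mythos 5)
The statement you are proving is left as a \emph{conjecture} in the paper: the authors support it only by the computational data for $m\le 10$ listed in the Appendix and defer all structural analysis of ${\cal D}^*_m$ to a forthcoming companion paper, so there is no proof to compare yours against. Your dihedral-group argument in fact settles the conjecture as stated, and it does so with exactly the ingredients the paper already has on hand: the trace formula of Theorem~\ref{thm:f2}, the symmetry of ${\cal T}^*_m$ (Theorem~\ref{thm:TkC3}), and the rotation/reflection automorphisms that the paper itself invokes (implicitly) in Theorem~\ref{thm:simmetry} and Proposition~\ref{Primedba3}. The identities (iv)--(vi) are the real content, and your plan for verifying them is sound: since by the bijection in the proof of Theorem~\ref{thm:TkC6} the entry $a_{vw}$ of ${\cal T}^*_m$ equals the number of $x\in V({\cal D}_m)$ with inlet word $v$ and outlet word $w$, and since both $\rho$ and horizontal conversion are bijections of $V({\cal D}_m)$ satisfying $i(\rho(x))=\rho(i(x))$, $o(\rho(x))=\rho(o(x))$, $i(\overline{x})=\overline{i(x)}$, $o(\overline{x})=\overline{o(x)}$, the multiplicities are preserved and $C$ and $H$ commute with $T$ exactly as you claim. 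The relation $HCH=C^{-1}$ is the paper's own identity $\rho^{p}(\overline{\alpha})=\overline{\rho^{m-p}(\alpha)}$, and the orbit count of $p\mapsto p+2 \pmod m$ gives precisely the odd/even dichotomy of the conjecture. What your approach buys is a complete proof of an open statement; what it does not (and cannot) buy, as you correctly observe, is the stronger claim in the Introduction that for even $m$ the count is independent of $p$ across both parities --- the Appendix data for $m=4,6,8,10$ show the two parity classes genuinely differ, so that sentence in the Introduction is simply an error, not a gap in your argument.

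Two small points to tidy up. First, your ``extra relation'' from transposing is vacuous: transposing $tr(C^pHT^n)$ and cycling gives $tr(HC^{-p}T^n)$, and $HC^{-p}=C^pH$ returns you to the original expression, so no new identity is obtained (harmless, since $f^{KB}_{m,p}=f^{KB}_{m,m-p}$ already follows from the $p\mapsto p+2$ relation whenever it says anything). Second, when you write up the multiplicity check, state explicitly that $\rho$ and $\overline{\phantom{x}}$ are automorphisms of ${\cal D}_m$ itself (i.e.\ that the ${\cal D}_{lr}$ arc relation on alpha-letters is invariant under cyclic shift of circular words and under the reflection $a\leftrightarrow c$, $d\leftrightarrow f$ combined with word reversal); the paper uses both facts without proof, so your write-up should not inherit that omission.
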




\vspace*{0.5cm}

\noindent Faculty of Technical Sciences,
  University of Novi Sad,
  Novi Sad, Serbia\\
     E-mail: jelenadjokic@uns.ac.rs  \\
E-mail: ksenija@uns.ac.rs (corresponding author)

\vspace*{0.5cm}

 \noindent
  Dept.\ of Math.\ \&\ Info.,
  Faculty of Science,
  University of Novi Sad,
  Novi Sad, Serbia \\
   E-mail: olga.bodroza-pantic@dmi.uns.ac.rs

\newpage

\vspace{1.5cc}
\begin{center}
{\bf  5.  \  Appendix }
\end{center}

\vspace*{4mm}

{\bf  5.1.  \  \ Thin cylinder graph $TnC_{m}(n) \equiv C_{m} \times P_{n}$ ($ 2 \leq m \leq 18$) }

 $\ds   {\cal F}^{TnC}_{m}(x) \stackrel{\rm def}{=} \sum_{n\geq 1}^{\infty} f^{TnC}_{m}(n)x^{n} $

 \small

\noindent
$ \ds  {\cal F}^{TnC}_{2}(x) =  \frac{x(1 + 3 x)}{1 - 2 x - 3 x^2} = $ \\
\parbox[t]{5.8in}{
$x+5 x^2+13 x^3+41 x^4+121 x^5+365 x^6+1093 x^7+3281 x^8+9841 x^9+29525 x^{10}+88573 x^{11}+265721 x^{12}+797161 x^{13}+2391485 x^{14}+7174453
x^{15}+21523361 x^{16}+64570081 x^{17}+193710245 x^{18}+581130733 x^{19}+1743392201 x^{20}+5230176601 x^{21}+15690529805 x^{22}+47071589413 x^{23}+141214768241
x^{24}+423644304721 x^{25}+ \ldots $}

 \bc $ \mbox{--------------------------------------    } $ \ec
$ \ds  {\cal F}^{TnC}_{3}(x) =  \frac{x(1+x)}{1-3 x-x^2} = $ \\
  \parbox[t]{5.8in}{
$x+4 x^2+13 x^3+43 x^4+142 x^5+469 x^6+1549 x^7+5116 x^8+16897 x^9+55807 x^{10}+184318 x^{11}+608761 x^{12}+2010601 x^{13}+6640564 x^{14}+21932293
x^{15}+72437443 x^{16}+239244622 x^{17}+790171309 x^{18}+2609758549 x^{19}+8619446956 x^{20}+28468099417 x^{21}+94023745207 x^{22}+310539335038 x^{23}+1025641750321
x^{24}+3387464586001 x^{25}+ \ldots $}

 \bc $ \mbox{--------------------------------------    } $ \ec
 $\ds {\cal F}^{TnC}_{4}(x) = \frac{x\left(1+3 x-4 x^2\right)}{1-6 x-3 x^2+4 x^3} = $ \\
  \parbox[t]{5.8in}{
$\ds x+9 x^2+53 x^3+341 x^4+2169 x^5+13825 x^6+88093 x^7+561357 x^8+3577121 x^9+22794425 x^{10}+145252485 x^{11}+925589701 x^{12}+5898117961
x^{13}+37584466929 x^{14}+239498796653 x^{15}+1526153708861 x^{16}+9725080775409 x^{17}+61970950592425 x^{18}+394896331045333 x^{19}+2516390514947637
x^{20}+16035148280452121 x^{21}+102180475903374305 x^{22}+651122738201811645 x^{23}+4149137263799184301 x^{24}+26439469893787043521 x^{25}+
  \ldots $}

      \bc $ \mbox{--------------------------------------    } $ \ec
    $\ds {\cal F}^{TnC}_{5}(x) =  \ds \frac{x\left(1+2 x-14 x^2+3 x^3\right)}{1-9 x+4 x^2+22 x^3-3 x^4}=$ \\
  \parbox[t]{5.8in}{$
x+11 x^2+81 x^3+666 x^4+5431 x^5+44466 x^6+364061 x^7+2981201 x^8+24412606 x^9+199912706 x^{10}+1637069691 x^{11}+13405842666 x^{12}+109779463516
x^{13}+898976005896 x^{14}+7361648869421 x^{15}+60284005131851 x^{16}+493661316969811 x^{17}+4042556485091321 x^{18}+33104199931650186 x^{19}+271087876486546101
x^{20}+2219918829931214536 x^{21}+18178753234393716291 x^{22}+148864483106909524811 x^{23}+1219040384395583776646 x^{24}+9982632712465747775776 x^{25}
  + \ldots $}

 \bc $ \mbox{--------------------------------------    } $ \ec
 $\ds {\cal F}^{TnC}_{6}(x) = \frac{x\left(1+4 x-52 x^2+85 x^3+4 x^4-24 x^5\right)}{1-16 x+15 x^2+108 x^3-163 x^4-14 x^5+24 x^6} = $ \\
 \parbox[t]{5.8in}{
$  x+20 x^2+253 x^3+3725 x^4+53812 x^5+781043 x^6+11328703 x^7+164342144 x^8+2384008549 x^9+34583478677 x^{10}+501682800748 x^{11}+7277627334803
x^{12}+105572401943143 x^{13}+1531478817520040 x^{14}+22216292548032997 x^{15}+322279125907163021 x^{16}+4675120061914150660 x^{17}+67819308904658336819
x^{18}+983816158598975546575 x^{19}+14271661707453924975056 x^{20}+207030882865408620073765 x^{21}+3003279319439344022622533 x^{22}+43566865704938163454739356
x^{23}+631999752759077987597027603 x^{24}+9168061117654885779896674423 x^{25}+ \ldots $}

   \bc $ \mbox{--------------------------------------    } $ \ec
 $ {\cal F}^{TnC}_{7}(x) \ds  =
   \frac{x(1+2 x-175 x^2+557 x^3-128 x^4-909 x^5+564 x^6+55 x^7)}{1-27 x+131 x^2+319 x^3-1511 x^4+598x^5+1473 x^6-740 x^7-55 x^8} = $ \\
 \parbox[t]{5.8in}{$x+29 x^2+477 x^3+9318 x^4+181231 x^5+3562728 x^6+70182449 x^7+1384148396 x^8+27309182412 x^9+538897819048 x^{10}+10634850017387 x^{11}+209878072831673
x^{12}+4141969931423934 x^{13}+81742600445824746 x^{14}+1613208844972065013 x^{15}+31837062363892428112 x^{16}+628312180222680689296 x^{17}+12399894995078143327538
x^{18}+244714977626524860954080 x^{19}+4829510338163034053569033 x^{20}+95311576159624130076274330 x^{21}+1880997437078948822215084651 x^{22}+37121947864644639377123619563
x^{23}+732610787514210597258594558399 x^{24}+14458254399899446658426778070807 x^{25} + \ldots $}
  \bc $ \mbox{--------------------------------------    } $ \\  $ \mbox{--------------------------------------    } $ \ec

  \noindent
  $ \ds
{\cal F}^{TnC}_{8}(x) \ds  = $ \\  \parbox[t]{5.8in}{
$ \ds \frac{x\left(-1-2 x+537 x^2-4828 x^3+8833 x^4+9182 x^5-27961 x^6+10592 x^7+1152 x^8-576 x^9\right)}{(1+x) \left(-1+48
x-497 x^2+40 x^3+16813 x^4-54024 x^5+54853 x^6-15568 x^7-1216 x^8+576 x^9\right)} $} \\

 \bc $ \mbox{--------------------------------------    } $ \ec

\noindent
$ \ds {\cal F}^{TnC}_{8}(x) = $ \\
 \parbox[t]{5.8in}{$\ds  x+49 x^2+1317 x^3+44269 x^4+1474937 x^5+49622793 x^6+1670194477 x^7+56241588037 x^8+1893972519489 x^9+63782453175969 x^{10}+2147983445752757
x^{11}+ 72337143245836829 x^{12}+2436082206688156809 x^{13}+82039418438284617401 x^{14}+2762823988969738975165 x^{15}+93043034632818588850549 x^{16}+
 3133390453009241673982033 x^{17}+105522522721603955058210641 x^{18}+3553659516758976044869767493 x^{19}+119675834464582605695436465485 x^{20}+
4030297581237050067157486579097 x^{21}+135727473019414267506176850632809 x^{22}+4570865193183627479968367145336909 x^{23}+153932053323301682716318949733383589
x^{24}+ 5183936965733447442900189278987107425 x^{25}+ \ldots $}

  \bc $ \mbox{--------------------------------------    } $ \\ $ \mbox{--------------------------------------    } $ \ec

\noindent
$ \ds {\cal F}^{TnC}_{9}(x) = $ \\
 \parbox[t]{5.8in}{$\ds
 x(-1+5 x+1579 x^2-31215 x^3+171589 x^4 +122800 x^5-3535489 x^6+9094371 x^7-2433845 x^8-16805943 x^9+14544216 x^{10}+7056620 x^{11}-9282885
x^{12}+150158 x^{13}+1105012 x^{14}-43295 x^{15})/ $}\\
\parbox[t]{5.8in}{$\ds (-1+81 x-1792 x^2+7289 x^3+113338 x^4-948939 x^5+891997 x^6+9118681 x^7-25652726 x^8+9992771 x^9+33620979 x^{10}-29903008 x^{11}-9941993
x^{12}+14464685 x^{13}- 684910 x^{14}-1263348 x^{15}+43295 x^{16}) $} \\

   \bc $ \mbox{--------------------------------------    } $ \ec

\noindent
$ \ds {\cal F}^{TnC}_{9}(x) = $ \\
 \parbox[t]{5.8in}{$\ds x+76 x^2+2785 x^3+127897 x^4+5864650 x^5+273687040 x^6+12839393125 x^7+604211712448 x^8+28474336325785 x^9+1342851693261496 x^{10}+63350881300193974
x^{11}+2989171289995095295 x^{12}+141053804754239239840 x^{13}+6656352599557966594252 x^{14}+314120572018446461135485 x^{15}+14823836337375386181563560
x^{16}+699563077126129766004289882 x^{17}+33013695479540693113501652143 x^{18}+1557980033613135956810106192973 x^{19}+73524126061962906254766642604468
x^{20}+3469748286048642692532972906829690 x^{21}+163744275200677590087354798821692690 x^{22}+7727416270516028098255799127367588354 x^{23}+364672071690222348813333957553217496592
x^{24}+ $ \\ $ 17209597260329748447239428698333452150035 x^{25}+ \ldots$}

  \bc $ \mbox{--------------------------------------    } $ \\ $ \mbox{--------------------------------------    } $ \ec
$ \ds
{\cal F}^{TnC}_{10}(x) = $ \\
   \noindent \parbox[t]{5.8in}{$ \ds
   -x(-1+7 x+4520 x^2-158777 x^3+1521893 x^4+382971 x^5-74719698 x^6+329729442 x^7+106970524 x^8-3804028638 x^9+8216839522 x^{10}-1691243209
x^{11}-12244389652 x^{12}+ 11435458052 x^{13}+1418185236 x^{14}-5391746720 x^{15}+1399734576 x^{16}+456158912 x^{17}-153955584 x^{18}-12127232 x^{19}+1990656
x^{20})/ $} \\    \noindent \parbox[t]{5.8in}{$ \ds
(1-132 x+4767 x^2-37118 x^3-798843 x^4+12319598 x^5-34775915 x^6-217196970 x^7+1331532391 x^8-746302336 x^9-9092536778 x^{10}+21085576048
x^{11}-6963045159 x^{12} - 23008679746 x^{13}+ 22066750640 x^{14}+1148788328 x^{15}-7918550312 x^{16}+1899531616 x^{17}+618171648 x^{18}-184157184 x^{19}-14919680
x^{20}+1990656 x^{21})$}

   \bc $ \mbox{--------------------------------------    } $ \ec

\noindent $ \ds
{\cal F}^{TnC}_{10}(x) = $ \\
   \noindent \parbox[t]{5.8in}{$ \ds
x+125 x^2+7213 x^3+552136 x^4+42414281 x^5+3321537720 x^6+261079885983 x^7+20559551095851 x^8+1619938572971116 x^9+127666740816792660
x^{10}+10062119265462622683 x^{11}+793072716833845192356 x^{12}+62508746073022625976096 x^{13}+4926858570367533896154450 x^{14}+388329118012333522559446373
x^{15}+30607652155303378920835811641 x^{16}+2412460161413885294860293439151 x^{17}+190147364231734348390327535617975 x^{18}+14987199165829312727697716746010488
x^{19}+1181274016272528946898501556179154091 x^{20}+93106676579881222044891366351751374216 x^{21}+7338562529511716808716729856342178425105 x^{22}+ $ \\ $ 578417166185052464306299085928797905085693
x^{23}+ $ \\ $ 45590184295478178866479815354627214195160656 x^{24}+ $ \\ $ 3593366562596992824027936085406987745740233896 x^{25}+    \ldots  $}
  \bc $ \mbox{--------------------------------------    } $  \\ $ \mbox{--------------------------------------    } $ \ec

\noindent
$ \ds
{\cal F}^{TnC}_{11}(x) = $ \\
   \noindent \parbox[t]{5.8in}{$ \ds x(1 - 44 x - 12081 x^2 + 1133545 x^3 - 37416712 x^4 + 493814171 x^5 +
   373805870 x^6 - 85025176253 x^7 + 932415097823 x^8 -
   3024413960156 x^9 - 15533189594719 x^{10} + 157561589289988 x^{11} -
   369165572872227 x^{12} - 756465027265708 x^{13 }+
   5361776208341062 x^{14} - 7328194249688806 x^{15} -
   9594905882891048 x^{16} + 34024877697560525 x^{17 } -
   13685759379919151 x^{18} - 44098741194270242 x^{19} +
   45724243017873625 x^{20} + 14863779838287700 x^{21} -
   37231721334564025 x^{22} + 6572628113135888 x^{23} +
   11491371455659095 x^{24} - 4771774256671575 x^{25} -
   1096225371607414 x^{26} + 810885158433255 x^{27} -
   26200483957866 x^{28} - 37952723101002 x^{29} + 3921432582660 x^{30} +
   91766783871 x^{31}) / $} \\
   \parbox[t]{5.8in}{  $\ds (1 - 243 x + 20039 x^2 - 656471 x^3 +
   4431429 x^4 + 227318362 x^5 - 5368460917 x^6 + 31298257778 x^7 +
   293649763644 x^8 - 5015233312469 x^9 + 21937438967225 x^{10} +
   31616042052087 x^{11} - 598765550430721 x^{12} +
   1679928753366531 x^{13}+ 1326205582238730 x^{14} -
   15835876424317510 x^{15} + 25160905266696190 x^{16} +
   16435286891736108 x^{17} - 83392084716072564 x^{18} +
   47653451679066927 x^{19} + 81495737922402983 x^{20} -
   100306224843907275 x^{21} - 13857775303557521 x^{22} +
   64525643671324017 x^{23} - 15743156527963701 x^{24} -
   16461505606017185 x^{25}+ 7609935220125145 x^{26} +
   1263109560506538 x^{27} - 1083745124145697 x^{28} +
   47282053928002 x^{29} + 43514398016402 x^{30} - 4294326180612 x^{31} -
   91766783871 x^{32}) $ }

   \bc $ \mbox{--------------------------------------    } $ \ec

     \noindent
  \parbox[t]{5.8in}{   ${\cal F}^{TnC}_{11}(x) =$ \\ $ \ds
   x+199 x^2+16237 x^3+1747846 x^4+188142923 x^5+20737405360 x^6+2304956480025 x^7+257573508034492 x^8+28865363400315608 x^9+3240058791241468318 x^{10}+364014089441637130211 x^{11}+40916785291407964066602 x^{12}+4600517305417776076239962 x^{13}+517344667951564049852195183 x^{14}+58182367489407130596945718621 x^{15}+6543710936410429529889754407939 x^{16}+735984616119817916488132820828362 x^{17}+82778971776385485828799271961192790 x^{18}+9310543698511076212843258446713194344 x^{19}+1047206044378161109688710809509094458969 x^{20}+ $ \\ $ 117785112741978205895896968461190504584242 x^{21}+ $ \\ $ 13247969348778085027453077072904755913393268 x^{22}+ $ \\ $ 1490076588308426360324950466197023081766178323 x^{23}+ $ \\ $ 167597706555404885963702412107237684319747327535 x^{24}+ $ \\ $ 18850707958004578841577999432299025065849894780530 x^{25}+ \ldots $
  }

  \bc $ \mbox{--------------------------------------    } $  \\ $ \mbox{--------------------------------------    } $ \ec

   \noindent \parbox[t]{5.8in}{${\cal F}^{TnC}_{12}(x) = $ \\ $ \ds
\ds x(-1 + 56 x + 34371 x^2 - 4864597 x^3 + 240897321 x^4 -
   4539302037 x^5 - 21305999292 x^6 + 2250499820605 x^7 -
   32835395212689 x^8 + 67641863177208 x^9 + 2973508964109849 x^{10} -
   32672625360812141 x^{11} + 101593694575101540 x^{12} +
   390701274674888073 x^{13} - 3534088510502050536 x^{14} +
   5294394569279677028 x^{15}+ 23673992151915915908 x^{16} -
   83713587334034214864 x^{17} - 16860728113863417992 x^{18} +
   389883495876254911488 x^{19} - 298642788278585906400 x^{20} -
   782820582097976003264 x^{21} + 1028399159901266187040 x^{22} +
   651762671657584482752 x^{23} - 1300946698118184948672 x^{24} -
   124207182380513596608 x^{25} + 687514492586295710080 x^{26} -
   61062685619092030208 x^{27} - 169059279633977012736 x^{28} +
   29452032911443639808 x^{29} + 19587344454030503936 x^{30} -
   4695885856421656576 x^{31} - 929084384166322176 x^{32} +
   320241016195547136 x^{33} + 3147630768685056 x^{34} -
   7866096317628416 x^{35} + 656099510321152 x^{36} +
   5025355005952 x^{37} -
   1480421539840 x^{38} ) / $  }  \\
   \parbox[t]{5.8in}{  $\ds ((-1 + 2 x) (1 - 378 x + 47332 x^2 -
     2280904 x^3 + 12808719 x^4 + 2389875792 x^5 - 75238234902 x^6 +
     543593806898 x^7 + 11175795915724 x^8 - 224307765986366 x^9 +
     783156388517842 x^{10} + 12757301632650108 x^{11 } -
     135144380202845134 x^{12} + 308192305830674750 x^{13} +
     1763902421105202275 x^{14} - 10128104308663148076 x^{15} +
     3698870897167759532 x^{16} + 75112077594370269840 x^{17} -
     119470305233424050008 x^{18} - 219141200424866705360 x^{19} +
     567082778690898661792 x^{20} + 229106439400398106656 x^{21} -
     1160421178627356937040 x^{22} + 59238560866623765184 x^{23} +
     1128088144206736669952 x^{24 } - 236940152142480533248 x^{25} -
     505989846956029581824 x^{26} + 133058523045466635008 x^{27} +
     107835619636777944064 x^{28} - 31591280389621688320 x^{29} -
     10611922573161156608 x^{30} + 3655312114963147776 x^{31} +
     384446534883069952 x^{32} - 199316640957022208 x^{33} +
     3772718451195904 x^{34}+ 4066367130304512 x^{35} -
     377529602932736 x^{36} - 699161116672 x^{37} + 740210769920 x^{38})) $ }  \bc $ \mbox{--------------------------------------    } $ \ec

        \noindent \parbox[t]{5.8in}{$ {\cal F}^{Tnc}_{12}(x) = $ \\ $ \ds
   x+324 x^2+40661 x^3+7110833 x^4+1258226556 x^5+229320317359 x^6+42157390580371 x^7+7784584974538368 x^8+1440027274442086769 x^9+266592485903824019297
x^{10}+49370866298667719771964 x^{11}+9144433503092353217515639 x^{12}+1693831480088780441382206551 x^{13}+313758480051614385713328975696 x^{14}+58120041000674374495827699035057
x^{15}+10766104822128945772009775893697705 x^{16}+1994308218312032136468520737923798952 x^{17}+369425086385808570102911307947404967215 x^{18}+68432227400397660077853587903266565652955
x^{19}+ $ \\ $ 12676374200261687110601982960285590881424640 x^{20}+ $ \\ $ 2348169602273990391519270417144794820324148805 x^{21}+ $ \\ $ 434974590875730148654647846938627993696817466361
x^{22}+ $ \\ $ 80574630359272258570224416141640937089566975901396 x^{23}+ $ \\ $ 14925632988960550558242562591180701510404613852892927 x^{24}+ $ \\ $ 2764822125913864323327329261558606236715960966384783383
x^{25}+ \ldots $}

  \bc $ \mbox{--------------------------------------    } $  \\ $ \mbox{--------------------------------------    } $ \ec

   \noindent \parbox[t]{5.8in}{ $ {\cal F}^{TnC}_{13}(x) =$ \\ $ \ds
 -x(1 - 208 x - 79892 x^2 + 33042137 x^3 - 4869390172 x^4 +
    373483066597 x^5 - 15135800458651 x^6 + 191818992781737 x^7 +
    11079457007481830 x^8 - 632841913799364517 x^9 +
    13918241384529471242 x^{10} - 87693069841230523094 x^{11} -
    2788059553915657926426 x^{12} + 77707843921367359992989 x^{13} -
    755415318769706674613309 x^{14} - 925018680108455680670626 x^{15} +
    101487342677058526530184995 x^{16} -
    1023078316773348885386226273 x^{17 }+
    2303832047406447741814053167 x^{18} +
    40152955247861642091072539182 x^{19} -
    395878025158363885643482233411 x^{20} +
    1001811435396844163799570124076 x^{21} +
    6394921217172604112892229840005 x^{22} -
    55668278101946512484697082135321 x^{23 }+
    109212038343275044549244922004133 x^{24 }+
    513572332823648453020271396810846 x^{25} -
    3237182867334606281012564610743853 x^{26} +
    3841140916525575444614058990336883 x^{27} +
    20419196651899602668992374438670226 x^{28} -
    78567646837662277795702316507343481 x^{29} +
    30690951610382047776198221838765067 x^{30} +
    378094412006368220082564812727239755 x^{31}-
    783528468398079487572618201668412839 x^{32 }-
    293990881816337089761981120737863631 x^{33} +
    3008104705893398867249496210169025307 x^{34} -
    2929015291786575133616708599085469355 x^{35} -
    3796739592887618722452573683809355005 x^{36 }+
    9746084284316976443626738883590246361 x^{37} -
    2754836522155994780666309278884205938 x^{38} -
    11402646654459649578350510433014541219 x^{39 }+
    12089306032836145766107047838981155562 x^{40} +
    2844360951099792863631702261076008755 x^{41} -
    11913725326192563296147055326251452013 x^{42} +
    4933800759563095968000854508039187618 x^{43} +
    4360344184704242580238180787164524291 x^{44} -
    4475179226084075430303128388600609667 x^{45} +
    146192922260515937248678600430383775 x^{46} +
    1450970582806272131232442714816566530 x^{47} -
    534809702639522619653312060561575608 x^{48} -
    175840325453497586039101584055405323 x^{49} +
    150523237812662580783224696921029933 x^{50} -
    7668370815393412314175579358355667 x^{51} -
    18233111926513813771882264772114438 x^{52} +
    4196696982178710871202017611050486 x^{53} +
    925936608621152268812931398292294 x^{54 }-
    429825821051533195593212358313691 x^{55} +
    897165031534360276341370410761 x^{56} +
    18895674660195435967037426744133 x^{57} -
    1784982930656432777798323730463 x^{58} -
    323509207084312021328443656409 x^{59} +
    51766170562066232232343663129 x^{60} +
    635531816742339296550606824 x^{61} -
    292979227870435856545576584 x^{62} +
    2903180987112106367511999 x^{63})
/ $} \\
   \parbox[t]{5.8in}{  $\ds
(-1 + 729 x - 205276 x^2 +
   28774364 x^3 - 2069242947 x^4 + 54521238784 x^5 +
   2540771984406 x^6 - 258270158875831 x^7 + 8254277869695393 x^8 -
   50438475178054008 x^9 - 4577922591875751515 x^{10} +
   156920730246337867838 x^{11} - 1947518869707068863368 x^{12} -
   6258787718073169065654 x^{13} + 537497696236859675204944 x^{14} -
   6981499288328474688957601 x^{15} + 20034421337134822343796215 x^{16} +
   493095821378669850823979955 x^{17} -
   6544872938919211955859448126 x^{18} +
   25406303573141864815862984199 x^{19} +
   141499498127500753890161896491 x^{20} -
   2000864943348121099938431452301 x^{21} +
   7154865464783049876481048644373 x^{22} +
   16629955824567304984660205720833 x^{23} -
   233729092639295072145786839574244 x^{24} +
   646116458926462503983936842610559 x^{25} +
   1177121779980131876904375050463561 x^{26 }-
   11791744839674393668221649162114373 x^{27} +
   21504006059868060173719331448152805 x^{28} +
   46190334441390263074747993336223894 x^{29} -
   256360433605959364284217420577841198 x^{30} +
   235009735911625906812861513875038323 x^{31} +
   870707795924250278511852812478304166 x^{32} -
   2387383239161720320955338589768157719 x^{33} +
   302607738682234190013151193583053376 x^{34} +
   6860585933858413756505798261386039563 x^{35} -
   8822306772787739848623153058208091328 x^{36} -
   5552856626388077752829058463035375175 x^{37}+
   21613448565579102233889724765461627035 x^{38} -
   10368056116790690392474636985351863102 x^{39} -
   20080762265804261087167442626526631317 x^{40} +
   26113021283068413700974698372414984970 x^{41} +
   1390004873389173619205792799047533594 x^{42} -
   21304487637811693028239371056298770161 x^{43} +
   10781189932775001881834059026377855347 x^{44} +
   6363412216101912926979518415704916003 x^{45} -
   7821204017924481120760773240982875216 x^{46} +  
   722046678331614498126502980945559883 x^{47} +
   2240930973472927038251157158623351578 x^{48} -
   919048221800555052277331785586930648 x^{49} -
   231337977658386086207508579809234906 x^{50} +
   228708614752344081036609774647295785 x^{51} -
   16913766876412520957797308390311353 x^{52} -
   25304540394430534938380404952836106 x^{53} +
   6236853171309462366457570112380092 x^{54} +
   1144877899955153902428948327877978 x^{55} -
   577206037028759042190137801400852 x^{56} +
   7529973985103843173848511760413 x^{57} +
   23326357953957345784305823318410 x^{58} -
   2318542877384283583434937750011 x^{59} -
   363854240174009045344913754630 x^{60 }+
   59964026528900193652041528729 x^{61} +
   515500743628959729859994424 x^{62} -
   305848740124993418971257480 x^{63} + 2903180987112106367511999 x^{64})
 $ }  \bc $ \mbox{--------------------------------------    } $ \ec

   \noindent \parbox[t]{5.8in}{ $ {\cal F}^{TnC}_{13}(x) = $ \\ $ \ds x+521 x^2+94641 x^3+23860994 x^4+6019949235 x^5+1563616901289 x^6+410685863818829 x^7+108680224298965775 x^8+28885443646068696636 x^9+7697782672223977809178
x^{10}+2054738065680739228638707 x^{11}+549007484897518979688253782 x^{12}+146778979981795502904523460553 x^{13}+39256455963159335499059410183014
x^{14}+10501645621993870751107771050851885 x^{15}+2809728365013355124287589270028794708 x^{16}+751810737333491167502810631395327329121 x^{17}+ $ \\ $ 201175700430372387590350775917384172630403
x^{18}+ $ \\ $ 53833981683801464428474326205942754537286776 x^{19}+ $ \\ $ 14406087682340513042806512640834731766157769808 x^{20}+ $ \\ $ 3855146257880058549107041721153429364184928394618
x^{21}+ $ \\ $ 1031665446779243847361514470733917000217365153177412 x^{22}+ $ \\ $ 276082504055472631358905615180969234514638921778807699 x^{23}+ $ \\ $ 73882247078607453109055517073219210939592385510318453725
x^{24}+ $ \\ $ 19771610577230034420482354351709347816048938574233828605390 x^{25}+
  \ldots  $}

  \bc $ \mbox{--------------------------------------    } $  \\ $ \mbox{--------------------------------------    } $ \ec

   \noindent \parbox[t]{5.8in}{${\cal F}^{TnC}_{14}(x) = $ \\ $ \ds
\ds -x(1 - 250 x - 241465 x^2 + 130181673 x^3 - 26427230137 x^4 +
   2708647608986 x^5 - 130217217118640 x^6 - 470836804131480 x^7 +
   419309310152602865 x^8 - 22974790326493373041 x^9 +
   474262067769429561796 x^{10} + 5598507190928637868102 x^{11} -
   571508536513909558942594 x^{12} + 13917902583796228345661803 x^{13} -
   117113367375450720200283133 x^{14} -
   1656259014654120521313749934 x^{15} +
   54001661463527122847686547042 x^{16} -
   486325436495354321002640736880 x^{17}-
   1605152466339233564549002956640 x^{18} +
   74099155369156694584230365258071 x^{19} -
   570102846866816503384840626169219 x^{20} -
   822300642088766387503327511823792 x^{21} +
   42981048873345720546910836849946004 x^{22} -
   257161489141860033549386184964574010 x^{23} -
   253510390114999310217807632433799594 x^{24} +
   10673567481699921074736526517028335402 x^{25} -
   46946430826474049597337696289782958858 x^{26} -
   46277637595578587497426960505134905378 x^{27} +
   1149337199348125334199654551037368029623 x^{28} -
   3522175222040222760434269297713180738268 x^{29} -
   4369856596077501178153759366651850525419 x^{30} +
   55566876535896079206032017528855663008535 x^{31} -
   113459184423791680593513823627816708409313 x^{32} -   $}
\\   \noindent \parbox[t]{5.8in}{$ \ds
   173943889851419628194630861790033520405456 x^{33} +    $  \\  $
   1215382299008078243475661349251217642694642 x^{34} -    $  \\  $
   1608119302284289389476015331241378182938774 x^{35} -    $  \\  $
   2879664522920911409039572843365761888793143 x^{36} +    $  \\  $
   12055789727468931549127069341186159738828857 x^{37} -    $  \\  $
   9985929328812497581879337689906031165087764 x^{38} -    $  \\  $
   20942258352471870193928568975579222497456902 x^{39} +    $  \\  $
   55392520723838044259412878168157515125889636 x^{40} -    $  \\  $
   27609686230977138495026565377064966130425637 x^{41} -    $  \\  $
   67850586447897636496464132692072872436758585 x^{42} +    $  \\  $
   118802785837761464661211758185967151322113034 x^{43} -    $  \\  $
   33977631755868264948399893457763079268493366 x^{44}-    $  \\  $
   99421473490735074981813309603831089689951584 x^{45} +    $  \\  $
   120256511659972649735756899060884018263658966 x^{46} -    $  \\  $
   20489478639226596484134815318696239165821201 x^{47} -    $  \\  $
   64166501630310175972836888096097593046941905 x^{48} +    $  \\  $
   56899019707905833112640870171779685334896140 x^{49} -    $  \\  $
   7988423180891978814384159379402438588047584 x^{50} -    $  \\  $
   16498043402068033174159692237075411690150480 x^{51} +    $  \\  $
   11566441375465033208057690381308526690265440 x^{52} -    $  \\  $
   1858480556728317777694402485432190396917760 x^{53 }-    $  \\  $
   1293380582943050706948480926685812206707200 x^{54} +    $  \\  $
   727602597725014857232079357140843386469888 x^{55} -    $  \\  $
   76710854904929243688280535862161670205440 x^{56} -    $  \\  $
   45971146675336357055513757997856051377152 x^{57} +
   15827629533235547990659315987758396997632 x^{58} -
   296353970289794533339234242414949163008 x^{59} -
   690854642288128089124070699167113347072 x^{60} +
   99831018249592940108202590217923198976 x^{61} +
   10408795298536646682314789739889688576 x^{62} -
   3325028631626157528481847618427682816 x^{63} +
   25338546848544478360952614168821760 x^{64 }+
   50169288331599343596914886980927488 x^{65} -
   2402700679322372631085805681508352 x^{66 }-
   395691346577380311483617642545152 x^{67} +
   25633128015062785391957603516416 x^{68} +
   1640091750757852426975263588352 x^{69} -
   100486229785123775534189248512 x^{70} -
   3089733801952044024971919360 x^{71} +
   87018482365782313402368000 x^{72}) / $} \\
   \parbox[t]{5.8in}{  $\ds ((1 + x) (-1 + 15 x) (1 + 11 x + 24 x^2 + 13 x^3) (1 + 39 x +
    164 x^2 + 139 x^3) (1 + 15 x + 43 x^2 + 14 x^3 + x^4) (1 +
    123 x + 4287 x^2 + 56388 x^3 + 340889 x^4 + 1023737 x^5 +
    1586768 x^6 + 1283699 x^7 + 557419 x^8 + 135360 x^9 +
    18857 x^{10 }+ 1486 x^{11} + 61 x^{12} + x^{13}) (-1 + 372 x -
    42413 x^2 + 2173210 x^3 - 60032212 x^4 + 993128332 x^5 -
    10485047801 x^6 + 73443641910 x^7 - 349700572708 x^8 +
    1150917656464 x^9 - 2655012399456 x^{10} + 4349572253312 x^{11} -
    5116898626432 x^{12 }+ 4355854109952 x^{13} - 2691841916928 x^{14} +
    1205257859072 x^{15} - 387743512576 x^{16} + 88113405952 x^{17} -
    13717635072 x^{18} + 1384644608 x^{19} - 81264640 x^{20} +
    2097152 x^{21}) (-1 + 897 x - 284613 x^2 + 44616593 x^3 -
    4029293589 x^4 + 228607904694 x^5 - 8623998744010 x^6 +
    225243247921559 x^7 - 4196824245744747 x^8 +
    57062519353551486 x^9 - 576185381851639467 x^{10} +
    4381096629347262750 x^{11} - 25362753972981226023 x^{12 }+
    112745987919002029131 x^{13} - 387224132372088067387 x^{14} +
    1031345440365206915058 x^{15} - 2132798841380783388554 x^{16} +
    3419096019693872449756 x^{17} - 4228713744245639777841 x^{18} +
    4000304340185270192004 x^{19} - 2855800746727609888656 x^{20 }+
    1508442695853085266432 x^{21} - 572933701954998116352 x^{22} +
    150113917428371795968 x^{23} - 25482017209198657536 x^{24} +
    2527785821948608512 x^{25} - 119775081782050816 x^{26} +
    1530848398540800 x^{27}) $ }
   \bc $ \mbox{--------------------------------------    } $ \ec

   \noindent \parbox[t]{5.8in}{${\cal F}^{TnC}_{14}(x) = $ \\ $ \ds  x+845 x^2+232861 x^3+93547152 x^4+38161604217 x^5+16179908259446 x^6+6955967193562675 x^7+3013814772945044388 x^8+1310571139307486059744
x^9+570941650352643832290496 x^{10}+248946408250205225732190657 x^{11}+108594562872624965291980865517 x^{12}+47380824041848615032082234543538 x^{13}+20674860253582882636086734473452690
x^{14}+9022044500672394796169951103350465307 x^{15}+3937117874038266866017108373849625832406 x^{16}+ $ \\ $ 1718135244552626203898654459395975826590780 x^{17}+ $ \\ $ 749788869420890785643183575437392430014238172
x^{18}+ $ \\ $ 327206540946232850646025525636752798786883194114 x^{19}+ $ \\ $ 142792577247139636166593093344516530392048619109541 x^{20}+ $ \\ $ 62314572410495518341343134325594627886014538946907912
x^{21}+ $ \\ $ 27194042460317564525972154320581961864323701254872880977 x^{22}+ $ \\ $ 11867466249360362556968273924667166748336830671415324128093 x^{23}+ $ \\ $ 5178956700196225617404438425797251018982847819819861731612629
x^{24}+ $ \\ $ 2260094378398440680979300686322330296192434522169114137118638649 x^{25}+
\ldots $  }
  \bc $ \mbox{--------------------------------------    } $  \\ $ \mbox{--------------------------------------    } $ \ec

    \noindent
\parbox[t]{5.8in}{ ${\cal F}^{TnC}_{15}(x) = $ \\ $ \ds x+1364 x^2+551613 x^3+325657968 x^4+192461352562 x^5+117686516544594 x^6+72945933567920729 x^7+45643040884844001536 x^8+28724255612327969672932
x^9+18144383836051636673861867 x^{10}+11488975420539457121794585998 x^{11}+7286202085630847737325323395906 x^{12}+4625571064261896320140372765529296
x^{13}+2938460832220940936831456750219444994 x^{14}+  $  \\  $1867516598405888914804775809449705242513 x^{15}+   $  \\  $1187226104290641513303717685425133226997878 x^{16}+  $  \\  $754890354123995415719107703048801759286901762
x^{17}+  $  \\  $480051335501492838673135121095806203328961433789 x^{18}+  $  \\  $305299750808514058182597036323917736265092146019244 x^{19}+  $  \\  $194172678923920991475888643778258816520058079987883396
x^{20}+  $  \\  $123499386994065384580259878272101608357026769697620157317 x^{21}+  $  \\  $78550926840805376586585823096710973035451347336859903278567 x^{22}+  $  \\  $49962512246307590120896780660550059579267388047814570691961898
x^{23}+  $  \\  $31779088949936319299332029947934364090037973973942235275728958766 x^{24}+  $  \\  $20213493680676849470640814622587215937195100630627182727303982687346
x^{25}+    \ldots $ }

    \bc $ \mbox{--------------------------------------    } $  \\ $ \mbox{--------------------------------------    } $ \ec

   \noindent
\parbox[t]{5.8in}{${\cal F}^{TnC}_{16}(x) =$ \\ $ \ds
x+2209 x^2+1344837 x^3+1247739997 x^4+1176032485833 x^5+1160288515380121 x^6+1166255193469341005 x^7+1185537358216390245429 x^8+1212265596856254020971761
x^9+1243579048192809989812145169 x^{10}+1277895262634953569701628954885 x^{11}+1314373438943841131293557932317677 x^{12}+1352565836514823616126618920708268489
x^{13}+1392241531890106476062759414188473371689 x^{14}+ $ \\ $ 1433288723930969312683797470832044615749917 x^{15}+ $  \\  $1475661751488339174448720884931066509880982565
x^{16}+ $  \\  $1519351974798138063352510239397485429264719411585 x^{17}+ $  \\  $1564371765333443333756180310291660713930922038383681 x^{18}+ $  \\  $1610745673603442972346912013817118708336327750901642421
x^{19}+ $  \\  $1658505551969350049980484820890118350456507321682207716957 x^{20}+ $  \\  $1707687862064664321324867899251182896438911982818451145146329 x^{21}+ $  \\  $1758332192736517484633412955904639221829026592583003083532989913
x^{22}+ $  \\  $1810480450502483013757516923306739236326466951416769104534271300797 x^{23}+ $  \\  $1864176424622422017964522716857650762175540568219077044979790831688277
x^{24}+ $  \\  $1919465561436274265978290290338211439781915616861551057610847225732594161 x^{25}+  \ldots
 $}

     \bc $ \mbox{--------------------------------------    } $  \\ $ \mbox{--------------------------------------    } $ \ec

   \noindent
 \parbox[t]{5.8in}{${\cal F}^{TnC}_{17}(x) =$ \\ $ \ds
x+3571 x^2+3215041 x^3+4444339751 x^4+6151532634859 x^5+8851836530897104 x^6+12939617301085024529 x^7+19127617320816121366452 x^8+28474590328963725085347639
x^9+42589885261525799514745869499 x^{10}+63905315558068166767129771712679 x^{11}+96096516725583526494032612816236112 x^{12}+144717421460086179237227147299450659836
x^{13}+218159686854741111724361184739620835897302 x^{14}+ $  \\  $329102191183638563664798864443258409828820241 x^{15}+ $  \\  $496701121655668982938232734328368052150528489952
x^{16}+ $  \\  $749898916351637898118940193686340894727637366003716 x^{17}+ $  \\  $1132423640596136202261956753918967417943579199078073338 x^{18}+ $  \\  $1710342364351588023476012453033776190031904865314169651567
x^{19}+ $  \\  $2583473277479531260220823597300820365910480560545500758760898 x^{20}+ $  \\  $3902627819968835955248721578281365478246482855981064422162312401 x^{21}+ $  \\  $5895661154947594901443977267554991231621289042216368497546032187540
x^{22}+ $  \\  $8906830825593693661442159214932408949457259307858939336106958487041559 x^{23}+ $  \\  $13456262232647550295196744180048374236536107862397316416427857701132566515
x^{24}+ $  \\  $20329792730599700675924568044783210931387539949914868325206259268106800414972 x^{25}+  \ldots
 $ }

     \bc $ \mbox{--------------------------------------    } $  \\ $ \mbox{--------------------------------------    } $ \ec

   \noindent \parbox[t]{5.8in}{${\cal F}^{TnC}_{18}(x) =$ \\ $ \ds x+5780 x^2+7801165 x^3+16788557123 x^4+36655273043812 x^5+84238150673398010 x^6+197983223809129991221 x^7+472116548787108690744794
x^8+1135126364276555282932647235 x^9+2742392002573492985349801499994 x^{10}+6643863136834483490999934197961784 x^{11}+16121626293866934494559934285910094227
x^{12}+39156115632530892545028999376876628331958 x^{13}+ $ \\ $ 95153141325329485855450526482781928401051570 x^{14}+ $  \\  $231303071218305347597566406802770824393462101445
x^{15}+ $  \\  $562364331191836924937840680006541962007395307421326 x^{16}+ $  \\  $1367412236724940008404355387662509090232141681352022714 x^{17}+ $  \\  $3325121329527473398490899002791157147316127401748635779509
x^{18}+ $  \\  $8085946346778760589100083264115676676177788707823547122857793 x^{19}+ $  \\  $19663604828744357778377718293621252172927077215019804895654629084 x^{20}+ $  \\  $47819014551205735095235300385428033892361857458316584177576367043214
x^{21}+ $  \\  $116289670661881088029330391385128725361964765661104284030666283103331222 x^{22}+282802621230386706038119542241848749659558713589758116655044031017408316646
x^{23}+ $  \\  $687743912708372591876329863661928036510919481432015721376343008774762855424214 x^{24}+ $  \\  $1672517552800166546609792709261336772742668582850638721533836534157959086106144871
x^{25}+ \ldots
$ }

\newpage

{\bf  5.1.   \ \ Torus Grid  $TG^{(p)}_{m}(n)$ and Klein bottle  $KB^{(p)}_{m}(n)$ ($2 \leq m \leq 10$) }

\normalsize

  $$\ds    {\cal F}^{TG}_{m,p}(x) \stackrel{\rm def}{=} \sum_{n\geq 1}^{\infty} f^{TG}_{m,p}(n)x^{n}
\mbox{   and  } \ds   {\cal F}^{KB}_{m,p}(x) \stackrel{\rm def}{=} \sum_{n\geq 1}^{\infty} f^{KB}_{m,p}(n)x^{n}  $$

\small

\bc $ 0 \leq p \leq m-1$,
$TG^{(0)}_{m}(n) \equiv C_{m} \times C_{n}$)\ec

   \small
 \noindent \parbox[t]{5.8in}{
$ {\cal F}^{TG}_{2,0}(x)  =   {\cal F}^{KB}_{2,1}(x) = $ \\ $  \ds  -\frac{2 x (1+3 x)}{(1+x) (-1+3 x)} -\frac{8 x^2}{(-1+2 x) (1+2 x)} = $ \\ $  2 x+18 x^2+26 x^3+114 x^4+242 x^5+858 x^6 + 2186 x^7+7074 x^8+19682 x^9+61098 x^{10}+177146 x^{11}+539634 x^{12}+1594322 x^{13}+4815738
x^{14}+14348906 x^{15}+43177794 x^{16}+129140162 x^{17}+387944778 x^{18}+1162261466 x^{19}+3488881554 x^{20}+10460353202 x^{21}+31389448218 x^{22}+94143178826
x^{23}+282463090914 x^{24}+847288609442 x^{25}+ \ldots $}
\bc $ \mbox{--------------------------------------    } $ \ec
\noindent
\parbox[t]{5.8in}{
$ \ds  {\cal F}^{TG}_{2,1}(x) =  {\cal F}^{KB}_{2,0}(x) = $ \\ $
\ds
-\frac{2 x (1+3 x)}{(1+x) (-1+3 x)} + \frac{4 x}{1-4 x^2} = $ \\ $ 6 x+10 x^2+42 x^3+82 x^4+306 x^5+730 x^6+2442 x^7+6562 x^8+20706 x^9+59050 x^{10}+181242 x^{11}+531442 x^{12}+1610706 x^{13}+4782970 x^{14}+14414442
x^{15}+43046722 x^{16}+129402306 x^{17}+387420490 x^{18}+1163310042 x^{19}+3486784402 x^{20}+10464547506 x^{21}+31381059610 x^{22}+94159956042 x^{23}+282429536482
x^{24}+847355718306 x^{25}
+  \dots  $}
   \bc $ \mbox{--------------------------------------    } $ \\ $ \mbox{--------------------------------------    } $ \ec
 \noindent
\parbox[t]{5.8in}{
$ \ds  {\cal F}^{TG}_{3,0}(x) = $ \\ $ \ds  -\frac{2x(1+ 11x + 4 x^2)}{(1+x)(-1 + 3 x + x^2)}$ = \\ $ 2 x+26 x^2+68 x^3+242 x^4+782 x^5+2600 x^6+8570 x^7+28322 x^8+93524 x^9+308906 x^{10}+1020230 x^{11}+3369608 x^{12}+11129042 x^{13}+36756746
x^{14}+121399268 x^{15}+400954562 x^{16}+1324262942 x^{17}+4373743400 x^{18}+14445493130 x^{19}+47710222802 x^{20}+157576161524 x^{21}+520438707386
x^{22}+1718892283670 x^{23}+5677115558408 x^{24}+18750238958882 x^{25} +  \dots  $}
\bc $ \mbox{--------------------------------------    } $ \ec
\noindent
\parbox[t]{5.8in}{
$ \ds  {\cal F}^{TG}_{3,1}(x) = {\cal F}^{TG}_{3,2}(x) = $ \\ $ \ds
-\frac{2x \left(4+2 x+x^2\right)}{(1+x) \left(-1+3 x+x^2\right)} = $ \\ $
8 x+20 x^2+74 x^3+236 x^4+788 x^5+2594 x^6+8576 x^7+28316 x^8+93530 x^9+308900 x^{10}+1020236 x^{11}+3369602 x^{12}+11129048 x^{13}+36756740
x^{14}+121399274 x^{15}+400954556 x^{16}+1324262948 x^{17}+4373743394 x^{18}+14445493136 x^{19}+47710222796 x^{20}+157576161530 x^{21}+520438707380
x^{22}+1718892283676 x^{23}+5677115558402 x^{24}+18750238958888 x^{25} +  \dots  $}
\bc $ \mbox{--------------------------------------    } $ \ec
\noindent
\parbox[t]{5.8in}{
$ \ds {\cal F}^{KB}_{3,0}(x) = {\cal F}^{KB}_{3,1}(x) = {\cal F}^{KB}_{3,2}(x) = $ \\ $ \ds
-\frac{(2x (3 + 2 x)}{-1 + 3 x + x^2}= $ \\ $6 x+22 x^2+72 x^3+238 x^4+786 x^5+2596 x^6+8574 x^7+28318 x^8+93528 x^9+308902 x^{10}+1020234 x^{11}+3369604 x^{12}+11129046 x^{13}+36756742
x^{14}+121399272 x^{15}+400954558 x^{16}+1324262946 x^{17}+4373743396 x^{18}+14445493134 x^{19}+47710222798 x^{20}+157576161528 x^{21}+520438707382
x^{22}+1718892283674 x^{23}+5677115558404 x^{24}+18750238958886 x^{25}
+  \dots  $}
   \bc $ \mbox{--------------------------------------    } $ \\ $ \mbox{--------------------------------------    } $ \ec
\noindent
\parbox[t]{5.8in}{${\cal F}^{TG}_{4,0}(x) = \ds
\frac{2 x (1+21 x+6 x^2-20 x^3)}{(1+x) (1+2 x) (1-7 x+4 x^2)} -\frac{8
x^2 (-7+25 x^2)}{(-1+x) (1+x) (-1+5 x) (1+5 x)} + \frac{8 x^2}{1-4 x^2} = $ \\
$ \ds 2 x+114 x^2+242 x^3+2970 x^4+10442 x^5+98466 x^6+426386 x^7+3500970 x^8+17323226 x^9+129930354 x^{10}+703463906 x^{11}+4970993658 x^{12}+28564983722
x^{13}+194231313474 x^{14}+1159909450802 x^{15}+7696445791050 x^{16}+47099249906042 x^{17}+307759067766546 x^{18}+1912510703585666 x^{19}+12377791111168410
x^{20}+77659350883118666 x^{21}+499635602835227874 x^{22}+3153433215088906706 x^{23}+20213772870411999978 x^{24}+128048212205068924442 x^{25}+
  \ldots $}
 \bc $ \mbox{--------------------------------------    } $ \ec
\parbox[t]{5.8in}{${\cal F}^{TG}_{4,1}(x) = {\cal F}^{TG}_{4,3}(x) = {\cal F}^{KB}_{4,0}(x)= {\cal F}^{KB}_{4,2}(x) = $ \\ $\ds
-\frac{6 (-1+x) x (1+2 x)}{(1+x) \left(1-7 x+4 x^2\right)} + \frac{8 \left(x+5 x^3\right)}{(-1+x)
(1+x) (-1+5 x) (1+5 x)} + \frac{4 x}{1-4 x^2} = $ \\$
\noindent
18 x+42 x^2+522 x^3+1650 x^4+16818 x^5+66954 x^6+583146 x^7+2718690 x^8+21231522 x^9+110395002 x^{10}+801128346 x^{11}+4482696018 x^{12}+31006422738
x^{13}+182024216682 x^{14}+1220944738122 x^{15}+7391269747650 x^{16}+48625129336578 x^{17}+300129672186714 x^{18}+1950657678339066 x^{19}+12187056243692850
x^{20}+78613025207913522 x^{21}+494867231236419402 x^{22}+3177275073032617386 x^{23}+20094563580794109858 x^{24}+128644258652957048418 x^{25}+
 \ldots $}
 \bc $ \mbox{--------------------------------------    } $ \ec
\parbox[t]{5.8in}{${\cal F}^{TG}_{4,2}(x) = \ds
-\frac{2 x \left(-5+3 x+6 x^2+4 x^3\right)}{(1+x) (1+2 x) \left(1-7 x+4 x^2\right)} +\frac{48 x^2}{(-1+x)
(1+x) (-1+5 x) (1+5 x)} + \frac{8 x^2}{1-4 x^2} = $\\$
\noindent
10 x+90 x^2+274 x^3+2898 x^4+10570 x^5+98202 x^6+426898 x^7+3499938 x^8+17325274 x^9+129926250 x^{10}+703472098 x^{11}+4970977266 x^{12}+28565016490
x^{13}+194231247930 x^{14}+1159909581874 x^{15}+7696445528898 x^{16}+47099250430330 x^{17}+307759066717962 x^{18}+1912510705682818 x^{19}+12377791106974098
x^{20}+77659350891507274 x^{21}+499635602818450650 x^{22}+3153433215122461138 x^{23}+20213772870344891106 x^{24}+128048212205203142170 x^{25}+  \ldots $
}
 \bc $ \mbox{--------------------------------------    } $ \ec
\parbox[t]{5.8in}{${\cal F}^{KB}_{4,1}(x) = {\cal F}^{KB}_{4,3}(x)  = $ \\
 $\ds  -\frac{6 (-1+x) x (1+2 x)}{(1+x) \left(1-7 x+4 x^2\right)} + \frac{52 x^2-100 x^4}{(-1+x) (1+x)
(-1+5 x) (1+5 x)} +\frac{8 x^2}{1-4 x^2}
  = $ \\$
\noindent 6 x+102 x^2+258 x^3+2934 x^4+10506 x^5+98334 x^6+426642 x^7+3500454 x^8+17324250 x^9+129928302 x^{10}+703468002 x^{11}+4970985462 x^{12}+28565000106
x^{13}+194231280702 x^{14}+1159909516338 x^{15}+7696445659974 x^{16}+47099250168186 x^{17}+307759067242254 x^{18}+1912510704634242 x^{19}+12377791109071254
x^{20}+77659350887312970 x^{21}+499635602826839262 x^{22}+3153433215105683922 x^{23}+20213772870378445542 x^{24}+128048212205136033306 x^{25}
+ \ldots $}
   \bc $ \mbox{--------------------------------------    } $ \\ $ \mbox{--------------------------------------    } $ \ec
\noindent
\parbox[t]{5.8in}{$ \ds {\cal F}^{TG}_{5,0}(x) = \ds
\frac{2x \left(1+116 x-250 x^2-1026 x^3+1646 x^4+1675 x^5-2401 x^6+58 x^7+372 x^8-48 x^9\right)}{\left(-1+9
x-4 x^2-22 x^3+3 x^4\right) \left(-1-4 x+4 x^2+10 x^3-8 x^4-x^5+x^6\right)} =$ \\
$ \ds 2 x+242 x^2+782 x^3+10442 x^4+67832 x^5+628382 x^6+4831612 x^7+40904442 x^8+329212322 x^9+2720543472 x^{10}+22172526752 x^{11}+182023143782
x^{12}+1488626009132 x^{13}+12198610087752 x^{14}+99857648185292 x^{15}+817880965553242 x^{16}+6696906730979202 x^{17}+54843316086466622 x^{18}+449095741864069442
x^{19}+3677664470840904912 x^{20}+30115901734205683702 x^{21}+246617882571678884332 x^{22}+2019531708324695230722 x^{23}+16537815439466805799542
x^{24}+135426887276826051341032 x^{25}+ \ldots $}
   \bc $ \mbox{--------------------------------------    } $  \ec

\noindent
\parbox[t]{5.8in}{$ \ds {\cal F}^{TG}_{5,1}(x) = {\cal F}^{TG}_{5,4}(x) = $ \\ $ \ds
\frac{-2 x (-16 + 34 x + 125 x^2 + 61 x^3 - 421 x^4 - 290 x^5 + 566 x^6 -
    133 x^7 + 3 x^8 + 3 x^9)}{(-1 + 9 x - 4 x^2 - 22 x^3 +
    3 x^4) (-1 - 4 x + 4 x^2 + 10 x^3 - 8 x^4 - x^5 + x^6)}=$ \\
$ \ds
32 x+92 x^2+1362 x^3+7952 x^4+78402 x^5+583082 x^6+5025732 x^7+40071652 x^8+332785472 x^9+2705210252 x^{10}+22238327242 x^{11}+181740763342
x^{12}+1489837842362 x^{13}+12193409497792 x^{14}+99879966576102 x^{15}+817785185872272 x^{16}+6697317770881812 x^{17}+54841552102546412 x^{18}+449103312027847732
x^{19}+3677631983367604002 x^{20}+30116041154693649302 x^{21}+246617284246523379292 x^{22}+2019534276046249700682 x^{23}+16537804420050484264322
x^{24}+135426934566819733665752 x^{25}
+ \ldots $}   \bc $ \mbox{--------------------------------------    } $  \ec

\noindent
\parbox[t]{5.8in}{$ \ds {\cal F}^{TG}_{5,2}(x) = {\cal F}^{TG}_{5,3}(x) = $ \\ $ \ds
\frac{2x \left(6+46 x-85 x^2-201 x^3-84 x^4+265 x^5+104 x^6-67 x^7+12 x^8-3 x^9\right)}{\left(-1+9 x-4 x^2-22 x^3+3 x^4\right)
\left(-1-4 x+4 x^2+10 x^3-8 x^4-x^5+x^6\right)}=$ \\
$ \ds
12 x+152 x^2+1022 x^3+9412 x^4+71952 x^5+610862 x^6+4906052 x^7+40585712 x^8+330578112 x^9+2714684802 x^{10}+22197663412 x^{11}+181915278242
x^{12}+1489088898082 x^{13}+12196623621232 x^{14}+99866173082952 x^{15}+817844380915012 x^{16}+6697063734348162 x^{17}+54842642304392132 x^{18}+449098633409635132
x^{19}+3677652061729775452 x^{20}+30115954988094303612 x^{21}+246617654031804132482 x^{22}+2019532689107058341092 x^{23}+16537811230424301945262
x^{24}+135426905339996319417002 x^{25} + \ldots $}

   \bc $ \mbox{--------------------------------------    } $  \ec

\noindent
\parbox[t]{5.8in}{$ \ds {\cal F}^{KB}_{5,0}(x) = {\cal F}^{KB}_{5,1}(x) ={\cal F}^{KB}_{5,2}(x) ={\cal F}^{KB}_{5,3}(x) ={\cal F}^{KB}_{5,4}(x) = $ \\ $ \ds
2\frac{x \left(-9+8 x+66 x^2-12 x^3\right)}{-1+9 x-4 x^2-22 x^3+3 x^4}=$ \\
$ \ds
18 x+146 x^2+1110 x^3+9034 x^4+73708 x^5+603254 x^6+4939036 x^7+40443834 x^8+331187898 x^9+2712066716 x^{10}+22208901612 x^{11}+181867045390
x^{12}+1489295898004 x^{13}+12195735265160 x^{14}+99869985500680 x^{15}+817828019825562 x^{16}+6697133948287830 x^{17}+54842340980068742 x^{18}+449099926547807034
x^{19}+3677646512207132764 x^{20}+30115978803956317906 x^{21}+246617551825666781576 x^{22}+2019533127726262262854 x^{23}+16537809348083275643742
x^{24}+135426913418091631501308 x^{25} + \ldots $}

\bc $ \mbox{--------------------------------------    } $ \\ $ \mbox{--------------------------------------    } $ \ec
\parbox[t]{5.8in}{
$ {\cal F}^{TG}_{6,0}(x) = $ \\ $ \ds
-(2 x (1+155 x+215 x^2-3684 x^3-4669 x^4+22509 x^5+18491 x^6-43072 x^7-10982 x^8+6708 x^9+1368
x^{10})) / $ \\ $((-1+x) (-1+2 x) (1+2 x) (1+3 x) (1+3 x+x^2) (1+6 x+x^2) (-1+17 x-37 x^2+12 x^3)) - $ \\ $(12 x^2 (35-2508 x^2+38371 x^4-212774 x^6+506293 x^8-564506 x^{10}+295808 x^{12}-68120 x^{14}+5376 x^{16})) /$ \\ $((-1+x) (1+x)
(-1+2 x) (1+2 x) (1-3 x+x^2) (1+3 x+x^2)(1-5 x+3 x^2) (1+5 x+3 x^2)(-1+14 x-22 x^2+8 x^3)
(1+14 x+22 x^2+8 x^3))  $ \\ $ \ds  - \frac{12 x^2 (-3+7 x) (3+7 x)}{(-1+x) (1+x) (-1+7 x) (1+7 x)} + \frac{8 x^2}{1-4 x^2}$
}
\bc $ \mbox{--------------------------------------    } $ \ec
\parbox[t]{5.8in}{
$ {\cal F}^{TG}_{6,0}(x) =  $ \\ $\ds 2 x+858 x^2+2600 x^3+98466 x^4+628382 x^5+16448400 x^6+134721638 x^7+2995602834 x^8+28432011752 x^9+566597492178 x^{10}+5985882924254
x^{11}+109903205061360 x^{12}+1259741716585718 x^{13}+
 (21736984452051810 x^{14}+265098868583817320 x^{15}+4365796637993895186 x^{16}+55786599952981377950 x^{17}+887421840845709378960 x^{18}+11739543540193217824262
x^{19}+182043417096228583263666 x^{20})+ 2470429220177037909717224 x^{21}+37601976458209902773864082 x^{22}+519868617913174476410785502 x^{23}+7806648458062158266900754864 x^{24}+109399360988897448327149426582
x^{25}+ \ldots $ }
\bc $ \mbox{--------------------------------------    } $ \ec
\parbox[t]{5.8in}{
$ {\cal F}^{TG}_{6,1}(x) =  {\cal F}^{TG}_{6,5}(x) =$ \\$ \ds
-(2 x (10+38 x-163 x^2-1254 x^3-595 x^4+6357 x^5+4568 x^6-6865 x^7-1700 x^8+1548 x^9+216 x^{10}))/$ \\ $((-1+x) (-1+2 x) (1+2 x) (1+3
x)(1+3 x+x^2) (1+6 x+x^2) (-1+17 x-37 x^2+12 x^3))-$ \\ $(6 x (5-282 x^2+3121 x^4-24860 x^6+110739 x^8-180644
x^{10}+115360 x^{12}-26936 x^{14}+1472 x^{16}))/$ \\ $((-1+x) (1+x) (-1+2 x) (1+2 x) (1-3 x+x^2) (1+3 x+x^2) (1-5 x+3 x^2)
(1+5 x+3 x^2) (-1+14 x-22 x^2+8 x^3) (1+14 x+22 x^2+8 x^3))+$ \\ $ \ds
\frac{12 x (1+7 x^2)}{(-1+x) (1+x) (-1+7 x) (1+7 x)}+\frac{4 x}{1-4 x^2}
$}
\bc $ \mbox{--------------------------------------    } $ \ec
\parbox[t]{5.8in}{
$ {\cal F}^{TG}_{6,1}(x) =  {\cal F}^{TG}_{6,5}(x) =$ \\$ \ds
66 x+196 x^2+7728 x^3+43276 x^4+1238826 x^5+9280810 x^6+220306950 x^7+1959709660 x^8+41039318112 x^9+412627702756 x^{10}+7869400145706
x^{11}+86839919979346 x^{12}+1542287856771702 x^{13}+18274564632840964 x^{14}+307535029174761288 x^{15}+3845645631265057852 x^{16}+62162525468824018554
x^{17}+809264730153945546874 x^{18}+12697619995607789219814 x^{19}+170298892278149136083116 x^{20}+2614399472275480088901744 x^{21}+35837112519944207795899156
x^{22}+541503291293519991461336874 x^{23}+7541438501057517562114538146 x^{24}+112650454910746254596670459126 x^{25} + \ldots
$}

\bc $ \mbox{--------------------------------------    } $ \ec
\parbox[t]{5.8in}{
$ {\cal F}^{TG}_{6,2}(x) =  {\cal F}^{TG}_{6,4}(x) =$ \\$ \ds
-(2 x (10+32 x-109 x^2-714 x^3-1207 x^4+2265 x^5+6872 x^6-223 x^7-1142 x^8+624 x^9+72 x^{10}))/$ \\ $((-1+x) (-1+2 x) (1+2 x) (1+3
x) (1+3 x+x^2) (1+6 x+x^2) (-1+17 x-37 x^2+12 x^3))+$ \\ $ (6 x^2 (-43+466 x^2+97 x^4-3308 x^6+10207
x^8-25688 x^{10}+22100 x^{12}-6624 x^{14}+768 x^{16}))/$ \\ $((-1+x) (1+x) (-1+2 x) (1+2 x) (1-3 x+x^2) (1+3 x+x^2) (1-5
x+3 x^2) (1+5 x+3 x^2) (-1+14 x-22 x^2+8 x^3) (1+14 x+22 x^2+8 x^3)) + $ \\ $ \ds
\frac{96 x^2}{(-1+x) (1+x) (-1+7 x) (1+7 x)} +\frac{8 x^2}{1-4 x^2}
$}
\bc $ \mbox{--------------------------------------    } $ \ec
\parbox[t]{5.8in}{
$ {\cal F}^{TG}_{6,2}(x) =  {\cal F}^{TG}_{6,4}(x) =$ \\$ \ds
20 x+546 x^2+3266 x^3+92394 x^4+649280 x^5+16288572 x^6+135413648 x^7+2990869818 x^8+28455356186 x^9+566448456186 x^{10}+5986674473624
x^{11}+109898350832964 x^{12}+1259768592693176 x^{13}+21736823342530506 x^{14}+265099781460879026 x^{15}+4365791234629904250 x^{16}+55786630962853983728
x^{17}+887421658574519641164 x^{18}+11739544593606366893792 x^{19}+182043410928107923513914 x^{20}+2470429255961988323286218 x^{21}+37601976249114446028812058
x^{22}+519868619128808596283356232 x^{23}+7806648450967193961130178964 x^{24}+109399361030193216423045910280 x^{25} + \ldots
$}
\bc $ \mbox{--------------------------------------    } $ \ec
\parbox[t]{5.8in}{
$ {\cal F}^{TG}_{6,3}(x) = $ \\$ \ds
-(2 x (7+95 x-295 x^2-2292 x^3+731 x^4+11745 x^5+2153 x^6-15772 x^7-4148 x^8+2808 x^9+648 x^{10}))/ $ \\ $((-1+x) (-1+2 x) (1+2 x) (1+3
x) (1+3 x+x^2) (1+6 x+x^2) (-1+17 x-37 x^2+12 x^3)) -$ \\ $
(12 x (1+102 x^2-1225 x^4+9488 x^6-31443 x^8+41180 x^{10}-14362 x^{12}-2836 x^{14}+1120 x^{16}))/ $ \\ $((-1+x) (1+x) (-1+2 x) (1+2
x) (1-3 x+x^2) (1+3 x+x^2) (1-5 x+3 x^2) (1+5 x+3 x^2) (-1+14 x-22 x^2+8 x^3) (1+14 x+22
x^2+8 x^3)) + $ \\ $ \ds \frac{12 x (1+7 x^2)}{(-1+x) (1+x) (-1+7 x) (1+7 x)} + \frac{4 x}{1-4 x^2}
$}
\bc $ \mbox{--------------------------------------    } $ \ec
\parbox[t]{5.8in}{
$ {\cal F}^{TG}_{6,3}(x) = $ \\$ \ds
42 x+274 x^2+6840 x^3+46498 x^4+1211262 x^5+9396232 x^6+219469278 x^7+1963685122 x^8+41013093312 x^9+412763242834 x^{10}+7868553742014
x^{11}+86844528775432 x^{12}+1542259952215518 x^{13}+18274721236048642 x^{14}+307534097147126400 x^{15}+3845650951522451074 x^{16}+62162494103450793390
x^{17}+809264910889307863816 x^{18}+12697618935604681128702 x^{19}+170298898417860124862338 x^{20}+2614399436368451516402304 x^{21}+35837112728513906398644274
x^{22}+541503290075625092403221982 x^{23}+7541438508142749946738210312 x^{24}+112650454869408625049062420062 x^{25} + \ldots
$}
\bc $ \mbox{--------------------------------------    } $ \ec
\parbox[t]{5.8in}{
$ {\cal F}^{KB}_{6,0}(x) =  {\cal F}^{KB}_{6,2}(x) =  {\cal F}^{KB}_{6,4}(x) = $ \\ $ \ds
\frac{2 x (7+17 x-207 x^2-106 x^3+850 x^4+12 x^5-168 x^6)}{(-1+2 x) (1+2 x)(1+3 x+x^2)
(-1+17 x-37 x^2+12 x^3)}-  $ \\ $ \ds \frac{24 x (-1+3 x^2-66 x^4+48 x^6)}{(-1+2 x) (1+2 x)(-1+14 x-22 x^2+8 x^3) \left(1+14
x+22 x^2+8 x^3\right)}+  $ \\ $ \ds \frac{12 x \left(1+7 x^2\right)}{(-1+x) (1+x) (-1+7 x) (1+7 x)} -\frac{4 x}{(-1+2 x) (1+2 x)}
$}
\bc $ \mbox{--------------------------------------    } $ \ec
\parbox[t]{5.8in}{
$ {\cal F}^{KB}_{6,0}(x) =  {\cal F}^{KB}_{6,2}(x) =  {\cal F}^{KB}_{6,4}(x) = $ \\ $ \ds
54 x+230 x^2+7416 x^3+44382 x^4+1229574 x^5+9319412 x^6+220027470 x^7+1961035326 x^8+41030575488 x^9+412672884830 x^{10}+7869118007046
x^{11}+86841456252900 x^{12}+1542278555236590 x^{13}+18274616833942958 x^{14}+307534718498817456 x^{15}+3845647404684319998 x^{16}+62162515013699348022
x^{17}+809264790399066843476 x^{18}+12697619642273418807534 x^{19}+170298894324719467773342 x^{20}+2614399460306470560540960 x^{21}+35837112589467440671869470
x^{22}+541503290887555025091854694 x^{23}+7541438503419261690355983300 x^{24}+112650454896967044747400670574 x^{25}+  \ldots
$}
\bc $ \mbox{--------------------------------------    } $ \ec
\parbox[t]{5.8in}{
$ {\cal F}^{KB}_{6,1}(x) =  {\cal F}^{KB}_{6,3}(x) =  {\cal F}^{KB}_{6,5}(x) = $ \\ $ \ds
-\frac{2 x \left(-9+15 x+177 x^2-110 x^3-524 x^4+16 x^5+120 x^6\right)}{(-1+2 x) (1+2 x) \left(1+3 x+x^2\right)
\left(-1+17 x-37 x^2+12 x^3\right)}-  $ \\ $ \ds \frac{8 x^2 \left(-39+434 x^2-828 x^4+256 x^6\right)}{(-1+2 x) (1+2 x) \left(-1+14 x-22 x^2+8 x^3\right) \left(1+14
x+22 x^2+8 x^3\right)}  $ \\ $ \ds -\frac{4 x^2 (-5+7 x) (5+7 x)}{(-1+x) (1+x) (-1+7 x) (1+7 x)}-\frac{8 x^2}{(-1+2 x) (1+2 x)}
$}
\bc $ \mbox{--------------------------------------    } $ \ec
\parbox[t]{5.8in}{
$ {\cal F}^{KB}_{6,1}(x) =  {\cal F}^{KB}_{6,3}(x) =  {\cal F}^{KB}_{6,5}(x) = $ \\ $ \ds
18 x+642 x^2+3060 x^3+94386 x^4+642378 x^5+16341720 x^6+135183234 x^7+2992446978 x^8+28447575732 x^9+566498132802 x^{10}+5986410627930
x^{11}+109899968900904 x^{12}+1259759634007074 x^{13}+21736877045671506 x^{14}+265099477168590660 x^{15}+4365793035751103490 x^{16}+55786620626230043946
x^{17}+887421719331582362808 x^{18}+11739544242468651585858 x^{19}+182043412984148141333346 x^{20}+2470429244033671522957524 x^{21}+37601976318812931602107458
x^{22}+519868618723597223009276538 x^{23}+7806648453332182063020149832 x^{24}+109399361016427960391147524578 x^{25}+ \ldots
$}
   \bc $ \mbox{--------------------------------------    } $  \\ $ \mbox{--------------------------------------    } $ \ec  \noindent
    \small
    \parbox[t]{5.8in}{ ${\cal F}^{TG}_{7,0}(x) = $ \\ $ \ds
  {-2x }
(1+1079 x-11309 x^2-163017 x^3 + 1411227 x^4 + 8050358 x^5-59600093 x^6-173474945 x^7 +  1206163636 x^8 + 1756819829 x^9-13269562704 x^{10}-7839247762
x^{11} + 83351457555 x^{12} + 7438024735 x^{13}-
308734737505 x^{14} + 53387827262 x^{15} + 698371664770 x^{16}-196250242748 x^{17}-995090049606 x^{18} + 287671218042 x^{19} + 906343826977 x^{20}-210199757682
x^{21}-523958371651 x^{22} + 71335687392 x^{23} +
185805053315 x^{24}-4904925295 x^{25}-37609720308 x^{26}-2746138593 x^{27} + 3804635885 x^{28} + 559321665 x^{29}-156088534 x^{30}-33319916
x^{31} + 1060080 x^{32} + 523880 x^{33} \left.  + 23870 x^{34})\right/ $} \\
 \noindent      \parbox[t]{5.8in}{$ \ds
((-1 + 27 x-131 x^2-319 x^3 + 1511 x^4-598 x^5-1473 x^6 + 740 x^7 + 55 x^8)
(1 + 13 x-72 x^2-1030 x^3 + 1281 x^4 + 25514 x^5-10606 x^6-289247 x^7 + 70298 x^8 + 1694810 x^9-310132 x^{10}-5443990 x^{11} + 538711 x^{12} + 9882395
x^{13} + 197366 x^{14}-10127513 x^{15}-1486842 x^{16} + 5648377 x^{17} + 1517144 x^{18}-1533577 x^{19}-597207 x^{20} + 147951 x^{21} + 84616 x^{22}-1225 x^{23}-4389 x^{24}-336 x^{25} + 63 x^{26} + 7 x^{27}))$}
   \bc $ \mbox{--------------------------------------    } $  \ec
     \noindent
   \parbox[t]{5.8in}{${\cal F}^{TG}_{7,0}(x) = $  \\   \small $ \ds
       2 x + 2186 x^2 + 8570 x^3 + 426386 x^4 + 4831612 x^5 + 134721638 x^6 + 2142911388 x^7 + 48506694658 x^8 + 878082461552 x^9 + 18309402983496 x^{10} + 348982663373192
x^{11} + 7041475309154414 x^{12} + 137028579300015678 x^{13} +
2728527055102059808 x^{14} + 53544239536458527320 x^{15} + 1060515253541792576418 x^{16} + 20881817475671338188760 x^{17} + 412705594209438378631232
x^{18} + 8137355045911556964755608 x^{19} +
160686577635260457567848056 x^{20} + 3170012863499006576524131402 x^{21} + 62575725299441382116431374292 x^{22} + 1234762642850384408406181694538
x^{23} + 24370659626838874175976096699838 x^{24} +
480931967635756920300968526068362 x^{25}+  \ldots $}
   \bc $ \mbox{--------------------------------------    } $  \ec
     \noindent
  \parbox[t]{5.8in}{${\cal F}^{TG}_{7,1}(x) = {\cal F}^{TG}_{7,6}(x) = $ \\$ \ds
2x
(-64+685 x+9405 x^2-68067 x^3-491175 x^4+2286528 x^5+11364290 x^6-39380117 x^7-133150016 x^8+419766765 x^9+843825807 x^{10}-2859633578
x^{11}-2793548158 x^{12}+11845391151 x^{13}+4887555986 x^{14}-
30035595233 x^{15}-6484474349 x^{16}+50321690006 x^{17}+10166277620 x^{18}-56681098507 x^{19}-14630592369 x^{20}+41052266316 x^{21}+13322442008
x^{22}-17711965765 x^{23}-6758877104 x^{24}+4060686354 x^{25}+
1753182339 x^{26}-397957684 x^{27}-199772595 x^{28}+10591357 x^{29}+8427503 x^{30}+300951 x^{31}-24213 x^{32}+7280 x^{33} +385 x^{34} )\left. \right/$}
\\ \parbox[t]{5.8in}{$ \ds((-1+27 x-131 x^2-319 x^3+1511 x^4-598 x^5-1473 x^6+740 x^7+55 x^8)
(1+13 x-72 x^2-1030 x^3+1281 x^4+25514 x^5-10606 x^6-289247 x^7+70298 x^8+1694810 x^9-310132 x^{10}-5443990 x^{11}+538711 x^{12}+9882395
x^{13}+197366 x^{14}-10127513 x^{15}-1486842 x^{16}+5648377 x^{17}+
1517144 x^{18}-1533577 x^{19}-597207 x^{20}+147951 x^{21}+84616 x^{22}-1225 x^{23}-4389 x^{24}-336 x^{25}+63 x^{26}+7 x^{27}))
 $}
   \bc $ \mbox{--------------------------------------    } $  \ec
     \noindent
  \parbox[t]{5.8in}{ ${\cal F}^{TG}_{7,1}(x) = {\cal F}^{TG}_{7,6}(x) =  $  \\   \small $ \ds
 128 x+422 x^2+24474 x^3+226186 x^4+7202610 x^5+104855690 x^6+2512184138 x^7+43867474800 x^8+936009899568 x^9+17582816323228 x^{10}+358078499108052
x^{11}+6927458003305128 x^{12}+138456926552174358 x^{13}+2710626515293975644 x^{14}+53768532907100055242 x^{15}+1057704536580601904710 x^{16}+20917037738011516021104
x^{17}+412264244230916452785342 x^{18}+8142885565313287985192096 x^{19}+160617274442569182417892638 x^{20}+3170881300284945353876423922 x^{21}+62564842903179537055071304118
x^{22}+1234899010119684176870242642914 x^{23}+24368950807461578197509718153998 x^{24}+480953380854744384385606313513380 x^{25}
 +  \ldots $}
    \bc $ \mbox{--------------------------------------    } $  \ec
     \noindent
 ${\cal F}^{TG}_{7,2}(x) = {\cal F}^{TG}_{7,5}(x) = $ \\   \small \parbox[t]{5.8in}{$ \ds
2x (-15-358 x+6500 x^2+20581 x^3-379686 x^4-614349 x^5+9623516 x^6+6871193 x^7-133665881 x^8+16050270 x^9+1034270306 x^{10}-832552543
x^{11}-4149076038 x^{12}+5536086744 x^{13}+8088738201 x^{14}-
16598435294 x^{15}-3468558824 x^{16}+24425985465 x^{17}-13759119120 x^{18}-16229019938 x^{19}+26995435172 x^{20}+1959375906 x^{21}-22448277564
x^{22}+2696371002 x^{23}+9963775850 x^{24}-1062648142 x^{25}-
2407422144 x^{26}+69471796 x^{27}+295592527 x^{28}+18818870 x^{29}-13539904 x^{30}-1719872 x^{31}+95417 x^{32}+18060 x^{33}+385
x^{34})\left. \right/$ } \\ \parbox[t]{5.8in}{$ \ds
((-1+27 x-131 x^2-319 x^3+1511 x^4-598 x^5-1473 x^6+740 x^7+55 x^8)
(1+13 x-72 x^2-1030 x^3+1281 x^4+25514 x^5-10606 x^6-289247 x^7+70298 x^8+1694810 x^9-310132 x^{10}-5443990 x^{11}+538711 x^{12}+9882395
x^{13}+197366 x^{14}-10127513 x^{15}-1486842 x^{16}+5648377 x^{17}+
1517144 x^{18}-1533577 x^{19}-597207 x^{20}+147951 x^{21}+84616 x^{22}-1225 x^{23}-4389 x^{24}-336 x^{25}+63 x^{26}+7 x^{27}))
 $}
   \bc $ \mbox{--------------------------------------    } $  \ec
     \noindent
   ${\cal F}^{TG}_{7,2}(x) ={\cal F}^{TG}_{7,5}(x) =  $  \\   \small \parbox[t]{5.8in}{$ \ds
30 x+1136 x^2+11664 x^3+365766 x^4+5281838 x^5+127985958 x^6+2214495516 x^7+47553306474 x^8+889459669478 x^9+18163991799382 x^{10}+350778866259390
x^{11}+7018826147346050 x^{12}+137311199904697394 x^{13}+2724978632865004126 x^{14}+53588649260529868984 x^{15}+1059958419863026829840 x^{16}+20888792565117299348136
x^{17}+412618173146784864901112 x^{18}+8138450397877849657730138 x^{19}+160672850996776147614394756 x^{20}+3170184866337306023656675304 x^{21}+62573569893828338148584827972
x^{22}+1234789651977316214729121486960 x^{23}+24370321173568753474896143092952 x^{24}+480936208782384939197706128173860 x^{25}
 +  \ldots $}
    \bc $ \mbox{--------------------------------------    } $  \ec
     \noindent
\parbox[t]{5.8in}{ ${\cal F}^{TG}_{7,3}(x) = {\cal F}^{TG}_{7,4}(x) = $ \\  \small $ \ds
2x (-15-99 x+515 x^2+19678 x^3+5209 x^4-643315 x^5-679399 x^6+8497104 x^7+5597117 x^8-72162939 x^9+81359410 x^{10}+372723641 x^{11}-1374105581
x^{12}-605370011 x^{13}+7017383133 x^{14}-1228664409 x^{15}-
18147310002 x^{16}+6393165275 x^{17}+26715714916 x^{18}-11222971387 x^{19}-22578171631 x^{20}+10522127180 x^{21}+10866240778 x^{22}-5344380437
x^{23}-3041644711 x^{24}+1293688512 x^{25}+511424493 x^{26}-110852572 x^{27}-31933238 x^{28}+1560783 x^{29}-1355067 x^{30}-368263 x^{31}+50617 x^{32}+12670 x^{33}+385 x^{34}) \left. \right/$ }\\
\parbox[t]{5.8in}{$ \ds
((-1+27 x-131 x^2-319 x^3+1511 x^4-598 x^5-1473 x^6+740 x^7+55 x^8)
(1+13 x-72 x^2-1030 x^3+1281 x^4+25514 x^5-10606 x^6-289247 x^7+70298 x^8+1694810 x^9-310132 x^{10}-5443990 x^{11}+538711 x^{12}+9882395
x^{13}+197366 x^{14}-10127513 x^{15}-1486842 x^{16}+5648377 x^{17}+
1517144 x^{18}-1533577 x^{19}-597207 x^{20}+147951 x^{21}+84616 x^{22}-1225 x^{23}-4389 x^{24}-336 x^{25}+63 x^{26}+7 x^{27}))
 $}
   \bc $ \mbox{--------------------------------------    } $  \ec
     \noindent
   \parbox[t]{5.8in}{ ${\cal F}^{TG}_{7,3}(x) ={\cal F}^{TG}_{7,4}(x) =  $  \\   \small $ \ds
30 x+618 x^2+16382 x^3+282368 x^4+6242980 x^5+114844424 x^6+2375085764 x^7+45491597510 x^8+915087730818 x^9+17840615564606 x^{10}+354820611593730
x^{11}+6968077366210606 x^{12}+137946609575486742 x^{13}+2717011641917464824 x^{14}+53688457838042204844 x^{15}+1058707502097050507466 x^{16}+20904466559904256508804
x^{17}+412421752194947373969974 x^{18}+8140911688520111488953128 x^{19}+160642008082884457694695846 x^{20}+3170571356074785654054128186 x^{21}+62568726767188747969016427850
x^{22}+1234850341076421412471622715496 x^{23}+24369560677167046111278862657074 x^{24}+480945738559287010379566254808738 x^{25}
 +  \ldots $}

   \bc $ \mbox{--------------------------------------    } $  \ec

\noindent
\parbox[t]{5.8in}{$ \ds {\cal F}^{KB}_{7,0}(x) = {\cal F}^{KB}_{7,1}(x) ={\cal F}^{KB}_{7,2}(x) ={\cal F}^{KB}_{7,3}(x) ={\cal F}^{KB}_{7,4}(x) = {\cal F}^{KB}_{7,5}(x) ={\cal F}^{KB}_{7,6}(x) =$ \\
$ \ds
\frac{2 x (-27+262 x+957 x^2-6044 x^3+2990 x^4+8838 x^5-5180 x^6-440 x^7)}{-1+27 x-131 x^2-319 x^3+1511 x^4-598 x^5-1473 x^6+740
x^7+55 x^8}$}

  \bc $ \mbox{--------------------------------------    } $  \ec
\noindent  \parbox[t]{5.8in}{$ \ds {\cal F}^{KB}_{7,0}(x) = {\cal F}^{KB}_{7,1}(x) ={\cal F}^{KB}_{7,2}(x) ={\cal F}^{KB}_{7,3}(x) ={\cal F}^{KB}_{7,4}(x) = {\cal F}^{KB}_{7,5}(x) ={\cal F}^{KB}_{7,6}(x) =$ \\
$ \ds  54 x+934 x^2+16230 x^3+310718 x^4+6040924 x^5+118584826 x^6+2335206032 x^7+46047350318 x^8+908456723040 x^9+17926321479704 x^{10}+353762659613648
x^{11}+6981456906125426 x^{12}+137779721623533238 x^{13}+2719108662179278428 x^{14}+53662217078257540780 x^{15}+1059036595803307294350 x^{16}+20900344457391068849264
x^{17}+412473419050676537420584 x^{18}+8140264335619150746929476 x^{19}+160650120668531433288830648 x^{20}+3170469701270440091385512318 x^{21}+62570000632547804065968070596
x^{22}+1234834378456746859506879340754 x^{23}+24369760706176232820477934929698 x^{24}+480943232004084226889532274151474 x^{25} +  \ldots $}

  \bc $ \mbox{--------------------------------------    } $  \\ $ \mbox{--------------------------------------    } $ \ec

   \noindent  \parbox[t]{5.8in}{  ${\cal F}^{TG}_{8,0}(x) = $  \\ $
2 x (1+ 1155 x- 12424 x^2- 384908 x^3+ 3652149 x^4+ 39134559 x^5- 397978970 x^6- 1325446682 x^7+ 19166357954 x^8- 4538659130 x^9- 385677721008
x^{10}+ 853933290216 x^{11}+ 2271727081046 x^{12}- 8951163335278 x^{13}- 1303075634316 x^{14}+ 32748415590308 x^{15}- 16992774066247 x^{16}- 56226830515941 x^{17}+ 42301303866136 x^{18}+ 53709870617604 x^{19}- 40205849888523
x^{20}- 32521295586161 x^{21}+ 17363173153318 x^{22}+
12143542140342 x^{23}- 2895586231932 x^{24}- 2156940376804 x^{25}+ 138822154976 x^{26}+ 146741481824 x^{27}- 1366377600 x^{28}- 3443059968
x^{29}-38983680 x^{30}+16671744 x^{31})   \left. \right/  $ \\ $
((-1+x) (1+x) (-1+3 x) (-1+9 x) (1+9 x) (1+6 x+x^2) (1+10 x+9 x^2+2 x^3) (1-8 x+20 x^2-16 x^3+x^4)(1-10
x+29 x^2-26 x^3+4 x^4) (1-45 x+415 x^2-1171 x^3+1264 x^4-512 x^5+64 x^6)(1+28 x+234 x^2+776 x^3+1213 x^4+964 x^5+390 x^6+72 x^7+4 x^8)) $ \\
 $-16 x^2 (210-167438 x^2+40334531 x^4-4621804256 x^6+300002141022 x^8-12034896543418 x^{10}+313705813053022 x^{12}-5495025669054056
x^{14}+66442956971597534 x^{16}-567362486710974600 x^{18}+3486823149562258817 x^{20}-15656403239930721918 x^{22}+51952386721895018640 x^{24}-128453782496846285630 x^{26}+237955842490205821743 x^{28}-331260008529177488392
x^{30}+346819322354225766064 x^{32}-272717367155082963572 x^{34}+160497980150868303944 x^{36}-70270735446821798072 x^{38}+22683671153127961776 x^{40}-5329748411900492656 x^{42}+895330914574433520
x^{44}-104854721754153696 x^{46}+8245570080796608 x^{48}-409892218061056 x^{50}+11569362093056 x^{52}-148430451200 x^{54}+472817664 x^{56}) \left. \right/ $ \\ $((-1+2 x) (1+2 x) (-1-2 x+2 x^2)(-1+2 x+2 x^2) (-1+6 x-5 x^2+x^3) (1+6 x+5 x^2+x^3) (-1+8
x-6 x^2-4 x^3+2 x^4) (-1-8 x-6 x^2+4 x^3+2 x^4) (1-38 x+265 x^2-509 x^3+264 x^4)(1+38 x+265 x^2+509 x^3+264 x^4)
(-1+12 x-37 x^2+41 x^3-16 x^4+2 x^5) (1+12 x+37 x^2+41 x^3+16 x^4+2 x^5) (1-28 x+276 x^2-1310 x^3+3381 x^4-4998 x^5+4269
x^6-2044 x^7+508 x^8-56 x^9+2 x^{10})
(1+28 x+276 x^2+1310 x^3+3381 x^4+4998 x^5+4269 x^6+2044 x^7+508 x^8+56 x^9+2 x^{10})) $ \\ $ -8 x^2 (147-33512 x^2+1954689 x^4-48736404 x^6+618388328 x^8-4267898812 x^{10}+16333809700 x^{12}-35136967144 x^{14}+44267544512
x^{16}-34076687840 x^{18}+16407177664 x^{20}-4932389632 x^{22}+893827072 x^{24}-
88756224 x^{26}+3686400 x^{28})  \left. \right/$ \\ $ ((-1+2 x) (1+2 x) (-1+3 x) (1+3 x) (1-5 x+2 x^2)(1-4 x+2 x^2)
(-1-2 x+2 x^2)(-1+2 x+2 x^2) (1+4 x+2 x^2) (1+5 x+2 x^2) (-1+21 x-26 x^2+8 x^3)(1+21 x+26 x^2+8 x^3)(-1+8 x-6 x^2-4 x^3+2 x^4) (-1-8 x-6 x^2+4 x^3+2 x^4))  $ \\$
 \ds + \frac{16 x^2 (11-81 x^2)}{1-82 x^2+81 x^4} +
  \frac{8 x^2}{1-4 x^2}$ }

    \bc $ \mbox{--------------------------------------    } $  \ec  \noindent
   \parbox[t]{5.8in}{ ${\cal F}^{TG}_{8,0}(x) = $ \\ $ \ds    2 x+7074 x^2+28322 x^3+3500970 x^4+40904442 x^5+2995602834 x^6+48506694658 x^7+2901094068042 x^8+55552654729898 x^9+2938658810744994 x^{10}+63145273515340370
x^{11}+3049469798814523722 x^{12}+71652138555461676474 x^{13}+3218226920211928101138 x^{14}+81272482607385417610722 x^{15}+3441994018739506356665514 x^{16}+92175870041745601933702730
x^{17}+3722302328182703609943479250 x^{18}+104539748840959783379179264498 x^{19}+4062808105933711353421113028170 x^{20}+118561430934917292221051125848154 x^{21}+4468567183903910361175202236507650
x^{22}+134463648376982602891252560891661954 x^{23} + 4945840131592037075156768922329963514 x^{24}+152498730067609316115438024585263736042 x^{25}+ \ldots $}

        \bc $ \mbox{--------------------------------------    } $  \ec  \noindent
   \parbox[t]{5.8in}{ ${\cal F}^{TG}_{8,1}(x) ={\cal F}^{TG}_{8,7}(x) = $ \\ $ \ds
   -2 x (35-352 x-8820 x^2+31894 x^3+678486 x^4+16722 x^5-28665874 x^6+11376898 x^7+580383868 x^8-1422165502 x^9-1155510466 x^{10}+6534180246
x^{11}-970156690 x^{12}-13680938914 x^{13}+8245651762 x^{14}+
8896179374 x^{15}-5387736095 x^{16}-3166721746 x^{17}+1081209174 x^{18}+605907460 x^{19}-34699780 x^{20}-32512224 x^{21}-1247616
x^{22}+155520 x^{23}) \left. \right/ $\\ $
((-1+x) (-1+3 x) (-1+9 x) (1+9 x) (1+6 x+x^2) (1-8 x+20 x^2-16 x^3+x^4)(1-45 x+415 x^2-1171 x^3+1264
x^4-512 x^5+64 x^6)(1+28 x+234 x^2+776 x^3+1213 x^4+964 x^5+390 x^6+72 x^7+4 x^8))
- $ \\
$16 x (7-4835 x^2+777672 x^4-53700277 x^6+1973210074 x^8-42111638648 x^{10}+530559598533 x^{12}-3824608351502 x^{14}+16263660067215
x^{16}-41026773171432 x^{18}+58406879392463 x^{20}-36858830222949 x^{22}-11435596336624 x^{24}+36679846968046 x^{26}-26764580426334 x^{28}+9779554720360 x^{30}-1896056631852 x^{32}+175950474200 x^{34}-4784086536
x^{36}-166360080 x^{38}+2917200 x^{40}) \left. \right/ $\\
$((-1+6 x-5 x^2+x^3)(1+6 x+5 x^2+x^3) (-1+8 x-6 x^2-4 x^3+2 x^4) (-1-8 x-6 x^2+4 x^3+2 x^4)
(1-38 x+265 x^2-509 x^3+264 x^4) (1+38 x+265 x^2+509 x^3+264 x^4) (1-28 x+276 x^2-1310 x^3+3381 x^4-4998 x^5+4269 x^6-2044 x^7+508 x^8-56 x^9+2 x^{10}) (1+28 x+276 x^2+1310 x^3+3381
x^4+4998 x^5+4269 x^6+2044 x^7+508 x^8+56 x^9+2 x^{10})) - $ \\ $ \ds
8 x (7-1243 x^2+30344 x^4-527732 x^6+4970334 x^8-18158020 x^{10}+26844948 x^{12}-18432776 x^{14}+6174656 x^{16}-948800 x^{18}+49920
x^{20})    \left. \right/ ((-1+2 x) (1+2 x) (-1+3 x) (1+3 x) (1-4 x+2 x^2) (1+4 x+2 x^2) (-1+21 x-26 x^2+8 x^3) (1+21 x+26
x^2+8 x^3) (-1+8 x-6 x^2-4 x^3+2 x^4) (-1-8 x-6 x^2+4 x^3+2 x^4))
+ $ \\ $ \ds
\frac{16 x \left(1+9 x^2\right)}{(-1+x) (1+x) (-1+9 x) (1+9 x)}
+\frac{4 x}{1-4 x^2}$}
        \bc $ \mbox{--------------------------------------    } $  \ec  \noindent
   \parbox[t]{5.8in}{ ${\cal F}^{TG}_{8,1}(x) ={\cal F}^{TG}_{8,7}(x) = $ \\ $ \ds
 258 x+906 x^2+122970 x^3+1193458 x^4+98122050 x^5+1432602986 x^6+92170349130 x^7+1647401456738 x^8+91916712721410 x^9+1874451697845274
x^{10}+94419822463975098 x^{11}+2127485426337270546 x^{12}+98878616824501947666 x^{13}+2413267137345084941770 x^{14}+105087917204613414240762 x^{15}+2737063321207611201239234
x^{16}+113047671005824033157621058 x^{17}+3104204990401867912554208122 x^{18}+122846314863361522066752950730 x^{19}+3520567590733086151373754801522
x^{20}+134623435267957989067437505160946 x^{21}+3992769309773621586556190357722666 x^{22}+148558323044818455419976063830003610 x^{23}+4528303974210709640776752177341476642
x^{24}+164867809702181958255884858725382062098 x^{25} + \ldots  $}

 \bc $ \mbox{--------------------------------------    } $  \ec  \noindent
   \parbox[t]{5.8in}{ ${\cal F}^{TG}_{8,2}(x) = {\cal F}^{TG}_{8,6}(x) =$ \\ $ \ds
2 x (-21+301 x+2142 x^2-31558 x^3-85695 x^4+1226087 x^5+2599204 x^6-46907676 x^7+143510797 x^8-97239221 x^9-304897698 x^{10}+687865258
x^{11}-544697417 x^{12}+142538593 x^{13}+62742304 x^{14}-63141192 x^{15}+
18193424 x^{16}-2640960 x^{17}+62208 x^{19})\left. \right/ $  \\ $((-1+x) (1+x) (-1+3 x) (-1+9 x) (1+9 x)(1+6 x+x^2) (1+10
x+9 x^2+2 x^3)(1-10 x+29 x^2-26 x^3+4 x^4)(1-45 x+415 x^2-1171 x^3+1264 x^4-512 x^5+64 x^6)) + $
 \\ $
16 x^2 \left(-100+6734 x^2 \right. -290409 x^4+7598896 x^6-87806785 x^8+600533949 x^{10}-2497352040 x^{12}+5307004114 x^{14}-3877527102 x^{16}-1464694334
x^{18}+3686927028 x^{20}-2013217248 x^{22}+477885104 x^{24}-47721632 x^{26}  +1672704 x^{28}) \left. \right/ $ \\ $((-1+2 x) (1+2 x) (-1-2 x+2 x^2) (-1+2
x+2 x^2)(-1+6 x-5 x^2+x^3) (1+6 x+5 x^2+x^3)(1-38 x+265 x^2-509 x^3+264 x^4)(1+38 x+265 x^2+509 x^3+264
x^4) (-1+12 x-37 x^2+41 x^3-16 x^4+2 x^5) (1+12 x+37 x^2+41 x^3+16 x^4+2 x^5))
+ $ \\
$16 x^2 (-43-187 x^2+19686 x^4-194658 x^6+631266 x^8-844368 x^{10}+513280 x^{12}-137984 x^{14}+13824 x^{16}) \left. \right/ $ \\ $((-1+2 x) (1+2
x) (-1+3 x) (1+3 x) (1-5 x+2 x^2)(-1-2 x+2 x^2)(-1+2 x+2 x^2) (1+5 x+2 x^2)(-1+21 x-26 x^2+8
x^3)(1+21 x+26 x^2+8 x^3)) +$ \\ $ \ds
\frac{160 x^2}{(-1+x) (1+x) (-1+9 x) (1+9 x)} \
-\frac{8 x^2}{(-1+2 x) (1+2 x)}$}
 \bc $ \mbox{--------------------------------------    } $  \ec  \noindent
   \parbox[t]{5.8in}{ ${\cal F}^{TG}_{8,2}(x) = {\cal F}^{TG}_{8,6}(x) =$ \\ $ \ds
   42 x+3618 x^2+39522 x^3+3128370 x^4+43440602 x^5+2927798394 x^6+49136765762 x^7+2886264949986 x^8+55716321149994 x^9+2935165787515578
x^{10}+63188436935121298 x^{11}+3048615263959972914 x^{12}+71663576269352269946 x^{13}+3218012820930136965210 x^{14}+81275517882661740507362 x^{15}+3441939475524946178698050
x^{16}+92176675886656076462974410 x^{17}+3722288264942729324525277738 x^{18}+104539962816630877105039922482 x^{19}+4062804448343867896109857271730
x^{20}+118561487754176728036632187133978 x^{21}+4468566226712191347899136905075514 x^{22}+134463663465005452261663283406005890 x^{23}+4945839879990147997782340020550314978
x^{24}+152498734074161580730944989626029361002 x^{25}+ \ldots  $}
\bc $ \mbox{--------------------------------------    } $  \ec  \noindent
   \parbox[t]{5.8in}{ ${\cal F}^{TG}_{8,3}(x) = {\cal F}^{TG}_{8,5}(x) = $ \\ $ \ds
-2 x (15+412 x-9492 x^2-55978 x^3+870046 x^4+2529242 x^5-27049786 x^6-38174246 x^7+436907996 x^8-66374206 x^9-3321440314 x^{10}+5209671710
x^{11}+2740680742 x^{12}-9419375018 x^{13}+41160282 x^{14}+ 8248514518 x^{15}-2000260459 x^{16}-3066704782 x^{17}+914663662 x^{18}+528379964 x^{19}-82926724 x^{20}-26623200 x^{21}+867456
x^{22}+155520 x^{23})  \left. \right/ $ \\ $
((-1+x) (-1+3 x) (-1+9 x) (1+9 x) (1+6 x+x^2) (1-8 x+20 x^2-16 x^3+x^4) (1-45 x+415 x^2-1171 x^3+1264
x^4-512 x^5+64 x^6) (1+28 x+234 x^2+776 x^3+1213 x^4+964 x^5+390 x^6+72 x^7+4 x^8)) + $ \\
$
16 x (-1-1558 x^2+290772 x^4-18543808 x^6+568017419 x^8-8628095973 x^{10}+42904133425 x^{12}+366587839840 x^{14}-4324466854832 x^{16}+17596586188137
x^{18}-39661828333732 x^{20}+56167093516791 x^{22}-51844948050210 x^{24}+30943111617414 x^{26}-11455460450038 x^{28}+2304439605736 x^{30}-110372301292 x^{32}-46175831064 x^{34}+8206127304
x^{36}-413798736 x^{38}+3773616 x^{40}) \left. \right/ $ \\ $
((-1+6 x-5 x^2+x^3) (1+6 x+5 x^2+x^3) (-1+8 x-6 x^2-4 x^3+2 x^4) (-1-8 x-6 x^2+4 x^3+2 x^4)
(1-38 x+265 x^2-509 x^3+264 x^4) (1+38 x+265 x^2+509 x^3+264 x^4) (1-28 x+276 x^2-1310 x^3+3381 x^4-4998 x^5+4269 x^6-2044 x^7+508 x^8-56 x^9+2 x^{10}) (1+28 x+276 x^2+1310 x^3+3381 x^4+4998
x^5+4269 x^6+2044 x^7+508 x^8+56 x^9+2 x^{10}))
- $ \\
$ 8 x (3+397 x^2-5048 x^4+41044 x^6-390886 x^8+2407564 x^{10}-3292980 x^{12}+1718856 x^{14}-239168 x^{16}-57664 x^{18}+13056 x^{20}) \left. \right/ $ \\ $ ((-1+2
x) (1+2 x) (-1+3 x) (1+3 x) (1-4 x+2 x^2) (1+4 x+2 x^2) (-1+21 x-26 x^2+8 x^3) (1+21 x+26 x^2+8 x^3)
(-1+8 x-6 x^2-4 x^3+2 x^4) (-1-8 x-6 x^2+4 x^3+2 x^4))
+ $ \\ $ \ds
\frac{16 x (1+9 x^2)}{(-1+x) (1+x) (-1+9 x) (1+9 x)} + \frac{4 x}{1-4 x^2} $}
 \bc $ \mbox{--------------------------------------    } $  \ec  \noindent
   \parbox[t]{5.8in}{ ${\cal F}^{TG}_{8,3}(x) = {\cal F}^{TG}_{8,5}(x) =$ \\ $ \ds
   90 x+1514 x^2+94362 x^3+1387122 x^4+92162898 x^5+1485327978 x^6+90828455850 x^7+1661455264354 x^8+91600780892322 x^9+1878185846862106
x^{10}+94343121781431066 x^{11}+2128477108273623058 x^{12}+98859556057492818690 x^{13}+2413530478272486804170 x^{14}+105083094553825097045610 x^{15}+2737133250428499047824578
x^{16}+113046434109082928272163778 x^{17}+3104223559811864961759678074 x^{18}+122845994423835057869228842458 x^{19}+3520572521759994096242175873138
x^{20}+134623351644902885173940369410098 x^{21}+3992770619186522756263173185080106 x^{22}+148558301108128862764536464874694458 x^{23}+4528304321919647817704326349628896546
x^{24}+164867803926291157295966137332587684850 x^{25}+ \ldots  $}
\bc $ \mbox{--------------------------------------    } $  \ec  \noindent
   \parbox[t]{5.8in}{ ${\cal F}^{TG}_{8,4}(x) = $ \\ $ \ds
2 x (41-413 x-15840 x^2+50300 x^3+2359725 x^4-4375985 x^5-165478450 x^6+443450094 x^7+5192096930 x^8-21582548570 x^9-53441593856
x^{10}+399917164216 x^{11}-319969463706 x^{12}-1690918438062 x^{13}+ 2931087071332 x^{14}+2518195738036 x^{15}-7381107127935 x^{16}-1396275636325 x^{17}+9358426269312 x^{18}-5633688660 x^{19}-6653330191507 x^{20}-1343499553
x^{21}+2667240608782 x^{22}+222728854430 x^{23}-542256944460 x^{24}-96086497060 x^{25}+43324394784 x^{26}+10305897696 x^{27}-1079294080 x^{28}-305921280 x^{29}+165888 x^{30}+746496
x^{31})) \left. \right/ $ \\ $
((-1+x) (1+x) (-1+3 x) (-1+9 x) (1+9 x) (1+6 x+x^2) (1+10 x+9 x^2+2 x^3) (1-8 x+20 x^2-16 x^3+x^4) (1-10
x+29 x^2-26 x^3+4 x^4) (1-45 x+415 x^2-1171 x^3+1264 x^4-512 x^5+64 x^6) (1+28 x+234 x^2+776 x^3+1213 x^4+964 x^5+390 x^6+72 x^7+4 x^8))
+ $\\ $
32 x^2 (-34+288 x^2+951531 x^4-127953686 x^6+7707175326 x^8-251997212110 x^{10}+4572837437749 x^{12}-40496311634298 x^{14}+8552238075793
x^{16}+3581342748552186 x^{18}-39624588209529114 x^{20}+231144282506449358 x^{22}-858372624857577962 x^{24}+2163429734007596828 x^{26}-3814083013353228292 x^{28}+4773225202601871392 x^{30}-4256752027902923568
x^{32}+2684197421453858678 x^{34}-1166840204758900552 x^{36}+328560423973435556 x^{38}-49409819933986792 x^{40}-437884684114088 x^{42}+1751272126622784 x^{44}-350647188754800
x^{46}+35628237884640 x^{48}-2185939682944 x^{50}+87861671296 x^{52}-2103543040 x^{54}+13381632 x^{56}) \left. \right/ $ \\ $
((-1+2 x) (1+2 x) (-1-2 x+2 x^2) (-1+2 x+2 x^2) (-1+6 x-5 x^2+x^3) (1+6 x+5 x^2+x^3) (-1+8
x-6 x^2-4 x^3+2 x^4) (-1-8 x-6 x^2+4 x^3+2 x^4) (1-38 x+265 x^2-509 x^3+264 x^4) (1+38 x+265 x^2+509 x^3+264 x^4)(-1+12 x-37 x^2+41 x^3-16 x^4+2 x^5) (1+12 x+37 x^2+41 x^3+16 x^4+2 x^5) (1-28 x+276 x^2-1310 x^3+3381 x^4-4998 x^5+4269
x^6-2044 x^7+508 x^8-56 x^9+2 x^{10})(1+28 x+276 x^2+1310 x^3+3381 x^4+4998 x^5+4269 x^6+2044 x^7+508 x^8+56 x^9+2 x^{10}))- $ \\ $8 x^2 (83-4480 x^2+221313 x^4-6117684 x^6+87506312 x^8-649648852 x^{10}+2459686052 x^{12}-4746232984 x^{14}+5239933088 x^{16}-3510660768
x^{18}+1460030784 x^{20}-375567616 x^{22}+57466880 x^{24}-4681728 x^{26}+147456 x^{28}) \left. \right/ $ \\ $ ((-1+2 x) (1+2 x) (-1+3 x) (1+3 x) (1-5 x+2 x^2)
(1-4 x+2 x^2) (-1-2 x+2 x^2) (-1+2 x+2 x^2) (1+4 x+2 x^2) (1+5 x+2 x^2) (-1+21 x-26 x^2+8
x^3) (1+21 x+26 x^2+8 x^3) (-1+8 x-6 x^2-4 x^3+2 x^4) (-1-8 x-6 x^2+4 x^3+2 x^4)) + $ \\
$ \ds \frac{160 x^2}{(-1+x) (1+x) (-1+9 x) (1+9 x)}
-\frac{8 x^2}{(-1+2 x) (1+2 x)} $}
 \bc $ \mbox{--------------------------------------    } $  \ec  \noindent
   \parbox[t]{5.8in}{ ${\cal F}^{TG}_{8,4}(x) = $ \\ $ \ds
82 x+2898 x^2+46498 x^3+2891562 x^4+45545722 x^5+2868866562 x^6+49729274306 x^7+2872061882730 x^8+55876850613994 x^9+2931718484241138
x^{10}+63231342283558674 x^{11}+3047764139803694010 x^{12}+71674992873372235130 x^{13}+3217798980157715938530 x^{14}+81278551435034414932258 x^{15}+3441884952158681678200266
x^{16}+92177481591073602079199370 x^{17}+3722274203243322471377657394 x^{18}+104540176780849548385092275378 x^{19}+4062800790874696129687192838202
x^{20}+118561544572502736215457104022554 x^{21}+4468565269529996976224046889468770 x^{22}+134463678552952226901453862332146114 x^{23}+4945839628389015474829403515287198570
x^{24}+152498738080707645357839362774243546922 x^{25}
+ \ldots $}

  \bc $ \mbox{--------------------------------------    } $  \ec
   \noindent  \parbox[t]{5.8in}{ ${\cal F}^{KB}_{8,0}(x) =  {\cal F}^{KB}_{8,2}(x) = {\cal F}^{KB}_{8,4}(x) =  {\cal F}^{KB}_{8,6}(x) =  $  \\ $
 2 x(-19+44 x+6760 x^2-32292 x^3-265118 x^4+1305548 x^5-1068528 x^6-1614876 x^7+2507865
x^8-798040 x^9-64800 x^{10}+33696 x^{11}) \left. \right/ $ \\ $(
(-1+x) (1+x) (-1+9 x) (1+9 x)(1+6 x+x^2) (1-45 x+415 x^2-1171 x^3+1264 x^4-512
x^5+64 x^6))+$ \\ $ \ds
8 x (-8+1005 x^2-27576 x^4+346147 x^6-267177 x^8-156480 x^{10}+53526 x^{12}) \left. \right/ $ \\ $((-1+6 x-5 x^2+x^3) (1+6 x+5 x^2+x^3)
(1-38 x+265 x^2-509 x^3+264 x^4) (1+38 x+265 x^2+509 x^3+264 x^4))
-   $ \\ $\ds  \frac{8 x (5-103 x^2+2436 x^4-5868 x^6+1968 x^8)}{(-1+2 x) (1+2 x) (-1+3 x) (1+3 x) (-1+21 x-26 x^2+8 x^3) (1+21
x+26 x^2+8 x^3)}+ $ \\ $ \ds \frac{16 x (1+9 x^2)}{(-1+x) (1+x) (-1+9 x) (1+9 x)}+\frac{4 x}{1-4 x^2} $}
 \bc $ \mbox{--------------------------------------    } $  \ec
   \noindent  \parbox[t]{5.8in}{  ${\cal F}^{KB}_{8,0}(x) =  {\cal F}^{KB}_{8,2}(x) = {\cal F}^{KB}_{8,4}(x) =  {\cal F}^{KB}_{8,6}(x) =  $  \\ $
162 x+1394 x^2+107262 x^3+1303578 x^4+95024862 x^5+1460029826 x^6+91489840926 x^7+1654514467114 x^8+91757972005254 x^9+1876325746040594
x^{10}+94381409360938158 x^{11}+2127981832165582650 x^{12}+98869081357268915166 x^{13}+2413398853562380348834 x^{14}+105085505467437020151702 x^{15}+2737098289524095588329034
x^{16}+113047052524099117573839606 x^{17}+3104214275407055708525782322 x^{18}+122846154640896586534969435350 x^{19}+3520570056270855454733052763738
x^{20}+134623393456211599142446833273510 x^{21}+3992769964482041713214111755986370 x^{22}+148558312076455933216017147636514830 x^{23}+4528304148065338262126693573363986282
x^{24}+164867806814235121979950116018384551862 x^{25}  + \ldots
   $} \bc $ \mbox{--------------------------------------    } $  \ec
   \noindent  \parbox[t]{5.8in}{  ${\cal F}^{KB}_{8,1}(x) =  {\cal F}^{KB}_{8,3}(x) = {\cal F}^{KB}_{8,5}(x) =  {\cal F}^{KB}_{8,7}(x) =  $  \\ $
- 2 x (-27+438 x+2322 x^2-27866 x^3-122068 x^4+704506 x^5-392098 x^6-1315518 x^7+1666463 x^8-501048 x^9-48672
x^{10}+23328 x^{11}) \left. \right/ $ \\ $ ((-1+x) (1+x) (-1+9 x) (1+9 x) (1+6 x+x^2) (1-45 x+415 x^2-1171 x^3+1264 x^4-512 x^5+64 x^6))
- $ \\$  \ds  4 x^2 (470-55846 x^2+1447257 x^4-7171386 x^6+8483145 x^8-3075627 x^{10}+243936 x^{12}) \left. \right/ $ \\ $((-1+6 x-5 x^2+x^3) (1+6
x+5 x^2+x^3) (1-38 x+265 x^2-509 x^3+264 x^4) (1+38 x+265 x^2+509 x^3+264 x^4))
- $ \\ $ \ds  \frac{4 x^2 (201-5433 x^2+27732 x^4-26144 x^6+5760 x^8)}{(-1+2 x) (1+2 x) (-1+3 x) (1+3 x) (-1+21 x-26 x^2+8 x^3) (1+21
x+26 x^2+8 x^3)}
- $ \\ $ \ds \frac{4 x^2 (-41+81 x^2)}{(-1+x) (1+x) (-1+9 x) (1+9 x)}+ \frac{8 x^2}{1-4 x^2}   $}
 \bc $ \mbox{--------------------------------------    } $  \ec
   \noindent  \parbox[t]{5.8in}{  ${\cal F}^{KB}_{8,1}(x) =  {\cal F}^{KB}_{8,3}(x) = {\cal F}^{KB}_{8,5}(x) =  {\cal F}^{KB}_{8,7}(x) =  $  \\ $
54 x+4086 x^2+39870 x^3+3148902 x^4+43450454 x^5+2928951690 x^6+49136936686 x^7+2886335354070 x^8+55716311712582 x^9+2935170243809226
x^{10}+63188435179050334 x^{11}+3048615551774372274 x^{12}+71663576075613080886 x^{13}+3218012839803884885586 x^{14}+81275517863718063880910 x^{15}+3441939476780979633744054
x^{16}+92176675884887202375765542 x^{17}+3722288265027681911221986654 x^{18}+104539962816469474926609307454 x^{19}+4062804448349720487906909287442
x^{20}+118561487754162209105685255046678 x^{21}+4468566226712602966494950715892446 x^{22}+134463663465004159455247364224789166 x^{23}+4945839879990177603501558809666430594
x^{24}+152498734074161466529767223663491822854 x^{25} + \ldots
   $}

 \bc $ \mbox{--------------------------------------    } $  \\ $ \mbox{--------------------------------------    } $  \ec  \noindent
\small
   \noindent  \parbox[t]{5.8in}{  ${\cal F}^{TG}_{9,0}(x) = $  \\ $
   -(2x (-1-9800 x+359083 x^2+16427291 x^3-610797257 x^4-10367246765 x^5+419467480970 x^6+3271713224707 x^7-159387915091771 x^8-532793040194260
x^9+38298388185121443 x^{10}+27528294151330576 x^{11}-6270569447266278576 x^{12}+6330866652133000181 x^{13}+734139949111726104650 x^{14}-1646671463137717739484 x^{15}-63589707904290138859679 x^{16}+202605153305015367613358
x^{17}+4180235346070316976909123 x^{18}-16549316774066151856124921 x^{19}-212717703591028771713946565 x^{20}+987196592557237768194072372 x^{21}+8510343085758647848438794430
x^{22}-45039456040594841954111521297 x^{23}-270951310971092477590360357988 x^{24}+1616949814715881221037930917164 x^{25}+6926989952773808495556682950829
x^{26}-46595391796390347805018716401847 x^{27}-143029807464313492758527896722438 x^{28}+1094016814924062628435464659431235 x^{29}+2389956227807518140976599231134577 x^{30}-21176292404353007582432350317178603
x^{31}-32200987369979466783264528354915163 x^{32}+341166273603656291654746808444027176 x^{33}+345092568242648114282783041476327934 x^{34}-4611138156199533636619770727592623594
x^{35}-2834587753227632927274941414952428056 x^{36}+52634633889435772825983348626968761355 x^{37}+15890857035822729086776887751918914308 x^{38}-510306487717336142003564865151071822647
x^{39}-27324678365155749893067330382835184065 x^{40}+4223052141706574917364770209959800039278 x^{41}-608712206598178809588916714383700113406 x^{42}-29958887771534993168817018210559085822548
x^{43}+9020739953725239922620163083622478673412 x^{44}+182881624736372061109142884993629891705916 x^{45} $ \\ $  -77669314780852491882422759538440392914863
x^{46} $ \\ $  - 963840298509073190956930797415637277181057 x^{47} $ \\ $  + 501657036824161407289050322441222062233670 x^{48} $ \\ $  + 4398455890744031925855351032495505883371193 x^{49} $ \\ $  - 2607235304757243219612258403963369571058152
x^{50} $ \\ $  - 17424503752343213730439006353333425448579947 x^{51} $ \\ $  + 11239123592612211062639562778159999105104346 x^{52} $ \\ $  + 60052504197509240391983675422140158245281774
x^{53} $ \\ $  - 40844642854390528671749230032414150257885968 x^{54} $ \\ $  - 180384953234555852463785364855361724601213457 x^{55} $ \\ $  + 126393394998188618722631962173561519719053026
x^{56} $ \\ $  + 472932268656311302299305113288163321238735414 x^{57} $ \\ $  - 335278270632828284263245112930823019579179230 x^{58} $ \\ $  - 1083433544313444881365952188508642523005068109
x^{59} $ \\ $  + 766037676985480927415111468593924900501823951 x^{60} $ \\ $  + 2170366382682205296407762434906709643885957386 x^{61} $ \\ $  - 1512881154229893990745371071933083045523711759
x^{62} $ \\ $  - 3803359331926731601173575617393141913236909440 x^{63} $ \\ $  + 2589771602796994700487014678672589603582230100 x^{64} $ \\ $  + 5830995414946424862562121177906382036656511359
x^{65} $ \\ $  - 3850861209510910475036354303453450580817325600 x^{66} $ \\ $  - 7819291373856199242106778977212668791015616982 x^{67} $ \\ $  + 4982368410441227670377883403072114524318945194
x^{68} $ \\ $  + 9167053748855525034544934831120538350111880652 x^{69} $ \\ $  - 5616585607685453687222510622094190031802133025 x^{70} $ \\ $  - 9388704056669986782487498406887434105465753160
x^{71} $ \\ $  + 5521940417786212009445539719556946163576682827 x^{72} $ \\ $  + 8391947729795989236879659154142149485746478836 x^{73} $ \\ $  - 4737653624282941370684007595500260842382714340
x^{74} $ \\ $  - 6538251345452713068078942885960343048732835523 x^{75} $ \\ $  + 3548052958227782985668965268515165868501505596 x^{76} $ \\ $  + 4433557480976635259284838732269310542392490573
x^{77} $ \\ $  - 2318947371053232849711104818657538657334219389 x^{78} $ \\ $  - 2611971844711786550309516743150809566433200994 x^{79} $ \\ $  + 1321846614354603660037651659426951751024353803
x^{80} $ \\ $  + 1334191278993962390114122100232110558267411345 x^{81} $ \\ $  - 656380259392679891778248299080766897969529065 x^{82} $ \\ $  - 589485711127887576027631906259151530931057378
x^{83} $ } \\
  \noindent  \parbox[t]{5.8in}{$  + 283450691350787209149331063609782519240650072 x^{84} $ \\ $  + 224677172180454596376482191289111161857532481 x^{85} $ \\ $  - 106212550099712195733516319877763466435808558
x^{86} $ \\ $  - 73644608565084126739027705044830155465216009 x^{87} $ \\ $  + 34439023112344323340694497351998623823291655 x^{88} $ \\ $  + 20687758554886147710876930337979324355430384
x^{89} $ \\ $  - 9631385000502561585439191271206784624962829 x^{90} $ \\ $  - 4961168852829536804935377327620615500073172 x^{91} $ \\ $  + 2314629901504294868886192907618384653379998
x^{92} $ \\ $  + 1011240586274196238351415160659890347313734 x^{93} $ \\ $  - 476045699325188881713822005593441880566914 x^{94} $ \\ $  - 174338436438015256755848229178002905245395
x^{95} $ \\ $  + 83417351982555357787029253609794579110355 x^{96} + $ \\ $  25280865375617832798040497534864896365038 x^{97} - 12394586777360308482213056922298187716656 x^{98}   - 3064064857180539080760448553722127024241 x^{99}+1553704891388032396756223000182870623210
x^{100}+308105318378920920842921174948070662487 x^{101}-163424572550351656285793539371759738312 x^{102}-25475679190337193798690592444222485630 x^{103}+14340549440696628740066875031882933475
x^{104}+1712710735758596942785458635211725859 x^{105}-1043260137093016999512250554401757765 x^{106}-92199808860212864167223745220780428 x^{107}+62487654943816613503297369926930645
x^{108}+3883823249084069631509213997618429 x^{109}-3057526712398502455144785071787771 x^{110}-122933510722791389849648662115766 x^{111}+121103155670861695041765346646592
x^{112}+2665737825027239527869597836832 x^{113}-3840436635553736102851239030138 x^{114}-27214348799419765458187216077 x^{115}+96191119182392354333208577518
x^{116}-481454351622633105341159175 x^{117}-1870161222854269116288436671 x^{118}+27388193160562613049845106 x^{119}+27586882126692398139141438 x^{120}-611375980770868106877705 x^{121}-299295232121840994450954
x^{122}+ 8329083449137020167535 x^{123}+2284087833532022975181 x^{124}-72759349025611434882 x^{125}-11442895851123712422 x^{126}+393304622439348567 x^{127}+33291200297708901
x^{128}-1179649024779960 x^{129}-  42085722577626 x^{130}+1479376641960 x^{131})) \left. \right/ $ \\ $((1+3 x) (-1+2 x+7 x^2) (-1+81 x-1792 x^2+7289 x^3+113338 x^4-948939 x^5+891997 x^6+9118681 x^7-25652726 x^8+9992771 x^9+33620979
x^{10}-29903008 x^{11}-9941993 x^{12}+14464685 x^{13}-684910 x^{14}-1263348 x^{15}+43295 x^{16})(-1-27 x-30 x^2+3263 x^3+10255 x^4-128271 x^5-422489 x^6+2034868 x^7+6331921 x^8-14787664 x^9-46140893 x^{10}+49195065 x^{11}+177327440
x^{12}-56000703 x^{13}-357184328 x^{14}-43503000 x^{15}+ 357901011 x^{16}+120196901 x^{17}-183392047 x^{18}-80222831 x^{19}+48816126 x^{20}+23044935 x^{21}-6865887 x^{22}-3172062 x^{23}+503757
x^{24}+213075 x^{25}-17955 x^{26}-6750 x^{27}+243 x^{28}+81 x^{29})(-1-12 x+1296 x^2+6760 x^3-558339 x^4-1161243 x^5+118840379 x^6+34612254 x^7-14636101794 x^8+11650032630 x^9+1139067517383 x^{10}-1675685198967
x^{11}-59425014210170 x^{12}+112413543683178 x^{13}+2171147767245072 x^{14}-4673729741871850 x^{15}-57423564754079904 x^{16}+132752592370932042 x^{17}+1127413588825996376 x^{18}-2710891411914111867
x^{19}-16748804440629896181 x^{20}+41076234964519891361 x^{21}+191073085446307327887 x^{22}-471751936975189881282 x^{23}-1693441623773186077195 x^{24}+4168981592750804837106 x^{25}+11769841572576724100388
x^{26}-28667256121124059466103 x^{27}-64650834761329667094285 x^{28}+154709707930302466051638 x^{29}+282472949704888959670555 x^{30}-659784412595668698793188
x^{31}-986747002345947660623415 x^{32}+2235928540752665118157037 x^{33}+2766102104157402845641392 x^{34}-6048596568103168034002953 x^{35}-6235862859792016465952094 x^{36}+13109354010298653133950159
x^{37}+11311529009445116059397322 x^{38}-22829191668429273215380050 x^{39}-16491985570921054148696568 x^{40}+32012050759806836634743418 x^{41}+19274557174365641463936649
x^{42}-36194090036067799457234367 x^{43}-17977866789394483635892473 x^{44}+33010263268427485989896780 x^{45}+13294805056149648548039391 x^{46}-24269111871425405879756730
x^{47}-7719715827397162309377098 x^{48}+14354944271362017116072472 x^{49}+3467347525245472903206939 x^{50}-6808062303404022976704068 x^{51}-1174671001995294266317512
x^{52}+2576330363368357032796665 x^{53}+285539149598812572013492 x^{54}-772973767306693551468021 x^{55}-43493796133268719517643 x^{56}+182435806074644884579263 x^{57}+1578291814221380714235
x^{58}-33560515290021718006404 x^{59}+1085611778689724348088 x^{60}+4761258378023303618643 x^{61}-319945226855087846232 x^{62}-514768561243945226721 x^{63}+49664633878454946921 x^{64}+41857441719264765459
x^{65}- 5059221729603564351 x^{66}-2523288763225221867 x^{67}+356134240037245131 x^{68}+111006922260938166 x^{69}-17560157208954669 x^{70}-3498962768541387
x^{71}+605417634344259 x^{72}+77202421595082 x^{73}-  14421262679676 x^{74}-1155401508177 x^{75}+232152034824 x^{76}+11204841699 x^{77}-2436730722 x^{78}-65443302 x^{79}+15699744
x^{80}+201609 x^{81}-55404 x^{82}-243 x^{83}+81 x^{84}))  $
    \bc $ \mbox{--------------------------------------    } $  \ec  \noindent }
 \\
  \noindent  \parbox[t]{5.8in}{
${\cal F}^{TG}_{9,1}(x) ={\cal F}^{TG}_{9,8}(x)  =  $ \\
$ \ds 2x(256-9523 x-425476 x^2+15516100 x^3+267654407 x^4-10137498520 x^5-86210435846 x^6+3657020332670 x^7+15599413664458 x^8-836292224795891
x^9-1490639437226235 x^{10}+131094693618246140 x^{11}+
20976358801026762 x^{12}-14815609503165511484 x^{13}+14492478755925630436 x^{14}+1250575592131503419406 x^{15}-2271988501858134682240 x^{16}-80970413199362959442624
x^{17}+
198898988548906024176885 x^{18}+4108844693427633645615839 x^{19}-12150496891363701675253969 x^{20}-166437784650071719589558637 x^{21}+558739345231834539436075547
x^{22}+
5468369973384975789385493599 x^{23}- $ \\ $ 20134072158417162774086404948 x^{24}-147750598875752939567215299638 x^{25}+583709057127073169103879791510
x^{26}+3320946768587065127907745387944 x^{27}-
13877311455886941191075848438065 x^{28}-62658295593998069225665025082929 x^{29}+274487699834608070500714875287505 x^{30}+998867403404942479967869877460361
x^{31}-
4566428265605950238348696101620719 x^{32}-13510132251931951616965396477885966 x^{33}+64410621533342852521655954176030862 x^{34}+155376601145314837770328687399012315
x^{35}-
774762131817861834853075851810609218 x^{36}-1520461003637141051553960597728704459 x^{37}+7979306252735603247315458712928648651 x^{38}+12653914204973825498400623595573464153
x^{39}-
70564270344673990326372139554453504754 x^{40}-89439474182377408689229748336634539082 x^{41}+536960482652747303931383391550264875643 x^{42}+535621450181306275017553446602923056181
x^{43}-
3521915369904598389768992931495198981586 x^{44}-2707520536475096285516014157639669063662 x^{45}+19941622219504651944162038449558536789212 x^{46}+11480984932881191144649176349360767005150
x^{47}-
97618148897501308156163488447298504013429 x^{48}-40397739782677654984144072525505515248274 x^{49}+ $ \\
$ 413742533309376061550231726327049486928382
x^{50}+$ \\ $115496607610595444066426688329271091874423 x^{51}-$ \\ $
1520533097062640973758522438455020518390002 x^{52}-$ \\ $255809501294899644856083752911674901285635 x^{53}+$ \\ $4852281970792844264066622510725183392942243
x^{54}+$ \\ $
378681801357985046954316453403664244570301 x^{55}-$ \\ $13463621437560335371111673718445551950074503 x^{56}-$ \\ $78718146032641225653212252196724609685067
x^{57}+$ \\ $
32521337726560030688531324592108575265981475 x^{58}-$ \\ $1651817900389582126258771587697633383429601 x^{59}-$ \\ $68456895074337938780038057564187225262617447
x^{60}+$ \\ $
6440670982751893282261277102199892124029666 x^{61}+$ \\ $125682081401107225593375491985839763200040569 x^{62}-$ \\ $15841100492120447680067399537769336637444593
x^{63}-$ \\ $
201375398182943627967965590076896887434807580 x^{64}+$ \\ $29904901914589641363919869944683676054976748 x^{65}+$ \\ $281698984295293927652899230541442671627458542
x^{66}-$ \\ $
45958967954255302257914413194599996067072438 x^{67}-$ \\ $344093609485730064450467471751146711287987586 x^{68}+$ \\ $59012854435261524887207805037030841974746836
x^{69}+$ \\ $
366982297737121921439062026766298726578617288 x^{70}-$ \\ $64196281289772335491808082360583094148327719 x^{71}-$ \\ $341629127493946825515270959139230647709145696
x^{72}+$ \\ $
59663405859467602954138972684198769698030151 x^{73}+$ \\ $277440828487324889553426577489262846382276270 x^{74}-$ \\ $47636462010951632528927910464520074934927354
x^{75}-$ \\ $
196406629311498817796232165055300113951824228 x^{76}+$ \\ $32801026862515441374834240263055383321908676 x^{77}+$ \\ $121080677017851731403575544976125066953471108
x^{78}-$ \\ $
19534356322584265992196134386337807177924585 x^{79}-$ \\ $64920613619166573214746021493294384712149760 x^{80}+$ \\ $10084157728533266563086982972641680100823915
x^{81}+$ \\ $
30228844205052614736407790438450170638135615 x^{82}-$ \\  $ 4520335900319551526789576365649057982052473 x^{83}-$ \\ $12201361576343292063684127121733914101032788
x^{84}+$ \\$
1761858337107910678532883426113231012214205 x^{85}+$ } \\
\parbox[t]{5.8in}{  $4260085359740534051688678789905890550774063 x^{86}-$ \\ $597587782849770481763897967486230851111151
x^{87}-$ \\ $
1283407364111767833124140228695015719906450 x^{88}+$ \\ $176408971409475384767276189214102963646835 x^{89}+$ \\ $332636884549440503765301009973289437578163
x^{90}-$ \\ $45285347823218354185604151043163568598857 x^{91}-$ \\ $ \ds
73915682432345613699249975400553006996844 x^{92}+10087448764957430552694919252562460165399 x^{93}+14024959035464121143031586576454174029923
x^{94}-1942663998428242508893071990051902697090 x^{95}-
2261496459716668799978123661194784794757 x^{96}+321749955767205283765511886870682360263 x^{97}+308167910462815456767564735079985130867 x^{98}-45520900151234895464942959860657177702
x^{99}-
35254283577574707707768022441668089992 x^{100}+5457692019799603423053969955717477800 x^{101}+3359491704405954218279641111223658603 x^{102}-549599214119596216665325469504489403
x^{103}-
264169841958960468901923458682046944 x^{104}+46042435024809013901270009519688051 x^{105}+16941988825835758858950012199612656 x^{106}-3176315605841964497522380112175234
x^{107}-
872711463468836289901577299082829 x^{108}+178486559593162069024680082022466 x^{109}+35327941631953592253811555024494 x^{110}-8071817641852705088270697144027
x^{111}-
1084481308801276995119276027790 x^{112}+289698452417658032438373993051 x^{113}+23478102343958042392800657822 x^{114}-8110050663068985394153808811
x^{115}-285270912875643824296125819 x^{116}+
173081504892834190211730111 x^{117}-1058754832038077762527839 x^{118}-2724484919094654297427530 x^{119}+126321035614224190035696 x^{120}+29983974235231632304338
x^{121}-2751491806187380745094 x^{122}-
207397402857267286743 x^{123}+33241924908973835892 x^{124}+637564544815655484 x^{125}-238291722911777220 x^{126}+1701602497586205
x^{127}+937355756130927 x^{128}-19311690472965 x^{129}-1538244814239 x^{130}+41756598765 x^{131})     \left.    \right/ $ \\
$ \ds ((1+3 x) (-1+2 x+7 x^2) (-1+81 x-1792 x^2+7289 x^3+113338 x^4-948939 x^5+891997 x^6+9118681 x^7-25652726 x^8+9992771 x^9+33620979
x^{10}-29903008 x^{11}-9941993 x^{12}+14464685 x^{13}-684910 x^{14}-1263348 x^{15}+43295 x^{16})
(-1-27 x-30 x^2+3263 x^3+10255 x^4-128271 x^5-422489 x^6+2034868 x^7+6331921 x^8-14787664 x^9-46140893 x^{10}+49195065 x^{11}+177327440
x^{12}-56000703 x^{13}-357184328 x^{14}-43503000 x^{15}+357901011 x^{16}+120196901 x^{17}-183392047 x^{18}-80222831 x^{19}+48816126 x^{20}+23044935 x^{21}-6865887 x^{22}-3172062 x^{23}+503757
x^{24}+213075 x^{25}-17955 x^{26}-6750 x^{27}+243 x^{28}+81 x^{29})
(-1-12 x+1296 x^2+6760 x^3-558339 x^4-1161243 x^5+118840379 x^6+34612254 x^7-14636101794 x^8+11650032630 x^9+1139067517383 x^{10}-1675685198967
x^{11}-59425014210170 x^{12}+112413543683178 x^{13}+
2171147767245072 x^{14}-4673729741871850 x^{15}-57423564754079904 x^{16}+132752592370932042 x^{17}+
$ \\ $
1127413588825996376 x^{18}-2710891411914111867
x^{19}-16748804440629896181 x^{20}+
41076234964519891361 x^{21}+191073085446307327887 x^{22}-471751936975189881282 x^{23}-1693441623773186077195 x^{24}+4168981592750804837106 x^{25}+11769841572576724100388
x^{26}-
28667256121124059466103 x^{27}-64650834761329667094285 x^{28}+154709707930302466051638 x^{29}+282472949704888959670555 x^{30}-659784412595668698793188
x^{31}-986747002345947660623415 x^{32}+
2235928540752665118157037 x^{33}+2766102104157402845641392 x^{34}-6048596568103168034002953 x^{35}-6235862859792016465952094 x^{36}+13109354010298653133950159
x^{37}+11311529009445116059397322 x^{38}-
22829191668429273215380050 x^{39}-16491985570921054148696568 x^{40}+32012050759806836634743418 x^{41}+19274557174365641463936649 x^{42}-36194090036067799457234367
x^{43}-
17977866789394483635892473 x^{44}+33010263268427485989896780 x^{45}+13294805056149648548039391 x^{46}-24269111871425405879756730 x^{47}-7719715827397162309377098
x^{48}+
14354944271362017116072472 x^{49}+3467347525245472903206939 x^{50}-6808062303404022976704068 x^{51}-1174671001995294266317512 x^{52}+2576330363368357032796665
x^{53}+285539149598812572013492 x^{54}-
772973767306693551468021 x^{55}-43493796133268719517643 x^{56}+182435806074644884579263 x^{57}+1578291814221380714235 x^{58}-33560515290021718006404
x^{59}+1085611778689724348088 x^{60}+
4761258378023303618643 x^{61}-319945226855087846232 x^{62}-514768561243945226721 x^{63}+49664633878454946921 x^{64}+41857441719264765459 x^{65}-5059221729603564351
x^{66}-2523288763225221867 x^{67}+
356134240037245131 x^{68}+111006922260938166 x^{69}-17560157208954669 x^{70}-3498962768541387 x^{71}+605417634344259 x^{72}+77202421595082 x^{73}-14421262679676
x^{74}-1155401508177 x^{75}+
232152034824 x^{76}+11204841699 x^{77}-2436730722 x^{78}-65443302 x^{79}+15699744 x^{80}+201609 x^{81}-55404 x^{82}-243 x^{83}+81
x^{84}))
$
    \bc $ \mbox{--------------------------------------    } $  \ec } \noindent \\
\parbox[t]{5.8in}{ ${\cal F}^{TG}_{9,2}(x) = {\cal F}^{TG}_{9,7}(x)=$ \\
$ \ds
2 x (31+2969 x-178588 x^2-2861945 x^3+180708818 x^4+1149202778 x^5-85242050741 x^6-226448839081 x^7+23776862205904 x^8+16072225985047
x^9-4394001689464458 x^{10}+2568720303994433 x^{11}+575000137331856909 x^{12}-853237425710411600 x^{13}-55549691402673241247 x^{14}+120611596025123106081
x^{15}+4074100239821587910807 x^{16}-11049124193058988617344 x^{17}-231651086214573892657119 x^{18}+726598274767858525180241 x^{19}+10395903439221716812362362
x^{20}-35967734629047318471860550 x^{21}-374350798195441830980115145 x^{22}+1380643975349217128077557322 x^{23}+10985931159010305843941081111 x^{24}-41994067450210537190323155770
x^{25}-266580571155366898580252852317 x^{26}+1029293042565437005412244801387 x^{27}+5419220993762883134681465449167 x^{28}-20602725029393270159979865339085
x^{29}-93342310884129617069853456607623 x^{30}+340293085937554201075292735099812 x^{31}+1375001344843856823537747969120517 x^{32}-4673427761373051677006233539478180
x^{33}-17450184944820208796486251746885217 x^{34}+53625526187963433125709389656052410 x^{35}+191876012293794229781371262223152326 x^{36}-515020882898039672642880265310157751
x^{37}-1835997549968244259104326149230918764 x^{38}+4131381529856504490320694517824108233 x^{39}+15342951757504366802207846643765090578 x^{40}-27467412642989500928377788927616463520
x^{41}-112320736311313274776222413505735921514 x^{42}+148618757470175356513982212112065730836 x^{43}+722210359114109441086262850089014049051 x^{44}-626589469496183000386335895341108188572
x^{45}-4087246640936655852811390409069436852703 x^{46}+1802437137004171445381319759301492053166 x^{47}+20388166887105599108452731988442338304844
x^{48}-1188949068275551169681319090444271394842 x^{49}-89698130295496733478577574805216574678801 x^{50}-24174504399367301570894868208072473932995
x^{51}+348009073220089551779303460480363214185878 x^{52}+ $ \\ $
186800970253536885265831483040823831777795 x^{53}-$ \\ $1189788117629036097745938209961142642831691
x^{54}-$ \\ $884127707018248102166280265658293890948689 x^{55}+$ \\ $3580219899751001334341190713219406938898021 x^{56}+$ \\ $3195595687319789114805357206158406236895958
x^{57}-$ \\ $9469305671408718050651968929713277672867389 x^{58}-$ \\ $9395397714499887620681608775578311379188775 x^{59}+$ \\ $21983759090035783503570240321665045767556290
x^{60}+$ \\ $23089315278846659080279604287497559865755600 x^{61}-$ \\ $44742511643770297574644269756120419423302276 x^{62}-$ \\ $48106348852914907447023580006937222975631102
x^{63}+$ \\ $79745763317487228620111402954831443544029593 x^{64}+$ \\ $85677698388628384560906639864787981703271280 x^{65}-$ \\ $124357567191443292400320089382851877312229309
x^{66}-$ \\ $131099193749904757818250922133349834981761585 x^{67}+$ \\ $169543046422270372358883325185383808128536414 x^{68}+$ \\ $172883413758867790300329745431481180456699697
x^{69}-$ \\ $201942715025410959758593606214741481886910238 x^{70}-$ \\ $196843406824013496415180594257640298932846526 x^{71}+$ \\ $209998245511058586707360052055251673236621132
x^{72}+
 $ \\ $ \ds
193680684714428101983041090715963104679724793 x^{73}-$ \\ $190507458731589925952197130209633016629575864 x^{74}-$ \\ $164706716016946172149532697814714320132114042
x^{75}+$ \\ $150635545839667647748787809408843420378723324 x^{76}+$ \\ $121000747887454109608038914425883779639961156 x^{77}-$ \\ $103701471821795876139514970310930247576388473
x^{78}-$ \\ $76713468855734350770152122311586591686946320 x^{79}+$ \\ $62071739533568112371533902846993680373626263 x^{80}+$ \\ $41908302241796690551010168537091218135802169
x^{81}-$ \\ $32249914173187990695631054624846877179733594 x^{82}-$ \\ $19688235006606097730929640521620603050572728 x^{83}+$ \\ $14514765545875912407282353930805101026116733
x^{84}+$ \\ $7934646525596436849793390719925058738837479 x^{85}-$ } \\
\parbox[t]{5.8in}{
$5645404037588077033182365290690555678299265 x^{86}-$ \\ $2735292101266018615595276168734759285846976
x^{87}+$ \\ $1892218375039972077225897076936898869102520 x^{88}+$ \\ $803860600136126273282934927210016301528801 x^{89}-$ \\ $544828281962994437234978795022461704880888
x^{90}-$ \\ $200632792292911936949649311442422338786875 x^{91}+$ \\ $134283422448481013810942977728286679847522 x^{92}+$ \\ $42344069526588886372533397904105415336039
x^{93}-28221107701768028425342801953152242413573 x^{94}-7520211315740421283592871876736129118640 x^{95}+5036068825144956148718836661767179757239
x^{96}+1117606598148513717714224303230901592090 x^{97}-759660364399428444911663555421550858341 x^{98}-138084924367825577886967157484745050195 x^{99}+96396944985542559606707193788825753772
x^{100}+14073920724117765760938596023458397860 x^{101}-10236922338603655147424133861099050511 x^{102}-1171781592505633915824572318862001593 x^{103}+904655996709262659832942892054536686
x^{104}+78661555161397042618740910274472546 x^{105}-66112359861020188909244086108822239 x^{106}-4177003654076905873494470430919143 x^{107}+3967113662233114619275635847395231
x^{108}+169958930157680923839847952357424 x^{109}-193844607140658668314657139048268 x^{110}-4965148689969141146255082849882 x^{111}+7636646334105522578743681101642
x^{112}+85361049460727497994792147991 x^{113}-239619583914771636922544009730 x^{114}+184378699519960949414845452 x^{115}+5897359565401568126967397629
x^{116}-  $ \\ $  61956659223934412321402379 x^{117}-111625272308018377588349754 x^{118}+2040430202641315227302757 x^{119}+  $ \\ $ 1583391582962589495924198 x^{120}-38481714006779143468044
x^{121}-16249756963364137225638 x^{122}+  $ \\ $ 467176331902195299924 x^{123}+114749582241566383659 x^{124}-3641175954753495570 x^{125}-516181843115882946
x^{126}+17172088744681188 x^{127}+1292513482761624 x^{128}-42964979521788 x^{129}-1323496592019 x^{130}+41756598765 x^{131}) \left.   \right/    $ \\ $ \ds
((1+3
x)(-1+2 x+7 x^2)(-1+81 x-1792 x^2+7289 x^3+113338 x^4-948939 x^5+891997 x^6+9118681 x^7-25652726 x^8+9992771 x^9+33620979 x^{10}-29903008
x^{11}-9941993 x^{12}+14464685 x^{13}-684910 x^{14}-1263348 x^{15}+43295 x^{16}) (-1-27 x-30 x^2+3263 x^3+10255 x^4-128271 x^5-422489
x^6+2034868 x^7+6331921 x^8-14787664 x^9-46140893 x^{10}+49195065 x^{11}+177327440 x^{12}-56000703 x^{13}-357184328 x^{14}-43503000 x^{15}+357901011
x^{16}+120196901 x^{17}-183392047 x^{18}-80222831 x^{19}+48816126 x^{20}+23044935 x^{21}-6865887 x^{22}-3172062 x^{23}+503757 x^{24}+213075 x^{25}-17955
x^{26}-6750 x^{27}+243 x^{28}+81 x^{29}) (-1-12 x+1296 x^2+6760 x^3-558339 x^4-1161243 x^5+118840379 x^6+34612254 x^7-14636101794 x^8+11650032630
x^9+1139067517383 x^{10}-1675685198967 x^{11}-59425014210170 x^{12}+112413543683178 x^{13}+2171147767245072 x^{14}-4673729741871850 x^{15}-57423564754079904
x^{16}+132752592370932042 x^{17}+1127413588825996376 x^{18}-2710891411914111867 x^{19}-16748804440629896181 x^{20}+41076234964519891361 x^{21}+191073085446307327887
x^{22}-471751936975189881282 x^{23}-1693441623773186077195 x^{24}+4168981592750804837106 x^{25}+11769841572576724100388 x^{26}-28667256121124059466103
x^{27}-64650834761329667094285 x^{28}+154709707930302466051638 x^{29}+282472949704888959670555 x^{30}-659784412595668698793188 x^{31}-986747002345947660623415
x^{32}+2235928540752665118157037 x^{33}+2766102104157402845641392 x^{34}-6048596568103168034002953 x^{35}-6235862859792016465952094 x^{36}+13109354010298653133950159
x^{37}+11311529009445116059397322 x^{38}-22829191668429273215380050 x^{39}-16491985570921054148696568 x^{40}+32012050759806836634743418 x^{41}+19274557174365641463936649
x^{42}-36194090036067799457234367 x^{43}-17977866789394483635892473 x^{44}+33010263268427485989896780 x^{45}+13294805056149648548039391 x^{46}-24269111871425405879756730
x^{47}-7719715827397162309377098 x^{48}+14354944271362017116072472 x^{49}+3467347525245472903206939 x^{50}-6808062303404022976704068 x^{51}-1174671001995294266317512
x^{52}+2576330363368357032796665 x^{53}+285539149598812572013492 x^{54}-772973767306693551468021 x^{55}-43493796133268719517643 x^{56}+182435806074644884579263
x^{57}+1578291814221380714235 x^{58}-33560515290021718006404 x^{59}+1085611778689724348088 x^{60}+4761258378023303618643 x^{61}-319945226855087846232
x^{62}-514768561243945226721 x^{63}+49664633878454946921 x^{64}+41857441719264765459 x^{65}-5059221729603564351 x^{66}-2523288763225221867 x^{67}+356134240037245131
x^{68}+111006922260938166 x^{69}-17560157208954669 x^{70}-3498962768541387 x^{71}+605417634344259 x^{72}+77202421595082 x^{73}-14421262679676 x^{74}-1155401508177
x^{75}+232152034824 x^{76}+11204841699 x^{77}-2436730722 x^{78}-65443302 x^{79}+15699744 x^{80}+201609 x^{81}-55404 x^{82}-243 x^{83}+81 x^{84}))
$
    \bc $ \mbox{--------------------------------------    } $  \ec } \noindent \\
\parbox[t]{5.8in}{ ${\cal F}^{TG}_{9,3}(x) = {\cal F}^{TG}_{9,6}(x)=$ \\
$ \ds
2 x (37+116 x-23347 x^2-1857623 x^3+23054138 x^4+1658327258 x^5-17417701382 x^6-649807102243 x^7+7250324620789 x^8+145892379295639
x^9-1826533024906947 x^{10}-20882240179401760 x^{11}+304971711617604024 x^{12}+1994020382395576351 x^{13}-35899777358788691585 x^{14}-126482024084706850179
x^{15}+3107954076294109597487 x^{16}+4769063027348182649695 x^{17}-203997283075072018280322 x^{18}-30796727137599657444262 x^{19}+10391684355656479304128160
x^{20}-8889790953446751473431800 x^{21}-418692808075182722899176511 x^{22}+677534735097561716635658086 x^{23}+13557081006078163679004027536 x^{24}-30096303941410726327492346153
x^{25}-357572117456073321570928032796 x^{26}+970629421114119355188072797346 x^{27}+7770334598705133902395140968889 x^{28}-24385655353329307903247323759838
x^{29}-140447020509725992912002934371873 x^{30}+494002959452453981192005910339782 x^{31}+2127826709463796246366810653957409 x^{32}-8237450711645200462095793863515701
x^{33}-27188253902642500039450652397258859 x^{34}+114626058616922437235619476025685813 x^{35}+294396135512961980192556588404013907 x^{36}-1344050071175848514152407548203052272
x^{37}-2711440926459327393705605565546421088 x^{38}+13374818665468335020701471366820039873 x^{39}+21303190827817726536861084202487105147 x^{40}-113566908548934930480545351369497111521
x^{41}-143116707967186098828707117011637301005 x^{42}+826317235300924604308880090011290190615 x^{43}+823822805373559725406060735609933523144 x^{44}-5169618364457182037830149789348587769832
x^{45}-4071307680553448788985143986422752984432 x^{46}+27888246117260876974428061303414407314818 x^{47}+$ \\ $17309536584610691517766617348306798760623
x^{48}-$ \\ $130041370174145404585997620607523828835036 x^{49}-$ \\ $63456803833519549992042725911808727002578 x^{50}+$ \\ $525210717726756596993094082616487926160344
x^{51}+$ \\ $201114272119935019403514705790237651321418 x^{52}-$ \\ $1840501651309957697447669266685914146009438 x^{53}-$ \\ $552686500538060751093637947981163333080029
x^{54}+$ \\ $5604289116866772449217279054349337415691054 x^{55}+$ \\ $1321442212373257033736668530226649291870862 x^{56}-$ \\ $14845457980693056804830460113428745911144308
x^{57}-$ \\ $2758718910988368220120863673268295731769638 x^{58}+$ \\ $34240673073616315886807632692993077149353977 x^{59}+$ \\ $5045918324752776979782843081498003092194639
x^{60}-$ \\ $68808008924629424902653118569849935809327403 x^{61}-$ \\ $8107039923472720701621914994517330147693006 x^{62}+$ \\ $120517229401056560197301293293022258976330916
x^{63}+$ \\ $11450048653857330340685697833843219365607214 x^{64}-$ \\ $184009253208087679596679163597644603293154607 x^{65}-$ \\ $-14185430326650963152006871743476054189110730
x^{66}+$ \\ $244898842258228863416490985378354168016898939 x^{67}+$ \\ $15320039924369011905195347260630645952220268 x^{68}-$ \\ $284041241478779349093029310904430679337930648
x^{69}-$ \\ $14254331689316738359831240299313941860932683 x^{70}+$ \\ $286967088390849635071109608801431443079995101 x^{71}+$ \\ $11197732400520137313058811735950130785356399
x^{72}-$ \\ $252384982272215060945883244229103969149946955 x^{73}-$ \\ $7157254869792406692086349601183785585603060 x^{74}+$ \\ $193068914026006736251245981970019452947635808
x^{75}+$ \\ $3418104114406704697507050232820883115647634 x^{76}-$ \\ $128327770987969825505499052077773487791248021 x^{77}-$ \\ $866676417948185498200097129379562121875510
x^{78}+$ \\ $74016396208936238604803388083982718570997751 x^{79}-$ \\ $343244762360597330558729349575909912194103 x^{80}-$ \\ $36988224395917779836206444587956521556424311
x^{81}+$ \\ $609688590350078061889758313752736630349923 x^{82}+$ \\ $15985989888689235493205146921647609138849984 x^{83}-$ \\ $458754682980422641994255289295454118916709
x^{84}-$ \\ $5962740932128082535154159620694597548129155 x^{85}+$} \\
\parbox[t]{5.8in}{ $ \ds
245188667698978289907968803368854501122616 x^{86}+$ \\ $1914899291477973168196809825318011453169712
x^{87}-$ \\ $102538207963238991005653039036052574637783 x^{88}-$ \\ $528046087922705788550263368051218290561957 x^{89}+$ \\ $34721966677282892897306077574246805796453
x^{90}+$ \\ $124657517716325433033826461848132979155649 x^{91}-$ \\ $9663653748921129291559963941103333014753 x^{92}-25109136235865310705212439539798645313822
x^{93}+2225690643221080168123371969412652724585 x^{94}+4299215098313300650501034977190737398552 x^{95}-425333796028671614696987485463114217945 x^{96}-623117616085336186919220078676483385817
x^{97}+67447521779360066018021309585618803671 x^{98}+76087851926994247260583756758172853070 x^{99}-8858331910663314132940890286526034342 x^{100}-7785358123155709143171306133933539774
x^{101}+960043961885231999108424386426197110 x^{102}+663394063022520816346501459443542127 x^{103}-85381995220129919784268078439018598 x^{104}-46742094844290537596318253899040822
x^{105}+6182891448591506876905377453622119 x^{106}+2701150294363639182935799864099408 x^{107}-360678361583665996972799854563252 x^{108}-126837646150147970033595650674437
x^{109}+16698093437864022496266948686496 x^{110}+4788801023595342364085346938979 x^{111}-600174248828417135482901674131 x^{112}-143694131752400660841117412170
x^{113}+16155848412954608122413857637 x^{114}+3386503800730502899294338918 x^{115}-303369068308412780079057189 x^{116}-62104863688059170776717638
x^{117}+3227565770769017117973798 x^{118}+886018513223091179911365 x^{119}+4339109835241241753637 x^{120}-10021783495826405771145 x^{121}-771872417151952497255
x^{122}+93278865090048547914 x^{123}+12830634432227320731 x^{124}-720260973021852279 x^{125}-106840747154565450 x^{126}+4211112043399635 x^{127}+458551859100894
x^{128}-15261801128280 x^{129}-794483135775 x^{130}+23860913580 x^{131}) \left. \right/ $ \\ $
((1+3 x) (-1+2 x+7 x^2)(-1+81 x-1792
x^2+7289 x^3+113338 x^4-948939 x^5+891997 x^6+9118681 x^7-25652726 x^8+9992771 x^9+33620979 x^{10}-29903008 x^{11}-9941993 x^{12}+14464685 x^{13}-684910
x^{14}-1263348 x^{15}+43295 x^{16})(-1-27 x-30 x^2+3263 x^3+10255 x^4-128271 x^5-422489 x^6+2034868 x^7+6331921 x^8-14787664 x^9-46140893
x^{10}+49195065 x^{11}+177327440 x^{12}-56000703 x^{13}-357184328 x^{14}-43503000 x^{15}+357901011 x^{16}+120196901 x^{17}-183392047 x^{18}-80222831
x^{19}+48816126 x^{20}+23044935 x^{21}-6865887 x^{22}-3172062 x^{23}+503757 x^{24}+213075 x^{25}-17955 x^{26}-6750 x^{27}+243 x^{28}+81 x^{29})
(-1-12 x+1296 x^2+6760 x^3-558339 x^4-1161243 x^5+118840379 x^6+34612254 x^7-14636101794 x^8+11650032630 x^9+1139067517383 x^{10}-1675685198967
x^{11}-59425014210170 x^{12}+112413543683178 x^{13}+2171147767245072 x^{14}-4673729741871850 x^{15}-57423564754079904 x^{16}+132752592370932042 x^{17}+1127413588825996376
x^{18}-2710891411914111867 x^{19}-16748804440629896181 x^{20}+41076234964519891361 x^{21}+191073085446307327887 x^{22}-471751936975189881282 x^{23}-1693441623773186077195
x^{24}+4168981592750804837106 x^{25}+11769841572576724100388 x^{26}-28667256121124059466103 x^{27}-64650834761329667094285 x^{28}+154709707930302466051638
x^{29}+282472949704888959670555 x^{30}-659784412595668698793188 x^{31}-986747002345947660623415 x^{32}+2235928540752665118157037 x^{33}+2766102104157402845641392
x^{34}-6048596568103168034002953 x^{35}-6235862859792016465952094 x^{36}+13109354010298653133950159 x^{37}+11311529009445116059397322 x^{38}-22829191668429273215380050
x^{39}-16491985570921054148696568 x^{40}+32012050759806836634743418 x^{41}+19274557174365641463936649 x^{42}-36194090036067799457234367 x^{43}-17977866789394483635892473
x^{44}+33010263268427485989896780 x^{45}+13294805056149648548039391 x^{46}-24269111871425405879756730 x^{47}-7719715827397162309377098 x^{48}+14354944271362017116072472
x^{49}+3467347525245472903206939 x^{50}-6808062303404022976704068 x^{51}-1174671001995294266317512 x^{52}+2576330363368357032796665 x^{53}+285539149598812572013492
x^{54}-772973767306693551468021 x^{55}-43493796133268719517643 x^{56}+182435806074644884579263 x^{57}+1578291814221380714235 x^{58}-33560515290021718006404
x^{59}+1085611778689724348088 x^{60}+4761258378023303618643 x^{61}-319945226855087846232 x^{62}-514768561243945226721 x^{63}+49664633878454946921
x^{64}+41857441719264765459 x^{65}-5059221729603564351 x^{66}-2523288763225221867 x^{67}+356134240037245131 x^{68}+111006922260938166 x^{69}-17560157208954669
x^{70}-3498962768541387 x^{71}+605417634344259 x^{72}+77202421595082 x^{73}-14421262679676 x^{74}-1155401508177 x^{75}+232152034824 x^{76}+11204841699
x^{77}-2436730722 x^{78}-65443302 x^{79}+15699744 x^{80}+201609 x^{81}-55404 x^{82}-243 x^{83}+81 x^{84}))$
    \bc $ \mbox{--------------------------------------    } $  \ec  } \\
\parbox[t]{5.8in}{ ${\cal F}^{TG}_{9,4}(x) = {\cal F}^{TG}_{9,5}(x)=$ \\
$ \ds
2 x (31+602 x-62128 x^2-58391 x^3+34270988 x^4-285377827 x^5-8059003382 x^6+177790238036 x^7+618410529394 x^8-52720475798255
x^9+123512524582362 x^{10}+9704448702305672 x^{11}-41847987558005235 x^{12}-1233105448397528192 x^{13}+6098719240153252291 x^{14}+115383858654790586625
x^{15}-571022906872928214019 x^{16}-8335558852092814790267 x^{17}+38284698185589306324930 x^{18}+481728026053094138134598 x^{19}-1946183370909462881959906
x^{20}-22818007427343440525301891 x^{21}+78196816060271487050861855 x^{22}+897888019651364101278384526 x^{23}-2571362729896620700282934032 x^{24}-29513384773013776775289099986
x^{25}+71221872670999826288486788814 x^{26}+811521320687415415127242509546 x^{27}-1695750562083087963017621966625 x^{28}-18678314840799062874190631743739
x^{29}+35050990985687022136494917074428 x^{30}+360364299102434639565886383276124 x^{31}-629420079142777899235394235948710 x^{32}-5842638826243642457691370704749449
x^{33}+9776264862610987426315534812933455 x^{34}+79876007029710867764051068254909937 x^{35}-130636055605627341007741087359981362 x^{36}-924380571330583710172747815743562106
x^{37}+1495845679305990225160093639189628782 x^{38}+9092089025833971680305452772394630339 x^{39}-14648806418206992446073170945966253940 x^{40}-76307751285063665359668050105532336579
x^{41}+122662870819778111328096620086311910519 x^{42}+548504502704133783250728583747951528543 x^{43}-879059595240098569007623129581037237042 x^{44}-3388291610718301850600761624622016147643
x^{45}+5399927011716624504134817183990138849392 x^{46}+18042901001129661376917216739496915146711 x^{47}-28484216251741782505424257091794045950741
x^{48}-83050338609167435721338619853247026650376 x^{49}+129259919802387560654542295961547523557814 x^{50}+331220362657552916931042314031583957072778
x^{51}-505501919662968615237649815844526775724042 x^{52}-$ \\ $1146854709876173546732874146878392724664946 x^{53}+$ \\ $1706361505310498240935428889886622938002252
x^{54}+$ \\ $3453264181961807015744654464912115034645046 x^{55}-$ \\ $4978787099684469109990480419928978891219014 x^{56}-$ \\ $9053739159698123236542724578977065381130337
x^{57}+$ \\ $12572467013408961417486603793792708214963974 x^{58}+$ \\ $20685972664237534250707926579221660430719186 x^{59}-$ \\ $27506266088296020671511251310888187007016236
x^{60}-$ \\ $41206883029254161321568625742010295742424968 x^{61}+$ \\ $52186889070388129542837432174698053357522283 x^{62}+$ \\ $71569830596167508886445266871716499926763782
x^{63}-$ \\ $85931425830090720836068628754512823117957630 x^{64}-$ \\ $108344764397933829801236245066891148860744613 x^{65}+$ \\ $122880170754388257331558487263426496325261545
x^{66}+$ \\ $142854779958750270533725889429027284949588124 x^{67}-$ \\ $152670962260577911276211698379512123365609206 x^{68}-$ \\ $163881552357509524524997873677791178511382467
x^{69}+$ \\ $164850727062374330365085785299506868797425821 x^{70}+$ \\ $163344789816947261923391516958629047512365245 x^{71}-$ \\ $154696547131462428865119407574750433702116306
x^{72}-$ \\ $141207812031643786085233462443607902927795091 x^{73}+$ \\ $126113657648345685800007320261549058372375129 x^{74}+$ \\ $105647410281875085219152457387827318680106682
x^{75}-$ \\ $89240522194618244088005119943722145425528199 x^{76}-$ \\ $68229849082260969599361519502317406791030427 x^{77}+$ \\ $54731386456054392221266195284707825852014712
x^{78}+$ \\ $37916620206596841759322248618333094180954680 x^{79}-$ \\ $29026687823631807740938289321189220296729480 x^{80}-$ \\ $18061000773875435280552791453437231629650153
x^{81}+$ \\ $13268607664479011329783660785340186235965549 x^{82}+$ \\ $7338965547068730635108321833618810117214721 x^{83}-$ \\ $5204174614680949367097056078540062478516681
x^{84}-$ \\ $2528796174779026440622152570792252181745219 x^{85}+$} \\
\parbox[t]{5.8in}{$1740560729330149347898601197925776748230130 x^{86}+$ \\ $733279596195519438448753494194702437540504
x^{87}-$ \\ $492198695473440115367180945786577355455370 x^{88}-$ \\ $177150997364736816040887788597739907143955 x^{89}+$ \\ $116262710292005292850067102681927755061929
x^{90}+$ \\ $35163164631392939675837449983938085577959 x^{91}-$ \\ $22517426427067004704098734282185266987102 x^{92}-$ \\ $5615388955065139699307482825410977173149
x^{93}+3461626757265782976025446241428824829498 x^{94}+695779812741289700648526772084179309606 x^{95}-393401993279283138159958768904853935916 x^{96}-61822699353433287175557137924848600176
x^{97}+25812270996211253576946595513230789750 x^{98}+2985375898085984475566575780423749771 x^{99}+937369013566333099250034234345529434 x^{100}+106565329356341600420634395456692455
x^{101}-541489212594673337864196311479176765 x^{102}-35940101033480717301336403308550782 x^{103}+86328675926524247747720836894334109 x^{104}+3538747260753841644126614490303756
x^{105}-9074344187155068160071771704407443 x^{106}-189320299469529894463856888507679 x^{107}+707853639339186336678519901542447 x^{108}+3329457270154587620097507014634
x^{109}-42482614550803139963140441822269 x^{110}+344897686005821078120470950138 x^{111}+1984420174540513431179145762441 x^{112}-35905415462483512503632787180
x^{113}-72169164215957833763740271892 x^{114}+1851054193769077238320669635 x^{115}+2029255579724550560138579283 x^{116}-63240177787467630324374907
x^{117}-43525404524172780590079771 x^{118}+1522327290640351898257629 x^{119}+697515553781480029001832 x^{120}-26066459309785602251445 x^{121}-8101345709182717927089
x^{122}+312521014755158955552 x^{123}+65193537548917994607 x^{124}-2525557581716525577 x^{125}-338785060827590559 x^{126}+12837256720911612 x^{127}+1004062961221668
x^{128}-36087360798015 x^{129}-1269809536464 x^{130}+41756598765 x^{131}) \left. \right/ $ \\ $((1+3 x) (-1+2 x+7 x^2) (-1+81 x-1792
x^2+7289 x^3+113338 x^4-948939 x^5+891997 x^6+9118681 x^7-25652726 x^8+9992771 x^9+33620979 x^{10}-29903008 x^{11}-9941993 x^{12}+14464685 x^{13}-684910
x^{14}-1263348 x^{15}+43295 x^{16}) (-1-27 x-30 x^2+3263 x^3+10255 x^4-128271 x^5-422489 x^6+2034868 x^7+6331921 x^8-14787664 x^9-46140893
x^{10}+49195065 x^{11}+177327440 x^{12}-56000703 x^{13}-357184328 x^{14}-43503000 x^{15}+357901011 x^{16}+120196901 x^{17}-183392047 x^{18}-80222831
x^{19}+48816126 x^{20}+23044935 x^{21}-6865887 x^{22}-3172062 x^{23}+503757 x^{24}+213075 x^{25}-17955 x^{26}-6750 x^{27}+243 x^{28}+81 x^{29})
(-1-12 x+1296 x^2+6760 x^3-558339 x^4-1161243 x^5+118840379 x^6+34612254 x^7-14636101794 x^8+11650032630 x^9+1139067517383 x^{10}-1675685198967
x^{11}-59425014210170 x^{12}+112413543683178 x^{13}+2171147767245072 x^{14}-4673729741871850 x^{15}-57423564754079904 x^{16}+132752592370932042 x^{17}+1127413588825996376
x^{18}-2710891411914111867 x^{19}-16748804440629896181 x^{20}+41076234964519891361 x^{21}+191073085446307327887 x^{22}-471751936975189881282 x^{23}-1693441623773186077195
x^{24}+4168981592750804837106 x^{25}+11769841572576724100388 x^{26}-28667256121124059466103 x^{27}-64650834761329667094285 x^{28}+154709707930302466051638
x^{29}+282472949704888959670555 x^{30}-659784412595668698793188 x^{31}-986747002345947660623415 x^{32}+2235928540752665118157037 x^{33}+2766102104157402845641392
x^{34}-6048596568103168034002953 x^{35}-6235862859792016465952094 x^{36}+13109354010298653133950159 x^{37}+11311529009445116059397322 x^{38}-22829191668429273215380050
x^{39}-16491985570921054148696568 x^{40}+32012050759806836634743418 x^{41}+19274557174365641463936649 x^{42}-36194090036067799457234367 x^{43}-17977866789394483635892473
x^{44}+33010263268427485989896780 x^{45}+13294805056149648548039391 x^{46}-24269111871425405879756730 x^{47}-7719715827397162309377098 x^{48}+14354944271362017116072472
x^{49}+3467347525245472903206939 x^{50}-6808062303404022976704068 x^{51}-1174671001995294266317512 x^{52}+2576330363368357032796665 x^{53}+285539149598812572013492
x^{54}-772973767306693551468021 x^{55}-43493796133268719517643 x^{56}+182435806074644884579263 x^{57}+1578291814221380714235 x^{58}-33560515290021718006404
x^{59}+1085611778689724348088 x^{60}+4761258378023303618643 x^{61}-319945226855087846232 x^{62}-514768561243945226721 x^{63}+49664633878454946921
x^{64}+41857441719264765459 x^{65}-5059221729603564351 x^{66}-2523288763225221867 x^{67}+356134240037245131 x^{68}+111006922260938166 x^{69}-17560157208954669
x^{70}-3498962768541387 x^{71}+605417634344259 x^{72}+77202421595082 x^{73}-14421262679676 x^{74}-1155401508177 x^{75}+232152034824 x^{76}+11204841699
x^{77}-2436730722 x^{78}-65443302 x^{79}+15699744 x^{80}+201609 x^{81}-55404 x^{82}-243 x^{83}+81 x^{84}))
$    \bc $ \mbox{--------------------------------------    } $  \ec  } \\
\parbox[t]{5.8in}{ ${\cal F}^{TG}_{9,0}(x) = $ \\ $ \ds  2 x+19682 x^2+93524 x^3+17323226 x^4+329212322 x^5+28432011752 x^6+878082461552 x^7+55552654729898 x^8+2132941421905988 x^9+116306062547543522
x^{10}+4958358771587057828 x^{11}+251329211093805920936 x^{12}+11279496913606980300530 x^{13}+551467123545121317792872 x^{14}+25385504162517911001366644 x^{15}+1219155378352018709924802122
x^{16}+  $ \\ $  56830091153236092352304798696 x^{17}+2705278285367600567705111414600 x^{18}+  $ \\ $ 126891148055785602672485993306402 x^{19}+6014007162408033910079155162601306 x^{20}+  $ \\ $ 282956991958974571162226344697994140
x^{21}+13381716290026149071316106980725715704 x^{22}+630565873382416175092989923883359477120 x^{23}+29788991646554019144315781518130939668488 x^{24}+1404760707866092787952973880341347611130872
x^{25}+  \ldots $}
    \bc $ \mbox{--------------------------------------    } $  \ec  \noindent
\parbox[t]{5.8in}{ ${\cal F}^{TG}_{9,1}(x) = {\cal F}^{TG}_{9,8}(x) = $ \\
$ \ds
512 x+1946 x^2+439202 x^3+6299282 x^4+656565692 x^5+17594491250 x^6+1226485102814 x^7+43937205514832 x^8+2514403471437956 x^9+103599732118131110
x^{10}+5378551863832846706 x^{11}+237352093034087642312 x^{12}+11742882034081066779986 x^{13}+536066318783898296950466 x^{14}+25896591033787222616098172
x^{15}+1202176502875288256476167434 x^{16}+57393772736506764620828358704 x^{17}+2686555983308620473457947952094 x^{18}+127512813472592633115034928692166
x^{19}+5993360903047359045133640862863048 x^{20}+283642590152288706874076231594752178 x^{21}+13358947707870269363501184403403075606 x^{22}+631321970380500838514137593512361615842
x^{23}+29763882285739346946762455633018909249438 x^{24}+1405594548223937510578673458656900828693482 x^{25} + \ldots
$}    \bc $ \mbox{--------------------------------------    } $  \ec  \noindent
\parbox[t]{5.8in}{ ${\cal F}^{TG}_{9,2}(x) = {\cal F}^{TG}_{9,7}(x)=$ \\
$ \ds
62 x+8480 x^2+137072 x^3+14210138 x^4+363462332 x^5+26532003710 x^6+915555394790 x^7+53914258528706 x^8+2176161943348964 x^9+114670545449448656
x^{10}+5007516324469120688 x^{11}+249596894049237312338 x^{12}+11334597592658842230830 x^{13}+549588085827578958461546 x^{14}+25446749939033251837348298
x^{15}+1217097404125626538920277424 x^{16}+56897881395054910975811696630 x^{17}+2703015329182348287645876379988 x^{18}+126966032665145462695913910346544
x^{19}+6011514644444855103579794806875086 x^{20}+283039637604878712217084941098443106 x^{21}+13378968973784682198341681329010893754 x^{22}+630657046658161443304774645270935448424
x^{23}+29785962563889394025535498143753699848088 x^{24}+1404861270062874242661195330285432136118762 x^{25} + \ldots
$}
    \bc $ \mbox{--------------------------------------    } $  \ec
\parbox[t]{5.8in}{ ${\cal F}^{TG}_{9,3}(x) = {\cal F}^{TG}_{9,6}(x)=$ \\
$ \ds
74 x+3266 x^2+262148 x^3+7911458 x^4+544354964 x^5+19596376844 x^6+1131327794972 x^7+46330377148466 x^8+2420680879349042 x^9+106356601817379116
x^{10}+5279958847445383736 x^{11}+240458297140884450428 x^{12}+11636274357637640578604 x^{13}+539526552908365784839508 x^{14}+25779995022870311375877998
x^{15}+1206010136418493293262113074 x^{16}+  $ \\ $ 57265642694742304204932407954 x^{17}+2690792573456768162701834939058 x^{18}+  $ \\ $ 127371723915042979126239574314290
x^{19}+5998037422344075827100765499725548 x^{20}+283487096263156378225598193422631764 x^{21}+  $ \\ $ 13364107174651935060195394772723475284 x^{22}+631150537929216637202743139271246555782
x^{23}+  $ \\ $ 29769573266566030463726598764257042784972 x^{24}+1405405513079478142247631257165439831264914 x^{25} + \ldots
$}
\bc $ \mbox{--------------------------------------    } $  \ec
\parbox[t]{5.8in}{ ${\cal F}^{TG}_{9,4}(x) = {\cal F}^{TG}_{9,5}(x)=$ \\
$ \ds
62 x+3746 x^2+175898 x^3+10217936 x^4+436406162 x^5+22829200760 x^6+1010175275654 x^7+50160909680426 x^8+2287267619640590 x^9+110686584320790536
x^{10}+5133314650417419902 x^{11}+245277146687916214622 x^{12}+11474916366982044692240 x^{13}+544859576007540551875442 x^{14}+25602264782906657432963132
x^{15}+1211899751598998868201637346 x^{16}+57069758021633766370530343418 x^{17}+2697291483166647988938039090980 x^{18}+127155761131991008479105941423036
x^{19}+6005206256008264542881364899900432 x^{20}+283248960369775483820801745840547736 x^{21}+13372013892532434686796439722545979536 x^{22}+630887933367539681987436563158641597776
x^{23}+29778293309209194216031866839553472715330 x^{24}+1405115916111221404744271544679934700480692 x^{25}+ \ldots
$
\bc $ \mbox{--------------------------------------    } $  \ec}  \\
\parbox[t]{5.8in}{ ${\cal F}^{KB}_{9,0}(x) = {\cal F}^{KB}_{9,1}(x)= {\cal F}^{KB}_{9,2}(x)= \ldots = {\cal F}^{KB}_{9,8}(x)=$ \\
$ \ds
-2 x (81-3584 x+21867 x^2+453352 x^3-4744695 x^4+5351982 x^5+63830767 x^6-205221808 x^7+89934939 x^8+336209790 x^9-328933088
x^{10}-119303916 x^{11}+188040905 x^{12}-9588740 x^{13}-18950220 x^{14}+692720 x^{15}) \left. \right/ $ \\ $(-1+81 x-1792 x^2+7289 x^3+113338 x^4-948939 x^5+891997
x^6+9118681 x^7-25652726 x^8+9992771 x^9+33620979 x^{10}-29903008 x^{11}-9941993 x^{12}+14464685 x^{13}-684910 x^{14}-1263348 x^{15}+43295 x^{16})$
    \bc $ \mbox{--------------------------------------    } $  \ec
  ${\cal F}^{KB}_{9,0}(x) = {\cal F}^{KB}_{9,1}(x)= {\cal F}^{KB}_{9,2}(x)= \ldots = {\cal F}^{KB}_{9,8}(x)=$  \\
$ \ds
162 x+5954 x^2+235704 x^3+10509978 x^4+481196722 x^5+22392890660 x^6+1049463249472 x^7+49359794952522 x^8+2325552138201720 x^9+109659221103616794
x^{10}+5173004682647753432 x^{11}+244077563657520119748 x^{12}+11517426401813873331550 x^{13}+543505354511098122072640 x^{14}+25648522857745864224017944
x^{15}+1210391440932092505047153962 x^{16}+57120466761012398267514757872 x^{17}+2695621002621818932450534427948 x^{18}+127211534491703307722312706167998
x^{19}+6003360623788571438605965682391618 x^{20}+283310395637685903715253987621223424 x^{21}+13369976865300532409887251655415617332 x^{22}+630955650005917041901241431720829943204
x^{23}+29776046055262438938714290747872606532484 x^{24}+1405190578091235043157390783155692506293072 x^{25} + \ldots
$     \bc $ \mbox{--------------------------------------    } $  \\ $ \mbox{--------------------------------------    } $  \ec } \newpage
   \noindent  \parbox[t]{5.8in}{  ${\cal F}^{TG}_{10,0}(x) = $  \\ $ -((2 x (-1-9151 x+347945 x^2+32630909 x^3-1111958814 x^4-43059427711 x^5+1454258019400 x^6+27897573145887 x^7-1026234961927855
x^8-9626609938852254 x^9+439838356451493758 x^{10}+
 1628133610322615893 x^{11}-122753309907463820397 x^{12}-12165541576960418735 x^{13}+23396064432092877164330 x^{14}-55479828982332399840627 x^{15}-3156778227985365662914842
x^{16}+
 13305008163630408742288033 x^{17}+310433388026323234253833161 x^{18}-1780846057357121293505876042 x^{19}-22792857035567496860423638641 x^{20}+162922006430776639152654652722
x^{21}+
 1274313922487530381959727996331 x^{22}-10987581502323951838783094283475 x^{23}-55081249560236801856083928794799 x^{24}+569581697130976963801381811678395
x^{25}+
 1860211422063105079751121545305182 x^{26}-23332601101252367030733797918191661 x^{27}-49348269836403883592769356882049772 x^{28}+770618136484896262258099447574152201
x^{29}+
 1026992805510492088903425455653980263 x^{30}-20836036554592199246423868188724971034 x^{31}-16573974562165266647090224837533918371 x^{32}+466721674683114699071654609351639204872
x^{33}+
 200926466489825538448264946086852868981 x^{34}-8742269375230143447817788080202549189973 x^{35}-1676799734991725628732806337912008685148 x^{36}+137936049161637717961352946672178189862174
x^{37}+
 6625227492896686553628879144818552433565 x^{38}- $  \\
 $1843482710437592550239575408748402713583236 x^{39}+ 43644530383607927636339840658529182219497
x^{40}+ $ \\ $20955584957992526063541157506131710655568721 x^{41}- $ \\ $
 861790766400524637156684131612739418597862 x^{42}- $ \\ $203201390724040837312671982470373739395790156 x^{43}+ $ \\ $3831822153971494196536700675415440451263716
x^{44}+ $ \\ $
 1684091069034456953156348867312100123659566069 x^{45}+ $ \\ $52181956441857109176676523459189483560487729 x^{46}- $ \\ $11944024254666937113793585759056577061038638033
x^{47}- $ \\ $
 1173778148424135508231435116156684814132109699 x^{48}+ $ \\ $72546666541158499993317581343874374307863076526 x^{49}+ $ \\ $12675137612308776695357437268529139983044090502
x^{50}- $ \\ $
 377584025518565663291556330183036915312815237865 x^{51}- $ \\ $95808266272635295530324745024226596208209392686 x^{52}+ $ \\ $1685001088194340853348604241492944760818293716422
x^{53}+ $ \\ $
 556584614149605931664617899058800489648174541113 x^{54}- $ \\ $6452604793842113875169329824955125191445807360414 x^{55}- $ \\ $2585893805203607517622630862346213828983636147364
x^{56}+ $ \\ $
 21229132325735453853388899194622136109851492357478 x^{57}+ $ \\ $9816804787372121120299810404061487694729355927931 x^{58}- $ \\ $ \ds 60102559527735875686107702087183652815935728627741
x^{59}- $ \\ $
 30863478638081393444183045432252651407407470115347 x^{60}+  $ \\ $146726216853128534798567966621635711715961294066747 x^{61}+ $ \\ $ 81110592878143379227573850825040143276794720183670
x^{62}- $ \\ $
 309621213005285945544931156045339688800673056118659 x^{63}- $ \\ $179420700319817687802626961833497916657548401686502 x^{64}+ $ \\ $566279477652227705803407878142138840370349063877955
x^{65}+ $ \\ $
 335857008503568516947766715486327625014734116633612 x^{66}- $ \\ $900170464089421945590395323819094883834995058866319 x^{67}- $ \\ $534242939127592612991590306559086913589559561479697
x^{68}+ $ \\ $
 1247093884550591982313533609358694129347689522910186 x^{69}+ $ \\ $724442214016021932843739815397488229702226707072508 x^{70}- $ \\ $1509487343895047224650012935294153419066721459683587
x^{71}- $ \\ $
 839271168212553709169756832163656736950668232441832 x^{72}+ $ \\ $1599572707256452103095507966069918905634929051245092 x^{73}+ $ \\ $831632150351437120693073974786756141316736490976905
x^{74}- $ \\ $
 1486142516369902350036432348249790668123105372025652 x^{75}- $ \\
 $704781821972242576557127944923937138521488523545922 x^{76}+ $ \\ $1211551285099572872747379473132969089206092774952666
x^{77}+ $ \\ $ 509989157853116593058567471481648160183329689566777 x^{78}- $  \\ $866708219687141380076155070123957374746816733077413 x^{79}- $ \\ $313905899485261248470325492192336440677983634506572
x^{80}+ $ } \\   \noindent  \parbox[t]{5.8in}{$
 543696896205934524511646796995027934063312690130012 x^{81}+ $ \\ $163180063313296811897779558945233322917702114677823 x^{82}- $ \\ $298664940720626262100714965046936447441329897633830
x^{83}- $ \\ $
 70713301360488195932621639394412544181287142896122 x^{84}+ $ \\ $143366409062186817764099502330812423796748548496434 x^{85}+ $ \\ $24906704011680864154765370399726626081102437061420
x^{86}- $ \\ $
 59972219544087831608373314199303218526868045124116 x^{87}- $ \\ $6731239318115495847334328999546848337423397208807 x^{88}+ $ \\ $21787376329068211553727304382557845337126346231301
x^{89}+ $ \\ $
 1156125990564889559214368647082722873742098074257 x^{90}- $ \\ $  6845592608181034939712806942210122315516232962599 x^{91}+ $ \\ $23844497421084482974481218881731411456159403536
x^{92}+ $ \\ $
 1850998162622180993633545006202720661679479060631 x^{93}- $ \\ $106469208965877682345326448691473373424462297494 x^{94}- $ \\ $428135945761179213276661103833830436788414323811
x^{95}+ $ \\ $
 46923908040885779396362911701229134077100532333 x^{96}+ $ \\ $84090794684063816224639066567057159073265996804 x^{97}- $ \\ $13520419688157051956572237498076551647579012246
x^{98}- $ \\ $
 13896736815759293984625116732996692615222231381 x^{99}+ $ \\ $2957685154146647389968860264219751525105519082 x^{100}+ $ \\ $1909391759965272903454522450894972712174443452
x^{101}- $ \\ $
 513073792593596873714296154410994534028389867 x^{102}- $ \\ $214588305972369288186442559814146192735891502 x^{103}+ $ \\ $71626195801538526399512872232844091347811728
x^{104}+ $ \\ $
 19252571403473135237214420040871485319832852 x^{105}- $ \\ $8069188152442193498330722485469055366321591 x^{106}- $ \\ $1322708122930232301704108897975588485152163
x^{107}+ $ \\ $
 730760995108537555632664350434153924720548 x^{108}+ $ \\ $63499207149871357540336093402805870542506 x^{109}-52746321582381669438250228722772178106273
x^{110}-1496196721579050079617663944643865674440 x^{111}+
 2996281218014420061168398797091409934070 x^{112}-51980157745494094967471589784766408560 x^{113}-131704170461599489961387419506815291350 x^{114}+7109707286601427648085026491346312800
x^{115}+
 4382330595020159556031761516095691000 x^{116}-369029482469239650589666771909951600 x^{117}-107275945584063726503451845613872800 x^{118}+11816162475785331177592299057128000
x^{119}+
 1860471871970364152365855865536000 x^{120} $ \\ $ \ds -248730207344469916899991298880000 x^{121}-21734158909515171677120768880000 x^{122}+3403380725721404654558468000000
x^{123}+
 160240657641720476168152000000 x^{124}-28732432086756426026768000000 x^{125}-704779451485724979072000000 x^{126}+135499142872993306880000000
x^{127}+2036097084471582720000000 x^{128}-
298168596631449600000000 x^{129}-4771517801267200000000 x^{130}+203108376576000000000 x^{131}+3837984768000000000 x^{132})) \left. \right/ $ \\ $ \ds
 ( (-1+x) (1+x) (1+2 x) (-1+4 x) (1+4 x) ( 1-8 x+4 x^2) ( 1+5 x+5 x^2) ( 1+7 x+8 x^2) ( 1+18 x+63 x^2+50 x^3+13 x^4+x^5) ( -1+34 x-160 x^2+248 x^3-152 x^4+32 x^5) ( 1+34 x+160 x^2+248 x^3+152 x^4+32 x^5)
 ( -1+18 x-91 x^2+110 x^3+175 x^4-515 x^5+455 x^6-175 x^7+25 x^8) ( -1-18 x-91 x^2-110 x^3+175 x^4+515 x^5+455 x^6+175 x^7+25 x^8)
( -1+121 x-3901 x^2+49003 x^3-293115 x^4+896942 x^5-1381585 x^6+970100 x^7-264304 x^8+15552 x^9)
( -1+8 x+14 x^2-194 x^3+302 x^4-65 x^5-120 x^6+62 x^7+4 x^8-7 x^9+x^{10}) ( -1-8 x+14 x^2+194 x^3+302 x^4+65 x^5-120 x^6-62 x^7+4
x^8+7 x^9+x^{10})
 ( 1-36 x+504 x^2-3603 x^3+14416 x^4-33263 x^5+44489 x^6-34794 x^7+16069 x^8-4345 x^9+653 x^{10}-47 x^{11}+x^{12}) ( 1+53 x+951
x^2+7589 x^3+32774 x^4+84127 x^5+133734 x^6+133056 x^7+81936 x^8+30215 x^9+6265 x^{10}+650 x^{11}+25 x^{12})
 ( 1+92 x+3048 x^2+50453 x^3+483576 x^4+2928888 x^5+11832412 x^6+32949126 x^7+64377394 x^8+88750186 x^9+85833700 x^{10}+57206921 x^{11}+25441290
x^{12}+7182529 x^{13}+1203428 x^{14}+111956 x^{15}+5340 x^{16}+120 x^{17}+x^{18})
 ( 1-66 x+1833 x^2-28651 x^3+283174 x^4-1881547 x^5+8719340 x^6-28859906 x^7+69342834 x^8-122339404 x^9+159752386 x^{10}-155169581 x^{11}+112318552
x^{12}-60476870 x^{13}+24066800 x^{14}-6990095 x^{15}+
1450676 x^{16}-207780 x^{17}+19365 x^{18}-1050 x^{19}+25 x^{20}))) $} \\  \noindent
  \parbox[t]{5.8in}{ $ \ds
 -( ( 20 x^2 ( 1395-10498446 x^2+29534328219 x^4-44962185811984 x^6+43313020335654151 x^8-28790160274050440896 x^{10}+13952606364650668570418
x^{12}-5122525183178332979714504 x^{14}+
 1465606225295080443499814135 x^{16}-334001055654838940923381631142 x^{18}+61696895553372946658371608398260 x^{20}-9371065799739996444209867470746342
x^{22}+
 1184503190700523363931367496108174258 x^{24}-125873847120266594700589074184590986574 x^{26}+11344834830006609241007414340813175962903 x^{28}- $ \\ $873842489680814103936875801650855210599904
x^{30}+ $ \\ $
 57907359971798417848233686641575233706022820 x^{32}- $ \\ $3320812268179294358204334604806491772747303602 x^{34}+ $ \\ $165657978475671321362400699182472698198694568952
x^{36}- $ \\ $
 7221545224672198500014257691817957619188402208714 x^{38}+ $ \\ $276228270124730321172471466072121221008101667866506 x^{40}- $ \\ $9304694183914723348006039786741894531472168340301972
x^{42}+ $ \\ $
 276907880953847085086437287897372785475019321356677065 x^{44}- $ \\ $7301576915110758180634769803322294003932348734570932318 x^{46}+ $ \\ $171023510685604924139689623520714942817835675358492548115
x^{48}- $ \\ $
 3566447900281137024307343060578168612205432568968847462682 x^{50}+ $ \\ $66347446909327938830992487845965683100434068427687071237261 x^{52}- $ \\ $1103015887452521380635163175757711698032859550816066654144668
x^{54}+ $ \\ $
 16412373368163821280781570897031453307942086956673492968028772 x^{56}-  $ \\ $ 218859448891740400621199528307947000606528891519988011464042062 x^{58}+  $ \\ $
 2618496631038043496963760995802467321362483006344261374878222375 x^{60}-  $ \\ $ 28134736548930259326557059719903729906254185679109933567244714668 x^{62}+  $ \\ $
 271692816610864608627941120206874649343769324621546506638650315757 x^{64}-  $ \\ $ 2359573015932305442438512344717322830043626761790797714857545826226
x^{66}+  $ \\ $
 18438487574361840147231151293489327092847730619973130347501491544800 x^{68}-129693313979105951923356528018350389884972343260608426027988043534730
x^{70}+
 821348184337626735131004749997957901358062638811657029110805738888567 x^{72}-4684149040659253218080356892686712665895839636325024621901530451155146
x^{74}+
 24058549763509608253643215800785297074909236387032764850969481068523107 x^{76}-111290133750663185113119266316739726324307850446684482518585078820392412
x^{78}+
 463642331187130942872032557996634341912284570942109214236126043294324903 x^{80}-1739494005668525928603987080304993788760050437434022486206499352918312636
x^{82}+
 5876783952491307312500617998657940146305566006583368180407883657137544712 x^{84}- $ \\ $17876873443500286928830963161375413691980437605242812862398760595868799372
x^{86}+
 48959399868922071101775253850186455279797897834511746377256280093323265341 x^{88}- $ \\ $120707614560141400371154017037364619693063066905917611685399626092063367954
x^{90}+ $ \\ $
 267889639134957233391123221383595764297350957130066266269670609241761561628 x^{92}- $ \\ $535153340560911923747416719989046744462199539639218602298156637400325127050
x^{94}+ $ \\ $
 962257798369553642176404310911523528936752524200954022898009542694688066802 x^{96}- $ \\ $1557389676242939690964718764820921304429144257410135379168619027928784877626
x^{98}+ $ \\ $
 2268867499669128738095447565151916396237775176281576049585318798924089803177 x^{100}- $ \\ $ \ds 2975435327786708654457225199791039823504126016407255329525621664495980526412
x^{102}+ $ \\ $
 3512796801961120156367485640538130806660750734606120794614561406863230964953 x^{104}- $ \\ $3733820885516339916939972994788301323114685586021102231112592184576654482250
x^{106}+ $ \\ $
 3573473324143548611591038150435840683680223412926436828745623086816913445526 x^{108}- $ \\ $3079647912702619595172957557057038155388131953918495662022922646795054627874
x^{110}+ $ \\ $
 2390100555657890569842817178175813395058661542528666718340761457188820895825 x^{112}- $ \\ $1670541510225799156648502053116314501707523241628852928156742789632568038398
x^{114}+
 1051555212789828473896482704143053990908213964758242869285324939457517514741 x^{116}-596115883824954436499688752528621606054604169572033126577473154283291889508
x^{118}+
 304313862971460245626129595444719249388134313341162595367372101629392066320 x^{120}-139877736704528344586003947229262673250084069541282964739477455803641233300
x^{122}+
 57879623708654705450983440739182443279867141818597326383494355698585913998 x^{124}- $ } \\  \noindent
  \parbox[t]{5.8in}{$ 21554342757642145440453183250793493101312439796467779036241626625039162688
x^{126}+
 7221395160062704788603662388730656469065539266895520455783818977029641494 x^{128}-2175652316238552701649676552668784158856814678588799693298260718458059374
x^{130}+ $ \\$
 589115020596673091498039722192528620912783838470353405649601454579492541 x^{132}-143272830066040175042410165236815448427153455240621936927209345769202740
x^{134}+
 31270679215976194988613696643856615165810693180943541664002081667052890 x^{136}-6119515563398381264354386635125111650469391712590450777844115312384382
x^{138}+
 1072587998756312977701031692943554845304123415010941155182397176584088 x^{140}-168166977608434260695843529900004682752475667639647969010939937484786
x^{142}+
 23551207523233927325171285179391331040088851514246334934791180951790 x^{144}-2941235801219027721156632256494176557526041731755142360144887869024
x^{146}+
 326937581247772078045617049193150296196128135707331902355426907181 x^{148}-32275463289519537535752963423926680721082658666566950325209921590
x^{150}+
 2822733858020670971524873241108544944320003350642471675840970427 x^{152}-218080241052350094444950750248167634075885214761472487703696344 x^{154}+
 14834988596438495074307143267693640743929737502075137472093912 x^{156}-885203618742010986193286520653698283660735056432402221646040 x^{158}+ $ \\ $
 46130750512598204049027343233885057192922197555890722002980 x^{160}- $ \\ $2088968303602671457805327560112447728375335704774745736550 x^{162}+ $ \\ $81715229105323435143424062634493516702733025349169811700
x^{164}- $ \\ $
 2742152230163637736794448099795976508315245771547802250 x^{166}+ $ \\ $78294331746974078627014438546475178604560005418196125 x^{168}- $ \\ $1883458617179795955823811992656799415438680271348750
x^{170}+ $ \\ $
 37724174593224404947378346603725468691501908587500 x^{172}- $ \\ $620050207121892953400022652194264801552782768750 x^{174}+ $ \\ $8214413735975041444755179568141897103549556250
x^{176}- $ \\ $
 85752762699520608679205773115282855229250000 x^{178}+ $ \\ $685248778001013593988941840285985417500000 x^{180}- $ \\ $4035126800571803976550788405666750000000
x^{182}+ 16631007480485006982183011180000000000 x^{184}-  $ \\ $ 44595164631827946644543120000000000 x^{186}+69561806631999823257600000000000
x^{188}-   $ \\ $ 52374089642845696000000000000 x^{190}+14512627712000000000000000 x^{192})) \left. \right/ $ \\ $ \ds
 ( (-1+x) (1+x) (-1+2 x) (1+2 x) (-1+11 x) (1+11 x) ( 1-3 x+x^2) ( 1+3 x+x^2) ( 1-13 x+11 x^2) ( 1-8 x+11
x^2) ( 1+8 x+11 x^2) ( 1+13 x+11 x^2) ( 1-12 x^2+8 x^3) ( -1+12 x^2+8 x^3)
 ( -1+104 x-2661 x^2+24090 x^3-91993 x^4+158236 x^5-121128 x^6+44736 x^7-7808 x^8+512 x^9) ( 1+104 x+2661 x^2+24090 x^3+91993 x^4+158236
x^5+121128 x^6+44736 x^7+7808 x^8+512 x^9)
 ( -1+5 x+45 x^2-305 x^3+65 x^4+2175 x^5-1800 x^6-4525 x^7+1975 x^8+2500 x^9-875 x^{10}-375 x^{11}+125 x^{12})( -1-5 x+45 x^2+305
x^3+65 x^4-2175 x^5-1800 x^6+4525 x^7+1975 x^8-2500 x^9-875 x^{10}+375 x^{11}+125 x^{12})
 ( 1-22 x-214 x^2+4230 x^3-15001 x^4-2783 x^5+60858 x^6-12548 x^7-46005 x^8+16340 x^9+10377 x^{10}-5171 x^{11}-329 x^{12}+413 x^{13}-27 x^{14}-9
x^{15}+x^{16})
 ( 1+22 x-214 x^2-4230 x^3-15001 x^4+2783 x^5+60858 x^6+12548 x^7-46005 x^8-16340 x^9+10377 x^{10}+5171 x^{11}-329 x^{12}-413 x^{13}-27 x^{14}+9
x^{15}+x^{16})
 ( 1-82 x+2898 x^2-59227 x^3+789803 x^4-7334275 x^5+49339066 x^6-246611784 x^7+930982131 x^8-2681675595 x^9+5927069364 x^{10}-10071216252
x^{11}+13140900495 x^{12}-13114381556 x^{13}+9943609371 x^{14}-
5673467839 x^{15}+2404556811 x^{16}-744137999 x^{17}+164387022 x^{18}-25156085 x^{19}+2562619 x^{20}-164923 x^{21}+6260 x^{22}-125 x^{23}+x^{24})
 ( 1+82 x+2898 x^2+59227 x^3+789803 x^4+7334275 x^5+49339066 x^6+246611784 x^7+930982131 x^8+2681675595 x^9+5927069364 x^{10}+10071216252
x^{11}+13140900495 x^{12}+13114381556 x^{13}+9943609371 x^{14}+
5673467839 x^{15}+2404556811 x^{16}+744137999 x^{17}+164387022 x^{18}+25156085 x^{19}+2562619 x^{20}+164923 x^{21}+6260 x^{22}+125 x^{23}+x^{24})
 ( 1-104 x+4410 x^2-104155 x^3+1561045 x^4-15952817 x^5+116275468 x^6-623060570 x^7+2507306175 x^8-7693787635 x^9+18199448138 x^{10}-33436393552
x^{11}+47933912805 x^{12}-53730741510 x^{13}+
47072037995 x^{14}-32132854819 x^{15}+16992690151 x^{16}-6899340865 x^{17}+2123302560 x^{18}-486469225 x^{19}+80870375 x^{20}-9387875
x^{21}+715250 x^{22}-31875 x^{23}+625 x^{24})
 ( 1+104 x+4410 x^2+104155 x^3+1561045 x^4+15952817 x^5+116275468 x^6+623060570 x^7+2507306175 x^8+7693787635 x^9+18199448138 x^{10}+33436393552
x^{11}+47933912805 x^{12}+53730741510 x^{13}+
47072037995 x^{14}+32132854819 x^{15}+16992690151 x^{16}+6899340865 x^{17}+2123302560 x^{18}+486469225 x^{19}+80870375 x^{20}+9387875
x^{21}+715250 x^{22}+31875 x^{23}+625 x^{24})))$ } \\  \noindent  \parbox[t]{5.8in}{$ \ds
 -( ( 20 x^2 ( 594-1804554 x^2+1861067073 x^4-980843342138 x^6+312271486068241 x^8-65745220858591243 x^{10}+9687425374674323038 x^{12}-1038831237350709728110
x^{14}+83458218228361655487358 x^{16}-
 5138793668486066172756627 x^{18}+247053756207980542996136827 x^{20}-9419200761604590794195183553 x^{22}+288563111422071718470665921122 x^{24}-7183020595820022918356293143191
x^{26}+
 146652249608919689717899944278935 x^{28}-2475119608958112116008084298092578 x^{30}+34758125569891291721733111044716670 x^{32}-408304515747043069333312170291563122
x^{34}+
 4029449548707334703065632689075530706 x^{36}-33520664098879279530846935348951084554 x^{38}+235672643031643784431539474426952002775 x^{40}-1402984966041861133179528890471348738574
x^{42}+
 7080937105693023633976720410573018906241 x^{44}-30320071126392282095132323456378758467169 x^{46}+110172038060616666751347384033916095236232
x^{48}- $ \\ $339654301314797407777153865619599548204754 x^{50}+ $ \\ $
 887985454364155231342962388893289072575681 x^{52}- $ \\ $1967193073973489077747887034156386489102131 x^{54}+ $ \\ $3689432161691671920569060944667608096392527
x^{56}- $ \\ $
 5852119578251451581944352085122158040704060 x^{58}+ $ \\ $7843136224187563187226124828706969966528880 x^{60}- $ \\ $8874030077853675225145693452723432706700712
x^{62}+ $ \\ $
 8470662807268393313340334637925976926072906 x^{64}- $ \\ $6818435626776523787742731932422831662587020 x^{66}+ $ \\ $4627235068497006366393774486518345216647229
x^{68}- $ \\ $
 2647281955658943924493368557219644884136624 x^{70}+ $ \\ $1276826796481948530558612094007072047328791 x^{72}- $ \\ $519184186877237814267298561395368846269221
x^{74}+
 177956576580743269622848780662126557731099 x^{76}-51397351204492036816439862004619872030622 x^{78}+12498597071276042509524782837533790404985
x^{80}-2555674675066458205582629945069159959078 x^{82}+
 438542422244921221766078573463986639711 x^{84}-62973228671754616635759571412982109611 x^{86}+7538270982475572478351765733687307547 x^{88}-748421432953615982992344042701918850
x^{90}+
 61219012082088331470745014081798155 x^{92}-4090101472901437592294988836303100 x^{94}+220699636373609972767521846072825 x^{96}-9477025744173262424135480148125
x^{98}+
 317515492537202322240501006125 x^{100}-8077133667540703264127475000 x^{102}+149998872943707676049412500 x^{104}-1913445101529246015000000 x^{106}+15080477594669686400000
x^{108}-
58396676207056000000 x^{110}+44735868320000000 x^{112})) \left. \right/ $ \\ $ \ds (-1+x) (1+x) (-1+2 x) (1+2 x) (-1+11 x) (1+11 x) ( -1+9 x-6 x^2+x^3) ( 1+9 x+6 x^2+x^3) ( 1-12 x^2+8 x^3)
( -1+12 x^2+8 x^3) ( 1-59 x+402 x^2-863 x^3+572 x^4) ( 1+59 x+402 x^2+863 x^3+572 x^4)
 ( 1-18 x+93 x^2-195 x^3+164 x^4-37 x^5+x^6) ( 1+18 x+93 x^2+195 x^3+164 x^4+37 x^5+x^6) ( 1-33 x+375 x^2-1922 x^3+5192
x^4-7883 x^5+6797 x^6-3252 x^7+834 x^8-105 x^9+5 x^{10})
 ( 1+33 x+375 x^2+1922 x^3+5192 x^4+7883 x^5+6797 x^6+3252 x^7+834 x^8+105 x^9+5 x^{10}) ( -1+5 x+45 x^2-305 x^3+65 x^4+2175 x^5-1800
x^6-4525 x^7+1975 x^8+2500 x^9-875 x^{10}-375 x^{11}+125 x^{12})
 ( -1-5 x+45 x^2+305 x^3+65 x^4-2175 x^5-1800 x^6+4525 x^7+1975 x^8-2500 x^9-875 x^{10}+375 x^{11}+125 x^{12})
(1-22 x-214 x^2+4230 x^3-15001 x^4-2783 x^5+60858 x^6-12548 x^7-46005 x^8+16340 x^9+10377 x^{10}-5171 x^{11}-329 x^{12}+413 x^{13}-27 x^{14}-9
x^{15}+x^{16})
(1+22 x-214 x^2-4230 x^3-15001 x^4+2783 x^5+60858 x^6+12548 x^7-46005 x^8-16340 x^9+10377 x^{10}+5171 x^{11}-329 x^{12}-413
x^{13}-27 x^{14}+9 x^{15}+x^{16}))$  \\$
-((40 x^2 (-66+36093 x^2-5516439 x^4+388236010 x^6-15014461407 x^8+345537603621 x^{10}-4913151745670 x^{12}+43979525538890 x^{14}-251882028246762
x^{16}+944748415928748 x^{18}-2385367130666279 x^{20}+
4170446377769406 x^{22}-5178850852983665 x^{24}+4663663884891106 x^{26}-3092886616570548 x^{28}+1525543183091576 x^{30}-561928143366458 x^{32}+154231144536537
x^{34}-31238675921445 x^{36}+
4578594704312 x^{38}-469127758800 x^{40}+31622131840 x^{42}-1247296000 x^{44}+21504000 x^{46})) \left. \right/ $ \\ $
((-1+4 x) (1+4 x)(-1+34 x-160 x^2+248 x^3-152 x^4+32 x^5)(1+34 x+160 x^2+248 x^3+152 x^4+32 x^5)(-1+18 x-91
x^2+110 x^3+175 x^4-515 x^5+455 x^6-175 x^7+25 x^8)
(-1-18 x-91 x^2-110 x^3+175 x^4+515 x^5+455 x^6+175 x^7+25 x^8)(-1+8 x+14 x^2-194 x^3+302 x^4-65 x^5-120 x^6+62
x^7+4 x^8-7 x^9+x^{10}) (-1-8 x+14 x^2+194 x^3+302 x^4+65 x^5-120 x^6-62 x^7+4 x^8+7 x^9+x^{10}))) $ \
$ \ds  - \frac{20 x^2 (-13+121 x^2)}{(-1+x) (1+x) (-1+11 x) (1+11 x)} + \frac{8 x^2}{1-4 x^2}. $
   \bc $ \mbox{--------------------------------------    } $  \ec  } \noindent
\newpage
\parbox[t]{5.8in}{ ${\cal F}^{TG}_{10,1}(x) = {\cal F}^{TG}_{10,9}(x) = $ \\
 $ \ds
\noindent - 2 x (-126+4714 x+437565 x^2-12611526 x^3-567388479 x^4+13753278349 x^5+385520884470 x^6-8284187953578 x^7-160641003614150
x^8+3215643247348036 x^9+43980670674459483 x^{10}-870993551869846297 x^{11}-8074506245787202467 x^{12}+169625772037234381120 x^{13}+995921851945576442660
x^{14}-24047780344636955674382 x^{15}-82143064794651585210552 x^{16}+2527651631607405145222893 x^{17}+4381806202685160800918991 x^{18}-202495312238122666408663282
x^{19}-123707903557328312369562631 x^{20}+12753493024361174683538336762 x^{21}-  $ \\ $ 1682890068179646582089711519 x^{22}-649696714907241335201343229620
x^{23}+ $ \\ $ 407750580971314579666991272521 x^{24}+27368646178933360013158281840695 x^{25}- $ \\ $  26519445629173125277129099646898 x^{26}-967211735916993519624158100682906
x^{27}+  $ \\ $ 1176901464597591561699677739388018 x^{28}+28900838281550657791369813850233161 x^{29}-40616088351630299146161711679315317 x^{30}-732835110694018892941454065351244804
x^{31}+1135512287366392931640220881030442719 x^{32}+15799142494128738264306882481440048382 x^{33}-26105491550343884048588120518419200419 x^{34}-290073832704770629997335680514994110268
x^{35}+496620724524413814467519288195583692037 x^{36}+4544322241429391928765420772990545048589 x^{37}-7843709988408787645582149802807257247835 x^{38}-60873696920504343519414091014698850003696
x^{39}+ $ \\ $103095577116858005240460087581066904041397 x^{40}+ $ \\ $698595604170265734661315942081013616465696 x^{41}- $ \\ $1129708193863219306235648147833588263568512
x^{42}- $ \\ $6877982050309429179957619985827908643818456 x^{43}+ $ \\ $10334041184834306381779332507166494619734746 x^{44}+ $ \\ $58132302604340536673323040208135718269388369
x^{45}- $ \\ $78976324149437543259900553640138054268911236 x^{46}- $ \\ $421750002060523489070341947802324679311694583 x^{47}+ $ \\ $504402788450555133787241504130514062822892126
x^{48}+ $ \\ $2624761587323542739562610535631750438166150761 x^{49}- $ \\ $2691906469918894092320492985940558738727910243 x^{50}- $ \\ $13999895897935552439860249455745960564552686465
x^{51}+ $ \\ $11999089612286543467874011680943055552808716669 x^{52}+ $ \\ $63943232236245797051918900395814124786677206522 x^{53}- $ \\ $44640872183812406197139214510725225101293543907
x^{54}- $ \\ $249971776271555347747289472912416721033538634114 x^{55}+ $ \\ $138483655156632952275567060779112531466726829591 x^{56}+ $ \\ $836463558039531139582619885408440950298551362918
x^{57}- $ \\ $357773093966464250496510754685314065020434327459 x^{58}- $ \\ $2397572132773692449972745475889858361918040101546 x^{59}+ $ \\ $768528992524023637411936421724095555722647022253
x^{60}+ $ \\ $5894374669015964035425375364530645589183432463532 x^{61}- $ \\ $1369878675319019487740616996254165186121553118035 x^{62}- $ \\ $12451823157746928283548944649431479142417501938179
x^{63}+ $ \\ $2022017238374635811740832236539463905035910815113 x^{64}+ $ \\ $22651018398777680925240151182656344585747949218000 x^{65}- $ \\ $2470836229919363567724020370226514132619460448548
x^{66}- $ \\ $35562588638577680510810869956837080084202151152784 x^{67}+ $ \\ $2518748272826072507089151832184552533824420185028 x^{68}+ $ \\ $48297231429038403721493579182376418837576996542856
x^{69}- $ \\ $2214314443233722111215154885122804984730629745017 x^{70}- $ \\ $56853662633089501387197039794860584577625202506807 x^{71}+ $ \\ $1840060686834467071509838088596287620109406716253
x^{72}+ $ \\ $58108956893985244792626631820819874852827595734482 x^{73}- $ \\ $1663686318314317841079372547640439377944773040445 x^{74}- $ \\ $51633961702169960176764777680735921351860558195322
x^{75}+ $ \\ $1702269096045947570656987398162066715282497575553 x^{76}+ $ \\ $39919921295002750153428727622724287368812740977576 x^{77}- $ \\ $1748460580007129897131836443795052033963692270273
x^{78}- $ \\ $26862395467212628355493055696360534462358413570308 x^{79}+ $ \\ $1602868306377267328085956018975174526702132429113 x^{80}+ $} \\
\parbox[t]{5.8in}{ $15729895582837114540582097103218373920855680131592
x^{81}- $ \\ $1246947738060531925950689457494415226204395086642 x^{82}- $ \\ $8010417440869034806227486695246210894546864191445 x^{83}+ $ \\ $812807805724756266170556008275489454254614708238
x^{84}+ $ \\ $3543809487121956695769622373636215496253184082939 x^{85}- $ \\ $444047531170570566328567191327647026125783327570 x^{86}- $ \\ $1360034665691746377203076235209263222629638097106
x^{87}+ $ \\ $204017601948080197825246313532836477553968615368 x^{88}+ $ \\ $452005466074584079311602817561736138432313847316 x^{89}- $ \\ $79112922645690820922042018468673398611689947423
x^{90}- $ \\ $129838744234419019077702710246793796130819268894 x^{91}+ $ \\ $25968473803066199452124961642296567223184176771 x^{92}+ $ \\ $32167056962134644928083456897538846715468365081
x^{93}- $ \\ $7232427115622104105998830078306131616319083994 x^{94}- $ \\ $6857463982703248151332637844915957945040702486 x^{95}+ $ \\ $1712456199002200066896315799138249491193357028
x^{96}+ $ \\ $1254571553584855464912237849307490250766083294 x^{97}- $ \\ $345327340100126832187138513088378059038657851 x^{98}- $ \\ $196263443310593359268470079121057609170887801
x^{99}+ $ \\ $59400289741450268891445972905820058752203347 x^{100}+ $ \\ $26104619635872942573075637798466637583572162 x^{101}- $ \\ $8722094424514298052035003744342160705370027
x^{102}- $ \\ $2922823833139123594746827968542006899881092 x^{103}+ $ \\ $1091950273483654811578742537601320664026133 x^{104}+ $ \\ $270567900193658604720850900713961997988772
x^{105}- $ \\ $115986992642117906114003026051797096310821 x^{106}- $ \\ $20027321510484134803187989302890303887708 x^{107}+ $ \\ $10345495768987909273727671318275530307593
x^{108}+ $ \\ $1106579110698492290202506674951204294666 x^{109}- $ \\ $761896202381901597750883899954791545308 x^{110}-37490123025815057443063363473317568345 x^{111}+45245548871473318675276948248599232790
x^{112}-69845834029567070116227562026057635 x^{113}-2102011492986728979703912229378590670 x^{114}+97711651716729205271894427916703800 x^{115}+73555151848859315408266337151110000
x^{116}-6454962259090450989492008598241600 x^{117}-1844456054559257765597850962737200 x^{118}+238967514825930526075828819460000 x^{119}+30739514002710801839359729248000
x^{120}-5591717031117774150418451640000 x^{121}-293032539128210872669883680000 x^{122}+82386001607040137727041600000 x^{123}+871059440237948873552000000
x^{124}-722384265447172393584000000 x^{125}+ $ \\ $ 8234562669686880000000000 x^{126}+3391177756974129920000000 x^{127}-58940876047165440000000 x^{128}-  $ \\ $ 7380395951001600000000
x^{129}+54189660569600000000 x^{130}+5954215936000000000 x^{131}+47775744000000000 x^{132}) /  $ \\ $ \ds ((-1+x) (1+x) (1+2 x) (-1+4 x) (1+4
x) (1-8 x+4 x^2) (1+5 x+5 x^2) (1+7 x+8 x^2) (1+18 x+63 x^2+50 x^3+13 x^4+x^5) (-1+34 x-160 x^2+248
x^3-152 x^4+32 x^5) (1+34 x+160 x^2+248 x^3+152 x^4+32 x^5) (-1+18 x-91 x^2+110 x^3+175 x^4-515 x^5+455 x^6-175 x^7+25 x^8)
(-1-18 x-91 x^2-110 x^3+175 x^4+515 x^5+455 x^6+175 x^7+25 x^8) (-1+121 x-3901 x^2+49003 x^3-293115 x^4+896942 x^5-1381585 x^6+970100
x^7-264304 x^8+15552 x^9) (-1+8 x+14 x^2-194 x^3+302 x^4-65 x^5-120 x^6+62 x^7+4 x^8-7 x^9+x^{10}) (-1-8 x+14 x^2+194 x^3+302
x^4+65 x^5-120 x^6-62 x^7+4 x^8+7 x^9+x^{10}) (1-36 x+504 x^2-3603 x^3+14416 x^4-33263 x^5+44489 x^6-34794 x^7+16069 x^8-4345 x^9+653
x^{10}-47 x^{11}+x^{12}) (1+53 x+951 x^2+7589 x^3+32774 x^4+84127 x^5+133734 x^6+133056 x^7+81936 x^8+30215 x^9+6265 x^{10}+650 x^{11}+25
x^{12}) (1+92 x+3048 x^2+50453 x^3+483576 x^4+2928888 x^5+11832412 x^6+32949126 x^7+64377394 x^8+88750186 x^9+85833700 x^{10}+57206921
x^{11}+25441290 x^{12}+7182529 x^{13}+1203428 x^{14}+111956 x^{15}+5340 x^{16}+120 x^{17}+x^{18}) (1-66 x+1833 x^2-28651 x^3+283174 x^4-1881547
x^5+8719340 x^6-28859906 x^7+69342834 x^8-122339404 x^9+159752386 x^{10}-155169581 x^{11}+112318552 x^{12}-60476870 x^{13}+24066800 x^{14}-6990095
x^{15}+1450676 x^{16}-207780 x^{17}+19365 x^{18}-1050 x^{19}+25 x^{20}))+
   $} \\
   \parbox[t]{5.8in}{ $
   \noindent 10 x (-42+313864 x^2-851142342 x^4+1235764748147 x^6-1130051840878650 x^8+711776772859760335 x^{10}-326808647822865467172 x^{12}+113746719527560037690911
x^{14}-30882824693724401421476206 x^{16}+6685785526149426988979143964 x^{18}-1174395530723402676670266857496 x^{20}+169787858338662268550962070725389
x^{22}-20447624321970200692815309411305688 x^{24}+2072486682581050038132580005319863544 x^{26}-178378165031459557413137310844961534002 x^{28}+13140613813827635294551010663030721400270
x^{30}
- $ \\ $ 834361909358337388189829635232605077772794 x^{32}+ $ \\ $ 45949346183864680072918093762551215377347315 x^{34}- $ \\ $ 2207189611066353743874973361342771591837328318
x^{36}+ $ \\ $ 92950077601265825709297627456440947651076556442 x^{38}- $ \\ $ 3447599576641764810918654136816230774514796386460 x^{40}+ $ \\ $ 113099555040864112998132343225638149527092703409572
x^{42}- $ \\ $ 3294037009849222735481930981652981527994459178101348 x^{44}+ $ \\ $ 85467355920785875887506546322683528920065446095040666 x^{46}- $ \\ $ 1981501993103321464977498652187042497986559330536679380
x^{48}+ $ \\ $ 41159223063649420115408008431119917827653388745530863803 x^{50}- $ \\ $ 767733968495092808641072045451177305261207106665326586854 x^{52}+ $ \\ $ 12884088271253271927947296246117730762555055135866016240025
x^{54}- $ \\ $ 194832687457259468087278107720230470490843494153683260959844 x^{56}+ $ \\ $ 2657929857601800051153891612245796291758739851730136878366341 x^{58}- $ \\ $ 32738515584454881345855021030693888247111599798817043104592162
x^{60}+ $ \\ $ 364278662366944196978569695940572497001506478922098037569576304 x^{62}- $ \\ $ 3662481313316366767319493197768343024435170684728101420568864792 x^{64}+ $ \\ $ 33273067189236408214698788940655843200761627850306821734694498691
x^{66}- $ \\ $ 273096217009350901800040911285975831728643715709406174552697244274 x^{68}+ $ \\ $ 2024481149545022365184511724303612665903492826755559296230430391829
x^{70}- $ \\ $ 13549226399353760094452922786305674279247489848279385355134230614622 x^{72}+ $ \\ $ 81831153372179192707289350315822852529937431121635370233968701727658
x^{74}- $ \\ $ 445772438867631837266182716247949117396455096916351783398792708217488 x^{76}+ $ \\ $ 2189182799263529530654515168823671592260568498940750798325264389021197
x^{78}- $ \\ $ 9687562414044150105829596300708578291192808627674188126416367089884512 x^{80}+ $ \\ $ 38611110700755436269236964409736594630710832673889663424913234790832251
x^{82}- $ \\ $ 138546144639359591068732290848007552523448617267006332701427139013129812 x^{84}+ $ \\ $ 447405696394706376198597483305419137272274034181274240871531788595314525
x^{86}- $ \\ $ 1299865841576273613761701443266421230567765761742941402339374338262038198 x^{88}+ $ \\ $ 3396856726106366392645924319574043893374131641404506686507999438374175222
x^{90}- $ \\ $ 7982839265228523478281247823351084560339331794647678554957075955052145470 x^{92}+ $ \\ $ 16868926548951469899928723924287621948430107464872844263693427932028062077
x^{94}- $ \\ $ 32051201701686736796035211969036638398713261706142549038799559577412636628 x^{96}+ $ \\ $ 54756067413833880108800038019197116428476534398815345424142499733199689076
x^{98}- $ \\ $ 84116428856431987450497875214510152632789374971245335234373903935407700272 x^{100}+ $ \\ $ 116209593690478590774299229200101703480209878294271640801103917286808641080
x^{102}- $ \\ $ 144406286502293100272056216024615887671310821635766905142830755770851881228 x^{104}+ $ \\ $ 161434830593758860883462763006696567086788248705659441374774310160696992061
x^{106}- $ \\ $ 162393125192600024300745473933059205037158521537869945356220247541005527076 x^{108}+ $ \\ $ 147025126734291550558084933415921346658856781537816043080259487976085360647
x^{110}- $ \\ $ 119828120546339826383406669606868572885516437140154031617109391756956593132 x^{112}+ $ \\ $ 87932121515729300014343507030655810476533180306989529451900217898082341476
x^{114}- $ \\ $ 58105601281303824810400477131287154123082930771268693941708015656151466080 x^{116}+ $ \\ $ 34578401109947820251526098874007332074505087242622980402791400190538573085
x^{118}- $ \\ $ 18531527768350768903955133894128040095396248937133246986327603756194477178 x^{120}+ $ \\ $ 8943381048386568229770163431879018794275728311067420377392501633845078913
x^{122}- $ \\ $ 3885950097942957253543669426596472161078784818414557879048781683259698910 x^{124}+ $ \\ $ 1519739400721375416874073631095812738914433828454132253328903861353696004
x^{126}- $ \\ $ 534730218820996724487414090898744856830627416881002519575707424200421110 x^{128}+ $ \\ $ 169181449268700732022768576581979072653789357328542536510795271695542822
x^{130}- $} \\
\parbox[t]{5.8in}{
 $ 48097129127070040774771899300916907929305076085635659101947088719076332 x^{132}+ $ \\ $ 12276176832594508346366575366484654498329064751834013738455362036811212
x^{134}- $ \\ $ 2810235064489231077029533098439043286398257023301653096148996475543742 x^{136}+ $ \\ $ 576287812517470580530577828240339641030413956992145114517601453147511
x^{138}- $ \\ $ 105718970529165839532622368262894087751056370743326307527966406064532 x^{140}+ $ \\ $ 17321782246578449507536703928824102708052660661470229487773139661698
x^{142}-2530276925018341797904936864100643494147857521233103374358475270804 x^{144}+328833913321337246618649319418020949345078318804252358692922274802
x^{146}-37930331383452395272392254586869353387290643030741766220069676836 x^{148}+3872723717303709475093538522228919041281720671027582887414515176
x^{150}-348904622825961341705930089455120449886760849754688780952903620 x^{152}+27636507074247403253215204934323013412422159426991326884328925 x^{154}-1916495090976212998851907979462263809055814145275746084290884
x^{156}+ $ \\ $ 10430950519870815402120276740507496451374763236449582800 x^{164}+ $ \\ $ 336343214263458003667277345626013161094409447476815750 x^{166}- $ \\ $ 8984504210731988900438530973514548043759658509323750
x^{168}+ $ \\ $ 194204573884366489143173291826736961135001039337500 x^{170}- $ \\ $ 3267692220850107774517111035703187810808592182500 x^{172}+ $ \\ $ 39541220457391557236964949469685704043566209375
x^{174}- $ \\ $ 266665955532110137591464036389141582138856250 x^{176}- $ \\ $ 880294790033078497219602036905620090000000 x^{178}+ $ \\ $ 47052457070586921926313171546035372500000
x^{180}-600945674879069863433016053011350000000 x^{182}+4329105872907071408304286126000000000 x^{184}-18614930530108161917762400000000000 x^{186}+44979276158405373945600000000000
x^{188}-52407753203528448000000000000 x^{190}+20855305687040000000000000 x^{192})/ $
\\ $\ds ((-1+x) (1+x) (-1+2 x) (1+2 x) (-1+11 x) (1+11
x) (1-3 x+x^2) (1+3 x+x^2) (1-13 x+11 x^2) (1-8 x+11 x^2) (1+8 x+11 x^2) (1+13 x+11 x^2)
(1-12 x^2+8 x^3) (-1+12 x^2+8 x^3) (-1+104 x-2661 x^2+24090 x^3-91993 x^4+158236 x^5-121128 x^6+44736 x^7-7808 x^8+512
x^9) (1+104 x+2661 x^2+24090 x^3+91993 x^4+158236 x^5+121128 x^6+44736 x^7+7808 x^8+512 x^9) (-1+5 x+45 x^2-305 x^3+65 x^4+2175
x^5-1800 x^6-4525 x^7+1975 x^8+2500 x^9-875 x^{10}-375 x^{11}+125 x^{12}) (-1-5 x+45 x^2+305 x^3+65 x^4-2175 x^5-1800 x^6+4525 x^7+1975
x^8-2500 x^9-875 x^{10}+375 x^{11}+125 x^{12}) (1-22 x-214 x^2+4230 x^3-15001 x^4-2783 x^5+60858 x^6-12548 x^7-46005 x^8+16340 x^9+10377
x^{10}-5171 x^{11}-329 x^{12}+413 x^{13}-27 x^{14}-9 x^{15}+x^{16}) (1+22 x-214 x^2-4230 x^3-15001 x^4+2783 x^5+60858 x^6+12548 x^7-46005
x^8-16340 x^9+10377 x^{10}+5171 x^{11}-329 x^{12}-413 x^{13}-27 x^{14}+9 x^{15}+x^{16}) (1-82 x+2898 x^2-59227 x^3+789803 x^4-7334275
x^5+49339066 x^6-246611784 x^7+930982131 x^8-2681675595 x^9+5927069364 x^{10}-10071216252 x^{11}+13140900495 x^{12}-13114381556 x^{13}+9943609371
x^{14}-5673467839 x^{15}+2404556811 x^{16}-744137999 x^{17}+164387022 x^{18}-25156085 x^{19}+2562619 x^{20}-164923 x^{21}+6260 x^{22}-125 x^{23}+x^{24})
(1+82 x+2898 x^2+59227 x^3+789803 x^4+7334275 x^5+49339066 x^6+246611784 x^7+930982131 x^8+2681675595 x^9+5927069364 x^{10}+10071216252 x^{11}+13140900495
x^{12}+13114381556 x^{13}+9943609371 x^{14}+5673467839 x^{15}+2404556811 x^{16}+744137999 x^{17}+164387022 x^{18}+25156085 x^{19}+2562619 x^{20}+164923
x^{21}+6260 x^{22}+125 x^{23}+x^{24}) (1-104 x+4410 x^2-104155 x^3+1561045 x^4-15952817 x^5+116275468 x^6-623060570 x^7+2507306175 x^8-7693787635
x^9+18199448138 x^{10}-33436393552 x^{11}+47933912805 x^{12}-53730741510 x^{13}+47072037995 x^{14}-32132854819 x^{15}+16992690151 x^{16}-6899340865
x^{17}+2123302560 x^{18}-486469225 x^{19}+80870375 x^{20}-9387875 x^{21}+715250 x^{22}-31875 x^{23}+625 x^{24}) (1+104 x+4410 x^2+104155
x^3+1561045 x^4+15952817 x^5+116275468 x^6+623060570 x^7+2507306175 x^8+7693787635 x^9+18199448138 x^{10}+33436393552 x^{11}+47933912805 x^{12}+53730741510
x^{13}+47072037995 x^{14}+32132854819 x^{15}+16992690151 x^{16}+6899340865 x^{17}+2123302560 x^{18}+486469225 x^{19}+80870375 x^{20}+9387875 x^{21}+715250
x^{22}+31875 x^{23}+625 x^{24}))+$ }
 \\
   \parbox[t]{5.8in}{ $ \ds
\noindent - 20 x (12-36233 x^2+35693525 x^4-18015088281 x^6+5544243396574 x^8-1140498928876223 x^{10}+165784297773487726 x^{12}-17659082811422467959
x^{14}+1413747980204751728411 x^{16}-86712812542489990236049 x^{18}+4139783491801797354374336 x^{20}-155976703325674603887820165 x^{22}+4695170776839340840967375255
x^{24}-114132018850569329682605350253 x^{26}+2260921906876186678981281116392 x^{28}-36771687605301104127340111100417 x^{30}+493809392736966628417744981828282
x^{32}-5496514036146995436911463800815519 x^{34}+50801602467190410672105857414795571 x^{36}-389638103580670814183312541635836105 x^{38}+2470500312785634497681191154896104823
x^{40}-12837177451800051118585034431008799804 x^{42}+53707861081585046034005118053286201463 x^{44}-174115790501291591107223590801974694251 x^{46}+393881431087274978644674738635632440874
x^{48}-351103829225154254734893983224668896264 x^{50}-1712727400393175292612084199252226403602 x^{52}+10533391603117541298754751466523385355409 x^{54}-34228559924503375918144129551284191775333
x^{56}+80288657332318482346781605324023689202959 x^{58}- 146569330004818274830798372339984793020723 x^{60}+ $ \\ $ 214485835521386635407406558034397997694437
x^{62}- $ \\ $ 255269964316562780616534033897514913410482 x^{64}+ $ \\ $ 249100865204895186426356477997558201184145 x^{66}- $ \\ $ 200326432092412817356607982959633536599604
x^{68}+ $ \\ $ 133231849164372820331579064825007668419343 x^{70}-  $ \\ $  73467374652032533560455497947111132742429 x^{72}+33652874976731492525893788655422058390791
x^{74}-12822449712227472973878582738869948148858 x^{76}+4066977863853705603986355769215115494264 x^{78}-1073928383588405271419554429292303079964
x^{80}+235941613641636492215234204957809084050 x^{82}-43061906258254333102247139321023972679 x^{84}+6511919119987911315675092886316998195 x^{86}-812746119034732679187931909889357613
x^{88}+83260609150888784743201085359158870 x^{90}-6948602517050959224143088698286450 x^{92}+467658564024256759618996160805700 x^{94}-25038227012225248707574504136100
x^{96}+1046682062822702699778633624000 x^{98}-33276179695900052675350996250 x^{100}+773608690503924003517654375 x^{102}-12328139666261194870390000
x^{104}+118258166454843810700000 x^{106}-440077648384537400000 x^{108}-2116229832156000000 x^{110}+17656673320000000 x^{112})/ $ \\ $((-1+x)
(1+x) (-1+2 x) (1+2 x) (-1+11 x) (1+11 x) (-1+9 x-6 x^2+x^3) (1+9 x+6 x^2+x^3) (1-12 x^2+8 x^3) (-1+12 x^2+8
x^3) (1-59 x+402 x^2-863 x^3+572 x^4) (1+59 x+402 x^2+863 x^3+572 x^4) (1-18 x+93 x^2-195 x^3+164 x^4-37 x^5+x^6)
(1+18 x+93 x^2+195 x^3+164 x^4+37 x^5+x^6) (1-33 x+375 x^2-1922 x^3+5192 x^4-7883 x^5+6797 x^6-3252 x^7+834 x^8-105 x^9+5 x^{10})
(1+33 x+375 x^2+1922 x^3+5192 x^4+7883 x^5+6797 x^6+3252 x^7+834 x^8+105 x^9+5 x^{10}) (-1+5 x+45 x^2-305 x^3+65 x^4+2175 x^5-1800
x^6-4525 x^7+1975 x^8+2500 x^9-875 x^{10}-375 x^{11}+125 x^{12}) (-1-5 x+45 x^2+305 x^3+65 x^4-2175 x^5-1800 x^6+4525 x^7+1975 x^8-2500
x^9-875 x^{10}+375 x^{11}+125 x^{12}) (1-22 x-214 x^2+4230 x^3-15001 x^4-2783 x^5+60858 x^6-12548 x^7-46005 x^8+16340 x^9+10377 x^{10}-5171
x^{11}-329 x^{12}+413 x^{13}-27 x^{14}-9 x^{15}+x^{16}) (1+22 x-214 x^2-4230 x^3-15001 x^4+2783 x^5+60858 x^6+12548 x^7-46005 x^8-16340
x^9+10377 x^{10}+5171 x^{11}-329 x^{12}-413 x^{13}-27 x^{14}+9 x^{15}+x^{16})) $ \\  \\$
 - 10 x (-9+4699 x^2-611640 x^4+39853698 x^6-1654541277 x^8+47171448683 x^{10}-907848961210 x^{12}+11283990656808 x^{14}-87630117751305
x^{16}+424309837572525 x^{18}-1316944054137175 x^{20}+2715321651741897 x^{22}-3844967038911191 x^{24}+3841989868583469 x^{26}-2764056440448407 x^{28}+1450624056331021
x^{30}-558782965472919 x^{32}+157793958793899 x^{34}-32350122050946 x^{36}+4715296752120 x^{38}-470462657040 x^{40}+30019059200 x^{42}-1072166400
x^{44}+15360000 x^{46})/ $ \\ $ ((-1+4 x) (1+4 x) (-1+34 x-160 x^2+248 x^3-152 x^4+32 x^5) (1+34 x+160 x^2+248 x^3+152
x^4+32 x^5) (-1+18 x-91 x^2+110 x^3+175 x^4-515 x^5+455 x^6-175 x^7+25 x^8) (-1-18 x-91 x^2-110 x^3+175 x^4+515 x^5+455 x^6+175
x^7+25 x^8) (-1+8 x+14 x^2-194 x^3+302 x^4-65 x^5-120 x^6+62 x^7+4 x^8-7 x^9+x^{10}) (-1-8 x+14 x^2+194 x^3+302 x^4+65 x^5-120
x^6-62 x^7+4 x^8+7 x^9+x^{10})) + $ \\ $ \ds \frac{20 x (1+11 x^2)}{(-1+x) (1+x) (-1+11 x) (1+11 x)} + \frac{4 x}{1-4 x^2}$
\bc $ \mbox{--------------------------------------    } $  \ec}
    \\ \noindent
\parbox[t]{5.8in}{ ${\cal F}^{TG}_{10,2}(x) = {\cal F}^{TG}_{10,8}(x) = $ \\
 $ \ds
2 x (46+1376 x-217625 x^2-3180944 x^3+345272944 x^4+2490792541 x^5-270947482370 x^6-869665513372 x^7+126629126161460 x^8+98949137339289
x^9-39376684889493468 x^{10}+42285392372031707 x^{11}+8584595987201580737 x^{12}-23283523171385601670 x^{13}-1335302299814544715435 x^{14}+5591333078113361960712
x^{15}+149360639878168136676722 x^{16}-824089151275096662900673 x^{17}-12157213128401490596058106 x^{18}+82301527251852610260951207 x^{19}+732909361120852753953328426
x^{20}-5895879797569636310659879862 x^{21}-33381771502384436510944678661 x^{22}+315427428241304314512677601615 x^{23}+1171670620109532384089041441404
x^{24}-12987244764595246253258802545575 x^{25}-32336082777167266312396782218377 x^{26}+420925951025249041794783348621726 x^{27}+718419992365757170664898344964382
x^{28}-10918053790902226856605466458871501 x^{29}-13271957917324789528048438726531448 x^{30}+229171334962491461628745738446414019 x^{31}+213443441025950269955959028000690656
x^{32}-3914234058789326860692196284312129082 x^{33}-3148734347714652982231344767589293571 x^{34}+54323429284989387948670246212459893123 x^{35}+43912818088464812257037761730749993098
x^{36}-605089443498881414084357682635175774439 x^{37}-569044389491347853877462049307892156000 x^{38}+5220878715557154021521940649621624708556 x^{39}+6508844235431769745634483161682747172238
x^{40}- $ \\ $ 31312949416002620684089448770078672506466 x^{41}- $ \\ $ 62035830840273142151188778864117775173393 x^{42}+ $ \\ $ 67512628467109223629597552547600738952341
x^{43}+ $ \\ $ 462907281695254776364394321916294822366064 x^{44}+ $ \\ $ 1150124058798841207872277950691887252424236 x^{45}- $ \\ $ 2410182015256556554130934737402442011296299
x^{46}- $ \\ $ 18754721304177397859154779848784596070275562 x^{47}+ $ \\ $ 4962104925741844239442989645754202672438839 x^{48}+ $ \\ $ 169758079278535377469176905380650808809742304
x^{49}+ $ \\ $ 54052188532350338377479734227653232530498138 x^{50}- $ \\ $ 1131008694484192945943456981819758756987583345 x^{51}- $ \\ $ 759034479578625186383320251391183487501152119
x^{52}+ $ \\ $ 5954762641859609723077421357174788441276243938 x^{53}+ $ \\ $ 5612173118486469325689348069142749587262366922 x^{54}- $ \\ $ 25489175486849636471886692127487291959461863406
x^{55}- $ \\ $ 30019686442850459092441957656457165179295004306 x^{56}+ $ \\ $ 89985461770519080059824895833497768232283164842 x^{57}+ $ \\ $ 125691788607120374541898678027073834967579401174
x^{58}- $ \\ $ 264366614752423292209687714467067498056940890129 x^{59}- $ \\ $ 425820964430437903944612588986299081415885400978 x^{60}+ $ \\ $ 650976669947374612296683257977166112910923720123
x^{61}+ $ \\ $ 1188703767239106054126143922355195992679894728960 x^{62}- $ \\ $ 1353296239898151274059150672954433907226753504806 x^{63}- $ \\ $ 2767281816129362135768674864514868778733773403653
x^{64}+ $ \\ $ 2395621671027669188498243311822840507575709395500 x^{65}+ $ \\ $ 5420566816035425459789297705449835170964008715468 x^{66}- $ \\ $ 3650625899243473324174186157879930401833690632566
x^{67}- $ \\ $ 8997976453880767229880718788619205251627196849578 x^{68}+ $ \\ $ 4854819742548640375441950937784538235836118141929 x^{69}+ $ \\ $ 12731557017044046684671074340866010238708468090132
x^{70}- $ \\ $ 5724208601734121665800860540698772826573127087233 x^{71}- $ \\ $ 15426783914798362981527390347385743704957570089333 x^{72}+ $ \\ $ 6077797748927983466026725091623157745811805483448
x^{73}+ $ \\ $ 16063353171802888804627345774351313761042115444640 x^{74}- $ \\ $ 5875682027878365971877426842763458962810581443698 x^{75}- $ \\ $ 14405586845732333037850316400535990454391828922158
x^{76}+ $ \\ $ 5186036756143243526701553286115820157105851667504 x^{77}+ $ \\ $ 11136286258416877782524292946881844145240727987288 x^{78}- $ \\ $ 4153802824303357314079316825108239450773342885167
x^{79}- $ \\ $ 7416572048805349048127510539219254423365083376368 x^{80}+ $ } \\
\parbox[t]{5.8in}{$2984265773587600318869087312497624561619586174703 x^{81}+ $ \\ $ 4245518590261815168573619073151459546774801293537
x^{82}- $ \\ $ 1898435914021288164803623857464026933895627941070 x^{83}- $ \\ $ 2079937923475563486330468292587554114548440743983 x^{84}+ $ \\ $ 1057210719172001069112356503595008899583148477781
x^{85}+ $ \\ $ 865944715115122099326420674211937384288933422460 x^{86}- $ \\ $ 510734652696903103332658964865624399826584743324 x^{87}- $ \\ $ 302901037843214641894844222443821209138136544428
x^{88}+ $ \\ $ 212530933938864837536292678753092572360096982854 x^{89}+ $ \\ $ 87309084084137500607540885183987263384304327603 x^{90}- $ \\ $ 75734576997170395238432074996691684149914395296
x^{91}- $ \\ $ 19974064830365800389464801498909415866432312556 x^{92}+ $ \\ $ 22985323372962285318373270166478668391125030269 x^{93}+ $ \\ $ 3304460544302352241560753768707271516305254934
x^{94}- $ \\ $ 5907731266157350861142552516140918155755433094 x^{95}- $ \\ $ 259528667464077236283453436569295472213918308 x^{96}+ $ \\ $ 1277479490924279792619375439640087922462799641
x^{97}- $ \\ $ 53203266964195556918208247846968114802813954 x^{98}- $ \\ $ 230529761578347467223073696774946655307690019 x^{99}+ $ \\ $ 27252543052661342103050406885836549509517623
x^{100}+ $ \\ $ 34351237308971591042846502793485554346765628 x^{101}- $ \\ $ 6565382661406655339356714288977195154418648 x^{102}- $ \\ $ 4165231666059949091206884605537074258388888
x^{103}+ $ \\ $ 1112683444078636053151050947679836406781002 x^{104}+ $ \\ $ 402029938059366565160971118106761505237548 x^{105}- $ \\ $ 143724121948666277674972862479485096687194
x^{106}- $ \\ $ 29743081168358793753045310712143140234077 x^{107}+ $ \\ $ 14496299929026094319092178814968444596222 x^{108}+ $ \\ $ 1553736431253179444912012128817607803139
x^{109}-1147387095010303616425607954772857449487 x^{110}-42648478160269384804251550283231418570 x^{111}+70922276437071877542468838023316116985 x^{112}-1055374653794311655436952413967693165
x^{113}-3382762184718203434466266747444119700 x^{114}+187508486538742325262154515889915800 x^{115}+122145945422660334353025075463146600 x^{116}-10807608789892071407961638630040400
x^{117}-3247293087244926802378830712639200 x^{118}+379503922816969678282015518328000 x^{119}+61099223165915795540454289944000 x^{120}-8736442405643690427276805160000
x^{121}-769610406856795845379585920000 x^{122}+130258585332331027154256000000 x^{123}+6013684165721471766720000000 x^{124}-1187971120530700538640000000
x^{125}-26851833413180858368000000 x^{126}+ $ \\ $ 5926564650703299840000000 x^{127}+74087465438689280000000 x^{128}-13022898156236800000000 x^{129}- $ \\ $ 186621766860800000000
x^{130}+8236163072000000000 x^{131}+143327232000000000 x^{132})) / $ \\
$((-1+x) (1+x) (1+2 x) (-1+4 x) (1+4 x) (1-8 x+4 x^2)
(1+5 x+5 x^2) (1+7 x+8 x^2) (1+18 x+63 x^2+50 x^3+13 x^4+x^5) (-1+34 x-160 x^2+248 x^3-152 x^4+32 x^5)
(1+34 x+160 x^2+248 x^3+152 x^4+32 x^5) (-1+18 x-91 x^2+110 x^3+175 x^4-515 x^5+455 x^6-175 x^7+25 x^8) (-1-18 x-91 x^2-110
x^3+175 x^4+515 x^5+455 x^6+175 x^7+25 x^8) (-1+121 x-3901 x^2+49003 x^3-293115 x^4+896942 x^5-1381585 x^6+970100 x^7-264304 x^8+15552
x^9) (-1+8 x+14 x^2-194 x^3+302 x^4-65 x^5-120 x^6+62 x^7+4 x^8-7 x^9+x^{10}) (-1-8 x+14 x^2+194 x^3+302 x^4+65 x^5-120 x^6-62
x^7+4 x^8+7 x^9+x^{10}) (1-36 x+504 x^2-3603 x^3+14416 x^4-33263 x^5+44489 x^6-34794 x^7+16069 x^8-4345 x^9+653 x^{10}-47 x^{11}+x^{12})
(1+53 x+951 x^2+7589 x^3+32774 x^4+84127 x^5+133734 x^6+133056 x^7+81936 x^8+30215 x^9+6265 x^{10}+650 x^{11}+25 x^{12}) (1+92 x+3048
x^2+50453 x^3+483576 x^4+2928888 x^5+11832412 x^6+32949126 x^7+64377394 x^8+88750186 x^9+85833700 x^{10}+57206921 x^{11}+25441290 x^{12}+7182529
x^{13}+1203428 x^{14}+111956 x^{15}+5340 x^{16}+120 x^{17}+x^{18}) (1-66 x+1833 x^2-28651 x^3+283174 x^4-1881547 x^5+8719340 x^6-28859906
x^7+69342834 x^8-122339404 x^9+159752386 x^{10}-155169581 x^{11}+112318552 x^{12}-60476870 x^{13}+24066800 x^{14}-6990095 x^{15}+1450676 x^{16}-207780
x^{17}+19365 x^{18}-1050 x^{19}+25 x^{20})) +$}
   \\
\parbox[t]{5.8in}{ $
10 x^2 (-1104+5489423 x^2-11166526727 x^4+12992482181651 x^6-9970482068563665 x^8+5463389849244066039 x^{10}-2247593052404730178653
x^{12}+717994844547678609637470 x^{14}-182334854645136459340349488 x^{16}+37440189828668690609959232069 x^{18}- $ \\ $6297027295578297540584449391469 x^{20}+876550965774781289080259532128824
x^{22}- $ \\ $ 101877845327018065269714137272473196 x^{24}+9962683575392788477859919262886347506 x^{26}- $ \\ $  825322123947922609252892814717057679516 x^{28}+ 58271170299859004098568603799993894174099
x^{30}- $ \\ $ 3525352970006782250450029168391184616533527 x^{32}+ $ \\ $ 183621735164227590804154775053000140402220648 x^{34}- $ \\ $ 8267983639168737809249500398007700850024626488
x^{36}+ $ \\ $ 322953831172839259761629034957305196473808326282 x^{38}- $ \\ $ 10974405200407601633684939620120463026087146971786 x^{40}+ $ \\ $ 325130800347790286980601863591328563457431826782290
x^{42}- $ \\ $ 8409462008138200207569142861794144623991160010655192 x^{44}+ $ \\ $ 189971039388045686773284792960863481711769015687733985 x^{46}- $ \\ $ 3744776736631884218707526296039909502689469086987196969
x^{48}+ $ \\ $ 64232689703339222018657101730261546132417343525391255545 x^{50}- $ \\ $ 952953187598434172676732640196393299241186881743478300249 x^{52}+ $ \\ $ 12083749961002920693284516648690254498358820989186450750753
x^{54}- $ \\ $ 127758882843339023010133437955070856652150374218565986826205 x^{56}+ $ \\ $ 1060436894984722647355390033262675461605698143869707636470258 x^{58}- $ \\ $ 5578379141164463376614546655685636069840073343987765045294554
x^{60}- $ \\ $ 10427656698488101417875342523887690886263909091754325463653911 x^{62}+ $ \\ $ 725442888058351328437001423058924356121735217718522933326895415 x^{64}- $ \\ $ 11283871457189743038866632267211015171196313817367509084441149037
x^{66}+ $ \\ $ 123953790298522306390293990368999496946345721526170198757538671151 x^{68}- $ \\ $ 1103440027186411439525322079826094067237820514889803835049239911308
x^{70}+ $ \\ $ 8341853473056395483062733123650716637104917836373184312279992156092 x^{72}- $ \\ $ 54735259583826639715075136217830108711357098803422894443015967897123
x^{74}+ $ \\ $ 315476015292533985446527594231872942098819557292288865080166151837941 x^{76}- $ \\ $ 1608913298121882475600130126947086520991155295288579927774622946385145
x^{78}+ $ \\ $ 7295210745830573214615062539664771995602228448456647080760360259761735 x^{80}- $ \\ $ 29505256306569733592036724868735223209887363780739351770305902928783111
x^{82}+ $ \\ $ 106688599723094166420838178302588828173607020290382466689712885648565229 x^{84}- $ \\ $ 345473772001966422860870668895843542278295816551743187357102775499577074
x^{86}+ $ \\ $ 1003038951507466297073599664538120582275966218493624000842531349973293020 x^{88}- $ \\ $ 2613457322529287088157561993775973185678172326968841477836792229068505179
x^{90}+ $ \\ $ 6115025411967736538022887659443140334121681950470623627424520141804910195 x^{92}- $ \\ $ 12855411954205705282780857733283133752777361686908028384141399028046170644
x^{94}+ $ \\ $ 24291163719594743808249036669579249075294296823518204448323548152336046430 x^{96}- $ \\ $ 41268652918348465205108886806108028081309596840096840922452243872342556226
x^{98}+ $ \\ $ 63054084658527857988642951409933325866248806355931062915906886536825961060 x^{100}- $ \\ $ 86660659739172570039038765406127146302167719816871622044646736639539796443
x^{102}+ $ \\ $ 107159169194901307945439981715053201179114757308066644634855855635929910401 x^{104}- $ \\ $ 119236102642969254186727449597031896009856206687964454218282254651890395457
x^{106}+ $ \\ $ 119404182756961980107198326922607471895150502977923449530589844389718579427 x^{108}- $ \\ $ 107625510919584127409765271065993521918724926108183003878761949009520584410
x^{110}+ $ \\ $ 87323557920550269807819385823470661631930223616892123471967000168610351334 x^{112}- $ \\ $ 63780493049113840339124144934440624517891039195968729241835444310634412105
x^{114}+ $ \\ $ 41935746697584809479042402513817088437682973866062658446966636772125324075 x^{116}- $ \\ $ 24819767393875362365976905518674291340772264932492866999736943427229759795
x^{118}+ $ \\ $ 13221434170975841307671978878796767714367069310416273974919498385525265347 x^{120}- $ \\ $ 6337949554158998135228007691556653964516511007021738736572075453308259606
x^{122}+ $ \\ $ 2733370741706075247824304531703266812227772175523565438228171653849775934 x^{124}- $ \\ $ 1060190194469032444391708630235555751446628004968337690280254496218389322
x^{126}+ $ \\ $ 369677874895650267165368737393457933933133905708985012065238074251974296 x^{128}- $ \\ $ 115823500925066771247257491935543452420478836067286994317996610282812354
x^{130}+ $ }
   \\
\parbox[t]{5.8in}{$ 32586639489960221090642892175330800805491477093055717864050705956177268 x^{132}- $ \\ $ 8227001374346177821969032220749017786837257001661541147736631270934417
x^{134}+ $ \\ $ 1862263703755046686915929765647974894660271606313311907212125161047017 x^{136}- $ \\ $ 377590353425249818266299858942934875413754617415496566482067809546802
x^{138}+ $ \\ $ 68501660497029754352400168967407895507258531456907942410368713926120 x^{140}- $ \\ $ 11105365183430858507629457683684843169630429304793560853163166939810
x^{142}+ $ \\ $ 1606531109473718534594530182665952079562954439432361598630717186236 x^{144}- $ \\ $ 207040173678939354726729332698936652911106381521251519357889105280
x^{146}+ $ \\ $ 23725381454208217615297576176963428232818874679802766445008594690 x^{148}- $ \\ $ 2412310961657048946296173910145662411783527155642611007337012021
x^{150}+ $ \\ $ 217095118910027467160288291483232288984125117244905095790365759 x^{152}- $ \\ $ 17244238890870092795654597417700220025987925425589658553559407 x^{154}+ $ \\ $ 1205077512777402031615916716203648867995217155679218833218371
x^{156}- $ \\ $ 73815890507916320676911789220378258814331426144792858248950 x^{158}+ $ \\ $ 3946234528864430181274400320996867933891840096143776755490 x^{160}- $ \\ $ 183208374687979113510601425933222783247296920338183962750
x^{162}+ $ \\ $ 7343470634529034263997971855872818683791273206571064850 x^{164}- $ \\ $ 252386024928336323757054707034042564283133498762815500 x^{166}+ $ \\ $ 7377219808182764935914860042284586198735853854686500
x^{168}- $ \\ $ 181608181279946828536236488212847960492136084478125 x^{170}+ $ \\ $ 3720870390260477293342366907407412832343772765625 x^{172}- $ \\ $ 62531478159246985678688894033140505718654706250
x^{174}+ $ \\ $ 846486577971149809174369696179591097754012500 x^{176}- $ \\ $ 9020287465658281265111985694080724339000000 x^{178}+ $ \\ $ 73447683765231069508597890169422765000000
x^{180}- $ \\ $ 439311193933486419544786460231500000000 x^{182}+1829042325676928704927548200000000000 x^{184}-4909393424488807413731360000000000 x^{186}+7559224240682295244800000000000
x^{188}-5500825133923328000000000000 x^{190}+1451262771200000000000000 x^{192}))/
$ \\ $
((-1+x) (1+x) (-1+2 x) (1+2 x) (-1+11 x) (1+11
x) (1-3 x+x^2) (1+3 x+x^2) (1-13 x+11 x^2) (1-8 x+11 x^2) (1+8 x+11 x^2) (1+13 x+11 x^2)
(1-12 x^2+8 x^3) (-1+12 x^2+8 x^3) (-1+104 x-2661 x^2+24090 x^3-91993 x^4+158236 x^5-121128 x^6+44736 x^7-7808 x^8+512
x^9) (1+104 x+2661 x^2+24090 x^3+91993 x^4+158236 x^5+121128 x^6+44736 x^7+7808 x^8+512 x^9) (-1+5 x+45 x^2-305 x^3+65 x^4+2175
x^5-1800 x^6-4525 x^7+1975 x^8+2500 x^9-875 x^{10}-375 x^{11}+125 x^{12}) (-1-5 x+45 x^2+305 x^3+65 x^4-2175 x^5-1800 x^6+4525 x^7+1975
x^8-2500 x^9-875 x^{10}+375 x^{11}+125 x^{12}) (1-22 x-214 x^2+4230 x^3-15001 x^4-2783 x^5+60858 x^6-12548 x^7-46005 x^8+16340 x^9+10377
x^{10}-5171 x^{11}-329 x^{12}+413 x^{13}-27 x^{14}-9 x^{15}+x^{16}) (1+22 x-214 x^2-4230 x^3-15001 x^4+2783 x^5+60858 x^6+12548 x^7-46005
x^8-16340 x^9+10377 x^{10}+5171 x^{11}-329 x^{12}-413 x^{13}-27 x^{14}+9 x^{15}+x^{16}) (1-82 x+2898 x^2-59227 x^3+789803 x^4-7334275
x^5+49339066 x^6-246611784 x^7+930982131 x^8-2681675595 x^9+5927069364 x^{10}-10071216252 x^{11}+13140900495 x^{12}-13114381556 x^{13}+9943609371
x^{14}-5673467839 x^{15}+2404556811 x^{16}-744137999 x^{17}+164387022 x^{18}-25156085 x^{19}+2562619 x^{20}-164923 x^{21}+6260 x^{22}-125 x^{23}+x^{24})
(1+82 x+2898 x^2+59227 x^3+789803 x^4+7334275 x^5+49339066 x^6+246611784 x^7+930982131 x^8+2681675595 x^9+5927069364 x^{10}+10071216252 x^{11}+13140900495
x^{12}+13114381556 x^{13}+9943609371 x^{14}+5673467839 x^{15}+2404556811 x^{16}+744137999 x^{17}+164387022 x^{18}+25156085 x^{19}+2562619 x^{20}+164923
x^{21}+6260 x^{22}+125 x^{23}+x^{24}) (1-104 x+4410 x^2-104155 x^3+1561045 x^4-15952817 x^5+116275468 x^6-623060570 x^7+2507306175 x^8-7693787635
x^9+18199448138 x^{10}-33436393552 x^{11}+47933912805 x^{12}-53730741510 x^{13}+47072037995 x^{14}-32132854819 x^{15}+16992690151 x^{16}-6899340865
x^{17}+2123302560 x^{18}-486469225 x^{19}+80870375 x^{20}-9387875 x^{21}+715250 x^{22}-31875 x^{23}+625 x^{24}) (1+104 x+4410 x^2+104155
x^3+1561045 x^4+15952817 x^5+116275468 x^6+623060570 x^7+2507306175 x^8+7693787635 x^9+18199448138 x^{10}+33436393552 x^{11}+47933912805 x^{12}+53730741510
x^{13}+47072037995 x^{14}+32132854819 x^{15}+16992690151 x^{16}+6899340865 x^{17}+2123302560 x^{18}+486469225 x^{19}+80870375 x^{20}+9387875 x^{21}+715250
x^{22}+31875 x^{23}+625 x^{24}))+ $}
 \\
\parbox[t]{5.8in}{ $
20 x^2 (-234+320088 x^2-136124684 x^4+19593775368 x^6+2051776259798 x^8-1228350029049507 x^{10}+222968262737347774 x^{12}-24260112070770702232
x^{14}+1822705736380052048069 x^{16}-100916244524214917441711 x^{18}+4280351276362169014606513 x^{20}-142714587242091466173878208 x^{22}+3809244824928988744064951017
x^{24}-82520285133755774770493062281 x^{26}+1467444186063988950248319979540 x^{28}-21645799220520736094505619118872 x^{30}+267673821756028722432428512055244
x^{32}-2806982839652108649030286648082110 x^{34}+25272503448488173051336420691992951 x^{36}-197780012316676527080052996403029466 x^{38}+1359169178626735798084611650934559940
x^{40}-8248031136427815283635899573046948609 x^{42}+44177863065405964773884157087947265203 x^{44}-207622415152659005897709259821867905936 x^{46}+848497628761386032955407146818233252549
x^{48}-2986630066500259391059753396810405403376 x^{50}+8978250680997615844943555979241325835152 x^{52}-22895948659860885033520654294356119238385
x^{54}+49283420613392374626429789336199914351832 x^{56}-89218926571231646024024810770340452641720 x^{58}+ $ \\ $ 135504944868856507823416830950748516957057
x^{60}- $ \\ $ 172389160906176314420710885275180212724527 x^{62}+ $ \\ $ 183547316219318903960320280829096116071264 x^{64}- $ \\ $ 163511137364714938845665108789571404106054
x^{66}+ $ \\ $ 121892330878948391165227140997734856300186 x^{68}- $ \\ $ 76072332280581439617597030145837199231769 x^{70}+ $ \\ $ 39768616467462574761163933983127050741442
x^{72}-17423630167343235325212146702079447097908 x^{74}+6399637265892835771340913358933746610354 x^{76}-1970513471030550875880140613668297005840
x^{78}+508394838137195907177822243919451418219 x^{80}-109789516243417400238915411635236675515 x^{82}+19810446202617140638361529402358493938 x^{84}-2978992033214528091220468262602926129
x^{86}+371971689389903353069545144071327698 x^{88}-38379338936238591743463114713161240 x^{90}+3251185572664588565055860085862595 x^{92}-224233918626783202070571619435700
x^{94}+12454497048355022000718356741550 x^{96}-549122212654098218823400038125 x^{98}+18852262128595778955123212625 x^{100}-490751406011489188851217500
x^{102}+9320772012592835172937500 x^{104}-121647973242146321200000 x^{106}+981927330967529600000 x^{108}-3885738425184000000 x^{110}+2771248480000000
x^{112}))/ $ \\ $ ((-1+x) (1+x) (-1+2 x) (1+2 x) (-1+11 x) (1+11 x) (-1+9 x-6 x^2+x^3) (1+9 x+6 x^2+x^3) (1-12
x^2+8 x^3) (-1+12 x^2+8 x^3) (1-59 x+402 x^2-863 x^3+572 x^4) (1+59 x+402 x^2+863 x^3+572 x^4) (1-18
x+93 x^2-195 x^3+164 x^4-37 x^5+x^6) (1+18 x+93 x^2+195 x^3+164 x^4+37 x^5+x^6) (1-33 x+375 x^2-1922 x^3+5192 x^4-7883 x^5+6797
x^6-3252 x^7+834 x^8-105 x^9+5 x^{10}) (1+33 x+375 x^2+1922 x^3+5192 x^4+7883 x^5+6797 x^6+3252 x^7+834 x^8+105 x^9+5 x^{10}) (-1+5
x+45 x^2-305 x^3+65 x^4+2175 x^5-1800 x^6-4525 x^7+1975 x^8+2500 x^9-875 x^{10}-375 x^{11}+125 x^{12}) (-1-5 x+45 x^2+305 x^3+65 x^4-2175
x^5-1800 x^6+4525 x^7+1975 x^8-2500 x^9-875 x^{10}+375 x^{11}+125 x^{12}) (1-22 x-214 x^2+4230 x^3-15001 x^4-2783 x^5+60858 x^6-12548
x^7-46005 x^8+16340 x^9+10377 x^{10}-5171 x^{11}-329 x^{12}+413 x^{13}-27 x^{14}-9 x^{15}+x^{16}) (1+22 x-214 x^2-4230 x^3-15001 x^4+2783
x^5+60858 x^6+12548 x^7-46005 x^8-16340 x^9+10377 x^{10}+5171 x^{11}-329 x^{12}-413 x^{13}-27 x^{14}+9 x^{15}+x^{16}))+$ \\
\\ $10 x^2 (153-29597 x^2+1087980 x^4+42425030 x^6-3931103503 x^8+114477354009 x^{10}-1757059345360 x^{12}+16138162540382 x^{14}-94374636014721
x^{16}+361278697104035 x^{18}-926515970968211 x^{20}+1633115810080701 x^{22}-2029670985218743 x^{24}+1818156169145257 x^{26}-1193705004547281 x^{28}+580696031421991
x^{30}-210302687884583 x^{32}+56589053389377 x^{34}-11202352029008 x^{36}+1598514878512 x^{38}-158589514560 x^{40}+10266827840 x^{42}-384000000 x^{44}+6144000
x^{46}))/ $ \\ $ \ds ((-1+4 x) (1+4 x) (-1+34 x-160 x^2+248 x^3-152 x^4+32 x^5) (1+34 x+160 x^2+248 x^3+152 x^4+32 x^5)
(-1+18 x-91 x^2+110 x^3+175 x^4-515 x^5+455 x^6-175 x^7+25 x^8) (-1-18 x-91 x^2-110 x^3+175 x^4+515 x^5+455 x^6+175 x^7+25 x^8)
(-1+8 x+14 x^2-194 x^3+302 x^4-65 x^5-120 x^6+62 x^7+4 x^8-7 x^9+x^{10}) (-1-8 x+14 x^2+194 x^3+302 x^4+65 x^5-120 x^6-62 x^7+4 x^8+7
x^9+x^{10}))+ $ \\ $ \ds
\frac{240 x^2}{(-1+x) (1+x) (-1+11 x) (1+11 x)} +
\frac{8 x^2}{1-4 x^2}
   $ \bc $ \mbox{--------------------------------------    } $  \ec }
  \\
\parbox[t]{5.8in}{ ${\cal F}^{TG}_{10,3}(x) = {\cal F}^{TG}_{10,7}(x) = $ \\
 $ \ds
-2 x (-36-1916 x+168735 x^2+5010324 x^3-274378534 x^4-4757396231 x^5+228843935150 x^6+2025044991592 x^7-111577291533350
x^8-351784868410599 x^9+34709854247997148 x^{10}-34224714438481007 x^{11}-7228139570023107567 x^{12}+31484087151192214710 x^{13}+1022544557324685448165
x^{14}-7726403586188404895132 x^{15}-96927246923228013595192 x^{16}+1110292692380516699256673 x^{17}+5787032606728018989068536 x^{18}-107024913198782087817627047
x^{19}-161936681754414137310937526 x^{20}+7363849217031780970200396412 x^{21}- $ \\ $ 5103265539648415668733523649 x^{22}-377007276918192621691983323335
x^{23}+ $ \\ $ 814424380614062430253250143406 x^{24}+14854797156536370812767804103645 x^{25}- $ \\ $ 47444804546241151730315447453123 x^{26}-464807852968619111518109495428186
x^{27}+1811873185531787270465347762924488 x^{28}+11930197928982070276439367794697501 x^{29}-50797582722824878493282804727691372 x^{30}-259986161076557036689640239414096519
x^{31}+1090776426980624605622189726780448964 x^{32}+4969547018363442145475560980703270892 x^{33}-18211073820660843827319672959217077929 x^{34}-85135169833414862520908525451264813903
x^{35}+234772295338935068809824990236485723882 x^{36}+1311251933592394296445067090264572153729 x^{37}-2233938586334662638636245659125876508990 x^{38}-17926309036444108577185692626715264379976
x^{39}+13057535908042630686034170038972661451122 x^{40}+ $ \\ $ 213503897268271369291205614772900281740876 x^{41}+ $ \\ $ 11942703203239479252622866715936665605983
x^{42}- $ \\ $ 2179643296990347355591253994135364500009211 x^{43}- $ \\ $ 1400332310309499241938435675019641359806534 x^{44}+ $ \\ $ 18867496983220826568169506882617368827379464
x^{45}+ $ \\ $ 20221436637104425292971526317275687933765789 x^{46}- $ \\ $ 137648914756743532843556306205509510025923348 x^{47}- $ \\ $ 191451061106249771522974055941451069353101089
x^{48}+ $ \\ $ 844165244475461400543956274507100420448139056 x^{49}+ $ \\ $ 1382404398775516625584271594630147301334133962 x^{50}- $ \\ $ 4350569921243651172165308859465882298992407885
x^{51}- $ \\ $ 7997835350801929281998017686821726079764334791 x^{52}+ $ \\ $ 18867944017272600388961468253894418273069526052 x^{53}+ $ \\ $ 37923903961575131564776043748056924014809725008
x^{54}- $ \\ $ 69051116005894445846938834684153259489987283484 x^{55}- $ \\ $ 149292783277158409938890178952977240380261269884 x^{56}+ $ \\ $ 214128943684090145833646407125392402737492559008
x^{57}+ $ \\ $ 492147658986179500815073230736569819493983577106 x^{58}- $ \\ $ 565734019821988649775796329605823969557792979851 x^{59}- $ \\ $ 1367589248484230250289518027124766412971441247652
x^{60}+ $ \\ $ 1282101525337831397831399640179867037217958276087 x^{61}+ $ \\ $ 3221262140039836175406543937752562410298848886090 x^{62}- $ \\ $ 2512109952571249121285404582729418334150525500754
x^{63}- $ \\ $ 6462631108446716063860559605318793409605106728837 x^{64}+ $ \\ $ 4292410502638103736617386264622176038468532872040 x^{65}+ $ \\ $ 11090608632794755540037476442617132770684107747052
x^{66}- $ \\ $ 6450967502022728552624652596886874199633565483404 x^{67}- $ \\ $ 16340394528514224460352930271543579137143790685532 x^{68}+ $ \\ $ 8590783629340927231223030228549252522418485800541
x^{69}+ $ \\ $ 20732941366425606820638218969191703723055871444598 x^{70}- $ \\ $ 10189967962701197804045503177647253969425181371737 x^{71}- $ \\ $ 22708605957959876839421247633531256215800348715717
x^{72}+ $ \\ $ 10789546426634996532849475119997603196871152735092 x^{73}+ $ \\ $ 21506936307411287369259389782946888449347349247870 x^{74}- $ \\ $ 10190492654169351518317593508843893787658278015062
x^{75}- $ \\ $ 17628471249633504173415860543025600400997152054412 x^{76}+ $ \\ $ 8559304260765623984619410294466164590804168593886 x^{77}+ $ \\ $ 12505920532346866707087441097482649399627136909022
x^{78}- $ \\ $ 6366255619740655357299366212575030653293867360313 x^{79}- $ \\ $ 7671314125552612910659128615463488597155255665462 x^{80}+ $}  \\
\parbox[t]{5.8in}{ $ 4173841459066035523630590859289795540296006308627
x^{81}+ $ \\ $ 4060129773656062830513291382887072135261415926063 x^{82}- $ \\ $ 2401528027750720332911111979568835391311111689570 x^{83}- $ \\ $ 1847110705100133652247168856585744119018940801527
x^{84}+ $ \\ $ 1207814202812711595768462298351485236383125314439 x^{85}+ $ \\ $ 717935783248206530641262116237831800033117205110 x^{86}- $ \\ $ 529015240726349040459348935526421277410621669676
x^{87}- $ \\ $ 236074515095465841918522486527477088250531080282 x^{88}+ $ \\ $ 201061589024348389785817914650665157471527960706 x^{89}+ $ \\ $ 64586932647694501290681726703266943669718879747
x^{90}- $ \\ $ 66061881749215181108610074962727799952933803044 x^{91}- $ \\ $ 14246289880628663983127902407151714357822730944 x^{92}+ $ \\ $ 18686159159803262350807232538694428815188675551
x^{93}+ $ \\ $ 2354969865021389684484798074355442693094877156 x^{94}- $ \\ $ 4528292840022064554946645760527744494477780546 x^{95}- $ \\ $ 222979309586946537733886101580663340672685812
x^{96}+ $ \\ $ 934762157241035392240145387464924568014652569 x^{97}- $ \\ $ 16310382302301099357939512280958189475626626 x^{98}- $ \\ $ 163240549167171917628614142832851854163662451
x^{99}+ $ \\ $ 12486324140443905088460189522686374507002677 x^{100}+ $ \\ $ 23916605194993724961756722842770694637641942 x^{101}- $ \\ $ 3113615636906831580264512497699747490684672
x^{102}- $ \\ $ 2910123122887909578140795582029463434826802 x^{103}+ $ \\ $ 526492210810517257755405625091807788838068 x^{104}+ $ \\ $ 290422425775152621330145950103904170459752
x^{105}- $ \\ $ 67233194380924146428815639872844417067586 x^{106}- $ \\ $ 23399408200317334916422711040101343373183 x^{107}+ $ \\ $ 6688880744417021037524526118998530451568
x^{108}+ $ \\ $ 1490969312450495634801382726014192521101 x^{109}-522493346107885097091398951520140132033 x^{110}-72995970232301946385277269709301064610 x^{111}+31965568239313812611014774131543921065
x^{112}+2624326919216007470130970868200137915 x^{113}-1519024845577441160073185013926343670 x^{114}-63350510078567124003738981822693800 x^{115}+55385339062745059619974689082766000
x^{116}+762820258695875590863590180838400 x^{117}-1524898406969209282514771091817200 x^{118}+7233652169272220117438357244000 x^{119}+31037166475816379824971933208000
x^{120}-503501276797022466754815080000 x^{121}-452200065221602358330196480000 x^{122}+10354100042319884708192000000 x^{123}+4456516208552562339248000000
x^{124}-118522096312353717232000000 x^{125}-26803219242657227520000000 x^{126}+737317094012558080000000 x^{127}+83077840933058560000000 x^{128}-1828120427008000000000
x^{129}-120046659174400000000 x^{130}+889987072000000000 x^{131}+47775744000000000 x^{132})/$ \\ $((-1+x) (1+x) (1+2 x) (-1+4 x) (1+4
x) (1-8 x+4 x^2) (1+5 x+5 x^2) (1+7 x+8 x^2) (1+18 x+63 x^2+50 x^3+13 x^4+x^5) (-1+34 x-160 x^2+248
x^3-152 x^4+32 x^5) (1+34 x+160 x^2+248 x^3+152 x^4+32 x^5) (-1+18 x-91 x^2+110 x^3+175 x^4-515 x^5+455 x^6-175 x^7+25 x^8)
(-1-18 x-91 x^2-110 x^3+175 x^4+515 x^5+455 x^6+175 x^7+25 x^8) (-1+121 x-3901 x^2+49003 x^3-293115 x^4+896942 x^5-1381585 x^6+970100
x^7-264304 x^8+15552 x^9) (-1+8 x+14 x^2-194 x^3+302 x^4-65 x^5-120 x^6+62 x^7+4 x^8-7 x^9+x^{10}) (-1-8 x+14 x^2+194 x^3+302
x^4+65 x^5-120 x^6-62 x^7+4 x^8+7 x^9+x^{10}) (1-36 x+504 x^2-3603 x^3+14416 x^4-33263 x^5+44489 x^6-34794 x^7+16069 x^8-4345 x^9+653
x^{10}-47 x^{11}+x^{12}) (1+53 x+951 x^2+7589 x^3+32774 x^4+84127 x^5+133734 x^6+133056 x^7+81936 x^8+30215 x^9+6265 x^{10}+650 x^{11}+25
x^{12}) (1+92 x+3048 x^2+50453 x^3+483576 x^4+2928888 x^5+11832412 x^6+32949126 x^7+64377394 x^8+88750186 x^9+85833700 x^{10}+57206921
x^{11}+25441290 x^{12}+7182529 x^{13}+1203428 x^{14}+111956 x^{15}+5340 x^{16}+120 x^{17}+x^{18}) (1-66 x+1833 x^2-28651 x^3+283174 x^4-1881547
x^5+8719340 x^6-28859906 x^7+69342834 x^8-122339404 x^9+159752386 x^{10}-155169581 x^{11}+112318552 x^{12}-60476870 x^{13}+24066800 x^{14}-6990095
x^{15}+1450676 x^{16}-207780 x^{17}+19365 x^{18}-1050 x^{19}+25 x^{20}))+
   $} \\
 \parbox[t]{5.8in}{ $
 -10 x (5+12116 x^2-79418156 x^4+137140867371 x^6-126508615717472 x^8+74767076102836781 x^{10}-30942949149634802119 x^{12}+9474224318337889138895
x^{14}-2232969550357170034098693 x^{16}+418666074569555119051522960 x^{18}-64398858149144925068715966395 x^{20}+ $ \\ $8380549398143979118631696745651 x^{22}-950353277648081037400875899749218
x^{24}+ $ \\ $ 96166253683043030683405714746310640 x^{26}- 8792671215727369494525660472248856129 x^{28}+ $ \\ $ 726431573860985445629172152338973849642 x^{30}- 53798911213912575702326451029551494505063
x^{32}+ $ \\ $ 3537041647440449141091748495572121759171627 x^{34}- $ \\ $ 204921187006042892771855403946720565775891226 x^{36}+ $ \\ $ 10420451672656398773177209637593960317776673940
x^{38}- $ \\ $ 464574261578436636974365554081388609823037887244 x^{40}+ $ \\ $ 18173213589595381981298097198477475680225585776658 x^{42}- $ \\ $ 624908252678769459968024882300955823830594046178503
x^{44}+ $ \\ $ 18932972835331332900494207397483318358405483346878536 x^{46}- $ \\ $ 506645827276974021061686954398768831988005446790414876 x^{48}+ $ \\ $ 12003413445861334707963534368389598629541759708354889745
x^{50}- $ \\ $ 252331412959443401022688614962893956206296850502898884612 x^{52}+ $ \\ $ 4715795006592029282544383550848335485940811875647240445403 x^{54}- $ \\ $ 78487285463193038364584002227312381535862216674549935342749
x^{56}+ $ \\ $ 1165046290521685133214209718327497791570656140136198298109117 x^{58}- $ \\ $ 15442812909805651340672989975528735982914073474512388259682979 x^{60}+ $ \\ $ 182977750311047710034345040083038068561084101369340495943426800
x^{62}- $ \\ $ 1939663773210336010648372584343908539911860679341498590418276668 x^{64}+ $ \\ $ 18407972063428115196727823647040468291087661753116450203992748767
x^{66}- $ \\ $ 156483275436745088163493327124237157527383915894910147614127604443 x^{68}+ $ \\ $ 1192032663588453188636719530285268875420447867964754432031081460355
x^{70}- $ \\ $ 8139450714520556617631586123457434234058573567496087323586845586213 x^{72}+ $ \\ $ 49828495446843377107558048069230075854878576225379936862269833932662
x^{74}- $ \\ $ 273522688760435447683941721355414268996739648431776542650301532681392 x^{76}+ $ \\ $ 1346401325180100284214171590687485399563700464728372399915910200599243
x^{78}- $ \\ $ 5943421964008175511708926578056651344116897808780805082502953702381898 x^{80}+ $ \\ $ 23527945857702709573936161343767789953265247293219975356432250361804601
x^{82}- $ \\ $ 83525073888832706915791705619488426178753343416477542876432340041041581 x^{84}+ $ \\ $ 265910823567429568614445517131522099821535138339159760419321110141507911
x^{86}- $ \\ $ 759185171579093056810065374765768522799159734093044958218155861957088189 x^{88}+ $ \\ $ 1943868869176729791208920065518878574471329118608259392191255594937396422
x^{90}- $ \\ $ 4463951590575857864845925749111540779398612798411265031683371489478392313 x^{92}+ $ \\ $ 9194798929902132275679830302034860240699544487890965937510047811852078371
x^{94}- $ \\ $ 16989576360092651399188099561997843627512750816477144463253853993691771492 x^{96}+ $ \\ $ 28164087047319318264900705315803002488133099370226233533165596637417849460
x^{98}- $ \\ $ 41892978838686483195497319334134016970652660371262642296182535869955627951 x^{100}+ $ \\ $ 55921476431718526318975650434431995277386842605498694329110096282384030776
x^{102}- $ \\ $ 66997973808755369796112943793634994220136455402164648700188296147308129474 x^{104}+ $ \\ $ 72049808293548540767760266431693622548580954761803915204469275049003110499
x^{106}- $ \\ $ 69553194411943777834266005659860482482633803800107573186654319196098070075 x^{108}+ $ \\ $ 60271979747886817678692147417517488277285764394829973545715540572955695623
x^{110}- $ \\ $ 46881143548794248642431957083066699715374137915368058857257154576986182241 x^{112}+ $ \\ $ 32726318466471098605114887198050551084612284665705165762957145792468286678
x^{114}- $ \\ $ 20497314121136849362189361591842619709961214856986553953580334352499902524 x^{116}+ $ \\ $ 11513946116651347159013570955103452008331834498101419443461571533756318473
x^{118}- $ \\ $ 5797402717328986514251230556474300092287212890897992439415700302276897215 x^{120}+ $ \\ $ 2614500970163823323223492592644809292916890668890970542261118441971434577
x^{122}- $ \\ $ 1054964580013359327689484487031375380100209082989886573323001904092263934 x^{124}+ $ \\ $ 380342789334010891582609761777148294258923192432266575791633625238020242
x^{126}- $ \\ $ 122289032452474187177677777755412177221482682549858300520133740887344744 x^{128}+ $ \\ $ 34975957438918481379966994161919059670729272785641699683433127096289836
x^{130}- $} \\
 \parbox[t]{5.8in}{  $ 8867216335858408485217966795135537628625490781859865415911624430361503 x^{132}+ $ \\ $ 1982624968515707637882074749906573421341207210557318049774437204700534
x^{134}- $ \\ $ 387989956867150841539685396282870602794296777489383115051049176531255 x^{136}+ $ \\ $ 65643439581663042595026003087583729005006126850300437025068914527619
x^{138}- $ \\ $ 9392978584589560782463227527721090762503409838765585137809104047858 x^{140}+ $ \\ $ 1085079304345741842178680500123055802538537940244207945025150841960
x^{142}- $ \\ $ 88495250760041851373048611707004737239783763224475009416367975128 x^{144}+ $ \\ $ 1790052550388233104048219091304253202990729749372583270202309066
x^{146}+ $ \\ $ 1003651541025064484844332148522101800594356584488486112086238211 x^{148}- $ \\ $ 237744796217808129585073596645411533868824824815971527447233894
x^{150}+ $ \\ $ 34955479540841192462625004541662671964556133838389234629589100 x^{152}- $ \\ $ 3978498491577539079609191812924069506324893217877858023989683 x^{154}+ $ \\ $ 372170923315028263281435005279062208055390030676997320391049
x^{156}- $ \\ $ 29285215996028369935028045814603066957693117403020564430595 x^{158}+ $ \\ $ 1957223619862996298504154794316123978047628015601370232650 x^{160}- $ \\ $ 111459186075598287557357052611641746335074322313046302550
x^{162}+ $ \\ $ 5405339264174080415093851046473072210522644367389902550 x^{164}- $ \\ $ 222505700416139768735317491851352491491658997664218500 x^{166}+ $ \\ $ 7731495850912541731260091645150134812023365828796875
x^{168}- $ \\ $ 225017823393816165756080531458991265025745174253750 x^{170}+ $ \\ $ 5429584089654686978951872937358530273323065666875 x^{172}- $ \\ $ 107204318171259230142700949209717874664888165625
x^{174}+ $ \\ $ 1703161891355082735952840513091010492878731250 x^{176}- $ \\ $ 21305578284273495288972444199134108364500000 x^{178}+ $ \\ $ 204007873783900670205110921677009827500000
x^{180}- $ \\ $ 1440110752865636403817658922898850000000 x^{182}+7121003927421030307978111698000000000 x^{184}-22957124325133939279933280000000000 x^{186}+43377645661355179680000000000000
x^{188}-40364378214778112000000000000 x^{190}+13684748267520000000000000 x^{192})/$ \\ $((-1+x) (1+x) (-1+2 x) (1+2 x) (-1+11 x) (1+11
x) (1-3 x+x^2) (1+3 x+x^2) (1-13 x+11 x^2) (1-8 x+11 x^2) (1+8 x+11 x^2) (1+13 x+11 x^2)
(1-12 x^2+8 x^3) (-1+12 x^2+8 x^3) (-1+104 x-2661 x^2+24090 x^3-91993 x^4+158236 x^5-121128 x^6+44736 x^7-7808 x^8+512
x^9) (1+104 x+2661 x^2+24090 x^3+91993 x^4+158236 x^5+121128 x^6+44736 x^7+7808 x^8+512 x^9) (-1+5 x+45 x^2-305 x^3+65 x^4+2175
x^5-1800 x^6-4525 x^7+1975 x^8+2500 x^9-875 x^{10}-375 x^{11}+125 x^{12}) (-1-5 x+45 x^2+305 x^3+65 x^4-2175 x^5-1800 x^6+4525 x^7+1975
x^8-2500 x^9-875 x^{10}+375 x^{11}+125 x^{12}) (1-22 x-214 x^2+4230 x^3-15001 x^4-2783 x^5+60858 x^6-12548 x^7-46005 x^8+16340 x^9+10377
x^{10}-5171 x^{11}-329 x^{12}+413 x^{13}-27 x^{14}-9 x^{15}+x^{16}) (1+22 x-214 x^2-4230 x^3-15001 x^4+2783 x^5+60858 x^6+12548 x^7-46005
x^8-16340 x^9+10377 x^{10}+5171 x^{11}-329 x^{12}-413 x^{13}-27 x^{14}+9 x^{15}+x^{16}) (1-82 x+2898 x^2-59227 x^3+789803 x^4-7334275
x^5+49339066 x^6-246611784 x^7+930982131 x^8-2681675595 x^9+5927069364 x^{10}-10071216252 x^{11}+13140900495 x^{12}-13114381556 x^{13}+9943609371
x^{14}-5673467839 x^{15}+2404556811 x^{16}-744137999 x^{17}+164387022 x^{18}-25156085 x^{19}+2562619 x^{20}-164923 x^{21}+6260 x^{22}-125 x^{23}+x^{24})
(1+82 x+2898 x^2+59227 x^3+789803 x^4+7334275 x^5+49339066 x^6+246611784 x^7+930982131 x^8+2681675595 x^9+5927069364 x^{10}+10071216252 x^{11}+13140900495
x^{12}+13114381556 x^{13}+9943609371 x^{14}+5673467839 x^{15}+2404556811 x^{16}+744137999 x^{17}+164387022 x^{18}+25156085 x^{19}+2562619 x^{20}+164923
x^{21}+6260 x^{22}+125 x^{23}+x^{24}) (1-104 x+4410 x^2-104155 x^3+1561045 x^4-15952817 x^5+116275468 x^6-623060570 x^7+2507306175 x^8-7693787635
x^9+18199448138 x^{10}-33436393552 x^{11}+47933912805 x^{12}-53730741510 x^{13}+47072037995 x^{14}-32132854819 x^{15}+16992690151 x^{16}-6899340865
x^{17}+2123302560 x^{18}-486469225 x^{19}+80870375 x^{20}-9387875 x^{21}+715250 x^{22}-31875 x^{23}+625 x^{24}) (1+104 x+4410 x^2+104155
x^3+1561045 x^4+15952817 x^5+116275468 x^6+623060570 x^7+2507306175 x^8+7693787635 x^9+18199448138 x^{10}+33436393552 x^{11}+47933912805 x^{12}+53730741510
x^{13}+47072037995 x^{14}+32132854819 x^{15}+16992690151 x^{16}+6899340865 x^{17}+2123302560 x^{18}+486469225 x^{19}+80870375 x^{20}+9387875 x^{21}+715250
x^{22}+31875 x^{23}+625 x^{24})) +$}
 \\
\parbox[t]{5.8in}{$
20 x (-1-7462 x^2+10821110 x^4-5831400069 x^6+1697820496893 x^8-308858374732578 x^{10}+37702475169808132 x^{12}-3185614084338056754
x^{14}+185590028685218837998 x^{16}-6977471751331290769044 x^{18}+116319318777276366674463 x^{20}+3999608763470504622094343 x^{22}-377737416256394840648610318
x^{24}+15519679525165384897331544882 x^{26}- $ \\ $ 433927200141827426983587685042 x^{28}+9125056924927835556655466937503 x^{30}-$ \\ $ 150536536854634405519175680765654
x^{32}+ 1993544783725378630249399433052553 x^{34}- 21499362635105896408621603402843080 x^{36}+ 190641781155325395202533674826755142 x^{38}- 1399217823032215716335407078488612543
x^{40}+ 8539127744177678958060920648882825538 x^{42}- 43463945090054981464218242987508598032 x^{44}+ 184864845779386486382692454012059277145 x^{46}- 657688834829591094722077056371086082332
x^{48}+1957766585510085808050179364694581126812 x^{50}-4875232794264694656151162666299958638316 x^{52}+10151155868176949128615346944290379140933
x^{54}-17663636894646983285752078518863185757343 x^{56}+25675826594048285268183840759942792750132 x^{58}-31179991896285055472961990580261975987806
x^{60}+31654893677412544844443219632667434433441 x^{62}-26901116475608043194315960173902454931844 x^{64}+19158139606098240680001128673664551962516
x^{66}-11424361390006327607950339936180762768054 x^{68}+5668976093562867051564563126606570777766 x^{70}-2299354022466441346099809259718404773236
x^{72}+729434661055564132571856270866151396569 x^{74}-159439615475006473968083099727921925278 x^{76}+10344118514096746041938408544272297229 x^{78}+9561257817018618562109881539028346644
x^{80}-5369756299516475821266336994021720743 x^{82}+1718144171830579878909467633407256974 x^{84}-399219702188962880466123482662543144 x^{86}+71662605680566682337258252423059683
x^{88}-10163082704123961106407234076358225 x^{90}+1147519550739261609993649422486375 x^{92}-103161083863250469590135854626400 x^{94}+7344296866782507412058917789350
x^{96}-409753388119012653712344902750 x^{98}+17626639128811501846661608750 x^{100}-570757985824669654350612500 x^{102}+13421821873595697379427500
x^{104}-216707453609405809250000 x^{106}+2180021700889543400000 x^{108}-11153092053400000000 x^{110}+14806523160000000 x^{112})/ $ \\ $ ((-1+x)
(1+x) (-1+2 x) (1+2 x) (-1+11 x) (1+11 x) (-1+9 x-6 x^2+x^3) (1+9 x+6 x^2+x^3) (1-12 x^2+8 x^3) (-1+12 x^2+8
x^3) (1-59 x+402 x^2-863 x^3+572 x^4) (1+59 x+402 x^2+863 x^3+572 x^4) (1-18 x+93 x^2-195 x^3+164 x^4-37 x^5+x^6)
(1+18 x+93 x^2+195 x^3+164 x^4+37 x^5+x^6) (1-33 x+375 x^2-1922 x^3+5192 x^4-7883 x^5+6797 x^6-3252 x^7+834 x^8-105 x^9+5 x^{10})
(1+33 x+375 x^2+1922 x^3+5192 x^4+7883 x^5+6797 x^6+3252 x^7+834 x^8+105 x^9+5 x^{10}) (-1+5 x+45 x^2-305 x^3+65 x^4+2175 x^5-1800
x^6-4525 x^7+1975 x^8+2500 x^9-875 x^{10}-375 x^{11}+125 x^{12}) (-1-5 x+45 x^2+305 x^3+65 x^4-2175 x^5-1800 x^6+4525 x^7+1975 x^8-2500
x^9-875 x^{10}+375 x^{11}+125 x^{12}) (1-22 x-214 x^2+4230 x^3-15001 x^4-2783 x^5+60858 x^6-12548 x^7-46005 x^8+16340 x^9+10377 x^{10}-5171
x^{11}-329 x^{12}+413 x^{13}-27 x^{14}-9 x^{15}+x^{16}) (1+22 x-214 x^2-4230 x^3-15001 x^4+2783 x^5+60858 x^6+12548 x^7-46005 x^8-16340
x^9+10377 x^{10}+5171 x^{11}-329 x^{12}-413 x^{13}-27 x^{14}+9 x^{15}+x^{16}))- $ \\  \\
$ 10 x (-5+843 x^2-119678 x^4+10867994 x^6-458965303 x^8+8922095485 x^{10}-61636790888 x^{12}-401648853762 x^{14}+9116648473545
x^{16}-57595864199635 x^{18}+187629020347989 x^{20}-369284813892781 x^{22}+476512235911457 x^{24}-423463849180949 x^{26}+266671095305693 x^{28}-120427555936855
x^{30}+38823243083667 x^{32}-8699482508427 x^{34}+1260261232950 x^{36}-91964916840 x^{38}-2608957200 x^{40}+1217168000 x^{42}-102336000 x^{44}+3072000
x^{46})/ $ \\ $ ((-1+4 x) (1+4 x) (-1+34 x-160 x^2+248 x^3-152 x^4+32 x^5) (1+34 x+160 x^2+248 x^3+152 x^4+32 x^5)
(-1+18 x-91 x^2+110 x^3+175 x^4-515 x^5+455 x^6-175 x^7+25 x^8) (-1-18 x-91 x^2-110 x^3+175 x^4+515 x^5+455 x^6+175 x^7+25 x^8)
(-1+8 x+14 x^2-194 x^3+302 x^4-65 x^5-120 x^6+62 x^7+4 x^8-7 x^9+x^{10}) (-1-8 x+14 x^2+194 x^3+302 x^4+65 x^5-120 x^6-62 x^7+4 x^8+7
x^9+x^{10}))+$ \\ \\
$ \ds \frac{20 x (1+11 x^2)}{(-1+x) (1+x) (-1+11 x) (1+11 x)}+ \frac{4 x}{1-4 x^2}  $\bc $ \mbox{--------------------------------------    } $  \ec } \\
  \noindent
\parbox[t]{5.8in}{ ${\cal F}^{TG}_{10,4}(x) = {\cal F}^{TG}_{10,6}(x) = $ \\
 $ \ds
2 x (76-1394 x-238925 x^2+1712406 x^3+326860329 x^4-862982509 x^5-248411745750 x^6+598148325518 x^7+112441361628130 x^8-425819187134586
x^9-31635562487737893 x^{10}+169069561363732747 x^{11}+5783691389458258347 x^{12}-38091262241469735960 x^{13}-730642369070520564440 x^{14}+5327668007688979504782
x^{15}+70288034693143851128732 x^{16}-509546186173135832171883 x^{17}-5771491712553417562186101 x^{18}+37476694985740526040381412 x^{19}+426520647639540583956084441
x^{20}-2394527171196445018361375072 x^{21}-27402020733527879719989268251 x^{22}+141136895183253856958991863440 x^{23}+1450862846642585378107875741569
x^{24}-7444174317678966677583089116935 x^{25}-61683086368986780465239430705782 x^{26}+332198399812433110497570224897246 x^{27}+2101497319878504023127929342494062
x^{28}-12153254849021617265465509221458271 x^{29}-57912172911174653508468879074566763 x^{30}+361836547950576663098760835207121634 x^{31}+1308355368877520625301526566676180541
x^{32}-8804053584283913860310509427431464812 x^{33}-24590770259974391554844046897898644771 x^{34}+176434024844834939178014726972824636048 x^{35}+390287084413688337049722688100400508143
x^{36}-2935186122993845950713753256757764268399 x^{37}-5306696728085749119312546111212846451945 x^{38}+40810342651099464351201530222713836042606
x^{39}+62606471606168801679124998218538268952943 x^{40}- $ \\ $ 476738700918722768304386203447296161619916 x^{41}- $ \\ $ 646848537132358507328709256432653115721028
x^{42}+ $ \\ $ 4697350268576585848415782065392731630659506 x^{43}+ $ \\ $ 5878123961351749019488759929893251993494134 x^{44}- $ \\ $ 39143484028413651276167897193179764877714039
x^{45}- $ \\ $ 46933361068140037553087882726792512609720694 x^{46}+ $ \\ $ 276360871711679782276177858874509336982250173 x^{47}+ $ \\ $ 327592952368208058058088146654034419643522404
x^{48}- $ \\ $ 1655064448725583660687122054021505759823194011 x^{49}- $ \\ $ 1985215294592474362037047404736373394191841777 x^{50}+ $ \\ $ 8414819399798759720560063649514813552892229525
x^{51}+ $ \\ $ 10375260782719129180429583424945415362164619921 x^{52}- $ \\ $ 36350096990699504999959936062822761621670954072 x^{53}- $ \\ $ 46518462354728614824593740508832595146020775943
x^{54}+ $ \\ $ 133533393159687830120045743391792592660572254024 x^{55}+ $ \\ $ 178319885580967893892594122423718671080978292069 x^{56}- $ \\ $ 417635817334154005985422845341612823138229587688
x^{57}- $ \\ $ 583425601257625126505595124801107684320884357631 x^{58}+ $ \\ $ 1113684240256620739851068019122582421187927676336 x^{59}+ $ \\ $ 1628735690589529547953291272424292377586016996407
x^{60}- $ \\ $ 2536602330197671386302658849342221370873986788932 x^{61}- $ \\ $ 3882437915630040535920218874228791855792830975785 x^{62}+ $ \\ $ 4944971527505951275279982350658309133061036522519
x^{63}+ $ \\ $ 7913208682713515236429634503313428145676899492567 x^{64}- $ \\ $ 8269823681172962541385972174627064439473704021060 x^{65}- $ \\ $ 13815960884029107987010015833749181748861763839202
x^{66}+ $ \\ $ 11893897770309070649683730328407223225603955215904 x^{67}+ $ \\ $ 20704756341764314299375228398387445612641453212552 x^{68}- $ \\ $ 14748679350809389969308266947292317747796024252546
x^{69}- $ \\ $ 26688877241098479365608979092500557183344009914693 x^{70}+ $ \\ $ 15807268115015427684697139892919836865274053567057 x^{71}+ $ \\ $ 29652229323814775402516150246858150718351979024217
x^{72}- $ \\ $ 14675334568210423353994402813558829931174670818482 x^{73}- $ \\ $ 28451300686238204279380701744794806806882323641285 x^{74}+ $ \\ $ 11821034941255061396192636512044022909676194504702
x^{75}+ $ \\ $ 23618661404044406404798541598983651217489452136937 x^{76}- $ \\ $ 8267255590608315234052788237465396648388984326346 x^{77}- $ \\ $ 16992121061482092504276982101774320632220211093257
x^{78}+ $ \\ $ 5016389933986179051313427690359939827483300200858 x^{79}+ $ \\ $ 10611295309042114464879898796386974065677715729747 x^{80}-  $ } \\
\parbox[t]{5.8in}{ $
2633550703523053101113823181467305993344840449642
x^{81}- $ \\ $ 5761093827250612814897835052167120505360981310358 x^{82}+ $ \\ $ 1189419782155923876489992102023186439928475245605 x^{83}+ $ \\ $ 2723981960856611974747984431336012590583428956372
x^{84}- $ \\ $ 457493952713815687953531557425810062810172958509 x^{85}- $ \\ $ 1123967308448762999510235882985712485296419123970 x^{86}+ $ \\ $ 147269529489891609894138126193531231986872854316
x^{87}+ $ \\ $ 405755632936043957961653240435723137701647173442 x^{88}- $ \\ $ 38417552558907475986677891182403036871439639836 x^{89}- $ \\ $ 128563608983293213699169429875481149397754243467
x^{90}+ $ \\ $ 7563145550394346658488192781417045244836404514 x^{91}+ $ \\ $ 35884085865245716127244213136328680445926691459 x^{92}- $ \\ $ 882666413044803609991000460700186874654968951
x^{93}- $ \\ $ 8854096496047569190227693378871773065496863996 x^{94}- $ \\ $ 49266355189601144549968883561878755469658194 x^{95}+ $ \\ $ 1935483345476653378590186722481035716374714252
x^{96}+ $ \\ $ 54832009208749471094266164144249878293484376 x^{97}- $ \\ $ 374537294662737761444773263350566879864890579 x^{98}- $ \\ $ 15197877462053260192509989827651046875351759
x^{99}+ $ \\ $ 63827967357861685014207745625989528826307943 x^{100}+ $ \\ $ 2599514922475714022597387201846620111204908 x^{101}- $ \\ $ 9482984520097272492139509105890097184405693
x^{102}- $ \\ $ 279499137274335659814865699584689453690378 x^{103}+ $ \\ $ 1210630428324895124436659545625520837440287 x^{104}+ $ \\ $ 11414403382052752814491991871795511356328
x^{105}- $ \\ $ 130484285370157915640919297896303996709769 x^{106}+ $ \\ $ 2086752952950718685863717367609941712828 x^{107}+ $ \\ $ 11642555818084990833433691590209735778677
x^{108}- $ \\ $ 508083546585748888780571116785682580146 x^{109}-841974685513335569359381708398195541952 x^{110}+59956376536143910347860239305084744825 x^{111}+48245552099801569047183180378163613500
x^{112}-4695491493539315967387460794962554015 x^{113}-2137046734722587785930242557254552700 x^{114}+259349893739981348471330658408540600 x^{115}+71224314031485062607717954605759400
x^{116}-10193681207823594729100365539024400 x^{117}-1734972463112469632851268537303200 x^{118}+281724021719971135665504399056000 x^{119}+30032941647402517402033777104000
x^{120}-5341786411150276941306571480000 x^{121}-362732666908249569673396320000 x^{122}+67117115638337759921096000000 x^{123}+3078865576388639154720000000
x^{124}-534402916425858334864000000 x^{125}-18809994434686683648000000 x^{126}+2540231345489337600000000 x^{127}+77102666216724480000000 x^{128}-6426991539712000000000
x^{129}-160153693388800000000 x^{130}+5974777856000000000 x^{131}+143327232000000000 x^{132})/$ \\ $ ((-1+x) (1+x) (1+2 x) (-1+4 x) (1+4
x) (1-8 x+4 x^2) (1+5 x+5 x^2) (1+7 x+8 x^2) (1+18 x+63 x^2+50 x^3+13 x^4+x^5) (-1+34 x-160 x^2+248
x^3-152 x^4+32 x^5) (1+34 x+160 x^2+248 x^3+152 x^4+32 x^5) (-1+18 x-91 x^2+110 x^3+175 x^4-515 x^5+455 x^6-175 x^7+25 x^8)
(-1-18 x-91 x^2-110 x^3+175 x^4+515 x^5+455 x^6+175 x^7+25 x^8) (-1+121 x-3901 x^2+49003 x^3-293115 x^4+896942 x^5-1381585 x^6+970100
x^7-264304 x^8+15552 x^9) (-1+8 x+14 x^2-194 x^3+302 x^4-65 x^5-120 x^6+62 x^7+4 x^8-7 x^9+x^{10}) (-1-8 x+14 x^2+194 x^3+302
x^4+65 x^5-120 x^6-62 x^7+4 x^8+7 x^9+x^{10}) (1-36 x+504 x^2-3603 x^3+14416 x^4-33263 x^5+44489 x^6-34794 x^7+16069 x^8-4345 x^9+653
x^{10}-47 x^{11}+x^{12}) (1+53 x+951 x^2+7589 x^3+32774 x^4+84127 x^5+133734 x^6+133056 x^7+81936 x^8+30215 x^9+6265 x^{10}+650 x^{11}+25
x^{12}) (1+92 x+3048 x^2+50453 x^3+483576 x^4+2928888 x^5+11832412 x^6+32949126 x^7+64377394 x^8+88750186 x^9+85833700 x^{10}+57206921
x^{11}+25441290 x^{12}+7182529 x^{13}+1203428 x^{14}+111956 x^{15}+5340 x^{16}+120 x^{17}+x^{18}) (1-66 x+1833 x^2-28651 x^3+283174 x^4-1881547
x^5+8719340 x^6-28859906 x^7+69342834 x^8-122339404 x^9+159752386 x^{10}-155169581 x^{11}+112318552 x^{12}-60476870 x^{13}+24066800 x^{14}-6990095
x^{15}+1450676 x^{16}-207780 x^{17}+19365 x^{18}-1050 x^{19}+25 x^{20})) + $ }
\\
\parbox[t]{5.8in}{ $
10 x^2 (-517+676787 x^2+774317814 x^4-2419714399691 x^6+2564483665392264 x^8-1627769273521566449 x^{10}+715691075642214615863
x^{12}-233906029848566006481854 x^{14}+59313351407358200830787539 x^{16}-12013328363705595847784537069 x^{18}+1984349884196736665101768128147 x^{20}-271463193781361220249645795491528
x^{22}+31115900105736469009127132399005036 x^{24}-3014933108824719813854145977709201724 x^{26}+248640108435949101007074334182548536845 x^{28}-17546939243467420889827947318016441956491
x^{30}+ $ \\ $ 1064228669269128709654342038483829542056933 x^{32}- $ \\ $ 55663639427111526948746996248843075135705834 x^{34}+ $ \\ $ 2517670156066476423918462780149718178364412756
x^{36}- $ \\ $ 98674239084542259941612965935587414996253484214 x^{38}+ $ \\ $ 3355389702977370331088633569058611403520556514950 x^{40}- $ \\ $ 99028855474319474897458266938217942777364845505712
x^{42}+ $ \\ $ 2534182412216439254014009046352331358356861372119239 x^{44}- $ \\ $ 56064257828215371971529618834775015607207629223982885 x^{46}+ $ \\ $ 1065542465729307899206344732852213131176346924691696406
x^{48}- $ \\ $ 17179471589186357312301912827347484445450205448364639213 x^{50}+ $ \\ $ 228781292956823863871533607554331597212638823865025962512 x^{52}- $ \\ $ 2354397326393522209327470619321442417343364899214897758669
x^{54}+ $ \\ $ 14539969280002673502059996633145336114372231684718352090463 x^{56}+ $ \\ $ 63031428368363696051800695948107508580692826559424890331014 x^{58}- $ \\ $ 3609481034048246752102564165390454663110709570892611643192157
x^{60}+ $ \\ $ 66892295189246125747800725065410245287285736846372719868073431 x^{62}- $ \\ $ 890721530969234818961963195859532226310150733060254600073007594 x^{64}+ $ \\ $ 9615326987964775277396780233245607751411304632503726365754153107
x^{66}- $ \\ $ 87811561099147692718399662391819540560958837209147319780001788555 x^{68}+ $ \\ $ 692103978495189213254721898306828193738807454540671576516415098168
x^{70}- $ \\ $ 4758915738658339330332941039748917752113368762591419582664808924429 x^{72}+ $ \\ $ 28726958092288839503995360296910447771046246573265060286369714258737
x^{74}- $ \\ $ 152804853567875457564079247173840993454818797085033824662347599674526 x^{76}+ $ \\ $ 717737200775214988598847699882150518690757586134539908900291236725001
x^{78}- $ \\ $ 2979759548111045598827821158263085129987950557399984407134500477184602 x^{80}+ $ \\ $ 10933799801730220822313162735837648378557201697985073399843106380626761
x^{82}- $ \\ $ 35425400102155758904127878877252456577206157039628123555467147935227139 x^{84}+ $ \\ $ 101140301640890902315796938026303539825737895434982874835711332669449506
x^{86}- $ \\ $ 253560622552322329431496562726098658039458858257099400575486998786665135 x^{88}+ $ \\ $ 555002161803252600721471908466808338028784660227855535132110020881893243
x^{90}- $ \\ $ 1050405451878301255709545442953843166523830296707434832022116868411211567 x^{92}+ $ \\ $ 1688945935033053153294708365108480074990825589713020133901434589832402584
x^{94}- $ \\ $ 2223928698281681848620119804409356220774990171203421486842984677012308772 x^{96}+ $ \\ $ 2173206655352974054019773238297536609204426630698877852326153102985170408
x^{98}- $ \\ $ 948526227314467419228209777792585361771448273778582244132509903987249153 x^{100}- $ \\ $ 1819405291715479863854490605473807612283378137526808434984763180722069553
x^{102}+ $ \\ $ 5914900965923269044217147939270892463829271258748985705262973630118104872 x^{104}- $ \\ $ 10416690207964498527299758498674794734734889989423408235821808303584525753
x^{106}+ $ \\ $ 14010742070784830839481950415107870696148334621764385925222081862148541737 x^{108}- $ \\ $ 15609103795965286858771307668622258361517326240758088646934974906536593234
x^{110}+ $ \\ $ 14879449869511137596437078740188048716292007756804706137729405940739185701 x^{112}- $ \\ $ 12336935018847273215707626891668537571697475808797879475396011518789695457
x^{114}+ $ \\ $ 8981607212369400548331996086614247140399127470468225835655293202105694594 x^{116}- $ \\ $ 5775985174128641122660870490751668698711117708599584018189690236860073161
x^{118}+ $ \\ $ 3294327006800819725815703239752429419383940363053123768811695637334584765 x^{120}- $ \\ $ 1671064785123669497559692016090338632775473625887639662338486759447398868
x^{122}+ $ \\ $ 755397565263833758818117414273373006675509005695508881680921738616024592 x^{124}- $ \\ $ 304746405413598009974214883288266050125946011080100317086397887414187270
x^{126}+ $ \\ $ 109830824435795266668488877080951242412566422994203326560147320358159074 x^{128}- $ \\ $ 35385817942772152173244389799694846384093263005738301338599534955876512
x^{130}+  $ } \\
\parbox[t]{5.8in}{ $10195963470962056542321130168809353773125912823984231841369144407736431 x^{132}- $ \\ $ 2627772217304815569409509481934061109329211077260681625563310472475007
x^{134}+ $ \\ $ 605721231548127516355254034749868517133632187599075698752570592375403 x^{136}- $ \\ $ 124838923271262970341354668239667848405296684020390995127992207020396
x^{138}+ $ \\ $ 22992489661714434573877468047125611931742317752726782297564561184472 x^{140}- $ \\ $ 3781320349645477844159059688171641456843873628540698757620280649226
x^{142}+ $ \\ $ 554721178934607551355764084012846112562389887364988872441121202072 x^{144}- $ \\ $ 72495830600074202591519099804026189802278176043187127398527899758
x^{146}+ $ \\ $ 8426777421654188849524142099987110840932439370522550822675376121 x^{148}- $ \\ $ 869521307107100566353899390036614754720031483886138357021802307
x^{150}+ $ \\ $ 79463941688496938462022923368185041153855619840008314567766514 x^{152}- $ \\ $ 6414369882222424872323948455688066106533776281070574928462367 x^{154}+ $ \\ $ 455883144081545801239127682848756077407859558760079770147511
x^{156}- $ \\ $ 28422396251765368521950078941036612211682050372535075968090 x^{158}+ $ \\ $ 1547755864730391360029960397861110841405620564010689580640 x^{160}- $ \\ $ 73248084749928349307293551920039573229339009685655937550
x^{162}+ $ \\ $ 2994974405577175271483623843522625474634959200032957100 x^{164}- $ \\ $ 105076126893820753837645834960442509335769712953130500 x^{166}+ $ \\ $ 3137649227238363072797095736357465767042554172967125
x^{168}- $ \\ $ 78978181583617961146235033839521487328507712751875 x^{170}+ $ \\ $ 1656510163756245191787512797504180508693473846875 x^{172}- $ \\ $ 28548758185185238788280858041609023773725781250
x^{174}+ $ \\ $ 397404504169696759271639127292496611097512500 x^{176}- $ \\ $ 4373743081799402562302972385084301879000000 x^{178}+ $ \\ $ 37040848022637890072412679239613365000000
x^{180}- $ \\ $ 233018353643509938568427080679500000000 x^{182}+ $ \\ $ 1037991667026054240268223080000000000 x^{184}- 3055249507560388630652960000000000 x^{186}+5329343649197877196800000000000
x^{188}-4581595294645248000000000000 x^{190}+1451262771200000000000000 x^{192})/$ \\ $((-1+x) (1+x) (-1+2 x) (1+2 x) (-1+11 x) (1+11
x) (1-3 x+x^2) (1+3 x+x^2) (1-13 x+11 x^2) (1-8 x+11 x^2) (1+8 x+11 x^2) (1+13 x+11 x^2)
(1-12 x^2+8 x^3) (-1+12 x^2+8 x^3) (-1+104 x-2661 x^2+24090 x^3-91993 x^4+158236 x^5-121128 x^6+44736 x^7-7808 x^8+512
x^9) (1+104 x+2661 x^2+24090 x^3+91993 x^4+158236 x^5+121128 x^6+44736 x^7+7808 x^8+512 x^9) (-1+5 x+45 x^2-305 x^3+65 x^4+2175
x^5-1800 x^6-4525 x^7+1975 x^8+2500 x^9-875 x^{10}-375 x^{11}+125 x^{12}) (-1-5 x+45 x^2+305 x^3+65 x^4-2175 x^5-1800 x^6+4525 x^7+1975
x^8-2500 x^9-875 x^{10}+375 x^{11}+125 x^{12}) (1-22 x-214 x^2+4230 x^3-15001 x^4-2783 x^5+60858 x^6-12548 x^7-46005 x^8+16340 x^9+10377
x^{10}-5171 x^{11}-329 x^{12}+413 x^{13}-27 x^{14}-9 x^{15}+x^{16}) (1+22 x-214 x^2-4230 x^3-15001 x^4+2783 x^5+60858 x^6+12548 x^7-46005
x^8-16340 x^9+10377 x^{10}+5171 x^{11}-329 x^{12}-413 x^{13}-27 x^{14}+9 x^{15}+x^{16}) (1-82 x+2898 x^2-59227 x^3+789803 x^4-7334275
x^5+49339066 x^6-246611784 x^7+930982131 x^8-2681675595 x^9+5927069364 x^{10}-10071216252 x^{11}+13140900495 x^{12}-13114381556 x^{13}+9943609371
x^{14}-5673467839 x^{15}+2404556811 x^{16}-744137999 x^{17}+164387022 x^{18}-25156085 x^{19}+2562619 x^{20}-164923 x^{21}+6260 x^{22}-125 x^{23}+x^{24})
(1+82 x+2898 x^2+59227 x^3+789803 x^4+7334275 x^5+49339066 x^6+246611784 x^7+930982131 x^8+2681675595 x^9+5927069364 x^{10}+10071216252 x^{11}+13140900495
x^{12}+13114381556 x^{13}+9943609371 x^{14}+5673467839 x^{15}+2404556811 x^{16}+744137999 x^{17}+164387022 x^{18}+25156085 x^{19}+2562619 x^{20}+164923
x^{21}+6260 x^{22}+125 x^{23}+x^{24}) (1-104 x+4410 x^2-104155 x^3+1561045 x^4-15952817 x^5+116275468 x^6-623060570 x^7+2507306175 x^8-7693787635
x^9+18199448138 x^{10}-33436393552 x^{11}+47933912805 x^{12}-53730741510 x^{13}+47072037995 x^{14}-32132854819 x^{15}+16992690151 x^{16}-6899340865
x^{17}+2123302560 x^{18}-486469225 x^{19}+80870375 x^{20}-9387875 x^{21}+715250 x^{22}-31875 x^{23}+625 x^{24}) (1+104 x+4410 x^2+104155
x^3+1561045 x^4+15952817 x^5+116275468 x^6+623060570 x^7+2507306175 x^8+7693787635 x^9+18199448138 x^{10}+33436393552 x^{11}+47933912805 x^{12}+53730741510
x^{13}+47072037995 x^{14}+32132854819 x^{15}+16992690151 x^{16}+6899340865 x^{17}+2123302560 x^{18}+486469225 x^{19}+80870375 x^{20}+9387875 x^{21}+715250
x^{22}+31875 x^{23}+625 x^{24})) + $ }\\
\parbox[t]{5.8in}{ $
20 x^2 (-200+227916 x^2-143551403 x^4+59211315220 x^6-16142031622757 x^8+2969830066234826 x^{10}-382316360665224398 x^{12}+35837872496861650216
x^{14}-2540609624382851662222 x^{16}+140719723738727589263020 x^{18}-6240897926592956227508870 x^{20}+225110747310805357924668587 x^{22}-6660003556830916437940838306
x^{24}+162316996807451927964671963118 x^{26}-3267946859388856477882995178581 x^{28}+54493764903682874405492471627164 x^{30}-754789802562258606954295438398849
x^{32}+8710176876759175625256794912961703 x^{34}-83989518933713520276164333277158768 x^{36}+678505684233084242086224307311267018 x^{38}-4601764678039844186832082418971747224
x^{40}+26240719568124896297495730614578776603 x^{42}-125903131018368341627896237780865814549 x^{44}+508281831518302655864553118546026906843 x^{46}-1725160967673831404953340074839046824676
x^{48}+4914668126336719731191653826957898875950 x^{50}-11721438310525470355701848796246166224313 x^{52}+23318763826921005472092418225977577162806
x^{54}-38501353487038504567462207122619591221944 x^{56}+52383095686232443740810703571776810527083 x^{58}- $ \\ $ 58099509084138454101188067620615958952530
x^{60}+ $ \\ $ 51588786896859944335736004046101956105347 x^{62}- $ \\ $ 35381369534232697774012939619635963807182 x^{64}+ $ \\ $ 17086474080826697413137462259606256611900
x^{66}- $ \\ $ 3732276831071734850539593958403700936191 x^{68}- $ \\ $ 2428464355862983472585526141316516903263 x^{70}+ $ \\ $ 3402759123520154411139768160944204589088 x^{72}-2304421600133270558446524057938225180967
x^{74}+1110964046270048178462688556794423253747 x^{76}-415574896533491847214751261049715583078 x^{78}+124686048231164214848295403120327799844 x^{80}-30477610844808149564938260498044727792
x^{82}+6116007349863691739549892718238204983 x^{84}-1010800363747704400662293867440045524 x^{86}+137603507223062562818735957731904073 x^{88}-15393297838937611452367754688486335
x^{90}+1408354506935192832661126658966870 x^{92}-104616848382664188424922025675350 x^{94}+6244804624327547674469167153425 x^{96}-295326823427544973498457436875
x^{98}+10851627741004024944060042625 x^{100}-301485415137407502901657500 x^{102}+6087466710598138314937500 x^{104}-84022542585257538800000 x^{106}+713242643294985600000
x^{108}-2979936064864000000 x^{110}+2771248480000000 x^{112})/$ \\ $ ((-1+x) (1+x) (-1+2 x) (1+2 x) (-1+11 x) (1+11 x) (-1+9 x-6
x^2+x^3) (1+9 x+6 x^2+x^3) (1-12 x^2+8 x^3) (-1+12 x^2+8 x^3) (1-59 x+402 x^2-863 x^3+572 x^4)
(1+59 x+402 x^2+863 x^3+572 x^4) (1-18 x+93 x^2-195 x^3+164 x^4-37 x^5+x^6) (1+18 x+93 x^2+195 x^3+164 x^4+37 x^5+x^6)
(1-33 x+375 x^2-1922 x^3+5192 x^4-7883 x^5+6797 x^6-3252 x^7+834 x^8-105 x^9+5 x^{10}) (1+33 x+375 x^2+1922 x^3+5192 x^4+7883 x^5+6797
x^6+3252 x^7+834 x^8+105 x^9+5 x^{10}) (-1+5 x+45 x^2-305 x^3+65 x^4+2175 x^5-1800 x^6-4525 x^7+1975 x^8+2500 x^9-875 x^{10}-375 x^{11}+125
x^{12}) (-1-5 x+45 x^2+305 x^3+65 x^4-2175 x^5-1800 x^6+4525 x^7+1975 x^8-2500 x^9-875 x^{10}+375 x^{11}+125 x^{12}) (1-22
x-214 x^2+4230 x^3-15001 x^4-2783 x^5+60858 x^6-12548 x^7-46005 x^8+16340 x^9+10377 x^{10}-5171 x^{11}-329 x^{12}+413 x^{13}-27 x^{14}-9 x^{15}+x^{16})
(1+22 x-214 x^2-4230 x^3-15001 x^4+2783 x^5+60858 x^6+12548 x^7-46005 x^8-16340 x^9+10377 x^{10}+5171 x^{11}-329 x^{12}-413 x^{13}-27 x^{14}+9
x^{15}+x^{16}))$ \\ \\
$10 x^2 (141-20317 x^2+1769412 x^4-106602114 x^6+4065572659 x^8-95245006503 x^{10}+1370793580650 x^{12}-12092519738638 x^{14}+65574984909135
x^{16}-226307435369999 x^{18}+515543092515601 x^{20}-794633284937229 x^{22}+841939662220809 x^{24}-617434377175313 x^{26}+310358521797465 x^{28}-101785469424327
x^{30}+17785066908763 x^{32}+757211048985 x^{34}-1348937563312 x^{36}+382686538032 x^{38}-59462965440 x^{40}+5491199040 x^{42}-281600000 x^{44}+6144000
x^{46})/$ \\ $ ((-1+4 x) (1+4 x) (-1+34 x-160 x^2+248 x^3-152 x^4+32 x^5) (1+34 x+160 x^2+248 x^3+152 x^4+32 x^5)
(-1+18 x-91 x^2+110 x^3+175 x^4-515 x^5+455 x^6-175 x^7+25 x^8) (-1-18 x-91 x^2-110 x^3+175 x^4+515 x^5+455 x^6+175 x^7+25 x^8)
(-1+8 x+14 x^2-194 x^3+302 x^4-65 x^5-120 x^6+62 x^7+4 x^8-7 x^9+x^{10}) (-1-8 x+14 x^2+194 x^3+302 x^4+65 x^5-120 x^6-62 x^7+4 x^8+7
x^9+x^{10}))+ $ \\ \\ $\ds
\frac{240 x^2}{(-1+x) (1+x) (-1+11 x) (1+11 x)} + \frac{8 x^2}{1-4 x^2}$
 \bc $ \mbox{--------------------------------------    } $  \ec } \\
\parbox[t]{5.8in}{ ${\cal F}^{TG}_{10,5}(x) = $ \\
 $ \ds
-2 x (-51-2531 x+346545 x^2+6281569 x^3-626107154 x^4-6720852191 x^5+557692878360 x^6+3924628100007 x^7-303896690325215
x^8-1203141163993754 x^9+110385064013034818 x^{10}+129738487704458173 x^{11}-27731403108413092267 x^{12}+34160159071697768285 x^{13}+4912263037693206222860
x^{14}-15954779864093857882007 x^{15}-623281892861255956436402 x^{16}+3097145780008994257521933 x^{17}+57632783639522368138374201 x^{18}-377934991774510513575939182
x^{19}-3952158907346024732403779791 x^{20}+32460308118041603852559150442 x^{21}+203786575810957015978655619471 x^{22}-2077813633463470867265288726055
x^{23}-7948961501060129247409085221049 x^{24}+102802217882332666430474733354935 x^{25}+232622429206461224171013982875562 x^{26}-4034402051064428138538406384326721
x^{27}-4900682055953589557666106815753572 x^{28}+128060859092710534391335093622529141 x^{29}+63051543270494126746830832945671583 x^{30}-3338490507604724111102033951338547814
x^{31}+26788511188693114060249776800215459 x^{32}+72356721471260820357813632619322970952 x^{33}-25133888076030218523548415691876946199 x^{34}-1316599417966012079962043281074495545073
x^{35}+742495978080795122963703947179610357632 x^{36}+20269576331788360890752199000142043677074 x^{37}-13862824906131506810766699016256900960915
x^{38}-265607493352496242359856362940910711915736 x^{39}+193104389804418555043447324391809895693127 x^{40}+ $ \\ $ 2975216590177302257886759045720008372793261
x^{41}- $ \\ $ 2111817918355102371185068791194960721452202 x^{42}- $ \\ $ 28570645331934163623948593487911673599500336 x^{43}+ $ \\ $ 18487694709094661432907887261184552837124826
x^{44}+ $ \\ $ 235581468114998519645838461855224488305073809 x^{45}- $ \\ $ 130156394689861484366232451128499870990468851 x^{46}- $ \\ $ 1668989256110220098195946316866546797554529153
x^{47}+ $ \\ $ 731417562602339275971261625262436986942402381 x^{48}+ $ \\ $ 10158855568490050874446218127772793907793908546 x^{49}- $ \\ $ 3198253249786663176802047271684130146636834688
x^{50}- $ \\ $ 53108779145386981783564979650152383616566550465 x^{51}+ $ \\ $ 10128033086120794207566571571046105705526451654 x^{52}+ $ \\ $ 238382059576456471861910521774034716007311766422
x^{53}- $ \\ $ 17221109780482571116786192851239191587398123757 x^{54}- $ \\ $ 918635161824208359247552665162981432872785417134 x^{55}- $ \\ $ 33195366068704491758994589883688270853002940664
x^{56}+ $ \\ $ 3040577629649310776598270380203292039919688800658 x^{57}+ $ \\ $ 400196059547950418222174462590664705515435620391 x^{58}- $ \\ $ 8652537823786096071652322353673649787183678251881
x^{59}- $ \\ $ 1839752907314622571026734469523168083989405111807 x^{60}+ $ \\ $ 21202716809862237156822487730685535276278991654367 x^{61}+ $ \\ $ 5903523420339799260371399837261110215036855209930
x^{62}- $ \\ $ 44834552043335708954505705972482347114055469684559 x^{63}- $ \\ $ 14744682581002030836899333703692875295653926228962 x^{64}+ $ \\ $ 82014457953943649674965051178817460289536021215555
x^{65}+ $ \\ $ 29863060596899002266660546446251805386332824925422 x^{66}- $ \\ $ 130137466567016964377506206069967536159477001537059 x^{67}- $ \\ $ 50053627002770222166627376101885876440432458356957
x^{68}+ $ \\ $ 179612493443689814186527356422746942236647246072246 x^{69}+ $ \\ $ 70235891648446344107801479915893386405663885576028 x^{70}- $ \\ $ 216173911972664059606906981717451566757458755155447
x^{71}- $ \\ $ 83076701121466780717363057720697280259747943692692 x^{72}+ $ \\ $ 227380374071409807308921118593306719565945503226892 x^{73}+ $ \\ $ 83137577281686798107796684139729872757623656114015
x^{74}- $ \\ $ 209365843407986573894358246160972068370712087311012 x^{75}- $ \\ $ 70461316925701078941006771908989977332975544991382 x^{76}+ $ \\ $ 168926823594511366682206960322103657949050819207126
x^{77}+ $ \\ $ 50480700674821294557379787438558260881230116262837 x^{78}- $ \\ $ 119467885941242496991002033696268682672607206171073 x^{79}- $ \\ $ 30396677030912213570439085714643100882979411593462
x^{80}+ $ } \\ \parbox[t]{5.8in}{  $ 74020463837949215696205173175300374984605142344152 x^{81}+ $ \\ $ 15197881227364908605675007638412617172581757209863 x^{82}- $ \\ $ 40128862392750652022136897009811599005073308003030
x^{83}- $ \\ $ 6152878805987423805276211517166965970667420375242 x^{84}+ $ \\ $ 18997536394461918549664063346870105159076338828474 x^{85}+ $ \\ $ 1900388850847166181951675437482252296274290069450
x^{86}- $ \\ $ 7832191839678246919734677017867495833467141242296 x^{87}- $ \\ $ 365399651689524059728343699524403419774914588397 x^{88}+ $ \\ $ 2802193964815597934939014943380747326366064960001
x^{89}- $ \\ $ 16835332987879145552417242317924585488588957143 x^{90}- $ \\ $ 866308406412822132399069201881506931370442990979 x^{91}+ $ \\ $ 49888904500015272960502242118186671756572892776
x^{92}+ $ \\ $ 230212846577582644227486528608136047856559439351 x^{93}- $ \\ $ 24906031147133352566492345138371357002782063214 x^{94}- $ \\ $ 52250179636338263131907344309120893625441147691
x^{95}+ $ \\ $ 8277699145310025749463885476605039347697929953 x^{96}+ $ \\ $ 10048398217541402857728024267494891772275023944 x^{97}- $ \\ $ 2105984461825530959671151172262247224745299426
x^{98}- $ \\ $ 1620900014269096181173400651490200441434643381 x^{99}+ $ \\ $ 427865890582649419864944018613027239279086942 x^{100}+ $ \\ $ 216372606893901948914468519952550532201294652
x^{101}- $ \\ $ 70480627023716470610346842862468964051941617 x^{102}- $ \\ $ 23445919611481704609414636889112228678067462 x^{103}+ $ \\ $ 9450269499593860084876826613561231502792428
x^{104}+ $ \\ $ 1999939004828987307604697849230729911169432 x^{105}- $ \\ $ 1029122034653812682572137628192624648134651 x^{106}- $ \\ $ 126594218202456922523147996987918621162003
x^{107}+ $ \\ $ 90440776236817384538157776269938860389718 x^{108}+ $ \\ $ 5052589819186147732132437936795903137926 x^{109}-6351553952451503074249343839812369826573
x^{110}-22664819101711128318447084542557984040 x^{111}+351830496887428666123569104723733957290 x^{112}-13435814835754352840124128090090275660 x^{113}-15117410702244283319581825228861246020
x^{114}+1094813438728454165994918549559584000 x^{115}+493311516515768181121201093952770000 x^{116}-50036189966890691796258300544529600 x^{117}-11896752290285081874041038063833200
x^{118}+1512543300338047970724655575772000 x^{119}+204555449167210073548457376448000 x^{120}-30942312932846690051739107360000 x^{121}-2391993570434141288613123280000
x^{122}+418664918044319757870120800000 x^{123}+18006370964198364023608000000 x^{124}-3538686190756713370448000000 x^{125}-85470368056527605120000000
x^{126}+  $ \\ $ 16891388123502675200000000 x^{127}+293197981277634560000000 x^{128}-38145542748876800000000 x^{129}- $ \\ $ 730539005542400000000 x^{130}+25023176704000000000
x^{131}+525533184000000000 x^{132})/$ \\ $((-1+x) (1+x) (1+2 x) (-1+4 x) (1+4 x) (1-8 x+4 x^2) (1+5 x+5 x^2) (1+7
x+8 x^2) (1+18 x+63 x^2+50 x^3+13 x^4+x^5) (-1+34 x-160 x^2+248 x^3-152 x^4+32 x^5) (1+34 x+160 x^2+248 x^3+152
x^4+32 x^5) (-1+18 x-91 x^2+110 x^3+175 x^4-515 x^5+455 x^6-175 x^7+25 x^8) (-1-18 x-91 x^2-110 x^3+175 x^4+515 x^5+455 x^6+175
x^7+25 x^8) (-1+121 x-3901 x^2+49003 x^3-293115 x^4+896942 x^5-1381585 x^6+970100 x^7-264304 x^8+15552 x^9) (-1+8 x+14 x^2-194
x^3+302 x^4-65 x^5-120 x^6+62 x^7+4 x^8-7 x^9+x^{10}) (-1-8 x+14 x^2+194 x^3+302 x^4+65 x^5-120 x^6-62 x^7+4 x^8+7 x^9+x^{10}) (1-36
x+504 x^2-3603 x^3+14416 x^4-33263 x^5+44489 x^6-34794 x^7+16069 x^8-4345 x^9+653 x^{10}-47 x^{11}+x^{12}) (1+53 x+951 x^2+7589 x^3+32774
x^4+84127 x^5+133734 x^6+133056 x^7+81936 x^8+30215 x^9+6265 x^{10}+650 x^{11}+25 x^{12}) (1+92 x+3048 x^2+50453 x^3+483576 x^4+2928888
x^5+11832412 x^6+32949126 x^7+64377394 x^8+88750186 x^9+85833700 x^{10}+57206921 x^{11}+25441290 x^{12}+7182529 x^{13}+1203428 x^{14}+111956 x^{15}+5340
x^{16}+120 x^{17}+x^{18}) (1-66 x+1833 x^2-28651 x^3+283174 x^4-1881547 x^5+8719340 x^6-28859906 x^7+69342834 x^8-122339404 x^9+159752386
x^{10}-155169581 x^{11}+112318552 x^{12}-60476870 x^{13}+24066800 x^{14}-6990095 x^{15}+1450676 x^{16}-207780 x^{17}+19365 x^{18}-1050 x^{19}+25
x^{20}))
   $ } \\
   \parbox[t]{5.8in}{$ \ds
   -20 x (2+5659 x^2-14484607 x^4+1623166223 x^6+19612453184950 x^8-25320534081702041 x^{10}+17091892743121647834 x^{12}-7655837232974703848604
x^{14}+2492277605569640395833581 x^{16}-620099827786773705171611869 x^{18}+121885949780878818966486500169 x^{20}-19384309001900158601675819590310
x^{22}+2540186001434970800443884399355721 x^{24}-278235840580135809493641738709226336 x^{26}+25765384988114400513290639323578465316 x^{28}-2035553668877659915082716579609632578913
x^{30}+ $ \\ $ 138196312427301076159723996151208851675079 x^{32}- $ \\ $ 8109230896031867918266410848377817689640924 x^{34}+ $ \\ $ 413161160550946548074386128305818536822648499
x^{36}- $ \\ $ 18344103322200841094903263044635009064943799154 x^{38}+ $ \\ $ 711816904534000358494914943635275294850195464896 x^{40}- $ \\ $ 24195185649666128001631263025120708673126477249290
x^{42}+ $ \\ $ 721678280050024769218969825405647420558142476056042 x^{44}- $ \\ $ 18912930409545443103034473780503465060418810796183341 x^{46}+ $ \\ $ 435799785917531445698829869799646400409340297457680583
x^{48}- $ \\ $ 8830009741753543163707469435200664569445824112203405647 x^{50}+ $ \\ $ 157198076496313230975646610274296281597109849037468640802 x^{52}- $ \\ $ 2454070354971455070662305063780743901748713735246305422475
x^{54}+ $ \\ $ 33466538141352151499532582791125094139455921443680209782438 x^{56}- $ \\ $ 395888331472760042299157128136911216502274641086632905513340 x^{58}+ $ \\ $ 4009750831589993497127268562617739272447604308340131540444385
x^{60}- $ \\ $ 33870841213004933871120926511210614898252679704449832033744953 x^{62}+ $ \\ $ 223949855657319688350878425976362530338378085000502022678755621 x^{64}- $ \\ $ 921462875839575071588591369463043652525707048884121111528853893
x^{66}- $ \\ $ 1826548520005609845984665795401945905775154499722805743624353842 x^{68}+ $ \\ $ 83758280051599460787091479473116770318079299843494501711550277584
x^{70}- $ \\ $ 1027285931910734858099310457561751998235691256560572904464524259553 x^{72}+ $ \\ $ 8967259351220872011077153767668367595881327758325251306097559852159
x^{74}- $ \\ $ 63440246962823249673909165117689191400102893653392948841586588496283 x^{76}+ $ \\ $ 380384157549626783130849524646317509227080629068121575223467875015497
x^{78}- $ \\ $ 1973537580869792683021248989352069132562510658842050140032004708090884 x^{80}+ $ \\ $ 8960729858891577905811008572630611446854911253463679456211189028525753
x^{82}- $ \\ $ 35848741085864395698132749891629191767962513604165732992920437683415716 x^{84}+ $ \\ $ 126925097405552838886235621891701363852352059062130769008765810973825210
x^{86}- $ \\ $ 398901092418377304241414075486889280864766490675190442602637215961593763 x^{88}+ $ \\ $ 1115199542594761854173624748910849443271658042502312675256644724618461325
x^{90}- $ \\ $ 2777758819899025882861031133251130097920398385094901405394695927895270173 x^{92}+ $ \\ $ 6171887585950640536233308481583175532091448408576743626905910740115368920
x^{94}- $ \\ $ 12244596800927822083598399274736287470567787812362881446247930723254806073 x^{96}+ $ \\ $ 21708232524582362102676634194012784749853509218702418772935430042276040782
x^{98}- $ \\ $ 34415982695758774633026344840956657057525423661143543963251323713314712788 x^{100}+ $ \\ $ 48822557794320119578058943135929592376719383814702632017318845864117288957
x^{102}- $ \\ $ 62008403932388029294187629828840491036045386323342349790841473402349574011 x^{104}+ $ \\ $ 70546739103255702476981855502474574737702669144117262038726367295574409817
x^{106}- $ \\ $ 71930122894272438404689062583654889655419194490842537760804817565863844620 x^{108}+ $ \\ $ 65757961617815612723189915581298198503277898335401690937775529211300226786
x^{110}- $ \\ $ 53922515483442644598178634993747162301831435258703657746483489403389384211 x^{112}+ $ \\ $ 39677049034669816206409587991332018712017565179790995825177198211744995613
x^{114}- $ \\ $ 26205908355603413844990105210820784914073418978699979154361605255412437523 x^{116}+ $ \\ $ 15540677047625759529182462703868548821497238359929159199774257668121505601
x^{118}- $ \\ $ 8276473889891497546477408617044943555339968243037682846407369535837002808 x^{120}+ $ \\ $ 3959033754799320453893164823280523335270054179615956238265142793264080238
x^{122}- $ \\ $ 1701105518674175574551493647766236493409926136158119733621370896727046445 x^{124}+ $ \\ $ 656542990989592417348625741487879377319426258958781570243093578803916784
x^{126}- $ \\ $ 227580103777549506068084641880256490019659457087714061253774872963800848 x^{128}+ $ \\ $ 70834884842512314536778258043118786548266175539893877638807925806556672
x^{130}- $} \\
  \parbox[t]{5.8in}{
 $ 19790330194890916651534968542711581210369213927725727755747964390504682 x^{132}+ $ \\ $ 4960742544385660758050409066252175516085005999980936006227765109072555
x^{134}- $ \\ $ 1114965408252275546691711366971472827850362369177808569751457148704719 x^{136}+ $ \\ $ 224524125026871302328965147855176330086721432772721002344692680641782
x^{138}- $ \\ $ 40470954750979642753538174797600193341865930262582203299880325752531 x^{140}+ $ \\ $ 6522484527368778953449973284479582692866856074217892707007561374018
x^{142}- $ \\ $ 938623824921590972669146517068697112311254760684888308613488469202 x^{144}+ $ \\ $ 120419567092177406591038675265056473432666809078090186644227888884
x^{146}- $ \\ $ 13747766293669092745250547824255227917032348835795909324767440930 x^{148}+ $ \\ $ 1393691287672287850081556830889435116855420688327049013385687843
x^{150}- $ \\ $ 125145452427006640167902103013205097232068982381428059224080893 x^{152}+ $ \\ $ 9924495724138899418787178708185784388161228130689225107867703 x^{154}- $ \\ $ 692717654611336590835950467233870863314381823699368983990018
x^{156}+ $ \\ $ 42383548518450024305013765537116171849144635477753961710730 x^{158}- $ \\ $ 2262213418454639456479373460003381347256055513386909318425 x^{160}+ $ \\ $ 104722440678757379322071746661202509907487401907861926350
x^{162}- $ \\ $ 4174807247219310825397505396455503773664774258173953600 x^{164}+ $ \\ $ 142070795170315335896284229860216232653265539388845500 x^{166}- $ \\ $ 4081335488728972215084901266311623117500995827927500
x^{168}+ $ \\ $ 97542596542325886160901390409841083590895138256875 x^{170}- $ \\ $ 1901115576372506933834356905953362581345663705625 x^{172}+ $ \\ $ 29342359742088281927166003690771020937903218750
x^{174}- $ \\ $ 341588812305336986413471889238776928586228125 x^{176}+ $ \\ $ 2709785456016506931219420585932164975125000 x^{178}- $ \\ $ 10151850504867281490674504538905811250000
x^{180}- $ \\ $ 52202370660679192022575858760375000000 x^{182}+972298069735283950652233953000000000 x^{184}-5928705592980063547455520000000000 x^{186}+17784983723555471913600000000000
x^{188}-24251168563898496000000000000 x^{190}+10543094200320000000000000 x^{192})/$ \\ $((-1+x) (1+x) (-1+2 x) (1+2 x) (-1+11 x) (1+11
x) (1-3 x+x^2) (1+3 x+x^2) (1-13 x+11 x^2) (1-8 x+11 x^2) (1+8 x+11 x^2) (1+13 x+11 x^2)
(1-12 x^2+8 x^3) (-1+12 x^2+8 x^3) (-1+104 x-2661 x^2+24090 x^3-91993 x^4+158236 x^5-121128 x^6+44736 x^7-7808 x^8+512
x^9) (1+104 x+2661 x^2+24090 x^3+91993 x^4+158236 x^5+121128 x^6+44736 x^7+7808 x^8+512 x^9) (-1+5 x+45 x^2-305 x^3+65 x^4+2175
x^5-1800 x^6-4525 x^7+1975 x^8+2500 x^9-875 x^{10}-375 x^{11}+125 x^{12}) (-1-5 x+45 x^2+305 x^3+65 x^4-2175 x^5-1800 x^6+4525 x^7+1975
x^8-2500 x^9-875 x^{10}+375 x^{11}+125 x^{12}) (1-22 x-214 x^2+4230 x^3-15001 x^4-2783 x^5+60858 x^6-12548 x^7-46005 x^8+16340 x^9+10377
x^{10}-5171 x^{11}-329 x^{12}+413 x^{13}-27 x^{14}-9 x^{15}+x^{16}) (1+22 x-214 x^2-4230 x^3-15001 x^4+2783 x^5+60858 x^6+12548 x^7-46005
x^8-16340 x^9+10377 x^{10}+5171 x^{11}-329 x^{12}-413 x^{13}-27 x^{14}+9 x^{15}+x^{16}) (1-82 x+2898 x^2-59227 x^3+789803 x^4-7334275
x^5+49339066 x^6-246611784 x^7+930982131 x^8-2681675595 x^9+5927069364 x^{10}-10071216252 x^{11}+13140900495 x^{12}-13114381556 x^{13}+9943609371
x^{14}-5673467839 x^{15}+2404556811 x^{16}-744137999 x^{17}+164387022 x^{18}-25156085 x^{19}+2562619 x^{20}-164923 x^{21}+6260 x^{22}-125 x^{23}+x^{24})
(1+82 x+2898 x^2+59227 x^3+789803 x^4+7334275 x^5+49339066 x^6+246611784 x^7+930982131 x^8+2681675595 x^9+5927069364 x^{10}+10071216252 x^{11}+13140900495
x^{12}+13114381556 x^{13}+9943609371 x^{14}+5673467839 x^{15}+2404556811 x^{16}+744137999 x^{17}+164387022 x^{18}+25156085 x^{19}+2562619 x^{20}+164923
x^{21}+6260 x^{22}+125 x^{23}+x^{24}) (1-104 x+4410 x^2-104155 x^3+1561045 x^4-15952817 x^5+116275468 x^6-623060570 x^7+2507306175 x^8-7693787635
x^9+18199448138 x^{10}-33436393552 x^{11}+47933912805 x^{12}-53730741510 x^{13}+47072037995 x^{14}-32132854819 x^{15}+16992690151 x^{16}-6899340865
x^{17}+2123302560 x^{18}-486469225 x^{19}+80870375 x^{20}-9387875 x^{21}+715250 x^{22}-31875 x^{23}+625 x^{24}) (1+104 x+4410 x^2+104155
x^3+1561045 x^4+15952817 x^5+116275468 x^6+623060570 x^7+2507306175 x^8+7693787635 x^9+18199448138 x^{10}+33436393552 x^{11}+47933912805 x^{12}+53730741510
x^{13}+47072037995 x^{14}+32132854819 x^{15}+16992690151 x^{16}+6899340865 x^{17}+2123302560 x^{18}+486469225 x^{19}+80870375 x^{20}+9387875 x^{21}+715250
x^{22}+31875 x^{23}+625 x^{24})) +
   $ } \\
    \parbox[t]{5.8in}{$ \ds
20 x (-6+14439 x^2-17489877 x^4+10330737178 x^6-3512457670994 x^8+773557628972348 x^{10}-118933118809202657 x^{12}+13402180795603772620
x^{14}-1143041824743722405247 x^{16}+75376256735760762870682 x^{18}-3900527153583558378830705 x^{20}+160213298841044278614369934 x^{22}-5275838401562286712534526324
x^{24}+140573690871732227718948395990 x^{26}-3056564285047841048567283322908 x^{28}+54652048844038774281403989770261 x^{30}-808942158731857921215515356108750
x^{32}+9967824187962503734284150480450070 x^{34}-102713732787270231411347953780078248 x^{36}+888250051092651001524860349740227677 x^{38}-6463166125218437400006981086907240411
x^{40}+39637978009471883546636305098439043760 x^{42}-205093971569105754927653524287955415546 x^{44}+895558787922679427854424612410673676432 x^{46}-3299173638099044870816627640048666130399
x^{48}+10245759838971553289794103842671422631885 x^{50}-26791934850374230320125306109278203915108 x^{52}+58906881739110815316562407273388595784955
x^{54}-108732542853795198593390914574744834434822 x^{56}+168239497159962925035017673451545751094463 x^{58}-217918473564650058361147962867208485475654
x^{60}+ $ \\ $ 236057112081995121270869493572868735267898 x^{62}- $ \\ $ 213713512024068312432499578346861834335878 x^{64}+ $ \\ $ 161683023301512813895399355837769391093165
x^{66}- $ \\ $ 102240860333144345801548115647872150591455 x^{68}+ $ \\ $ 54074149795443535376039142140102406234500 x^{70}- $ \\ $ 23943838634669362984469311992864393210682
x^{72}+8888700912481564685277221760614738560233 x^{74}-2771661455629275371014154547049255338390 x^{76}+727812657222702963167793555355730851828 x^{78}-161515258511255951877862805319023890021
x^{80}+30434257302491488042949030650375486214 x^{82}-4897312431601779726470344199863622918 x^{84}+676980041265615812866269278365661863 x^{86}-80737687927639046123515583011360482
x^{88}+8307477110059593202867794291904440 x^{90}-732682938604928561415945411279575 x^{92}+54623727635868006340062845596200 x^{94}-3372640551227980849360364652650
x^{96}+168032171870938912863676561625 x^{98}-6543238298083022807536718750 x^{100}+191057651766096466508155625 x^{102}- $ \\ $ 3932802250774791278810000 x^{104}+50816150733187918900000
x^{106}-287935206776374600000 x^{108}- $ \\ $ 1266111061732000000 x^{110}+20349020120000000 x^{112})/$ \\ $((-1+x) (1+x) (-1+2 x) (1+2 x) (-1+11
x) (1+11 x) (-1+9 x-6 x^2+x^3) (1+9 x+6 x^2+x^3) (1-12 x^2+8 x^3) (-1+12 x^2+8 x^3) (1-59 x+402
x^2-863 x^3+572 x^4) (1+59 x+402 x^2+863 x^3+572 x^4) (1-18 x+93 x^2-195 x^3+164 x^4-37 x^5+x^6) (1+18 x+93 x^2+195
x^3+164 x^4+37 x^5+x^6) (1-33 x+375 x^2-1922 x^3+5192 x^4-7883 x^5+6797 x^6-3252 x^7+834 x^8-105 x^9+5 x^{10}) (1+33 x+375
x^2+1922 x^3+5192 x^4+7883 x^5+6797 x^6+3252 x^7+834 x^8+105 x^9+5 x^{10}) (-1+5 x+45 x^2-305 x^3+65 x^4+2175 x^5-1800 x^6-4525 x^7+1975
x^8+2500 x^9-875 x^{10}-375 x^{11}+125 x^{12}) (-1-5 x+45 x^2+305 x^3+65 x^4-2175 x^5-1800 x^6+4525 x^7+1975 x^8-2500 x^9-875 x^{10}+375
x^{11}+125 x^{12}) (1-22 x-214 x^2+4230 x^3-15001 x^4-2783 x^5+60858 x^6-12548 x^7-46005 x^8+16340 x^9+10377 x^{10}-5171 x^{11}-329 x^{12}+413
x^{13}-27 x^{14}-9 x^{15}+x^{16}) (1+22 x-214 x^2-4230 x^3-15001 x^4+2783 x^5+60858 x^6+12548 x^7-46005 x^8-16340 x^9+10377 x^{10}+5171
x^{11}-329 x^{12}-413 x^{13}-27 x^{14}+9 x^{15}+x^{16}))-$ \\ \\ $ 20 x (-1-1084 x^2+181865 x^4-12397664 x^6+428673097 x^8-7910965080 x^{10}+75885327537 x^{12}-318421450140 x^{14}+6099467709
x^{16}+4470956945320 x^{18}-14829354664352 x^{20}+17943936244558 x^{22}-69992659920 x^{24}-23219650776430 x^{26}+28103414048922 x^{28}-16901971862574
x^{30}+5633156135784 x^{32}-787531780908 x^{34}-143356906659 x^{36}+93059619780 x^{38}-20683860360 x^{40}+2506426400 x^{42}-164601600 x^{44}+4608000
x^{46})/$ \\ $ ((-1+4 x) (1+4 x) (-1+34 x-160 x^2+248 x^3-152 x^4+32 x^5) (1+34 x+160 x^2+248 x^3+152 x^4+32 x^5)
(-1+18 x-91 x^2+110 x^3+175 x^4-515 x^5+455 x^6-175 x^7+25 x^8) (-1-18 x-91 x^2-110 x^3+175 x^4+515 x^5+455 x^6+175 x^7+25 x^8)
(-1+8 x+14 x^2-194 x^3+302 x^4-65 x^5-120 x^6+62 x^7+4 x^8-7 x^9+x^{10}) (-1-8 x+14 x^2+194 x^3+302 x^4+65 x^5-120 x^6-62 x^7+4 x^8+7
x^9+x^{10}))+ $ \\ \\ $ \ds
\frac{20 x (1+11 x^2)}{(-1+x) (1+x) (-1+11 x) (1+11 x)} + \frac{4 x}{1-4 x^2}   $
   \bc $ \mbox{--------------------------------------    } $  \ec} \\
\parbox[t]{5.8in}{ ${\cal F}^{TG(0)}_{10}(x) = $ \\ $ \ds  2 x+61098 x^2+308906 x^3+129930354 x^4+2720543472 x^5+566597492178 x^6+18309402983496 x^7+2938658810744994 x^8+116306062547543522 x^9+16178049740086515288
x^{10}+727292183495663026276 x^{11}+91590416197626458526954 x^{12}+4527128914550355698742952 x^{13}+527348632328247714223605228 x^{14}+28141085352317875290219968916
x^{15}+3073522709863650496508014495554 x^{16}+174855430602649847342302412927922 x^{17}+18089455293271238139140477649903078 x^{18}+1086333235497369435947818692121108606
x^{19}+107351220767799919507571884996197429624 x^{20}+6748862185028459643138579976924602126462 x^{21}+641627043758745484852708307999495553720888
x^{22}+ $ \\ $ 41926930850273492461849597562327593977489946 x^{23}+ $ \\ $3858618823943394269588240386686529017468650634 x^{24}+ $ \\ $260467820671447474586174871336823947674324860272
x^{25}  + \ldots $
 \bc $ \mbox{--------------------------------------    } $  \ec }  \noindent
 \\
\parbox[t]{5.8in}{ ${\cal F}^{TG}_{10,1}(x) = {\cal F}^{TG}_{10,9}(x) = $ \\
 $ \ds
1026 x+4180 x^2+2051592 x^3+33170592 x^4+8069846256 x^5+229045565290 x^6+40115886558012 x^7+1469214784114372 x^8+216667625165910036 x^9+9215230392687922900
x^{10}+1213567945794537846582 x^{11}+57414193981276811657742 x^{12}+6936778336725145347923466 x^{13}+356991726966366474382847320 x^{14}+40203601288654536608812622292
x^{15}+2218365210472271764159039832272 x^{16}+235526781691019196909383559047676 x^{17}+13782502354487762058634737982846660 x^{18}+1392191406729627260322579497951111622
x^{19}+85624673105717668604962699191914961602 x^{20}+8292493355622907262095650027980986669206 x^{21}+531939763587101129312734951569991372135300 x^{22}
+ $ \\ $ 49721834351877446556402146041368458726569482
x^{23}+ $ \\ $ 3304637053185180117384485662627188290697276482 x^{24}+ $ \\ $299841100528605106367555529876308008440746424306 x^{25}+ \ldots   $
    \bc $ \mbox{--------------------------------------    } $  \ec } \noindent \\
\parbox[t]{5.8in}{ ${\cal F}^{TG}_{10,2}(x) = {\cal F}^{TG}_{10,8}(x) = $ \\
 $ \ds
92 x+25218 x^2+471806 x^3+110135304 x^4+2969143592 x^5+544738148268 x^6+18733804343896 x^7+2906989357241034 x^8+117074541545912732 x^9+16126806424226575248
x^{10}+728717355076106649356 x^{11}+91503347671376455645554 x^{12}+4529796151476687283889602 x^{13}+527196799037450133481167468 x^{14}+28146094047791596217660650356
x^{15}+3073253455025125282943152429464 x^{16}+174864847843926862804318201550242 x^{17}+18088971857265650432893838691621708 x^{18}+1086350949490490741420195237197194316
x^{19}+107350344291239934770621956346509474854 x^{20}+6748895510691261530775791790843301575812 x^{21}+641625442244345393621045340731115667345458
x^{22}+ $ \\ $41926993550049337813771587624911437784823556 x^{23}+ $ \\ $3858615879222609487264258597095126866880206724 x^{24}+ $ \\ $260467938638858687027343013376015523986867164442
x^{25} + \ldots   $ \bc $ \mbox{--------------------------------------    } $  \ec } \\
\parbox[t]{5.8in}{ ${\cal F}^{TG}_{10,3}(x) = {\cal F}^{TG}_{10,7}(x) = $ \\
 $ \ds
216 x+7720 x^2+1359522 x^3+41326692 x^4+7224846606 x^5+244339052710 x^6+38872336271472 x^7+1497584405532272 x^8+214663386349476186 x^9+9268212962888129850
x^{10}+1210186806711127350552 x^{11}+57513573632708543746002 x^{12}+6930923246121002208627546 x^{13}+357178488377886900023024160 x^{14}+40193282071127277218839333842
x^{15}+2218716446757381095251745042372 x^{16}+235508351491935853471352363887656 x^{17}+13783163096442342022306579828853140 x^{18}+1392158139647409982133567021284313112
x^{19}+85625916211959364159888583860255390652 x^{20}+8292432789495082787047256034690725994066 x^{21}+531942102426568280941249248546221193046250 x^{22}+ $ \\ $ 49721723315230444674700433893953054270083442
x^{23}+ $ \\ $ 3304641453646815273059373950532468477928520782 x^{24}+ $ \\ $ 299840895821602146325432803809692855748240378556 x^{25}+ \ldots
     $ \bc $ \mbox{--------------------------------------    } $  \ec }

   \noindent
\parbox[t]{5.8in}{ ${\cal F}^{TG}_{10,4}(x) = {\cal F}^{TG}_{10,6}(x) = $ \\
 $ \ds
152 x+16248 x^2+624686 x^3+93066714 x^4+3290759442 x^5+516152502498 x^6+19364483660936 x^7+2859302099969304 x^8+118279367322734672 x^9+16045905490331081598
x^{10}+730996931066814837346 x^{11}+91363659130860668507844 x^{12}+4534093898798572924420242 x^{13}+526951852599307829987691948 x^{14}+28154186172061642466142676106
x^{15}+3072818241097287491246726066634 x^{16}+174880077107369071228724241445472 x^{17}+18088189923998590847682978221505078 x^{18}+1086379605861572895343307027610902516
x^{19}+107348926301965169450150269026412094304 x^{20}+6748949429080552776832015805855025487462 x^{21}+641622851054722146910701255125556753513288
x^{22}+$ \\ $ 41927094997964750458643481040889762138737366 x^{23}+ $ \\ $ 3858611114638599162347547786075041230010339874 x^{24}+$ \\ $ 260468129512499412268673475355192713041040718692
x^{25}+ \ldots
   $
\bc $ \mbox{--------------------------------------    } $  \ec } \\ \noindent
\parbox[t]{5.8in}{ ${\cal F}^{TG}_{10,5}(x) = $ \\
 $ \ds
306 x+10570 x^2+1211262 x^3+45545722 x^4+6849153816 x^5+252828385030 x^6+38186186511492 x^7+1514324450538202 x^8+213473030558695386 x^9+9300371737211057120
x^{10}+1208126218216126646952 x^{11}+57574580612326159255702 x^{12}+6927322391997258224033496 x^{13}+357293629161430374920263960 x^{14}+40186915480643390114180558472
x^{15}+2218933329755178601233759183642 x^{16}+235496967927647912670879570947646 x^{17}+13783571327298277992609557533586710 x^{18}+1392137583848052715245251761256984922
x^{19}+85626684406466871629751060858520922032 x^{20}+8292395360373738613387480674093825919686 x^{21}+531943547850287795263052296745922204885900 x^{22}+ $ \\ $ 49721654692613685201224263885906336633095402
x^{23}+ $ \\ $ 3304644173242482973584919641678295907935508182 x^{24}+ $ \\ $ 299840769306885738263268533595957733529298131316 x^{25}+ \ldots
   $ \bc $ \mbox{--------------------------------------    } $  \ec } \\
   \parbox[t]{5.8in}{ ${\cal F}^{KB}_{10,0}(x) = {\cal F}^{KB}_{10,2}(x) = {\cal F}^{KB}_{10,4}(x) ={\cal F}^{KB}_{10,6}(x) ={\cal F}^{KB}_{10,8}(x)  = $ \\ \\
 $ \ds
-2 x (51-831 x-153498 x^2+3494142 x^3+35737210 x^4-978785019 x^5+2181225858 x^6+40625932692 x^7-169809854157 x^8-539171574110
x^9+3248646540326 x^{10}+1733979403674 x^{11}-24220044711174 x^{12}+8425813503548 x^{13}+71539494759240 x^{14}-35544483302528 x^{15}-100136791120288
x^{16}+44771106019392 x^{17}+71841831791616 x^{18}-24300161941120 x^{19}-26998071010176 x^{20}+5807215608832 x^{21}+5044113799168 x^{22}-464409133056
x^{23}-381171302400 x^{24}-9920315392 x^{25}+3439853568 x^{26}) / $ \\ $((1+x) (-1+4 x) (1+4 x) (1+18 x+63 x^2+50 x^3+13 x^4+x^5)
(-1+34 x-160 x^2+248 x^3-152 x^4+32 x^5) (1+34 x+160 x^2+248 x^3+152 x^4+32 x^5) (-1+121 x-3901 x^2+49003 x^3-293115 x^4+896942
x^5-1381585 x^6+970100 x^7-264304 x^8+15552 x^9)) - $ \\ \\
$4 x (-45+71778 x^2-17369136 x^4+2188805644 x^6-76349085123 x^8+1009862205436 x^{10}-6106521487546 x^{12}+16764122743590
x^{14}-19163012747207 x^{16}+12803644578280 x^{18}-3806262284672 x^{20}+530610371584 x^{22}-33432641536 x^{24}+733872128 x^{26}) / $ \\ $((-1+2
x) (1+2 x) (1-3 x+x^2) (1+3 x+x^2) (1-13 x+11 x^2) (1+13 x+11 x^2) (-1+104 x-2661 x^2+24090 x^3-91993
x^4+158236 x^5-121128 x^6+44736 x^7-7808 x^8+512 x^9) (1+104 x+2661 x^2+24090 x^3+91993 x^4+158236 x^5+121128 x^6+44736 x^7+7808 x^8+512
x^9))+$ \\ \\ $ \ds 4 x (-30+17253 x^2-1654579 x^4+77516864 x^6-668471522 x^8+969635693 x^{10}+811800760 x^{12}-988086913 x^{14}+106897934 x^{16}) / $ \\ $ ((-1+x)
(1+x) (-1+11 x) (1+11 x) (-1+9 x-6 x^2+x^3) (1+9 x+6 x^2+x^3) (1-59 x+402 x^2-863 x^3+572 x^4) (1+59 x+402
x^2+863 x^3+572 x^4))- $ \\ \\ $12 x (-5+316 x^2-10552 x^4+58592 x^6-55552 x^8+12288 x^{10}) / $ \\ $ ((-1+4 x) (1+4 x) (-1+34 x-160 x^2+248 x^3-152 x^4+32
x^5) (1+34 x+160 x^2+248 x^3+152 x^4+32 x^5)) + $ \\ \\ $ \ds
\frac{20 x (1+11 x^2)}{(-1+x) (1+x) (-1+11 x) (1+11 x)} -\frac{4 x}{(-1+2 x) (1+2 x)}$
  \bc $ \mbox{--------------------------------------    } $  \ec } \\
   \parbox[t]{5.8in}{ ${\cal F}^{KB}_{10,0}(x) = {\cal F}^{KB}_{10,2}(x) = {\cal F}^{KB}_{10,4}(x) ={\cal F}^{KB}_{10,6}(x) ={\cal F}^{KB}_{10,8}(x)  = $ \\
 $ \ds
486 x+8742 x^2+1555326 x^3+40278270 x^4+7448028216 x^5+241043236914 x^6+39200186604960 x^7+1490511381690590 x^8+215200380276792594 x^9+9254216356840123792
x^{10}+1211105179284264511656 x^{11}+57486654067588797857634 x^{12}+6932526989474806983345204 x^{13}+357127332503295826122106028 x^{14}+40196121488251463309195643576
x^{15}+2218619758316902730081288019678 x^{16}+235513434522519195050742272384802 x^{17}+13782980800175828110273507021038930 x^{18}+1392167325142261666969798765433806674
x^{19}+85625572900721260086625938902471959360 x^{20}+8292449521724293420373158586932502245554 x^{21}+531941456216739020475250879088873942922380 x^{22}+$ \\ $ 49721753998437283054794227398187548320126318
x^{23}+$ \\ $ 3304640237580311820148409462658325632228709634 x^{24} +$ \\ $ 299840952395743492141155881299266273849675986916 x^{25}+ \ldots
   $ \bc $ \mbox{--------------------------------------    } $  \ec} \\ \noindent
   \parbox[t]{5.8in}{ ${\cal F}^{KB}_{10,1}(x) = {\cal F}^{KB}_{10,3}(x) ={\cal F}^{KB}_{10,5}(x) ={\cal F}^{KB}_{10,7}(x)  ={\cal F}^{KB}_{10,9}(x) =$ \\ \\
 $ \ds
2 x (-81+4743 x+18000 x^2-2761290 x^3-4824904 x^4+619528503 x^5-2779817376 x^6-20740283688 x^7+138457535763 x^8+160243309118
x^9-2194640836178 x^{10}+1013957190534 x^{11}+13884435328872 x^{12}-14282346936560 x^{13}-35197752351024 x^{14}+37740167279168 x^{15}+45100692942928
x^{16}-40565360679360 x^{17}-31522265045184 x^{18}+20159309425792 x^{19}+12193598770560 x^{20}-4556866880512 x^{21}-2441348405248 x^{22}+361003155456
x^{23}+203458510848 x^{24}+4195090432 x^{25}-1911029760 x^{26})/ $ \\ $((1+x) (-1+4 x) (1+4 x) (1+18 x+63 x^2+50 x^3+13 x^4+x^5)
(-1+34 x-160 x^2+248 x^3-152 x^4+32 x^5) (1+34 x+160 x^2+248 x^3+152 x^4+32 x^5) (-1+121 x-3901 x^2+49003 x^3-293115 x^4+896942
x^5-1381585 x^6+970100 x^7-264304 x^8+15552 x^9)) -$ \\ \\ $8 x^2 (-1413+1562003 x^2-366965421 x^4+24987288639 x^6-585637285790 x^8+6113962085337 x^{10}-30911433420511 x^{12}+73580273791090
x^{14}-71815424338881 x^{16}+29488786410400 x^{18}-5860745339328 x^{20}+585340968960 x^{22}-27329642496 x^{24}+444071936 x^{26})/$ \\ $((-1+2
x) (1+2 x) (1-3 x+x^2) (1+3 x+x^2) (1-13 x+11 x^2) (1+13 x+11 x^2) (-1+104 x-2661 x^2+24090 x^3-91993
x^4+158236 x^5-121128 x^6+44736 x^7-7808 x^8+512 x^9) (1+104 x+2661 x^2+24090 x^3+91993 x^4+158236 x^5+121128 x^6+44736 x^7+7808 x^8+512
x^9))) - $ \\ \\ $\ds 4 x^2 (1434-580778 x^2+52255635 x^4-1208428990 x^6+7871937815 x^8-17833483893 x^{10}+14395777753 x^{12}-3147975204 x^{14}+178151688
x^{16})/$ \\ $((-1+x) (1+x) (-1+11 x) (1+11 x) (-1+9 x-6 x^2+x^3) (1+9 x+6 x^2+x^3) (1-59 x+402 x^2-863 x^3+572 x^4)
(1+59 x+402 x^2+863 x^3+572 x^4)) - $ \\ \\$\ds 24 x^2 (-71+3736 x^2-39920 x^4+82624 x^6-48640 x^8+8192 x^{10})/$ \\ $((-1+4 x) (1+4 x) (-1+34 x-160 x^2+248 x^3-152
x^4+32 x^5) (1+34 x+160 x^2+248 x^3+152 x^4+32 x^5))
- $ \\ \\ $ \ds \frac{4 x^2 (-61+121 x^2)}{(-1+x) (1+x) (-1+11 x) (1+11 x)} - \frac{8 x^2}{(-1+2 x) (1+2 x)} $
  \bc $ \mbox{--------------------------------------    } $  \ec  }  \\
   \parbox[t]{5.8in}{ ${\cal F}^{KB}_{10,1}(x) = {\cal F}^{KB}_{10,3}(x) ={\cal F}^{KB}_{10,5}(x) ={\cal F}^{KB}_{10,7}(x)  ={\cal F}^{KB}_{10,9}(x) =$ \\
 $ \ds
162 x+26034 x^2+547254 x^3+105829506 x^4+3087139472 x^5+536544694458 x^6+18933459082308 x^7+2893320662340498 x^8+117429397122323214 x^9+16103929942562306184
x^{10}+729366115378484886684 x^{11}+91464255048323099506626 x^{12}+4530999924945274114423212 x^{13}+527128666585524788535089424 x^{14}+28148344110178105135600193324
x^{15}+3073132790947505523051236159154 x^{16}+174869068437238729059725316725722 x^{17}+18088755416815791020200633182335238 x^{18}+1086358879418462870325386565743428290
x^{19}+107349952098484950244056428392192616856 x^{20}+6748910421312067944031536548102249714502 x^{21}+641624725830162415536581332117994094315744
x^{22}+$ \\ $ 41927021596188944275645891032533653726633426 x^{23}+$ \\ $ 3858614562134145244469089962988617292496409666 x^{24}+$ \\ $ 260467991400549291223901340714858394942974085272
x^{25}+ \ldots
   $
     \bc $ \mbox{--------------------------------------    } $  \\ $ \mbox{--------------------------------------    } $  \ec }

\end{document}